\numberwithin{equation}{section}
\theoremstyle{plain}
\newtheorem{thm}{Theorem}[section]
\newtheorem*{thm*}{Theorem}
\newtheorem{prop}[thm]{Proposition}
\newtheorem*{prop*}{Proposition}
\newtheorem{cor}[thm]{Corollary}
\newtheorem*{cor*}{Corollary}
\newtheorem{lem}[thm]{Lemma}
\newtheorem{thmintro}{Theorem}
\theoremstyle{definition}
\newtheorem{defn}[thm]{Definition}
\newtheorem*{defn*}{Definition}
\newtheorem{ex}[thm]{Example}
\newtheorem{rmk}[thm]{Remark}
\newtheorem*{rmk*}{Remarks}
\newtheorem*{conj*}{Conjecture}
\newtheorem*{quest*}{Question}
\newtheorem{nota}[thm]{Notation}
\newtheoremstyle{blue-environment}{}{}{}{}{\color{blue}\bfseries}{.}{ }{}
\theoremstyle{blue-environment}
\newcommand{\acts}{\curvearrowright}
\newcommand{\ra}{\rightarrow}
\newcommand{\sq}{\subseteq}
\newcommand{\x}{\times}
\newcommand{\id}{\mathrm{id}}
\newcommand{\mbb}{\mathbb}
\newcommand{\mc}{\mathcal}
\newcommand{\mf}{\mathfrak}
\newcommand{\mscr}{\mathscr}
\newcommand{\R}{\mathbb{R}}
\newcommand{\Z}{\mathbb{Z}}
\newcommand{\N}{\mathbb{N}}
\newcommand{\Q}{\mathbb{Q}}
\newcommand{\s}{\sigma}
\newcommand{\eps}{\epsilon}
\newcommand{\Om}{\Omega}
\newcommand{\om}{\omega}
\newcommand{\g}{\gamma}
\newcommand{\G}{\Gamma}
\newcommand{\Out}{{\rm Out}}
\newcommand{\Aut}{{\rm Aut}}
\newcommand{\X}{\mc{X}}
\DeclareMathOperator{\lk}{lk}
\DeclareMathOperator{\St}{st}
\DeclareMathOperator{\Fix}{Fix}
\DeclareMathOperator{\Min}{Min}
\DeclareMathOperator{\ar}{ar}
\DeclareMathOperator{\Gr}{Gr}
\DeclareMathOperator{\rk}{rk}
\newcommand{\utimes}{\mathbin{\rotatebox[origin=c]{-90}{$\ltimes$}}}
\begin{document}

\title{Growth of automorphisms of virtually special groups} 

\author[E.\,Fioravanti]{Elia Fioravanti}\address{Institute of Algebra and Geometry, Karlsruhe Institute of Technology}\email{elia.fioravanti@kit.edu} 
\thanks{The author is supported by Emmy Noether grant 515507199 of the DFG}

\begin{abstract}
    We study the speed of growth of iterates of outer automorphisms of virtually special groups, in the Haglund--Wise sense. We show that each automorphism grows either polynomially or exponentially, and that its stretch factor is an algebraic integer. For coarse-median preserving automorphisms, we show that there are only finitely many growth rates and we construct an analogue of the Nielsen--Thurston decomposition of surface homeomorphisms.

    These results are new already for right-angled Artin groups. However, even in this particular case, the proof requires studying automorphisms of arbitrary special groups in an essential way.

    As results of independent interest, special groups are accessible over centralisers, and they have a canonical JSJ decomposition over centralisers. For any virtually special group $G$, the outer automorphism group $\Out(G)$ is boundary amenable, satisfies the Tits alternative, and has finite virtual cohomological dimension.
\end{abstract}

\maketitle

\section{Introduction}

Given an infinite, finitely generated group $G$, a fundamental problem is describing the structure of its outer automorphism group $\Out(G)$. From a topological perspective, this corresponds to understanding self homotopy equivalences of any classifying space of $G$. Unfortunately, the properties of $G$ are rarely reflected into those of $\Out(G)$ in a straightforward way, and there are virtually no general tools to study the structure of automorphisms.

Some of the most elementary groups --- such as free and surface groups --- give rise to some of the most interesting and intricate outer automorphism groups --- such as $\Out(F_n)$ and mapping class groups --- whose study has occupied a central place in low-dimensional topology and geometric group theory in the past decades. At the same time, even harmless-looking groups such as the Baumslag--Solitar group $BS(2,4)$ can behave wildly: its outer automorphism group is not finitely generated \cite{Collins-Levin}.

The only general class of groups for which we have a near-complete understanding of automorphisms is arguably that of \emph{Gromov-hyperbolic} groups \cite{Gromov-hyp}. Breakthroughs of Rips and Sela showed that every hyperbolic group $G$ has a canonical JSJ decomposition \cite{RS97}, 
and that $\Out(G)$ is finitely generated and completely encoded in this decomposition \cite{Rips-Sela,Sela-hypJSJ}, leading to the solution of the isomorphism problem in this case \cite{Sela-isom,Dahmani-Guirardel}. Regrettably, our understanding of automorphisms for general families of groups has not evolved qualitatively since Rips and Sela's work in the 90s. Many of their results have been extended to \emph{relatively hyperbolic} groups \cite{Drutu-Sapir08,BS08,Groves-AGT,Dahmani-Groves,GL-relhyp}, though always relying on somewhat similar techniques that require hyperbolicity in fundamental ways.

The main goal of this article is to move past the restrictions of negative curvature and develop a general theory of automorphisms for a broad and natural family of \emph{non-positively curved} groups. We have found that the ideal setting for such a theory is provided by the (compact) special groups of Haglund and Wise \cite{HW08}. This is the most important class of \emph{cubulated} groups --- marrying a wealth of examples with a powerful toolkit to study them --- so our choice was also motivated by recent questions of Rips on the structure of automorphisms of cubulated groups \cite[p.\,826]{Sela-new1}.

New approaches to study automorphisms of non-hyperbolic groups were recently proposed by Groves and Hull \cite{Groves-Hull} and by Sela \cite{Sela-new1,Sela-new2}, both relying on (weakly) acylindrical actions on hyperbolic spaces. What is different in our own approach is that we completely give up on acylindricity, rather embracing the fact that we will have to work with non-small $\R$--trees and non-acylindrical graphs of groups, and resolving by other means the issues that this causes. To some extent, this choice is a forced one: there exist special groups $G$ such that $\Out(G)$ is infinite and such that $G$ has no small actions on $\R$--trees \cite[p.\,166]{Fio10a}, so not all automorphisms of $G$ are ``seen'' by its (strongly) acylindrical actions on hyperbolic spaces. 

Proving that $\Out(G)$ is finitely generated for all special groups $G$ remains out of reach for the moment (although the techniques we develop here bring this question much closer to what can be reasonably tackled). Instead, we focus on the problem of analysing how fast the length of an element $g\in G$ can grow when we apply iterates of an outer automorphism $\phi\in\Out(G)$. 

Even in the very particular case when $G$ is a right-angled Artin group (RAAG), very little seems to be known on growth, despite the fact that several refined aspects of automorphisms of RAAGs are well-understood \cite{Day,Day-Wade,BCV}, including finite generation of their automorphism groups \cite{Laurence,Servatius}. In fact, our approach to analyse growth of automorphisms of RAAGs requires studying automorphisms of \emph{arbitrary} special groups in a key way: it relies on a hierarchical construction inspired by Nielsen--Thurston decompositions and \cite{Sela-Duke}, by which we reduce the study of an automorphism of a RAAG to the study of a ``simpler'' automorphism of an (a priori) general special group.

We now make precise the notion of ``length'' of a group element $g\in G$, and ``growth'' of an outer automorphism $\phi\in\Out(G)$. For the former, it is natural to consider the \emph{conjugacy length} $\|g\|$, i.e.\ the minimum word length of an element in the conjugacy class of $g$, fixing once and for all a finite generating set of $G$, whose choice will play no role. If $G$ is the fundamental group of a compact Riemannian manifold, then $\|g\|$ is roughly equal to the length of a shortest closed geodesic in the free homotopy class determined by $g$. We then define the \emph{growth rate} of $g$ under $\phi$ as the equivalence class of the sequence $n\mapsto\|\phi^n(g)\|$ up to bi-Lipschitz equivalence (see \Cref{sub:growth}). More crudely, one can define the \emph{stretch factor} of $\phi$ as
\[ {\rm str}(\phi):=\sup_{g\in G}\,\limsup_{n\ra+\infty}\,\|\phi^n(g)\|^{1/n} . \]

These notions are best understood in three fundamental examples: free abelian groups $\Z^m$, closed surface groups $\pi_1(S)$, and free groups $F_m$. Growth of elements of ${\rm GL}_m(\Z)$ is easily described in terms of the Jordan decomposition, while growth of elements of the mapping class group of $S$ is encoded in their Nielsen--Thurston decomposition \cite{Thurston-BAMS}. Analysing growth of automorphisms of free groups $F_m$ proved to be a more complex problem, which required refined techniques inspired by train tracks on surfaces \cite{BH92,BFH1,BFH2,Bridson-Groves} and was finally solved by Levitt \cite{Levitt-GAFA}. In all three examples, each outer automorphism admits only finitely many growth rates as the element $g$ varies in $G$, and each growth rate is bi-Lipschitz equivalent to a sequence $n\mapsto n^p\lambda^n$ for some $p\in\N$ and $\lambda\geq 1$. The number $\lambda$ happens to be an algebraic integer and a weak Perron number\footnote{An \emph{algebraic integer} is a root of a monic polynomial with integer coefficients. It is a \emph{weak Perron number} if its modulus is not smaller than that of any of its Galois conjugates.}.

Automorphisms of Gromov-hyperbolic groups (and toral relatively hyperbolic groups) have a similar growth behaviour, as was announced by \cite{CHHL}. At the same time, almost nothing is known on growth of automorphisms of non-hyperbolic groups and all classical techniques to approach this problem --- mainly train tracks and JSJ decompositions --- are known to fail or run into serious issues when one abandons the world of (relatively) hyperbolic groups. Even restricting to the rather tame world of RAAGs, almost no growth information seems to be available. 

What is more, Coulon recently constructed finitely generated groups with automorphisms of ``intermediate growth'' \cite{Coulon}: they grow faster than any polynomial and slower than any exponential. It remains unknown if these exotic behaviours can occur for automorphisms of finitely presented groups, but there seems to be no reason to expect the contrary.

Our first result is that the top growth rate of automorphisms of special groups is rather well-behaved. In particular, this applies to all automorphisms of RAAGs. We refer to \Cref{thm:tame} for a more precise statement.

\begin{thmintro}\label{thmintro:general_aut}
    Let $G$ be a virtually special group and let $\phi\in\Out(G)$.
    \begin{enumerate}
        \item The stretch factor ${\rm str}(\phi)$ is a weak Perron number.
        \item If ${\rm str}(\phi)=1$, then $\phi$ grows at most polynomially.
        \item If ${\rm str}(\phi)>1$, then ${\rm str}(\phi)$ is realised on some $H\leq G$ that is a surface group, a free product, or a group with a free abelian direct factor.
    \end{enumerate}
\end{thmintro}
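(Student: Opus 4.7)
The plan is to attack the theorem via a Bestvina--Paulin-style limit construction adapted to the non-acylindrical setting of special groups. First, I would reduce from virtually special to special: passing to a finite-index characteristic subgroup $G_0\leq G$ normalised by a suitable power $\phi^k$, the stretch factors satisfy ${\rm str}(\phi)^k={\rm str}(\phi^k|_{G_0})$, and conjugacy lengths on $G_0$ and on $G$ are bi-Lipschitz equivalent, so one may assume $G$ itself is special. Then, choosing a sequence $(g_n)$ in $G$ realising the stretch factor, I would consider the actions of $G$ on its Cayley graph (or Salvetti-style cube complex) twisted by $\phi^n$ and rescaled by $\|\phi^n(g_n)\|$. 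After extracting a subsequence, these converge to an isometric action of $G$ on an $\R$-tree $T_\infty$ on which (a representative of) $\phi$ acts by a homothety of factor ${\rm str}(\phi)$.

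The action on $T_\infty$ need not be small or acylindrical, but the JSJ decomposition of $G$ over centralisers and accessibility over centralisers (both constructed earlier in the paper) allow one to equivariantly decompose $T_\infty$ into pieces of controlled type, in the spirit of Gaboriau's decomposition into simplicial, axial, and surface (Seifert) components. Since the $\phi$-homothety permutes these pieces, the stretch factor is realised on a single piece, and identifying its stabiliser yields the trichotomy of (3): surface pieces correspond to surface subgroups, axial pieces to subgroups with free-abelian direct factors, and simplicial pieces with sufficiently small edge stabilisers to free-product structures. Part (1) then comes piece by piece: on each type of component the stretch factor admits a Perron--Frobenius interpretation as the leading eigenvalue of a non-negative integer matrix --- via train-track-like measured foliations on surface pieces, the linear action on an abelian direct factor on axial pieces, and the transition matrix of a topological representative of $\phi$ on the simplicial part --- yielding both algebraicity and the weak Perron property.

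For part (2), when ${\rm str}(\phi)=1$ the rescaling procedure cannot produce a nontrivial homothety, so one instead argues directly using the centraliser JSJ: a Kolchin-type statement should show that (a power of) $\phi$ preserves a graph-of-groups decomposition of $G$ and acts on each vertex group by an automorphism whose linear part on any abelian direct factor is unipotent, up to twists along edge centralisers. Iterating yields polynomial growth, with the degree controlled by the depth of the JSJ hierarchy. The main obstacle throughout is the loss of smallness and acylindricity for $T_\infty$: the classical Rips machine breaks down, and the heart of the work lies in exploiting the canonical centraliser JSJ of special groups, together with accessibility, to force a Gaboriau-style decomposition of $T_\infty$ and to control the stabilisers of its pieces finely enough to read off both the trichotomy in (3) and the integer matrices underlying (1).
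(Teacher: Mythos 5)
Your proposal takes a genuinely different route from the paper, and it has several gaps that would be fatal in this non-acylindrical setting.

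\textbf{The one-shot limit tree does not suffice.} Taking a Bestvina--Paulin limit $T_\infty$ equipped with a $\phi$-equivariant homothety and reading off the trichotomy from its pieces is the classical strategy for free groups, surface groups, and hyperbolic groups, but the paper deliberately avoids it. The limit of the rescaled twisted actions is a finite-rank median space (\Cref{prop:degeneration_is_median}), not a single $\R$-tree, and the key structural information lives in the associated $\R$-trees $T^v_\omega$, whose arc-stabilisers are centralisers or co-abelian subgroups thereof (\Cref{thm:10e+}) --- not small. The Rips-machine decomposition into simplicial, axial, surface, and (crucially, omitted from your list) \emph{exotic} components does apply here (\Cref{thm:acc_implies_nice}), but the stabilisers of these pieces are large special groups on which most of the growth can occur, so they do not directly encode the answer; one must recurse into them. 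The paper therefore replaces your one-shot argument by a genuine induction on ambient rank, descending through the enhanced JSJ (\Cref{thm:JSJ+}) and the singular subgroups, with the degenerations serving only as an auxiliary tool (Lemmas~\ref{lem:growth_vs_degeneration}, \ref{lem:beat_vs_degeneration}, \Cref{lem:rigid_bounds_otop}) rather than as the final object of study.

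\textbf{You do not address soundness.} The central technical obstacle of \Cref{sect:tameness} is showing that exponentially-growing outer automorphisms are \emph{sound}: that $\overline{\mc O}_{\rm top}(\varphi)\sim\overline{\mf o}_{\rm top}(\phi)$ (\Cref{defn:tame_aut}, \Cref{prop:alpha_good}). Without this, there is no way to pass growth information between $G$ and the vertex groups of a graph-of-groups decomposition (\Cref{prop:growth_GOGs}), because conjugacy length is not additive along paths in the tree. Nothing in the $\phi$-homothety picture gives this for free; proving it occupies all of \Cref{sub:main_step}, where a relative-hyperbolicity dichotomy (\Cref{lem:tame_auxiliary}) and an auxiliary degeneration rescaled by word length rather than conjugacy length are needed.

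\textbf{The trichotomy does not fall out of the tree pieces as you claim.} An axial piece has a stabiliser $G_U$ with $G_U/K_U$ free abelian; this is an abelian \emph{quotient}, not a direct factor, and turning it into a (virtual) direct factor requires the parabolic/semi-parabolic structure theory of \Cref{sub:special_prelims}, which your proposal does not invoke. Similarly, the edges in the simplicial part of $T_\infty$ have centralisers as stabilisers, which are not small, so there is no free-product structure visible there; in the paper the free-product case of part~(3) comes from the non-one-ended case via Grushko and relative train tracks (\Cref{subsec:free_products}), not from the simplicial part of a limit tree. Exotic components are absent from your analysis and must be treated (\Cref{rmk:excellent->fg_edges}).

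\textbf{Part (2) is not proved by a Kolchin-type statement in the paper.} Instead, it is an immediate consequence of the dichotomy in \Cref{prop:alpha_good}: every automorphism of a special group is either sub-polynomial or docile, and docility forces $\overline{\mf o}_{\rm top}(\phi)$ to be at least exponential, so ${\rm str}(\phi)=1$ rules out the docile case. Your Kolchin-style plan would require developing a unipotent normal form for automorphisms of special groups, which the paper does not do and which is not available.

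In short, your outline has the right ingredients (degenerations, accessibility over centralisers, the centraliser JSJ, and Perron--Frobenius on the building blocks), but the one-shot homothety argument cannot replace the paper's multi-level induction, and the soundness issue --- the heart of the matter --- is simply not addressed.
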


Part~(3) roughly means that the only source of exponential growth are pseudo-Anosovs on compact surfaces, fully irreducible automorphisms of free products, and skewing automorphisms of direct products.

\Cref{thmintro:general_aut} does \emph{not} describe the growth rates of the elements of $G$ under $\phi$ up to bi-Lipschitz equivalence. However, if $G$ is $1$--ended and ${\rm str}(\phi)$ is not realised on any maximal direct product inside of $G$, then we show that the top growth rate of $\phi$ is purely exponential. Namely, for every $g\in G$, the sequence $n\mapsto\|\phi^n(g)\|$ is either bi-Lipschitz equivalent to $n\mapsto\lambda^n$ for $\lambda={\rm str}(\phi)$, or exponentially slower (see part~(3) of \Cref{prop:pure_above_osing}).

We obtain a more complete description of growth rates under stronger assumptions on the automorphism. Specifically, we need $\phi$ to preserve the coarse median structure on $G$ induced by a convex-cocompact embedding into a RAAG. Automorphisms of RAAGs are coarse-median preserving precisely when they are \emph{untwisted} \cite{Fio10a}, a class of automorphisms that is well-studied and important in its own right \cite{CSV}. In addition, \emph{all} automorphisms of right-angled Coxeter groups are coarse-median preserving, and so are all automorphisms of Gromov-hyperbolic groups. See \cite[Theorem~D]{FLS} for further examples.

\begin{thmintro}\label{thmintro:cmp_aut}
    Let $G$ be special. Let $\phi\in\Out(G)$ be coarse-median preserving. 
    \begin{enumerate}
        \setlength\itemsep{.25em}
        \item There exist finitely many weak Perron numbers $\lambda_1,\dots,\lambda_m>1$ and an integer $P\in\N$ such that the following holds. For every $g\in G$, the sequence $n\mapsto\|\phi^n(g)\|$ is either bi-Lipschitz equivalent to $n\mapsto n^p\lambda_i^n$ for some integer $0\leq p\leq P$ and some index $1\leq i\leq m$, or it is eventually bounded by the sequence $n\mapsto n^P$.
        \item For any growth rate $\mf{o}=[n\mapsto n^p\lambda_i^n]$, let $\mc{K}(\mf{o})$ be the family of subgroups of $G$ all of whose elements grow at speed at most $\mf{o}$ under $\phi$. Then there are only finitely many $G$--conjugacy classes of maximal subgroups in $\mc{K}(\mf{o})$, and each of these is quasi-convex\footnote{Quasi-convexity of a subgroup $K\leq G$ is meant in the coarse-median sense and implies that $K$ is undistorted in $G$ and itself special. Equivalently, $K$ is convex-cocompact in $G$.} in $G$.
    \end{enumerate}
\end{thmintro}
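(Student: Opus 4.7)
The plan is to prove \Cref{thmintro:cmp_aut} by constructing an analogue of the Nielsen--Thurston decomposition adapted to $\phi$: a finite, canonical, $\phi$--invariant hierarchy of quasi-convex subgroups of $G$ on which $\phi$ grows at progressively slower rates. Such a decomposition should simultaneously yield the growth dichotomy of~(1) and identify the maximal slow-growth subgroups of~(2).

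The first step is to produce the top exponential rate. Set $\lambda_1:={\rm str}(\phi)$. If $\lambda_1=1$, the whole group is in the polynomial regime and~(1) reduces to a uniform-degree analysis of the induced action on abelianisations along a suitable invariant splitting; the rest of the argument specialises accordingly. If $\lambda_1>1$, \Cref{thmintro:general_aut} identifies $\lambda_1$ as a weak Perron number realised on a surface, free-product, or free-abelian-factor subgroup. Using the canonical JSJ over centralisers announced in the abstract, I would produce, after passing to a power $\phi^k$, an invariant splitting of $G$ that separates the ``$\lambda_1$--active'' vertex groups from a complementary subgroup $K_1\leq G$ on which $\phi^k$ grows strictly slower than $\lambda_1$. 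The coarse-median preserving hypothesis is crucial here, as it rules out skewing of direct product factors and ensures that $\phi$ is genuinely compatible with the splitting. Iterating on $K_1$, and using accessibility of special groups over centralisers to force termination, would then produce a strictly decreasing sequence $\lambda_1>\cdots>\lambda_m>1$ and a nested chain $G\supsetneq K_1\supsetneq\cdots\supsetneq K_m$, where $\phi^k$ acts polynomially on $K_m$.

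With the hierarchy in place, the growth of $g\in G$ is determined by the smallest $K_i$ containing a conjugate of $g$: either $g$ projects nontrivially to a $\lambda_i$--active piece, in which case \Cref{thmintro:general_aut} together with the classical Nielsen--Thurston analysis on the relevant surface, free, or free-abelian factor dictates a growth rate of the form $n\mapsto n^p\lambda_i^n$; or $g$ lies in $K_{i+1}$ and induction applies. A uniform bound $P$ on the polynomial degree follows from bounding both the depth of the hierarchy and the polynomial degree on each level, yielding~(1). For~(2), given $\mf{o}=[n\mapsto n^p\lambda_i^n]$, maximal subgroups in $\mc{K}(\mf{o})$ correspond to conjugates of specific collections of vertex groups of the $\phi^k$--invariant JSJ at level $i$: canonicity gives finiteness of $G$--conjugacy classes, and convex-cocompactness of JSJ vertex groups in the coarse-median sense gives quasi-convexity in $G$.

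The main obstacles are largely constructive. The first is building a JSJ over centralisers that is simultaneously canonical (hence $\phi^k$--invariant after passing to a power), compatible with the growth filtration, and with convex-cocompact vertex groups; this relies both on the announced canonical JSJ and on delicate coarse-median arguments. The second is ensuring that the iterative descent terminates in finitely many steps, which is precisely the accessibility result for special groups over centralisers announced in the abstract. A subtler difficulty will be obtaining the uniform polynomial bound $P$: polynomial factors from nested levels of the hierarchy must be shown not to accumulate beyond a controlled amount, requiring careful analysis of how $\phi$ interacts with the twist automorphisms along the edges of the JSJ splittings.
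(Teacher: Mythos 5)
Your high-level vision — a finite, $\phi$--invariant hierarchy of quasi-convex subgroups on which $\phi$ grows progressively slower, with $\mc{K}(\mf{o})$ read off from the levels — is indeed what the paper does. But the mechanism you propose for extracting the slow-growth subgroups is not the one that works, and this is where the genuine gap lies. You want the canonical JSJ over centralisers to ``separate the $\lambda_1$--active vertex groups from a complementary subgroup $K_1$'' on which $\phi^k$ grows slower, so that the maximal elements of $\mc{K}(\mf{o})$ appear as (collections of) vertex groups. This is not how things are: the maximal slow-growth subgroups are, in general, point-stabilisers of the $\R$--trees produced by the Bestvina--Paulin construction (equivalently, fixed-point sets in the limiting median space), not vertex groups of any JSJ splitting. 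Example~\ref{ex:fix_not_raag} illustrates this: for a linearly-growing untwisted automorphism of a RAAG $A_\G$, the unique maximal slow-growth subgroup is $\mathrm{Fix}(\varphi)$, which need not be a RAAG and certainly need not be a vertex group of any natural invariant splitting. So the JSJ is not the right tool here — it enters the proof elsewhere (in \Cref{prop:JSJ_osing} and its consequences, to control rates \emph{above} the singular rate and to set up the induction), but not to identify $\mc{K}(\mf{o})$ directly.

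Relatedly, you misidentify where the coarse-median preserving hypothesis does its work. You say it ``rules out skewing of direct product factors and ensures $\phi$ is compatible with the splitting''; the crucial point is instead \Cref{thm:10e+}(3): for coarse-median preserving automorphisms, the degeneration $\R$--trees have no perverse lines, so their arc-stabilisers are \emph{actual} centralisers rather than merely co-abelian subgroups of centralisers (the latter can be infinitely generated, which is exactly what goes wrong in \Cref{ex:positive_ifg}). Combined with accessibility over centralisers (\Cref{thmintro:acc}) and the $\R$--tree structure theory of \Cref{app:acc_R_trees}, this is what makes point-stabilisers of the degenerations finitely generated and convex-cocompact (\Cref{lem:B_om}), which is the heart of parts~(2) and~(4) of \Cref{thm:cmp_main}. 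The termination/finiteness of the hierarchy is then obtained by a double induction on ambient rank and on $\#\mf{g}_{\rm sing}(\phi)$ (\Cref{prop:cmp_non-uniform}), not by a direct accessibility argument terminating a nested chain of $K_i$'s. Without the $\R$--tree mechanism and the specific role of coarse-median preservation, the step ``maximal subgroups in $\mc{K}(\mf{o})$ are quasi-convex'' has no proof.
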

 
\Cref{thmintro:cmp_aut} extends to many groups that are just \emph{virtually} special, including all right-angled Coxeter groups; see \Cref{rmk:fi_cmp_fix}. Part~(2) of the theorem can be regarded as an analogue of the Nielsen--Thurston decomposition associated to a surface homeomorphism. \Cref{thmintro:cmp_aut}(2) is easily seen to fail for \emph{general} automorphisms of special groups \cite[Example~3.4]{Fio11a}. 

We should clarify that we do \emph{not} show that sub-polynomial growth rates are \emph{exactly} polynomial in \Cref{thmintro:cmp_aut} and, therefore, we leave open the rather unlikely possibility that there are infinitely many of the latter. 

Part of the proofs of Theorems~\ref{thmintro:general_aut} and~\ref{thmintro:cmp_aut} is based on procedures that relate the group $\Out(G)$ to the groups $\Out(P)$, where $P\leq G$ are lower-complexity special groups (\Cref{prop:product_embedding}). One important consequence is the following. When $G$ is a RAAG, this was shown in \cite{BGH22,CV-BLMS,Horbez14} (see \cite{Ham-exact,Kida,CV86,BFH1,BFH2} when $G$ is free or surface).

\begin{thmintro}\label{thmintro:exact}
    For any virtually special group $G$:
    \begin{enumerate}
        \item $\Out(G)$ is boundary amenable;
        \item $\Out(G)$ is virtually torsion-free with finite cohomological dimension;
        \item $\Out(G)$ satisfies the Tits alternative and contains no Baumslag--So\-litar subgroups ${\rm BS}(m,n)$ with $|m|\neq|n|$.
    \end{enumerate}
\end{thmintro}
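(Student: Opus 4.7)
My plan is to argue by induction on the complexity of $G$, with the base case being the right-angled Artin groups, for which the three properties are already established in the literature cited by the author (boundary amenability by \cite{BGH22}, finite vcd and virtual torsion-freeness by \cite{CV-BLMS}, and the Tits alternative by \cite{Horbez14}). After replacing $G$ by a finite-index subgroup, I may assume $G$ is special. All three properties are stable under passage to finite-index subgroups and supergroups, so it suffices to work with a finite-index subgroup of $\Out(G)$.

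The inductive step would rely on the \emph{product embedding} Proposition~\ref{prop:product_embedding} alluded to in the introduction, which relates $\Out(G)$ to the outer automorphism groups of lower-complexity special subgroups $P_1, \dots, P_k \leq G$ arising from the canonical JSJ decomposition over centralisers that the author constructs. Concretely, I would aim to produce a short exact sequence
\[ 1 \to K \to \Out_0(G) \to \prod_{i=1}^k \Out(P_i) \]
where $\Out_0(G)$ is a finite-index subgroup of $\Out(G)$ preserving the canonical JSJ decomposition (and fixing the set of vertex groups), and $K$ consists of automorphisms that restrict to an inner automorphism on each vertex group. Such a $K$ is a finite extension of an abelian group of twists (generated by twists by centralisers along edge groups and partial conjugations on free-product factors), so $K$ is boundary amenable, has finite cohomological dimension, and satisfies the Tits alternative without Baumslag--Solitar pathologies. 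Since the vertex groups $P_i$ are themselves virtually special of strictly smaller complexity, the inductive hypothesis applies to each $\Out(P_i)$.

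All three properties are closed under extensions with well-behaved kernel and quotient: boundary amenability is closed under group extensions and under passing to subgroups; finite cohomological dimension passes through extensions provided both kernel and quotient have finite cohomological dimension (and virtual torsion-freeness is inherited analogously after a finite-index refinement); the Tits alternative, augmented with the no-exotic-$\mathrm{BS}(m,n)$ condition, is also preserved by extensions. Combining these closure properties with the short exact sequence and the inductive step yields all three conclusions for $\Out(G)$.

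The main obstacle will be step (b): setting up the induction correctly, i.e.\ defining a complexity on virtually special groups that strictly decreases when one passes from $G$ to the JSJ vertex groups $P_i$, and verifying that the canonical JSJ decomposition over centralisers has enough functoriality to be preserved (up to finite index) by $\Out(G)$. One needs to handle the case where $G$ itself has no nontrivial JSJ decomposition over centralisers separately; there the group is either a free product, a surface group, a group with a free abelian direct factor, or embeds as a convex-cocompact subgroup in a RAAG in a ``rigid'' way, and in all of these situations one can appeal to the RAAG base case or to classical results. A secondary technical point is ensuring that the abelian twist group $K$ is finitely generated (which requires the accessibility over centralisers asserted in the abstract), so that its cohomological dimension is finite and the Tits alternative applies.
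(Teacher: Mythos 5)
Your overall strategy --- induction on complexity, the complexity-reduction homomorphism \`\`$\rho\colon\Out_0(G)\to\prod_i\Out(P_i)$'' from the canonical JSJ decomposition, and stability of the three properties under extensions --- matches the paper's route. However, there are several concrete gaps.

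First, your opening reduction \`\`after replacing $G$ by a finite-index subgroup, I may assume $G$ is special'' is not justified by the stability properties you cite. If $L\leq G$ is a finite-index special subgroup, the groups $\Out(L)$ and $\Out(G)$ are \emph{not} commensurable --- neither is a finite-index subgroup or overgroup of the other. Deducing the statement for $\Out(G)$ from the statement for $\Out(L)$ is the content of the paper's \Cref{lem:Out(fi)_new}, which is genuinely technical: one studies the restriction map $\Aut_L(G)\to\Out(L)$ and analyses the group $K_L$ of automorphisms of $G$ fixing $L$ pointwise, which can be infinite (e.g.\ the Klein bottle group). This is a non-negligible part of the argument that your proposal assumes away.

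Second, your description of the kernel $K$ of $\rho$ as \`\`a finite extension of an abelian group of twists'' is wrong. Twists around a single edge $e$ by elements of the centraliser $Z_V(G_e)$ form a subgroup isomorphic to a (virtual) direct factor of that centraliser, which is a special group and typically non-abelian. The paper shows $K$ is \emph{special}, not virtually abelian. This happens not to sink your argument, because special groups also have all three properties; but the characterisation of $K$ as you stated it is false and the reason it has the properties is different from what you claim. You also remark that finite generation of $K$ requires accessibility over centralisers, but in the paper's proof the relevant point is that $K$ has a finite classifying space, which follows directly from $K$ being special; accessibility enters elsewhere (in constructing the invariant JSJ tree and showing $\ker\rho$ has an appropriate structure via rigidity).

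Third, you do not adequately handle the cases outside the inductive step. When $G$ is not $1$--ended, one must pass to a Grushko decomposition and use specific structural results for $\Out$ of a free product (namely the exactness results of Bestvina--Guirardel--Horbez, the Tits-alternative transfer of Horbez, and the vcd result of Guirardel--Levitt); calling this a \`\`RAAG base case'' is not accurate. When $G$ virtually splits as a direct product, there is a genuinely separate argument passing through the abelianisation and ${\rm GL}_m(\Z)$ that your proposal elides. Finally, the paper's base cases are free and surface groups (where the properties are classical) rather than RAAGs; the RAAG case is itself covered by the induction.
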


These are some of the rare properties known to hold for all outer automorphism groups of virtually special groups; the only other one seems to be residual finiteness \cite{AMS}. Boundary amenability also implies that $\Out(G)$ satisfies the Novikov conjecture on higher signatures \cite{Higson,Baum-Connes-Higson}.

\smallskip
{\bf On the proof of Theorems~\ref{thmintro:general_aut} and~\ref{thmintro:cmp_aut}.}
The main difficulty to overcome is that classical JSJ theory is not well-suited to describing automorphisms of non-hyperbolic groups. Any finitely presented group has a JSJ decomposition over cyclic subgroups \cite{RS97}, and more generally over slender subgroups \cite{DS99,DS00,FP06,GL-JSJ}, but there are two key differences.

First, JSJ decompositions of hyperbolic groups are \emph{canonical}: they are graph-of-groups splittings preserved by all automorphisms of $G$ \cite{Sela-hypJSJ,Bow-JSJ}. Canonical JSJs are available for more general groups $G$ (the \emph{trees of cylinders} of \cite{GL-cyl}), but always under the assumption that abelian subgroups of $G$ do not interact with each other in complicated ways (the typical example is that of a relatively hyperbolic group). By contrast, many special groups (e.g.\ freely indecomposable RAAGs) are \emph{thick} in the sense of \cite{Behrstock-Drutu-Mosher}, which is a strong negation of this kind of property.

Secondly, cyclic splittings of hyperbolic groups encode all automorphisms: if $G$ is a torsion-free, $1$--ended hyperbolic group that does not split over $\Z$, then $\Out(G)$ is finite \cite{BF-stable}. This property again fails badly for special groups: there are RAAGs $A_{\G}$ that do not split over any abelian subgroups, but still have infinite $\Out(A_{\G})$ \cite[Figure~1]{Fio10a}.

One fundamental clue as to how to overcome these two issues comes from our previous work in \cite{Fio10e}: if $G$ is special and $\Out(G)$ is infinite, then $G$ splits over a \emph{centraliser} or a co-abelian subgroup thereof. Centralisers are not slender or small, and they have a complicated intersection pattern in general, but they also form a rather rigid collection of subgroups of $G$.

One of the goals of this article --- and its most important new tool --- is the construction of a \emph{canonical} JSJ decomposition over centralisers, for any special group $G$ (\Cref{thm:JSJ+}). To describe this, let $\mc{S}(G)$ denote the collection of maximal subgroups of $G$ that virtually split as direct products; we refer to these as the \emph{singular subgroups} of $G$ (\Cref{sub:singular}).

\begin{thmintro}\label{thmintro:JSJ}
    If $G$ is special and $1$--ended, then $G$ has an $\Aut(G)$--invariant graph-of-groups splitting (possibly a single vertex) such that:
    \begin{enumerate}
        \item each edge group is either a centraliser, or a cyclic subgroup;
        \item each subgroup in $\mc{S}(G)$ is conjugate into a vertex group;
        \item each vertex group is of one of the following two kinds:
            \begin{enumerate}
                \item[(a)] a quadratically hanging subgroup with trivial fibre;
                \item[(b)] a quasi-convex subgroup of $G$ that is elliptic in all splittings of $G$ over centralisers, relative to $\mc{S}(G)$.
            \end{enumerate}
    \end{enumerate}
\end{thmintro}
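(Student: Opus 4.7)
My plan is to construct a JSJ tree over centralisers (relative to singular subgroups), analyse its flexible vertices as quadratically hanging with trivial fibre, and then canonicalise via a tree-of-cylinders construction. First, I would consider the class $\mathcal{Z}$ of $G$--trees whose edge stabilisers are centralisers of non-trivial elements of $G$ and in which every subgroup of $\mathcal{S}(G)$ is elliptic. Using the accessibility over centralisers announced earlier in the paper, this class admits a JSJ deformation space in the sense of Guirardel--Levitt, and in particular a JSJ tree $T$ whose flexible vertex groups are characterised by the usual universal property.

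Next, for a flexible vertex stabiliser $V$, one extracts a non-trivial action on an $\mathbb{R}$--tree with centraliser arc stabilisers, dual to an incompatible family of splittings in the deformation space. A Rips-style analysis of this action---exploiting that centralisers in special groups form a lattice under intersection, are themselves lower-complexity special groups, and are strongly constrained by the cubulation---should reduce the possibilities to surface-type pieces: axial and abelian pieces are ruled out by the ellipticity of $\mathcal{S}(G)$, and Levitt pieces correspond to further centraliser splittings inconsistent with $V$ being elliptic in $T$. What remains forces $V$ to be quadratically hanging with trivial fibre. This step is the main obstacle, since the $\mathbb{R}$--tree action is neither small nor acylindrical, so the classical Rips machine does not apply directly; I expect one must genuinely exploit the lower-complexity special structure of centralisers, running an induction tamed by their rigidity under intersection and by the ellipticity constraint imposed by $\mathcal{S}(G)$.

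Finally, to obtain an $\Aut(G)$--invariant splitting I would apply the Guirardel--Levitt tree-of-cylinders construction with the automorphism-invariant equivalence relation ``have commensurable centralisers''. In the resulting canonical graph of groups, the stabiliser of a cylinder is the maximal centraliser associated to its equivalence class, while the stabiliser of an edge between two adjacent cylinders is the intersection of two non-commensurable maximal centralisers---which in a special group is forced to be cyclic. QH vertices persist through the collapse, giving (3a); universally elliptic vertices become vertices of type (3b); and the singular subgroups remain elliptic throughout, giving (2).
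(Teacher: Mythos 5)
Your proposal identifies the right ingredients (accessibility over centralisers, a Rips analysis, a canonicalisation step) but goes astray in the canonicalisation step, which is where the paper's main new idea lives.

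The paper explicitly rules out the tree-of-cylinders approach in its introduction: the tree of cylinders requires the equivalence classes of edge groups to form isolated families (so that two edge groups in different classes have ``small'' intersection), which is the regime of relatively hyperbolic groups. Many $1$--ended special groups (e.g.\ freely indecomposable RAAGs) are \emph{thick}, and their centralisers have a genuinely complicated intersection pattern, so no such isolated-family structure exists. Concretely, your assertion that ``the intersection of two non-commensurable maximal centralisers is forced to be cyclic'' is false already in a RAAG: for non-adjacent generators $a,b$ of $A_\Gamma$, the centralisers $Z(a)=\langle a\rangle\times A_{\mathrm{lk}(a)}$ and $Z(b)=\langle b\rangle\times A_{\mathrm{lk}(b)}$ share the parabolic $A_{\mathrm{lk}(a)\cap\mathrm{lk}(b)}$, which can have arbitrarily large rank. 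The paper instead develops a new construction of canonical splittings (Theorem~\ref{thm:invariant_splittings}): one stratifies edge stabilisers by inclusion, builds a nested sequence of forests $\mathcal{G}_1,\dots,\mathcal{G}_k$ from the incidence pattern of the fixed subtrees, shows the construction is purely algebraic (hence $\Aut(G)$--equivariant), and assembles these into an invariant tree. This replaces the tree of cylinders precisely because it tolerates overlapping edge-stabiliser families.

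The order of operations also differs from what you sketch. The paper does \emph{not} extract QH pieces as the flexible vertices of a JSJ over centralisers. Rather, it first produces an $\Aut(G)$--invariant splitting over $\mathcal{Z}_s(G)$ (non-cyclic-contracting centralisers) relative to $\mathcal{S}(G)$, whose vertex groups are rigid (Theorem~\ref{thm:Z_s-splitting}). It then runs the classical Guirardel--Levitt cyclic JSJ \emph{inside each rigid vertex group} relative to the incident edge groups (Proposition~\ref{prop:actual_JSJ}); the QH subgroups with trivial fibre appear only at this second stage. Finally, your Rips-machine step as stated is underspecified in exactly the way the paper flags as dangerous: the $\mathbb{R}$--tree actions are neither small nor acylindrical, and the needed accessibility input is Theorem~\ref{thmintro:acc}, which is new and must be proved (via pro--$p$ separability of parabolics), not assumed.
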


We think of type~(b) vertex groups $V\leq G$ as the ``rigid'' ones. Unlike for hyperbolic groups, the (virtual) restriction map $\Out(G)\ra\Out(V)$ can have infinite image even when $V$ is rigid. What is important is that any automorphism of a rigid group $V$ has its top growth rate realised on a special group of lower ``complexity'' (usually, a singular subgroup of $V$), which enables us to study growth by induction on complexity. The base step amounts to surface groups and free products, which can be dealt with using train tracks. Modulo technicalities, this is the core of the proof of \Cref{thmintro:general_aut}.

The enhanced JSJ decomposition in \Cref{thmintro:JSJ} is constructed in two steps. First, one finds an $\Out(G)$--invariant \emph{deformation space} of splittings over centralisers (in the sense of \cite{Forester,GL-JSJ}), using an accessibility result. Then, one finds an $\Out(G)$--invariant splitting in the deformation space, using a new construction of canonical splittings (\Cref{thm:invariant_splittings}); the latter is inspired by \cite{GL-cyl}, but can be applied under weaker hypotheses, allowing complicated intersection patterns of abelian subgroups. The accessibility result needed for the first step does not follow from classical forms of accessibility \cite{Dunwoody-acc,BF-complexity,Sela-acyl-acc}, as we are interested in non-acylindrical splittings over non-small subgroups. We instead prove the following:

\begin{thmintro}\label{thmintro:acc}
    Any special group $G$ admits an integer $N=N(G)$ such that any reduced graph-of-groups splitting of $G$ over centralisers has $\leq N$ edges.
\end{thmintro}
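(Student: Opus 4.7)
The plan is to argue by induction on a complexity function $c(G)\in\N$ on special groups --- for instance, the dimension of a cocompact convex subcomplex $X$ of the Salvetti cube complex of a RAAG $A_\G$ into which $G$ convex-cocompactly embeds, or the number of vertices of $\G$. Standard accessibility theorems cannot be used directly: Dunwoody and Linnell require finite edge groups, Bestvina--Feighn and its descendants require small or slender edge groups, and Sela's acylindrical accessibility requires acylindricity. Centralisers in a special group can contain large RAAG subgroups, and splittings of $G$ over centralisers are typically non-acylindrical (two parallel copies of a centraliser may both fix a segment of the Bass--Serre tree), so a genuinely new argument is needed.

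The structural input I would lean on is twofold: (i) any proper centraliser $C\lneq G$ is itself special and has strictly smaller complexity $c(C)<c(G)$, since it stabilises a proper convex subcomplex of $X$ (a parallel set of an axis); and (ii) at each complexity level, centralisers of $G$ fall into only finitely many $G$--conjugacy classes, inherited from the finiteness of parabolic types in the ambient RAAG and from the closure of centralisers under intersection.

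Given a reduced graph-of-groups splitting $\mathcal{G}$ of $G$ over centralisers with Bass--Serre tree $T$, I would build a $G$--equivariant map $X\to T$ and look at the preimages of midpoints of edges as a disjoint $G$--invariant family of convex subcomplexes of $X$. Each vertex group $G_v$ acts cocompactly on a convex subcomplex of $X$, is therefore convex-cocompact in $A_\G$ and special with complexity at most $c(G)$; and by reducedness, each edge group at $v$ is a proper subgroup of $G_v$, hence a centraliser of $G_v$ with strictly smaller complexity than $G_v$. Combined with the inductive hypothesis applied to vertex groups after collapsing top-complexity edges, this handles every edge except the top-complexity ones. Those are bounded by a counting argument in the compact quotient $G\backslash X$: they correspond to codimension-one essential convex subcomplexes whose stabilisers are centralisers of near-maximal rank, and the number of $G$--orbits of such incompressible subcomplexes is bounded by the cubical volume of $G\backslash X$, in the spirit of Bestvina--Feighn tracks but carried out at the top-dimensional level, where centralisers behave effectively as slender subgroups with respect to the cubical structure.

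The hardest step will be making this top-complexity count precise. The obstruction is that two distinct reduced edges can both carry centralisers of the same (top) complexity, even conjugate ones, without the splitting being collapsible; Bestvina--Feighn is thus not available as a black box, and acylindrical accessibility fails. My plan is to leverage closure of centralisers under intersection, together with the fact that two top-complexity centralisers of $G$ either coincide or generate a subgroup of strictly larger complexity, to bound the number of distinct parallel sets of top-complexity centralisers appearing in $X$ per $G$--orbit; this turns the accessibility bound into a finiteness statement about ``maximal'' convex subcomplexes in $G\backslash X$, which is tractable by a volume argument.
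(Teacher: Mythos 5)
Your proposal takes a genuinely different route from the paper's, but as written it has two gaps that look essential to me.

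First, your structural input (ii) is false: centralisers of $G$ of a fixed complexity do \emph{not} fall into finitely many $G$--conjugacy classes. Already in $F_2$, every maximal cyclic subgroup is the centraliser of any of its nontrivial elements, and there are infinitely many conjugacy classes of these, all of the same ``complexity''. The finiteness the paper actually uses is weaker: a centraliser $Z$ is \emph{$G$-semi-parabolic}, meaning it has a canonical normal $G$-parabolic subgroup $P\lhd Z$ with $Z/P$ abelian (the ``parabolic part''), and it is the $G$-parabolic subgroups that fall into finitely many conjugacy classes (\Cref{lem:parabolics_cofinite}). Grouping edges by the conjugacy class of the parabolic part of the edge group is what makes the reduction work; grouping them by conjugacy class of the edge group itself cannot. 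A related issue is that, for a vertex group $V$ and an edge $e\sq T$ incident to it, the edge group $G_e\in\mc{Z}(G)$ need not lie in $\mc{Z}(V)$, so your induction over ``centralisers of the vertex group'' does not close as stated; the paper sidesteps this by working with the larger, more stable class of semi-parabolics.

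Second, and more seriously, the step you identify as the hardest one --- bounding the number of ``top-complexity'' edges --- is left at the level of a heuristic. You correctly observe that Bestvina--Feighn cannot be used as a black box (edge groups are not slender) and that acylindrical accessibility fails, and then propose a volume/track count where ``centralisers behave effectively as slender subgroups with respect to the cubical structure''. But the obstruction you name is exactly the one that defeats the naive track argument: many parallel, non-collapsible copies of the same centraliser can appear, and there is no cubical-volume bound that limits them without an additional idea. Your suggested mechanism (``two top-complexity centralisers either coincide or generate a subgroup of strictly larger complexity'') is not established, and even if true, it does not obviously translate into a bound on the number of $G$-orbits of parallel sets in $X$. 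The paper's replacement for this step is quite different in flavour: for a fixed parabolic part $P$, it looks at the subtree $\Fix(P)\sq T$, observes that the action $N_G(P)/P\acts\Fix(P)$ has \emph{abelian} edge groups (so Bestvina--Feighn over abelian subgroups bounds the reduced minimal part), and then controls the failure of minimality/reducedness using $p$-separability of $N_G(P)$ in $G$ --- available because parabolic subgroups are retracts of residually torsion-free nilpotent groups --- via a homomorphism onto a free product of $\Z/p\Z$'s whose rank is bounded by $b_1(G;\mathbb{F}_p)$ (\Cref{prop:acc_from_transverse_cover}, \Cref{lem:parabolic_p_sep}, \Cref{lem:proper_normal_closure}). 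Nothing in your proposal plays the role of this $p$-separability input, and I do not see how the volume heuristic substitutes for it.
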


\Cref{thmintro:cmp_aut} and its complete description of growth rates for coarse-median preserving automorphisms require more work. The strategy is to consider the maximal subgroups of $G$ whose elements grow at below-top speed under $\phi\in\Out(G)$. If these are special and invariant under a power of $\phi$, then one can restrict $\phi$ to these subgroups, apply \Cref{thmintro:general_aut} and repeat. If this procedure eventually terminates, one obtains a description of all growth rates of $\phi$. A similar strategy was used in \cite{Sela-Duke} to construct analogues of Nielsen--Thurston decompositions for automorphisms of free groups.

Unfortunately, these maximal ``slow'' subgroups of $G$ are not even finitely generated for a general $\phi\in\Out(G)$ \cite[Example~3.4]{Fio11a}. This is where the coarse-median preserving hypothesis comes in: it guarantees that the maximal slow subgroups are quasi-convex in $G$. More precisely, given $G$ special and $\phi\in\Out(G)$ of infinite order, we can use the Bestvina--Paulin construction \cite{Bes88,Pau91} to produce $\R$--trees $G\acts T$ whose arc-stabilisers are (possibly infinitely generated) co-abelian subgroups of centralisers \cite{Fio10e}. When $\phi$ is coarse-median preserving, arc-stabilisers are actually genuine centralisers, which implies that the point-stabilisers of $T$ are quasi-convex (using \Cref{thmintro:acc} and \cite{Fio11b}). The maximal slow subgroups of $G$ are finite intersections of such point-stabilisers, and hence well-behaved.

Even if one is only interested in untwisted automorphisms of RAAGs, the proof of \Cref{thmintro:cmp_aut} forces us to immediately abandon this world: the maximal slow subgroups have no reason to be RAAGs in general (\Cref{ex:fix_not_raag}). One then has to understand automorphisms of more general special groups in order to prove \Cref{thmintro:cmp_aut} in this case. This should further motivate the level of generality pursued in this article: it serves the concrete purpose of understanding automorphisms of very explicit groups such as RAAGs.

Finally, we mention that versions of Rips--Sela theory for actions of special groups on higher-dimensional median spaces are available \cite{CRK1,CRK2}, though we managed to only rely on actions on $\R$--trees for this article.

\smallskip
{\bf Structure of paper.}
\Cref{sect:accessibility} is concerned with accessibility over centralisers; \Cref{thmintro:acc} is proved there as \Cref{thm:accessible}. \Cref{sect:JSJ} is devoted to the construction of the enhanced JSJ decomposition discussed in \Cref{thmintro:JSJ} (see \Cref{thm:JSJ+}). Then, \Cref{sect:exact} uses the JSJ decomposition to prove \Cref{thmintro:exact} (\Cref{cor:exact}). Finally, \Cref{sect:tameness} contains the proof of \Cref{thmintro:general_aut} (\Cref{thm:tame}), and \Cref{sect:cmp} that of \Cref{thmintro:cmp_aut} (\Cref{thm:cmp_main}).

\smallskip
{\bf Acknowledgements.} 
I am indebted to Ric Wade for suggesting that the proof of boundary amenability for $\Out(G)$ might also show finiteness of vcd, and to the referee for their careful reading and many insightful comments. I thank Adrien Abgrall, Martin Bridson, Daniel Groves, Vincent Guirardel, Camille Horbez, Ashot Minasyan, Sam Shepherd and Zlil Sela for useful suggestions, and the authors of \cite{CHHL} for sharing with me a preliminary version of their work after the present article was completed.


\section{Preliminaries}\label{sect:prelims}

\subsection{Special groups}\label{sub:special_prelims}

We say that a group $G$ is \emph{special}
if it is the fundamental group of a compact special cube complex. Equivalently, $G$ is a convex-cocompact subgroup of a RAAG $A_{\G}$, with respect to the action on the universal cover of the Salvetti complex $\X_{\G}$ \cite{HW08}; this means that $G$ leaves invariant a convex subcomplex of $\X_{\G}$ and acts cocompactly on it. 

\begin{rmk}\label{rmk:extending_to_virtually_special}
    Though Theorems~\ref{thmintro:general_aut} and~\ref{thmintro:exact} are stated for \emph{virtually} special groups, we will mostly work with actual special groups in the main body of the paper. One can reduce to this case by considering a finite-index, characteristic, special subgroup $G_0$ of any virtually special group $G$. There is then a uniform $k\geq 1$ such that each $g\in G$ has $g^k\in G_0$. However, some care is required when comparing the growth rates of $g$ and $g^k$, as we might have $|g|\gg |g^k|$ for elements of a general virtually special group. 
    The extension of Theorem~\ref{thmintro:general_aut} to \emph{virtually} special groups is taken care of in \Cref{lem:fi_fix}, and we discuss extensions of \Cref{thmintro:cmp_aut} in \Cref{rmk:fi_cmp_fix}.
\end{rmk}

Throughout the article, we fix a convex-cocompact embedding $\iota\colon G\hookrightarrow A_{\G}$. It is important to remember that many notions will not be intrinsic to $G$, and will rather depend on the choice of $\G$ and $\iota$. In particular, such notions will not be preserved by automorphisms of $G$ (and this will not matter). Our results always hold for \emph{all} choices of the convex-cocompact embedding $\iota$.

Occasionally, we will consider a \emph{coarse median structure} on $G$. This is an equivalence class of maps $\mu\colon G^3\ra G$ pairwise at bounded distance from each other with respect to a word metric (see \cite{Bow13,Fio10a}). Our coarse median structures will always be induced by the median operator on $\X_{\G}$, pulled back via the chosen convex-cocompact embedding $\iota\colon G\hookrightarrow A_{\G}$. Note that different choices of $\iota$ will induce different coarse median structures on $G$ (see \cite{FLS} for examples). An element of $\Aut(G)$ is \emph{coarse-median preserving} if it preserves the chosen coarse median structure on $G$; such automorphisms form a subgroup containing all inner automorphisms. In particular, it makes sense to speak of coarse-median preserving elements of $\Out(G)$.

An element $g\in G\setminus\{1\}$ is called \emph{contracting} if its centraliser $Z_G(g)$ is cyclic. With respect to any geometric $G$--action on a CAT(0) space, these are precisely the elements acting as rank--$1$ isometries \cite{Bestvina-Fujiwara,Charney-Sultan,Sisto-Z}.

We now discuss four classes of subgroups of any special group $G$ that play an important role in the paper. They relate to each other as follows:
\begin{center}
    \begin{tikzcd}
    \text{centraliser} \arrow[Rightarrow,rd,shorten >=.1ex, shorten <=.4ex] & & \\[-5ex]
    & \text{$G$--semi-parabolic} \arrow[Rightarrow,r] & \text{convex-cocompact} . \\[-6ex]
    \text{$G$--parabolic} \arrow[Rightarrow,ru] & &
    \end{tikzcd}
\end{center}
With the exception of centralisers, these classes of subgroups depend on the choice of the embedding $G\hookrightarrow A_{\G}$ and they are not $\Aut(G)$--invariant.

\subsubsection{Centralisers}\label{subsub:centralisers}
We write $Z_G(A):=\{g\in G\mid ga=ag,\ \forall a\in A\}$ for all subsets $A\sq G$.
A subgroup $H\leq G$ is a \emph{centraliser} if $H=Z_G(A)$ for some $A\sq G$; equivalently, we have $H=Z_GZ_G(H)$. We denote by $\mc{Z}(G)$ the family of all centralisers in $G$. Note that $\mc{Z}(G)$ is $\Aut(G)$--invariant and closed under intersections.

A subgroup $H\leq G$ is \emph{root-closed} if, whenever $g^n\in H$ for some $g\in G$ and $n\geq 2$, we actually have $g\in H$. Centralisers in special groups are always root-closed, as this is true in RAAGs \cite[Section~III]{Servatius}. Other properties of centralisers in special groups follow from the fact that they are convex-cocompact and $G$--semi-parabolic, two classes that we discuss below.

\subsubsection{$G$--parabolic subgroups}\label{subsub:G-parabolics}
A subgroup of a RAAG $A_{\G}$ is \emph{parabolic} if it is conjugate to $A_{\Delta}$ for some $\Delta\sq\G$. Consequently, if $G\leq A_{\G}$ is convex-cocompact, we say that a subgroup $H\leq G$ is \emph{$G$--parabolic} if $H=G\cap P$ for a parabolic subgroup $P\leq A_{\G}$. We emphasise that the trivial group $\{1\}$ and the whole group $G$ are $G$--parabolic subgroups.

With a notational abuse, we denote by $\mc{P}(G)$ the family of all $G$--parabolic subgroups of $G$, keeping in mind that this family also depends on our chosen embedding $G\hookrightarrow A_{\G}$. In particular, $\mc{P}(G)$ is not $\Aut(G)$--invariant. Nevertheless, $\mc{P}(G)$ is closed under intersections and all $G$--parabolic subgroups are root-closed. The main utility of $G$--parabolic subgroups comes from the following observation; see \cite[Corollary~3.21]{Fio10e} for a proof.

\begin{lem}\label{lem:parabolics_cofinite}
    The family $\mc{P}(G)$ is finite up to $G$--conjugacy.
\end{lem}

\subsubsection{Convex-cocompact subgroups}\label{subsub:cc}

A subgroup $H\leq G$ is \emph{convex-cocompact}, with respect to the chosen embedding $\iota\colon G\hookrightarrow A_{\G}$, if the universal cover of the Salvetti complex $\X_{\G}$ admits an $H$--invariant convex subcomplex on which $H$ acts cocompactly. Equivalently, $H$ acts cocompactly on its essential core within $\X_{\G}$ (in the sense of \cite{CS11}) or, again, $H$ is quasi-convex with respect to the coarse-median structure on $G$ induced by $\iota$; see \cite[Lemma~3.2]{Fio10a}.

The following results collect basic properties of convex-cocompactness.

\begin{lem}\label{lem:cc_basics}
    Let $G\leq A_{\G}$ and $H,K\leq G$ all be convex-cocompact.
    \begin{enumerate}
        \item The intersection $H\cap K$ is convex-cocompact.
        \item The normaliser $N_G(H)$ is convex-cocompact and virtually splits as $H\x P$ for some $P\in\mc{P}(G)$.
        \item If $g\in G$ satisfies $gHg^{-1}\leq H$, then $gHg^{-1}=H$.
        \item We have $\{g\in G \mid gHg^{-1}\leq K\}=K\cdot F\cdot N_G(H)$ with $F\sq G$ finite.
        \item The set $\{H\cap gKg^{-1}\mid g\in G\}$ is finite up to $H$--conjugacy.
        \item If $H\leq K$ and $K=K_1\x\dots\x K_m$ with all $K_i$ convex-cocompact, then the product $(H\cap K_1)\x\dots\x (H\cap K_m)$ has finite index in $H$.
    \end{enumerate}
\end{lem}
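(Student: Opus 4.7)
The unifying strategy is to work with convex subcomplexes of the universal Salvetti complex $\wt\X_\G$ on which the relevant subgroups act cocompactly. For each convex-cocompact subgroup $L\leq G$, let $\wt C_L\sq\wt\X_\G$ denote a canonical $L$--invariant, $L$--cocompact convex subcomplex --- for instance, the combinatorial convex hull of an $L$--orbit, or the essential $L$--core --- chosen so that $\mathrm{Stab}_G(\wt C_L)=N_G(L)$. Throughout, I would exploit that hyperplanes separate in a CAT(0) cube complex and that convex subcomplexes are intersections of halfspaces. Parts (1) and (3) are the most direct. For (1), the intersection $\wt C_H\cap\wt C_K$ is a convex $(H\cap K)$--invariant subcomplex; cocompactness of the $(H\cap K)$--action then follows from the standard pigeonhole-on-hyperplanes argument for intersections of convex-cocompact subgroups in CAT(0) cube complexes. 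For (3), if $gHg^{-1}\leq H$, then $g\wt C_H$ is a convex $H$--cocompact subcomplex contained in a bounded neighbourhood of $\wt C_H$; comparing $H$--covolumes forces $[H:gHg^{-1}]=1$.

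For (2), the normaliser $N_G(H)$ preserves $\wt C_H$ by the canonicity of the hull, so convex-cocompactness is immediate. For the virtual product decomposition, the prototype is the RAAG formula $N_{A_\G}(A_\Delta)=A_\Delta\x A_{\Delta^\perp}$. In our general setting, one takes the smallest $G$--parabolic $\wh H\in\mc{P}(G)$ containing $H$, uses the product structure of the hull of $\wh H$ in $\wt\X_\G$, and peels off an orthogonal transverse parabolic factor whose intersection with $G$ yields $P$; after passing to a finite-index subgroup to remove finite-order twisting, $N_G(H)$ virtually splits as $H\x P$. This is the step I expect to be the main obstacle, as it requires care in tracing the cube-complex product structure in $A_\G$ through the convex-cocompact embedding $G\hookrightarrow A_\G$, and also in ensuring that the factor $P$ really is $G$--parabolic rather than merely a subgroup of a $G$--parabolic.

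For (4), given $g$ with $gHg^{-1}\leq K$, the translate $g\wt C_H$ is convex and $(gHg^{-1})$--cocompact, lying in a bounded neighbourhood of $\wt C_K$; by $K$--cocompactness of $\wt C_K$, the subcomplexes $g\wt C_H$ fall into finitely many $K$--orbits with representatives $f_1\wt C_H,\dots,f_n\wt C_H$. For each such $g$ there exist $k\in K$ and an index $i$ with $(kg)f_i^{-1}\in\mathrm{Stab}_G(\wt C_H)=N_G(H)$, whence $g\in K\cdot F\cdot N_G(H)$ for $F=\{f_1,\dots,f_n\}$. Part (5) then follows immediately: each $H\cap gKg^{-1}$ is convex-cocompact by (1) with hull inside $\wt C_H\cap g\wt C_K$, and the finitely many $H$--conjugacy classes of such intersections correspond to the finitely many $(K,N_G(H))$--double cosets provided by (4), together with the $H$--cocompact action on $\wt C_H$.
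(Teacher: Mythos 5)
Your proposals for parts (1), (4), and (5) are in the right spirit --- the paper delegates (1), (2), (4) to its earlier reference and essentially reproduces your double-coset argument for (5). The argument for (2) is only sketched, but (2) is also cited, so I will not dwell on it. The genuine gap is in part (3), where "comparing $H$--covolumes forces $[H:gHg^{-1}]=1$" does not work as stated.

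There are two problems. First, you have not shown that $gHg^{-1}$ has finite index in $H$. If $gHg^{-1}$ were an infinite-index subgroup of $H$ --- which you have not ruled out at that stage --- then $g\wt C_H$ would \emph{not} lie in a bounded neighbourhood of $\wt C_H$ (think of a sub-line translated transversally inside a plane), so the geometric comparison never gets off the ground. Second, even if $m:=[H:gHg^{-1}]$ were finite, counting orbits gives that $g\wt C_H$ has $N$ orbits of vertices under $gHg^{-1}$ while $\wt C_H$ has $mN$ orbits under the same group; these are two \emph{different} convex subcomplexes, so there is no inequality forcing $m=1$. The paper's actual proof of (3) is more algebraic and crucially depends on part (2): after arranging $gC\cap C\neq\emptyset$ (using a gate argument to replace $g$ by a commuting perturbation, so that $gC\sq C$), one forms the ascending union $V:=\bigcup_{k\geq 0}g^{-k}Hg^k$, shows $V$ is convex-cocompact because the convex subcomplexes $g^{-k}C$ nest with a uniform number of vertex orbits, observes that $g$ normalises $V$, and then invokes part (2) to deduce that a power of $g$ lies in a product $V\x P$ and hence in some $g^{-k}Hg^k\x P$. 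That power of $g$ then normalises $g^{-k}Hg^k$, which forces $gHg^{-1}=H$. Your proposal misses both the need to first arrange the nesting $gC\sq C$ and the ascending-union trick that converts the problem into an application of (2). Without some device of this kind, self-conjugation cannot be ruled out by volume considerations alone.
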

\begin{proof}
    Parts~(1),~(2) and~(4) are Lemmas~2.8,~3.22(1) and~2.9 in \cite{Fio10e}, respectively. Part~(5) is \cite[Lemma~4.2(3)]{Fio11b}. Part~(3) is immediate from the fact that $H$ is separable in $G$ \cite[Corollary~7.9]{HW08}, and part~(6) is clear if we think of convex-co\-com\-pact\-ness as coarse-median quasi-convexity.
\end{proof}

\begin{lem}\label{lem:parabolic_normaliser2}
Let $G\leq A_{\G}$ be convex-cocompact.
    \begin{enumerate}
        \item All subgroups in the families $\mc{Z}(G)$ and $\mc{P}(G)$ are convex-cocompact.
        \item For every $P\in\mc{P}(G)$, the normaliser $N_G(P)$ lies in $\mc{P}(G)$.
    \end{enumerate}
\end{lem}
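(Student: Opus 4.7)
My overall strategy for both parts is to pull back to the ambient right-angled Artin group $A_{\G}$ and apply \Cref{lem:cc_basics}(1), exploiting standard structural facts about parabolic subgroups and centralisers of RAAGs. Note that, although \Cref{lem:cc_basics}(1) is stated for subgroups of $G$, it is really a statement about convex-cocompact subgroups of $A_{\G}$ (since convex-cocompactness refers to the action on $\X_\G$) and hence applies equally well with $A_{\G}$ itself playing the role of the ambient convex-cocompact group.

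For part~(1), any $P\in\mc{P}(G)$ is by definition of the form $G\cap Q$ for a parabolic $Q\leq A_{\G}$; since $Q$ is a conjugate of some standard $A_\Delta$, which stabilises and acts cocompactly on its sub-Salvetti complex, $Q$ is itself convex-cocompact in $A_{\G}$. Intersecting with the convex-cocompact $G$ via \Cref{lem:cc_basics}(1) yields convex-cocompactness of $P$. For any $C\in\mc{Z}(G)$, I would write $C=Z_G(A)=G\cap Z_{A_{\G}}(A)$ and invoke the classical fact (Servatius) that centralisers in a RAAG are convex-cocompact; one more application of \Cref{lem:cc_basics}(1) finishes the argument.

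For part~(2), the key step is to identify $N_G(P)$ with the intersection of $G$ against a \emph{single} parabolic of $A_{\G}$. Let $Q$ be the parabolic closure of $P$ in $A_{\G}$, which exists because parabolics of RAAGs are closed under intersection. By canonicity, any $g\in N_{A_{\G}}(P)$ satisfies $gQg^{-1}=Q$, so $N_{A_{\G}}(P)\leq N_{A_{\G}}(Q)$. Conversely, for $g\in G$ with $gQg^{-1}=Q$, the computation
\[ gPg^{-1}=g(G\cap Q)g^{-1}=G\cap Q=P \]
shows $g\in N_G(P)$. Hence $N_G(P)=G\cap N_{A_{\G}}(Q)$. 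I would then cite Servatius' normaliser theorem, by which the normaliser of any parabolic in $A_{\G}$ is itself parabolic (concretely, $N_{A_{\G}}(A_\Delta)=\langle\Delta\cup\Delta^\perp\rangle$, with $\Delta^\perp$ the vertices of $\G\setminus\Delta$ joined to all of $\Delta$). If $N_{A_{\G}}(Q)=A_{\G}$, then $N_G(P)=G$, which is the excluded case; otherwise $N_{A_{\G}}(Q)$ is a proper parabolic and, provided $N_G(P)\neq G$, the equality $N_G(P)=G\cap N_{A_{\G}}(Q)$ exhibits $N_G(P)$ as an element of $\mc{P}(G)$.

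The only real obstacle is bibliographic: the argument depends on two classical structural theorems about RAAGs---convex-cocompactness of centralisers and parabolicity of normalisers of parabolic subgroups---which are not proven in the present excerpt but are standard (Servatius, Charney--Crisp--Vogtmann). Once these are quoted, the rest amounts to routine intersection bookkeeping.
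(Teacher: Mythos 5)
Your proof of part~(1) matches the paper's: both invoke Servatius for centralisers, note parabolics are evidently convex-cocompact, and then intersect with $G$ via \Cref{lem:cc_basics}(1); your remark that the lemma should be read with $A_{\G}$ as the ambient group is a useful clarification of a point the paper leaves implicit.

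For part~(2) you take a genuinely different route from the paper, and both work, so let me compare them. The paper conjugates $P$ so that $P=G\cap A_{\Delta}$ with $\Delta$ minimal (even allowing further conjugation), and then proves the inclusion $N_G(P)\leq G\cap N_{A_{\G}}(A_{\Delta})$ by a geometric argument: it picks a convex subcomplex $C_P$ on which $P$ acts essentially, shows minimality of $\Delta$ forces every vertex of $\Delta$ to appear as a hyperplane label of $C_P$, and then argues about hyperplanes separating $C_P$ from $gC_P$. Your argument instead passes directly to the parabolic closure $Q=\widehat{P}$ in $A_{\G}$ and uses its canonicity under conjugation to get $N_{A_{\G}}(P)\leq N_{A_{\G}}(Q)$ in one line; combining with $P=G\cap Q$ (which holds because $P=G\cap Q'$ for some parabolic $Q'\supseteq Q$, giving $P\subseteq G\cap Q\subseteq G\cap Q'=P$) gives the identity $N_G(P)=G\cap N_{A_{\G}}(Q)$. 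Both approaches then quote $N_{A_{\G}}(Q)=Q\times A_{\Delta^{\perp}}$, which is parabolic, and conclude. Your algebraic route is cleaner to read and entirely avoids the hyperplane bookkeeping; the price is that it relies on the existence and uniqueness of parabolic closures in $A_{\G}$ (i.e.\ closure of parabolics under intersection together with the bounded chain condition), which the paper also uses as a background fact for $\mathcal{P}(G)$ but chooses to circumvent here in favour of the geometric argument. In practice these are both standard RAAG facts, so the difference is one of taste. One way to see the two proofs as the same underneath: the paper's minimal $\Delta$ is, after the conjugations are unwound, precisely the parabolic closure of $P$; the paper's hyperplane argument is a geometric reproof of the canonicity you quote as a black box.
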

\begin{proof}
    In view of \Cref{lem:cc_basics}(1), it suffices to prove part~(1) for $G=A_{\G}$. Convex-cocompactness is then immediate for parabolics, and a consequence of Servatius' Centralizer Theorem in \cite[Section~III]{Servatius} for centralisers.

    Regarding part~(2), consider an element $P\in\mc{P}(G)$. Let $\Delta\sq\G$ be minimal such that there exists $x\in A_{\G}$ with $P=G\cap xA_{\Delta}x^{-1}$. Up to conjugating the whole $G$ by $x$, we can assume that $P=G\cap A_{\Delta}$. Setting $\Delta^{\perp}:=\bigcap_{x\in\G}\lk(x)$, we have $N_{A_{\G}}(A_{\Delta})=A_{\Delta}\x A_{\Delta^{\perp}}$ by \cite[Proposition~3.13]{Antolin-Minasyan}. The subgroup $\mc{N}:=G\cap N_{A_{\G}}(A_{\Delta})$ is then $G$--parabolic and contained in $N_G(P)$.

    Now, \Cref{lem:cc_basics}(2) implies that $N_G(P)$ virtually splits as $P\x Q$ with $Q\in\mc{P}(G)$. Servatius' Centralizer theorem shows that $Q\leq G\cap A_{\Delta^{\perp}}$, 
    and so we have $P\x Q\leq\mc{N}$. In conclusion, $\mc{N}$ has finite index in $N_G(P)$ and, since $\mc{N}$ is root-closed, we must have $\mc{N}=N_G(P)$.
\end{proof}

Given an action $H\acts X$, we denote by $\Fix(H)$ the subset of fixed points.

\begin{lem}\label{lem:cc_edges}
    Let $G\leq A_{\G}$ be convex-cocompact. Let $G\acts T$ be a minimal action on a simplicial tree with convex-cocompact edge-stabilisers. 
    \begin{enumerate}
        \item Vertex-stabilisers of $T$ are convex-cocompact as well.
        \item If $H\leq G$ is convex-cocompact, then $N_G(H)\acts\Fix(H)$ is cocompact.
    \end{enumerate}
\end{lem}
\begin{proof}
    Part~(1) is \cite[Proposition~4.1]{Fio11b}. Regarding part~(2), since $G$ is finitely generated and acts minimally, the action $G\acts T$ is cocompact. Therefore, it suffices to show that, for each edge $e\sq T$, the normaliser $N_G(H)$ acts cofinitely on the set of edges of $\Fix(H)$ that lie in the $G$--orbit of $e$. Consider the set $\{g\in G\mid ge\sq\Fix(H)\}$. If $E$ is the stabiliser of $e$, this set coincides with $\{g\in G\mid g^{-1}Hg\leq E\}$. Since $H$ and $E$ are convex-cocompact, the latter set is a product $N_G(H)\cdot F\cdot E$ for a finite subset $F\sq G$, by \Cref{lem:cc_basics}(4). This means that $N_G(H)$ acts with $|F|$ orbits on the set of $H$--fixed edges in the $G$--orbit of $e$, as we wanted.
\end{proof}

The following is a related observation.

\begin{rmk}\label{rmk:root-closed_vertices}
    Let $H$ be a group and let $H\acts T$ be a minimal action on a tree. If edge-stabilisers are root-closed in $H$, then vertex-stabilisers are root-closed as well. Indeed, consider $g\in H$ and $n\geq 2$ such that $g^n$ fixes a vertex $x\in T$. It follows that $g$ is elliptic, so it fixes a vertex $y\in T$. If $y\neq x$, let $e\sq T$ be the edge incident to $x$ in the direction of $y$. Since $g^n$ fixes both $x$ and $y$, it must fix $e$. Now, the fact that $H_e$ is root-closed implies that $g\in H_e\leq H_x$, as required.
\end{rmk}

\subsubsection{$G$--semi-parabolic subgroups}\label{subsub:G-semi-parabolics}

Let $G\leq A_{\G}$ be convex-cocompact.

Centralisers are the most important class of subgroups of $G$ for this article: they will provide the edge groups for most of our splittings of $G$. A useful feature of centralisers is that they are close to being $G$--parabolic; their failure is only due to a (virtual) abelian factor. At the same time, the class $\mc{Z}(G)$ has a considerable weakness: if $H\leq G$ is convex-cocompact and $Z\in\mc{Z}(G)$, the intersection $H\cap Z$ will not normally lie in $\mc{Z}(H)$. 

To circumvent this, it is often useful to work with a larger class of subgroups, which have the same structure as centralisers and additionally satisfy the above stability property. We say that a subgroup is \emph{co-abelian} if it is normal with abelian quotient.

\begin{defn}\label{defn:semi-parabolic}
    A subgroup $Q\leq G$ is \emph{$G$--semi-parabolic} if it is convex-cocompact, root-closed, and has a co-abelian $G$--parabolic subgroup $R\lhd Q$.
\end{defn}

All centralisers are $G$--semi-parabolic: for centralisers of single elements, this follows from Servatius' Centralizer Theorem; it then suffices to note that intersections of semi-parabolic subgroups are again semi-parabolic.

The co-abelian $G$--parabolic subgroup $R\lhd Q$ in \Cref{defn:semi-parabolic} is not unique in general, but it does have the following useful property.

\begin{lem}\label{lem:weak_parabolic part}
    Let $Q\leq G$ be $G$--semi-parabolic, and let $R\lhd Q$ be co-abelian and $G$--parabolic. If we have $gRg^{-1}\leq Q$ for some $g\in G$, then $gRg^{-1}=R$.
\end{lem}
\begin{proof}
    As in the proof of \Cref{lem:parabolic_normaliser2}(2), we can assume that $R=G\cap A_{\Delta}$ for some $\Delta\sq\G$ such that we have $N_G(R)\leq N_{A_{\G}}(A_{\Delta})=A_{\Delta}\x A_{\Delta^{\perp}}$. Since we also have $Q\leq N_G(R)$, \Cref{lem:cc_basics}(6) shows that $Q$ virtually splits as $R\x A$, where $R=Q\cap A_{\Delta}$ and we set $A:=Q\cap A_{\Delta^{\perp}}$. 

    Now, consider an element $g\in G$ with $gRg^{-1}\leq Q$. The intersection $\mc{I}:=gRg^{-1}\cap (R\x A)$ must be entirely contained in $R$ (this is clear considering the cyclically reduced form of elements of $A_{\G}$). At the same time, since $\mc{I}$ has finite index in in $gRg^{-1}$ and since $R$ is root-closed, we obtain $gRg^{-1}\leq R$. \Cref{lem:cc_basics}(3) finally yields $gRg^{-1}=R$ as desired.
\end{proof}

\begin{rmk}\label{rmk:semi-parabolic_chains}
    There is a uniform bound on the length of chains of $G$--semi-parabolic subgroups $Q_1\lneq\dots\lneq Q_n$, which depends only on $G$ and its embedding in a RAAG. For instance, this is a particular case of \cite[Lemma~3.36]{Fio10e}. Using \Cref{lem:cc_basics}, the definition of ``$G$--semi-parabolic'' used in \cite{Fio10e} is easily seen to be equivalent to the above one.
\end{rmk}

\subsection{Terminology on $\R$--trees and splittings}\label{sub:arboreal_terminology}

Given a finitely generated group $G$ and a non-elliptic action on an $\R$--tree $G\acts T$, we denote by $\Min(G,T)$ the $G$--minimal subtree of $T$, i.e.\ the smallest invariant subtree. If $U\sq T$ is a subset, we write $G_U:=\{g\in G\mid gU=U\}$ for its $G$--stabiliser. An \emph{arc} is a subset homeomorphic to $[0,1]$. If $\beta\sq T$ is an arc, then $G_{\beta}$ coincides with the \emph{pointwise} stabiliser of $\beta$ (in all our cases of interest, $G_{\beta}$ is root-closed in $G$ and so there are no arc-inversions). On the other hand, if $\alpha$ is a line, then $G_{\alpha}$ might translate along it. A subtree $\tau\sq T$ is \emph{stable} if all its arcs have the same stabiliser. The action $G\acts T$ is \emph{BF--stable} (after \cite{BF-stable} and \cite{Guir-Fourier}) if every arc contains a stable sub-arc.

Throughout the article, we implicitly assume that actions on simplicial trees are without inversions. A \emph{splitting} of a group $G$ is a minimal $G$--action on a simplicial tree $T$ such that $T$ has at least one edge. Following \cite{GL-JSJ}, we speak of an \emph{$(\mc{A},\mc{H})$--splitting}, where $\mc{A}$ and $\mc{H}$ are families of subgroups of $G$, if the $G$--stabiliser of each edge of $T$ lies in $\mc{A}$ and if each subgroup in $\mc{H}$ fixes a point of $T$. We will also say that $T$ is a tree \emph{over} $\mc{A}$ and \emph{relative} to $\mc{H}$, with the same meaning. Importantly, we are interested in families $\mc{A}$ that are {\bf not} closed under taking subgroups, for instance $\mc{A}=\mc{Z}(G)$. This is an important difference to \cite{GL-JSJ} and standard JSJ theory, where it is often an implicit standing assumption. When we wish to allow the possibility that $T$ is a single vertex, or that $G$ acts non-minimally, then we simply speak of an \emph{$(\mc{A},\mc{H})$--tree}, still expecting $T$ to be simplicial.

A subgroup $H\leq G$ is \emph{$(\mc{A},\mc{H})$--rigid} in $G$ if it is elliptic in all $(\mc{A},\mc{H})$--splittings of $G$. A torsion-free group $G$ is \emph{$1$--ended relative} to $\mc{H}$ if $G$ is $(\{\{1\}\},\mc{H})$--rigid in itself; when $\mc{H}=\emptyset$, this is the usual notion of $1$--endedness \cite{Stallings}. A vertex group $Q$ of a $G$--tree $T$ is \emph{quadratically hanging (QH)} relative to $\mc{H}$ if we can represent $Q$ as the fundamental group of a compact hyperbolic surface $\Sigma$ such that all incident edge groups of $T$, as well as all subgroups of $Q$ contained in elements of $\mc{H}$, are \emph{peripheral} (i.e.\ conjugate into the fundamental groups of the components of $\partial\Sigma$).

A $G$--tree $T$ is \emph{invariant} under a subgroup $\mc{O}\leq\Out(G)$ if, for each automorphism $\varphi\in\Aut(G)$ with outer class in $\mc{O}$, there exists $\Phi\in\Aut(T)$ such that $\Phi(gx)=\varphi(g)\Phi(x)$ for all $g\in G$ and $x\in T$. When $G$ has trivial centre, this simply means that the action $G\acts T$ extends to an action $\tilde{\mc{O}}\acts T$, for the preimage $\tilde{\mc{O}}\leq\Aut(G)$ of the group $\mc{O}\leq\Out(G)$.

\subsection{Growth rates}\label{sub:growth}

When we speak of a ``growth rate'' in this article, we refer to the following general notion of how fast a sequence can grow.

\subsubsection{Abstract growth rates}
If $a,b\colon\N\ra\R_{>0}$ are two sequences, we write $a\preceq b$ if there exists a constant $C$ such that $a_n\leq Cb_n$ for all $n\geq 0$. We say that $a$ and $b$ are \emph{equivalent}, written $a\sim b$, if we have both $a\preceq b$ and $b\preceq a$. We denote by $[1]$ the equivalence class of bounded sequences.

\begin{defn}
    A \emph{growth rate} is a $\sim$--equivalence class $[a_n]$ of sequences in $(\R_{>0})^{\N}$ with $[a_n]\succeq [1]$. We denote by $(\mf{G},\preceq)$ the set of all growth rates with the poset structure induced by the relation $\preceq$.
\end{defn}

It will often be convenient to work modulo a non-principal ultrafilter $\om\sq 2^{\N}$ and the weaker ordering on sequences that it induces. We refer to \cite[Chapter~10]{DK} for generalities on ultrafilters. Given sequences $a,b\colon\N\ra\R_{>0}$, we write $a\preceq_{\om} b$ if we have $\lim_{\om} a_n/b_n<+\infty$. We say that $a$ and $b$ are \emph{$\om$--equivalent}, written $a\sim_{\om} b$, if we have both $a\preceq_{\om} b$ and $b\preceq_{\om} a$. 

\begin{defn}
    An \emph{$\om$--growth rate} is a $\sim_{\om}$--equivalence class $[a_n]$ of sequences in $(\R_{>0})^{\N}$ with $[a_n]\succeq_{\om} [1]$. We denote by $(\mf{G}_{\om},\preceq_{\om})$ the set of all $\om$--growth rates with the total order induced by the relation $\preceq_{\om}$.
\end{defn}

Each $\sim_{\om}$--equivalence class of sequences is a union of (uncountably many) $\sim$--equivalence classes. Thus, there is a natural order-preserving quotient map $(\mf{G},\preceq)\ra (\mf{G}_{\om},\preceq_{\om})$. We emphasise that, while $(\mf{G},\preceq)$ is only partially ordered, all its quotients $(\mf{G}_{\om},\preceq_{\om})$ are totally ordered.

\begin{rmk}\label{rmk:fail->fail_mod_omega}
    We have $[a_n]\preceq [b_n]$ if and only if the inequality $[a_n]\preceq_{\om}[b_n]$ holds for all non-principal ultrafilters $\om$.
\end{rmk}

We will often simply write $a_n\preceq b_n$, $a_n\sim b_n$, $a_n\preceq_{\om} b_n$, omitting square brackets when this streamlines notation without causing ambiguities. 

\subsubsection{Lengths on groups}\label{subsub:lengths_on_groups}
Let $G$ be a group with a finite generating set $S$. We denote by $|\cdot|_S$ and $\|\cdot\|_S$ the \emph{word} and \emph{conjugacy length} on $G$ associated with $S$, as defined in the Introduction. If $T$ is a different finite generating set, then $|\cdot|_T$ and $\|\cdot\|_T$ are bi-Lipschitz equivalent to $|\cdot|_S$ and $\|\cdot\|_S$.

If $G\acts X$ is an isometric action on a metric space, we can consider its \emph{displacement parameter}, and the \emph{translation length} of an element:
\begin{align*}
    \tau_X^S&:= \inf_{x\in X} \max_{s\in S} d(sx,x) , & \ell_X(g)&:=\inf_{x\in X} d(x,gx) .
\end{align*}
If $T$ is another finite generating set, there is a constant $C=C(S,T)$ such that $\frac{1}{C}\tau_X^S\leq \tau_X^T\leq C\tau_X^S$ for all $G$--spaces $X$. The following is straightforward. 

\begin{lem}\label{lem:length_generalities}
    Consider $\varphi\in\Aut(G)$ and an isometric action $G\acts X$.
    \begin{enumerate}
        \item The map $\varphi\colon G\ra G$ is bi-Lipschitz with respect to both $|\cdot|_S$ and $\|\cdot\|_S$.
        \item For every $g\in G$, we have $\ell_X(g)\leq \tau_X^S\|g\|_S$.
        \item If $X$ is geodesic and $G\acts X$ is free, proper and cocompact, then there is a constant $c=c(S,X)>0$ such that $\ell_X(g)\geq c\|g\|_S$ for all $g\in G$. 
    \end{enumerate}
\end{lem}

\begin{rmk}\label{rmk:conjlength_vs_undistortion}
    Let $H\leq G$ be generated by a finite subset $U\sq H$. If $H$ is undistorted, then the word lengths $|\cdot|_U$ and $|\cdot|_S$ are bi-Lipschitz equivalent on $H$. However, undistortion alone does not suffice to conclude that the conjugacy lengths $\|\cdot\|_U$ and $\|\cdot\|_S$ are bi-Lipschitz equivalent on $H$.

    Still, if $H\leq G\leq A_{\G}$ are convex-cocompact, then $\|\cdot\|_U$ and $\|\cdot\|_S$ are bi-Lipschitz equivalent. Indeed, there are convex subcomplexes $C_H\sq C_G\sq\X_{\G}$ that are invariant and cocompact, respectively, for the $H$-- and $G$--action. \Cref{lem:length_generalities} shows that $\|\cdot\|_U$ and $\|\cdot\|_S$ are bi-Lipschitz to $\ell(\cdot,C_H)$ and $\ell(\cdot,C_G)$, respectively. Finally, we have $\ell(h,C_H)=\ell(h,C_G)$ for $h\in H$, since the nearest-point projection $C_G\ra C_H$ is $1$--Lipschitz and $H$--equivariant.
\end{rmk}

\subsubsection{Growth of automorphisms}\label{subsub:growth_of_automorphisms}
Consider now a finitely generated group $G$, an automorphism $\varphi\in\Aut(G)$ and its outer class $\phi\in\Out(G)$. Let $|\cdot|$ and $\|\cdot\|$ be the word and conjugacy lengths on $G$ with respect to some finite generating set, whose choice will play no role.

\begin{defn}
    The \emph{growth rate} of an element $g\in G\setminus\{1\}$ under $\varphi$ is the $\sim$--equivalence class of the sequence $n\mapsto|\varphi^n(g)|$ in $\mf{G}$. Similarly, the \emph{growth rate} of $g$ under $\phi$ is\footnote{Here $\phi^n(g)$ is not a well-defined element of $G$, but it is a well-defined conjugacy class.} the equivalence class of $n\mapsto\|\phi^n(g)\|\in\mf{G}$. 
\end{defn}

A different choice of generating set for $G$ only alters $|\cdot|$ and $\|\cdot\|$ through a bi-Lipschitz equivalence, so it leads to the exact same growth rates within $\mf{G}$. Since $\|\cdot\|\leq |\cdot|$, we always have $\|\phi^n(g)\| \preceq |\varphi^n(g)|$. The equivalence $|\varphi^n(g)|\sim [1]$ holds if and only if a power of $\varphi$ fixes $g$, and $|\phi^n(g)\|\sim[1]$ holds if and only if a power of $\phi$ preserves the conjugacy class of $g$. Thus, we artificially define the growth rate of the identity of $G$ to be $[1]$.

We denote by $\mc{G}(\varphi)\sq\mf{G}$ and $\mf{g}(\phi)\sq\mf{G}$ (or $\mc{G}(G,\varphi)$ and $\mf{g}(G,\phi)$ in case of ambiguity) the sets of all growth rates of the automorphism, as $g$ varies in $G$. There are also versions of the above concepts modulo any non-principal ultrafilter $\om$. We thus denote by $\mc{G}_{\om}(\varphi)$ and $\mf{g}_{\om}(\phi)$ the sets of $\om$--growth rates of $\varphi$ and $\phi$, that is, the images of $\mc{G}(\varphi)$ and $\mf{g}(\phi)$ under the projection $\mf{G}\ra\mf{G}_{\om}$. In keeping with the above conventions, we will generally denote ($\om$--)growth rates of automorphisms by the letter $\mc{O}$, and ($\om$--)growth rates of outer automorphisms by the letter $\mf{o}$.

There are two additional important elements of $\mf{G}$ that we can associate with $\varphi$ and $\phi$. In some sense, they play the role of a ``maximum'' for the sets $\mc{G}(\varphi)$ and $\mf{g}(\phi)$, but it is important to stress that, a priori, they do {\bf not} lie in $\mc{G}(\varphi)$ or $\mf{g}(\phi)$. Fixing any finite generating set $S\sq G$, we write:
\begin{align*}
    \overline{\mc{O}}_{\rm top}(\varphi)&:=\big[\s^S(\varphi^n)\big],  \qquad\text{where} \quad \s^S(\varphi):=\max_{s\in S}|\varphi(s)| , \\
    \overline{\mf{o}}_{\rm top}(\phi)&:=\big[\tau^S(\phi^n)\big],  \qquad\text{where} \quad \tau^S(\phi):=\min_{x\in G}\max_{s\in S}|x\varphi(s)x^{-1}| .
\end{align*}
Note that $\overline{\mc{O}}_{\rm top}(\varphi)$ and $\overline{\mf{o}}_{\rm top}(\phi)$ are again completely independent of the choice of $S$. The notation $\tau^S(\phi)$ is consistent with the one introduced in \Cref{subsub:lengths_on_groups}; it is the displacement parameter for the action of $G$ on the Cayley graph used to define the word length $|\cdot|$, pre-composed with $\varphi$. 

\begin{lem}\label{lem:o_bar_is_faster}
    We have $\mc{O}\preceq\overline{\mc{O}}_{\rm top}(\varphi)$ for all $\mc{O}\in\mc{G}(\varphi)$. Similarly, we have $\mf{o}\preceq\overline{\mf{o}}_{\rm top}(\phi)$ for all $\mf{o}\in\mf{g}(\phi)$. 
\end{lem}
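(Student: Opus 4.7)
The plan is to derive both bounds from the same elementary submultiplicative estimate: if we expand $g$ as a word of length $|g|_S$ in the generators, then the homomorphism property together with the triangle inequality reduces the problem to bounding the lengths of $\varphi^n(s)$ for $s \in S$, which is exactly what $\sigma^S(\varphi^n)$ and $\tau^S(\phi^n)$ control. The constants will depend on $|g|_S$, but since growth rates are defined up to multiplicative constants as $n \to +\infty$, this is harmless.

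For the first statement, fix $\mathcal{O} = [\,|\varphi^n(g)|\,] \in \mathcal{G}(\varphi)$ and write $g = s_1 \cdots s_k$ with $s_i \in S$ and $k = |g|_S$. Since $\varphi^n$ is a homomorphism, $\varphi^n(g) = \varphi^n(s_1)\cdots\varphi^n(s_k)$, and the triangle inequality yields
\[
|\varphi^n(g)| \;\leq\; \sum_{i=1}^k |\varphi^n(s_i)| \;\leq\; k \cdot \sigma^S(\varphi^n).
\]
Taking $k$ as the constant gives $[\,|\varphi^n(g)|\,] \preceq [\,\sigma^S(\varphi^n)\,] = \overline{\mc{O}}_{\rm top}(\varphi)$.

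For the second statement, fix $\mathfrak{o} = [\,\|\phi^n(g)\|\,] \in \mathfrak{g}(\phi)$ and write $g = s_1\cdots s_k$ as before. For each $n$, the function $x \mapsto \max_{s \in S}|x\varphi^n(s)x^{-1}|$ takes values in $\mathbb{N}$, so the infimum defining $\tau^S(\phi^n)$ is attained by some $x_n \in G$. Then
\[
x_n\varphi^n(g)x_n^{-1} \;=\; \prod_{i=1}^k \bigl(x_n \varphi^n(s_i) x_n^{-1}\bigr),
\]
and the triangle inequality gives $|x_n\varphi^n(g)x_n^{-1}| \leq k \cdot \tau^S(\phi^n)$. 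Since the conjugacy length is bounded above by the length of any conjugate, we conclude
\[
\|\phi^n(g)\| \;\leq\; |x_n\varphi^n(g)x_n^{-1}| \;\leq\; k \cdot \tau^S(\phi^n),
\]
which yields $[\,\|\phi^n(g)\|\,] \preceq [\,\tau^S(\phi^n)\,] = \overline{\mathfrak{o}}_{\rm top}(\phi)$.

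There is no real obstacle in this proof; the only subtlety worth flagging is that the infimum defining $\tau^S(\phi^n)$ need not be realised by a canonical representative of the outer class, but this is irrelevant since we only need the existence of \emph{some} minimising $x_n$ at each $n$, and these conjugators are allowed to vary with $n$ (they need not fit together into an automorphism).
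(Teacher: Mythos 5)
Your proof is correct and follows essentially the same route as the paper: expand $g$ as a word in the generators, apply the homomorphism property of $\varphi^n$, and use the triangle inequality (the paper packages the conjugacy-length case as an appeal to its Lemma~2.12(1), but that lemma's content is exactly your argument with the minimising conjugator $x_n$). The only cosmetic difference is that the paper's bound uses $\|g\|_S$ where you use $|g|_S$ in the second estimate, which is an immaterial sharpening.
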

\begin{proof}
    For all $g\in G$ and $n\in\N$, we have $|\varphi^n(g)|\leq |g|_S\cdot\s^S(\varphi^n)$ and $\|\phi^n(g)\|\leq \|g\|_S\cdot\tau^S(\phi^n)$. The lemma immediately follows.
\end{proof}

Despite the previous lemma, it is not at all clear whether the sets $\mc{G}(\varphi)$ and $\mf{g}(\phi)$ always have a $\preceq$--maximum, even for automorphisms of special groups. I still do not know if this is the case in the setting of \Cref{thmintro:general_aut}. This is true under the assumptions of \Cref{thmintro:cmp_aut}, but even in that situation it will only become clear near the end of the article (\Cref{thm:cmp_main}(1)).

What is instead clear, even at this preliminary stage, is that automorphisms of special groups always have a top $\om$--growth rate, for any non-principal ultrafilter $\om$.
Denote by $\mc{O}_{\rm top}^{\om}(\varphi)$ and $\mf{o}_{\rm top}^{\om}(\phi)$ the projections of $\overline{\mc{O}}_{\rm top}(\varphi)$ and $\overline{\mf{o}}_{\rm top}(\phi)$ to $\mf{G}_{\om}$. These do lie in $\mc{G}_{\om}(\varphi)$ and $\mf{g}_{\om}(\phi)$:

\begin{lem}\label{lem:top_exists}
    Let $G,\varphi,\phi$ be as above, and let $\om$ be a non-principal ultrafilter.
    \begin{enumerate}
        \item We always have $\mc{O}^{\om}_{\rm top}(\varphi)=\max\mc{G}_{\om}(\varphi)$.
        \item If $G$ is special, then $\mf{o}_{\rm top}^{\om}(\phi)=\max\mf{g}_{\om}(\phi)$.
    \end{enumerate}
\end{lem}
\begin{proof}
By \Cref{lem:o_bar_is_faster}, the $\om$--growth rates $\mc{O}_{\rm top}^{\om}(\varphi)$ and $\mf{o}_{\rm top}^{\om}(\phi)$ are $\om$--faster than any $\om$--growth rate in the sets $\mc{G}_{\om}(\varphi)$ and $\mf{g}_{\om}(\phi)$, respectively. Thus, we only need to find two elements $g,g'\in G$ with $|\varphi^n(g)|\sim_{\om}\mc{O}^{\om}_{\rm top}(\varphi)$ and $\|\phi^n(g')\|\sim_{\om}\mf{o}^{\om}_{\rm top}(\phi)$. Since $S$ is finite, there is an element $s_0\in S$ such that the equality $\s^S(\varphi^n)=|\varphi^n(s_0)|$ holds for $\om$--all $n$; we can then take $g:=s_0$. As to the existence of $g'$, it requires the Bestvina--Paulin construction and some weak properties of asymptotic cones of special groups; we explain this a couple of pages below, in \Cref{prop:degeneration_basics}.
\end{proof}

\begin{rmk}\label{rmk:dominating_otop}
    Let $G$ be special. Given some $\mf{o}\in\mf{G}$, we have $\mf{o}\succeq\mf{o}'$ for all $\mf{o}'\in\mf{g}(\phi)$ if and only if $\mf{o}\succeq\overline{\mf{o}}_{\rm top}(\phi)$. Similarly, we have $\mf{o}\succeq\mc{O}$ for all $\mc{O}\in\mc{G}(\varphi)$ if and only if $\mf{o}\succeq\overline{\mc{O}}_{\rm top}(\varphi)$. In view of \Cref{rmk:fail->fail_mod_omega}, both observations follow from the combination of \Cref{lem:o_bar_is_faster} and \Cref{lem:top_exists}.
\end{rmk}

\subsubsection{Operations on growth rates}\label{subsub:operations_growth_rates}
Given two growth rates $[a_n],[b_n]\in\mf{G}$, the sum $[a_n]+[b_n]:=[a_n+b_n]$ is a well-defined growth rate. The sum of two $\om$--growth rates is also a well-defined $\om$--growth rate. For $k\in\N$, we define $k\ast[a_n]:=[a_{kn}]$, that is, the equivalence class of the sequence $n\mapsto a_{kn}$. Similarly, we set $\frac{1}{k}\ast[a_n]:=[a_{\lfloor\frac{n}{k}\rfloor}]$. The operation $\ast$ is only well-defined on $\mf{G}$; it does not descend to an operation on $\mf{G}_{\om}$.

For a finitely generated group $G$ and $\psi\in\Out(G)$, we have the identities $\overline{\mf{o}}_{\rm top}(\psi^k)\sim k\ast\overline{\mf{o}}_{\rm top}(\psi)$ and $\overline{\mf{o}}_{\rm top}(\psi)\sim\tfrac{1}{k}\ast\overline{\mf{o}}_{\rm top}(\psi^k)$ for each $k\geq 1$. (The second formula uses \Cref{lem:length_generalities}(1).) Analogous identities hold for elements of $\Aut(G)$ and $\overline{\mc{O}}_{\rm top}(\cdot)$.

\subsubsection{Beat and controlled subgroups}\label{sub:beat}
Given two growth rates $[a_n],[b_n]\in\mf{G}$, we write $[a_n]\prec[b_n]$ if we have $[a_n]\preceq[b_n]$ and $[a_n]\not\sim[b_n]$. Equivalently, we have $\limsup_n a_n/b_n<+\infty$ and $\liminf_n a_n/b_n=0$. 

The following terminology will be useful in Sections~\ref{sect:tameness} and~\ref{sect:cmp}. Let $G$ be finitely generated and consider $\phi\in\Out(G)$. Given a growth rate $\mf{o}\in\mf{G}$, we say that an element $g\in G$ is \emph{$\mf{o}$--beat} (for $\phi$) if we have the strict inequality $\|\phi^n(g)\|\prec\mf{o}$. We say that $g$ is \emph{$\mf{o}$--controlled} if we have the weak inequality $\|\phi^n(g)\|\preceq\mf{o}$. A subgroup $H\leq G$ is \emph{$\mf{o}$--beat} or \emph{$\mf{o}$--controlled} if all elements of $H$ have the respective property; for this we always compute conjugacy lengths within $G$. We denote by $\mc{B}(\mf{o},\phi)$ and $\mc{K}(\mf{o},\phi)$ (or simply $\mc{B}(\mf{o})$ and $\mc{K}(\mf{o})$) the families of $\mf{o}$--beat and $\mf{o}$--controlled subgroups of $G$.

Given a non-principal ultrafilter $\om$, we can also consider the elements $g\in G$ that are $\mf{o}$--beat or $\mf{o}$--controlled \emph{modulo $\om$}, that is, those satisfying $\|\phi^n(g)\|\prec_{\om}\mf{o}$ or $\|\phi^n(g)\|\preceq_{\om}\mf{o}$. We consequently define the families $\mc{B}^{\om}(\mf{o})$ and $\mc{K}^{\om}(\mf{o})$ of subgroups that are $\mf{o}$--beat or $\mf{o}$--controlled modulo $\om$.

\begin{rmk}\label{rmk:beat_invariant}
    For any $\phi\in\Out(G)$ and $\mf{o}\in\mf{G}$, the four families $\mc{B}(\mf{o},\phi)$, $\mc{K}(\mf{o},\phi)$, $\mc{B}^{\om}(\mf{o},\phi)$ and $\mc{K}^{\om}(\mf{o},\phi)$ are closed under conjugacy and $\phi$--invariant. This is because $\phi$ is bi-Lipschitz with respect to $\|\cdot\|$ (\Cref{lem:length_generalities}).
\end{rmk}

\subsection{Degenerations}\label{sub:degenerations}

Let $G$ be a special group. An infinite sequence in $\Out(G)$ and a choice of a non-principal ultrafilter $\om$ give rise to an isometric action on a median space $G\acts\X_{\om}$, which equivariantly embeds in a finite product of $\R$--trees $T^v_{\om}$. We refer to such actions as \emph{degenerations}. 

We briefly recall this construction and its properties shown in \cite{Fio10e,Fio10a}. Choose a convex-cocompact embedding $\iota\colon G\hookrightarrow A_{\G}$ into a RAAG. For each $v\in\G$, let $A_{\G}\acts T^v$ be the Bass--Serre tree of the HNN splitting of $A_{\G}$ with vertex group $A_{\G\setminus\{v\}}$, edge group $A_{\lk(v)}$ and stable letter $v$. The universal cover $\X_{\G}$ of the Salvetti complex embeds $A_{\G}$--equivariantly and isometrically into the finite product $\prod_{v\in\G}T^v$, using $\ell_1$--metrics.

For any infinite-order $\phi\in\Out(G)$ and any non-principal ultrafilter, the Bestvina--Paulin construction yields an isometric action $G\acts\X_{\om}$: this is the $\om$--limit of countably many copies of the actions $G\acts\X_{\G}$, with the $n$--th one twisted by $\phi^n$, based at a suitable point, and rescaled by the displacement parameter $\tau_{\X_{\G}}^{\phi^n(S)}$, for a fixed finite generating set $S\sq G$. We similarly obtain isometric actions $G\acts T^v_{\om}$ on $\R$--trees, which are the $\om$--limit of copies of $T^v$ twisted by $\phi^n$ and rescaled by the same factors. Naturally, there is a $G$--equivariant isometric embedding $\X_{\om}\hookrightarrow\prod_{v\in\G}T^v_{\om}$. The actions $G\acts T^v_{\om}$ can be elliptic for \emph{some} vertices $v\in\G$, but not for \emph{all} of them.

The following explains why degenerations are useful in studying growth rates, and it completes the proof of \Cref{lem:top_exists}(2). By \Cref{rmk:conjlength_vs_undistortion}, we can equivalently compute conjugacy lengths within $A_{\G}$ or $G$.

\begin{prop}\label{prop:degeneration_basics}
    Let $G\leq A_{\G}$ be convex-cocompact, let $\om$ be a non-principal ultrafilter, and let $\phi\in\Out(G)$ have infinite order.
    \begin{enumerate}
        \item The degeneration $G\acts\X_{\om}$ is an isometric action on a finite-rank median space. There are elements $g\in G$ with $\ell_{\X_{\om}}(g)>0$.
        \item For $g\in G$, we have $\ell_{\X_{\om}}(g)>0$ if and only if $\|\phi^n(g)\|\sim_{\om}\mf{o}^{\om}_{\rm top}(\phi)$. Similarly, $g$ fixes a point of $\X_{\om}$ if and only if $\|\phi^n(g)\|\prec_{\om}\mf{o}^{\om}_{\rm top}$.
    \end{enumerate}
\end{prop}
\begin{proof}
    The fact that $X$ is finite-rank median is shown in \cite[Section~9]{Bow13}. The action $G\acts\X_{\om}$ is non-elliptic by construction, and so \cite[Proposition~3.2]{Fio2} guarantees that it has loxodromics, proving part~(1). 
    
    Letting $\tau_n$ be the scaling factors used in the construction of $\X_{\om}$, the Milnor--Schwarz lemma and \Cref{rmk:conjlength_vs_undistortion} yield $[\tau_n]\sim\overline{\mf{o}}_{\rm top}(\phi)$. By \cite[Lemma~7.9(2)]{Fio10a}, we have $\ell_{\X_{\om}}(g)=\lim_{\om}\frac{1}{\tau_n}\ell_{\X_{\G}}(\phi^n(g))$ for all $g\in G$. Recalling that the function $\ell_{\X_{\G}}(\cdot)$ is bi-Lipschitz equivalent to $\|\cdot\|$ on $G$, we obtain $\|\phi^n(g)\|\sim_{\om}\tau_n$ if $\ell_{\X_{\om}}(g)>0$, and $\|\phi^n(g)\|\prec_{\om}\tau_n$ if $\ell_{\X_{\om}}(g)=0$. Finally, the fact that $[\tau_n]\sim\overline{\mf{o}}_{\rm top}(\phi)$ implies that $[\tau_n]\sim_{\om}\mf{o}^{\om}_{\rm top}(\phi)$, yielding part~(2).
\end{proof}

We generally speak of ``the'' degeneration determined by $\phi$ and $\om$ although, a priori, this also depends on the specific choice of basepoints in $\X_{\G}$ used to take an $\om$--limit. This is no cause for concern.

We now discuss the $\R$--trees $G\acts T^v_{\om}$ in more detail. What matters is that arc-stabilisers are well-behaved, and so Rips--Sela theory can be applied.

\begin{thm}\label{thm:10e-}
    Let $G\leq A_{\G}$ be convex-cocompact, $\phi\in\Out(G)$ infinite-order and $\om$ a non-principal ultrafilter. Let $G\acts\X_{\om}$ be the resulting degeneration. Consider $v\in\G$ such that $G$ is non-elliptic in $T^v_{\om}$.
    \begin{enumerate}
    \setlength\itemsep{.2em}
        \item For every arc $\beta\sq\Min(G,T^v_{\om})$, there exists $Z\in\mc{Z}(G)$ with $G_{\beta}\lhd Z$ and $Z/G_{\beta}$ free abelian. In particular, $G\acts\Min(G,T^v_{\om})$ is BF--stable.
        \item Suppose that {\bf at least one} of the following holds:
            \begin{enumerate}
                \item $\phi$ preserves the coarse median structure induced on $G$ by $A_{\G}$;
                \item all non-cyclic elements of $\mc{Z}(G)$ are elliptic in $T^v_{\om}$;
            \end{enumerate}
            Then, for every arc $\beta\sq\Min(G,T^v_{\om})$, we have $G_{\beta}\in\mc{Z}(G)$.
    \end{enumerate}
\end{thm}
\begin{proof}
    Part~(2b) and the first half of part~(1) follow from \cite[Proposition~5.12]{Fio10e}. BF--stability then holds because chains of arc-stabilisers have uniformly bounded length, see \cite[Corollary~6.13]{Fio10e}. Finally, part~(2a) is shown in \cite[Proposition~5.15(c1)]{Fio10e}. 
\end{proof}

The following consequence does not require finite generation of $H\leq G$.

\begin{lem}\label{lem:beat_vs_degeneration}
    Let $G\leq A_{\G}$ be convex-cocompact, with conjugacy length function denoted $\|\cdot\|_G$. 
    Let $G\acts\X_{\om}$ be the degeneration determined by some $\phi\in\Out(G)$ and an ultrafilter $\om$. Then, a subgroup $H\leq G$ fixes a point of $\X_{\om}$ if and only if $\|\phi^n(h)\|_G\prec_{\om}\mf{o}^{\om}_{\rm top}(\phi)$ for all $h\in H$.
\end{lem}
\begin{proof}
    In view of \Cref{prop:degeneration_basics}(2), we only need to show that, if all elements of $H$ are elliptic in $\X_{\om}$, then $H$ is elliptic. For each $v\in\G$, all finitely generated subgroups of $H$ are elliptic in $T^v_{\om}$ by Serre's lemma \cite[p.~64]{Serre}. Since chains of arc-stabilisers of $T^v_{\om}$ have bounded length by \Cref{thm:10e-}(1), this implies that $H$ is itself elliptic in all $T^v_{\om}$, e.g.\ using \cite[Lemma~2.18]{Fioravanti-Kerr}. From this one deduces that $H$ fixes a point of $\X_{\om}$, as required.
\end{proof}

\section{Accessibility over centralisers}\label{sect:accessibility}

In this section we prove \Cref{thmintro:acc}: special groups are accessible over centralisers, and more generally over semi-parabolic subgroups. 

We explain our terminology. A splitting $G\acts T$ is \emph{reduced} \cite{BF-complexity} if there does not exist a vertex $v\in T$ whose $G$--stabiliser fixes an incident edge and acts transitively on the remaining incident edges. More weakly, we say that $G\acts T$ is \emph{irredundant} if $T$ has no degree--$2$ vertices, except for those where the vertex-stabiliser swaps the two incident edges; in other words, $T$ was not obtained from a smaller splitting of $G$ simply by subdividing an edge. Reduced splittings are irredundant, but the converse does not hold.

\begin{defn}\label{defn:accessible}
A group $G$ is \emph{accessible} over a family of subgroups $\mc{F}$ if there exists a number $N(G)$ such that any reduced splitting of $G$ over $\mc{F}$ has at most $N(G)$ orbits of edges. We say that $G$ is \emph{unconditionally accessible} over $\mc{F}$ if the same is true of irredundant splittings.
\end{defn}

For instance, finitely presented groups are accessible over small subgroups \cite{BF-complexity}, but already the free group $F_2$ fails to be unconditionally accessible over cyclic subgroups \cite[p.\,450]{BF-complexity}. At the end of this section, we prove:

\begin{thm}\label{thm:accessible}
    Every special group $G$ is (unconditionally) accessible over the family of $G$--semi-parabolic subgroups.
\end{thm}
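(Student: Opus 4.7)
The plan is to induct on a complexity function of $G$ that is determined by the convex-cocompact embedding $G\hookrightarrow A_{\G}$, exploiting three tools already at our disposal: \Cref{prop:cc_edge->vertex} (vertex groups in splittings with convex-cocompact edges remain convex-cocompact), \Cref{lem:parabolics_cofinite} (only finitely many $G$--conjugacy classes of $G$--parabolic subgroups), and \Cref{rmk:semi-parabolic_chains} (uniform bound on chains of $G$--semi-parabolic subgroups, hence ACC).

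Concretely, I would fix a $G$--invariant, $G$--cocompact convex subcomplex $C\sq\X_{\G}$ and define a complexity $c(G)$ as the pair $(h(G),d(G))$ in lexicographic order, where $h(G)$ is the number of $G$--orbits of hyperplanes of $C$ and $d(G)$ is the maximal rank of a free abelian subgroup of $G$. Given a reduced graph-of-groups splitting $\Delta$ of $G$ over $G$--semi-parabolic subgroups, \Cref{prop:cc_edge->vertex} produces, for each vertex group $V$, an invariant convex subcomplex $C_V\sq\X_{\G}$ realising $V$ as convex-cocompact, to which the inductive hypothesis may be applied once we know $c(V)<c(G)$. The first main step is thus to show this strict drop: each reduced edge either ``cuts'' a $G$--orbit of hyperplanes of $C$ (lowering $h$), or peels off part of the abelian direct factor afforded by \Cref{prop:parabolic_part}(4) (lowering $d$ or $h$ on the vertex side). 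The ACC bound on chains of semi-parabolic subgroups, combined with \Cref{lem:parabolics_cofinite}, ensures that the ``abelian peeling'' possibility is uniformly controlled, while a reduced splitting in which no edge cuts a hyperplane orbit must involve new abelian direct factors. The second step is then a standard summing argument: fixing a bound $N_0=N_0(c(G)-1)$ from the inductive hypothesis on each vertex group, one bounds the total number of edges of $\Delta$ in terms of $N_0$ and $c(G)$.

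The main obstacle will be making the strict drop in complexity rigorous in the presence of loops and of edges whose carriers in $C$ do not immediately correspond to hyperplane orbits; some edges of $\Delta$ may be ``parallel'' to existing cubical walls of $C$ rather than transverse to them. A natural workaround is a two-stage reduction: first reduce to splittings over purely $G$--parabolic edges by extracting the central abelian factors of each semi-parabolic edge group (using \Cref{prop:parabolic_part}(3)-(4) to do this canonically), which is done at most $d$ times per edge thanks to \Cref{rmk:semi-parabolic_chains}; then treat the $G$--parabolic case by passing to a quotient where parabolic edges become small, so that classical Bestvina--Feighn or Dunwoody accessibility applies, with \Cref{lem:parabolics_cofinite} bounding the number of parabolic types that can appear.
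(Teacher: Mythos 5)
Your proposal takes a genuinely different route from the paper, but it has a gap that I do not see how to close, and I think it is worth being concrete about why.

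Your central move is a strict drop in the complexity $(h(G),d(G))$ from $G$ to the vertex groups of a reduced splitting over semi-parabolic subgroups. This is not established, and I believe it is false in general. A reduced one-edge splitting $G=A\ast_P B$ over a $G$--parabolic subgroup $P$ need not ``cut'' any orbit of hyperplanes of the convex core $C$: the vertex groups $A$ and $B$ can perfectly well have the same number of hyperplane orbits as $G$, and the same maximal abelian rank. Parabolic edges are in general \emph{parallel} to the cubical structure rather than transverse to it, which is exactly the obstacle you flag at the end, but the proposed workaround (reduce to parabolic edges, then ``pass to a quotient where parabolic edges become small'') does not resolve it: after sorting edges by the $G$--conjugacy class of their parabolic part using \Cref{lem:parabolics_cofinite} and looking at $N_G(P)/P$, the resulting action of $N_G(P)$ on the subtree $\Fix(P)$ fails to be minimal or reduced, and that failure is the entire difficulty --- it is \emph{not} controlled by a complexity count on vertex groups.

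What the paper does instead, and what is missing from your sketch, is to control this failure by a first-Betti-number argument powered by the pro--$p$ topology. Concretely, the collection $\{g\,\Fix(P)\mid g\in G\}$ is a transverse covering of the tree (\Cref{prop:acc_from_transverse_cover}), and the only place where $N_G(P)\acts\Fix(P)$ can fail to be reduced is at vertices where $\Fix(P)$ meets one of its translates (Claim~2 in that proof). The number of $N_G(P)$--orbits of such vertices is bounded by $2b_1(G;\mathbb{F}_p)$ (Claim~1), and \emph{that} bound is where $p$--separability of $G$--parabolic subgroups (\Cref{lem:parabolic_p_sep}, via \Cref{lem:proper_normal_closure}) is essential: one needs a $\mathbb{Z}/p\mathbb{Z}$--quotient of each overlap vertex group vanishing on its intersection with $N_G(P)$, and separability in the profinite topology alone would not produce the required \emph{normal} finite-index subgroup (\Cref{rmk:normal_matters}). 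None of this is an induction on a cubical complexity of $G$; it is a single global count. The use of \Cref{lem:parabolics_cofinite} and of \Cref{rmk:semi-parabolic_chains} (the latter to promote to unconditional accessibility via \Cref{lem:unconditional_vs_conditional_acc}) does appear in the paper, so that part of your plan is on the right track, but without the transverse-covering/separability argument the bound on $T/G$ does not close.
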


Here accessibility holds unconditionally because semi-parabolics are more rigid than small subgroups: ascending chains have bounded length (\Cref{rmk:semi-parabolic_chains}). Still, the core of \Cref{thm:accessible} lies in accessibility in the usual sense; unconditionality will just make our life easier later on.

Here is a proof sketch of \Cref{thm:accessible}. Every semi-parabolic subgroup $Q\leq G$ admits a $G$--parabolic co-abelian subgroup $P\lhd Q$. One thus hopes to leverage the fact that the quotiented normalisers $N_G(P)/P$ are accessible over small subgroups, together with the fact that there are only finitely many $G$--conjugacy classes of $G$--parabolic subgroups of $G$. This strategy works provided that, given a reduced splitting $G\acts T$, the induced action $N_G(P)\acts\Fix(P;T)$ is sufficiently close to being minimal and reduced. Showing this requires control over the normal closure $\langle\langle P\rangle\rangle_G$ and, here, it turns out that the pro--$p$ topologies on $G$ can be rather useful.  

Let $G$ be a group and $p$ a prime number. A subgroup $H\leq G$ is \emph{$p$--separable} if, for every element $g\in G\setminus H$, there exists a homomorphism $f\colon G\ra F$ such that $F$ is a finite $p$--group and $f(g)\not\in f(H)$. The group $G$ is \emph{residually $p$--finite} if the trivial subgroup is $p$--separable. Our interest in these properties is due to the two facts collected in the following lemma.

\begin{lem}\label{lem:p_separability}
    Let $G$ be a group and let $p$ be any prime.
    \begin{enumerate}
        \item If a proper subgroup $H<G$ is $p$--separable, then $H$ is contained in a normal subgroup of $G$ of index $p$. In particular, $\langle\langle H\rangle\rangle_G\neq G$.
        \item If $G$ is special, then all $G$--parabolic subgroups are $p$--separable in $G$.
    \end{enumerate}
\end{lem}
\begin{proof}
    Part~(1) follows from the observation that, if $F$ is a finite $p$--group and $F_0$ is a maximal proper subgroup of $F$, then $F_0$ is normal in $F$ (and has index $p$). See for instance \cite[Theorem~4.3.2]{Hall}.

    For part~(2), note that retracts of residually $p$--finite groups are $p$--se\-pa\-rable, by the argument used to prove \cite[Lemma~9.2]{HW08}. RAAGs are residually $p$--finite for all $p$, by \cite[Theorem~6.1]{Toinet}, and parabolic subgroups of RAAGs are clear retracts, so they are $p$--separable. For $G\leq A_{\G}$, each $G$--parabolic subgroup is the intersection between $G$ and a parabolic subgroup of $A_{\G}$, and so it is $p$--separable in $G$.
\end{proof}

We will also need the following observation, which can be used to promote accessibility to unconditional accessibility when chains of edge-stabilisers have bounded length. We refer to \cite[Lemma~2.2]{Fio11b} for a proof.

\begin{lem}\label{lem:unconditional_vs_conditional_acc}
    Let $G$ be a finitely generated group that is accessible over a family $\mc{F}$ of subgroups. If there is a uniform bound on the length of chains of subgroups in $\mc{F}$, then $G$ is unconditionally accessible over $\mc{F}$.
\end{lem}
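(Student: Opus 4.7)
The plan is to start with an irredundant splitting $G \acts T$ over $\mc{F}$, iteratively collapse ``redundant'' edge orbits to reach a reduced splitting $T_{\mathrm{red}}$, and bound the total number of collapses using the chain length hypothesis.

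Call an edge orbit $[e]$ \emph{redundant} if some endpoint $v$ of $e$ satisfies $G_v = G_e$ and $G_v$ acts transitively on the remaining edges incident to $v$. Starting from $T$, I would iteratively collapse such orbits; each collapse decreases the number of edge orbits by exactly one while preserving minimality, and the process terminates at a reduced splitting $T_{\mathrm{red}}$. The accessibility hypothesis gives $|E(T_{\mathrm{red}})/G| \leq N(G)$, so it suffices to bound the number of collapses performed. Letting $\pi \colon T \to T_{\mathrm{red}}$ denote the composite collapse map, the preimage $T_u := \pi^{-1}(u)$ of each vertex $u \in T_{\mathrm{red}}$ is a $G_u$-invariant subtree that collapses entirely to $u$, and the number of collapses equals $\sum_{[u]} |E(T_u)/G_u|$, summed over $G$-orbits of vertices of $T_{\mathrm{red}}$.

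The main step is therefore to bound each $|E(T_u)/G_u|$ by a function of $L$. I would orient each edge $e$ of $T_u$ from the endpoint with $G_v = G_e$ (at the moment $[e]$ is collapsed) towards the ``absorbing'' endpoint. In the quotient $T_u/G_u$ this produces a rooted tree of groups whose root has vertex group $G_u$, in which every non-root vertex $[v]$ has stabilizer equal to the group of its outgoing edge and containing the groups of its incoming edges. Hence along any leaf-to-root directed path the edge stabilizers form a weakly ascending chain in $\mc{F}$, so such a path contains at most $L$ edges with strictly increasing stabilizers.

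The main obstacle is accounting for consecutive edges on such a path whose stabilizers coincide. Such a repetition forces an intermediate vertex $[v]$ whose incoming and outgoing edges both have stabilizer equal to $G_v$, each being the unique edge in its $G_v$-orbit. A representative $v \in T_u$ then carries exactly two incident edges in $T_u$, both pointwise fixed by $G_v$; were $v$ to have no further edge of $T$ going outside $T_u$, it would be a degree-$2$ vertex of $T$ whose stabilizer fixes both incident edges, forbidden by irredundancy (a swap is impossible since the two edges lie in distinct $G_v$-orbits). Hence each such repetition vertex must carry a distinct ``boundary'' edge --- an edge of $T$ from $v$ into some $T_{u'}$ with $u' \neq u$ --- corresponding to a distinct edge of $T_{\mathrm{red}}$ incident to $u$. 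This yields an estimate of the form $|E(T_u)/G_u| \leq L \cdot (1 + \deg_{T_{\mathrm{red}}/G}[u])$, and summing over the at most $N(G)+1$ vertex orbits of $T_{\mathrm{red}}$ produces an unconditional accessibility bound of order $L \cdot N(G)$.
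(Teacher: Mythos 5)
The approach is genuinely different from the paper's: the paper works directly with the quotient graph $\mathcal{G}=T/G$, isolates the bad (non-reduced) vertices, organizes them into maximal paths $\pi_i$ with non-bad endpoints, and uses the top/bottom/transitional edge classification to bound the length of ascending runs. You instead perform an iterated collapse $T\to T_{\mathrm{red}}$ and try to bound the fibers $T_u=\pi^{-1}(u)$. The underlying idea (monotone chains of edge-stabilizers along a reduction direction are bounded by $L$) is the same, but your execution has gaps that the paper's framing avoids.

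The central gap is the claim that a repetition vertex $v$ ``carries exactly two incident edges in $T_u$.'' This is not justified. A representative $v$ could perfectly well have $T_u$-degree $\geq 3$: besides the incoming edge $e_j$ and outgoing edge $e_{j+1}$ on your chosen leaf-to-root path, there may be further incoming edges of $T_u$ at $v$ coming from other branches of the fiber. In that case $v$ already has $T$-degree $\geq 3$, your irredundancy argument produces no contradiction, and no boundary edge is forced, so ``each repetition has a distinct boundary edge'' fails. The paper sidesteps exactly this difficulty by noting that a vertex where reducedness fails has quotient degree exactly $2$, and arranging the $\pi_i$ to be maximal paths of such vertices; branching then simply cannot occur in the interior of a $\pi_i$, which is what keeps the bookkeeping between repetitions and boundary edges honest. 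Your fiber $T_u/G_u$, by contrast, can branch.

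A secondary issue is that the ``rooted tree of groups'' structure on $T_u/G_u$ is asserted, not proved. You need (i) that $T_u/G_u$ is a tree (it is a connected subgraph of $T/G$, which may have cycles — one must argue that cycles of collapsed edges are excluded, which follows from the fact that bottom edges are never collapsed together with an irredundancy argument around the cycle, but this needs to be said); (ii) that each vertex has at most one outgoing edge, and (iii) a unique root. Moreover the orientation itself depends on the chosen collapse order (top edges can be absorbed from either side, and which endpoint becomes bad for a later edge depends on how classes have grown), so some care is needed to ensure a consistent choice exists. None of this is fatal to the approach, but each requires an argument; the paper's one-shot combinatorial analysis of $\mathcal{G}$, with its explicit top/bottom/transitional trichotomy and the preliminary collapse of top and bottom edges, handles all of these uniformly.
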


We are now ready to prove the main ingredient of our accessibility result, namely \Cref{prop:acc_from_transverse_cover} below. Say that a group $G$ is \emph{$N$--accessible} over a family of subgroups $\mc{F}$, for some integer $N\geq 0$, if every reduced splitting of $G$ over $\mc{F}$ has at most $N$ orbits of edges. We denote by $b_1(\cdot)$ the 1st Betti number of a group. We will also often speak of \emph{collapses} of $G$--trees: if $G\acts T$ is a simplicial tree and $\mc{E}$ is a $G$--invariant set of edges, we can form a new tree $G\acts T'$ by shrinking each edge in $\mc{E}$ to a point. If $G\acts T$ was minimal or reduced, then so is $G\acts T'$. Elliptic subgroups for $T$ are elliptic for $T'$, and edge-stabilisers for $T'$ are edge-stabilisers for $T$.

\begin{prop}\label{prop:acc_from_transverse_cover}
    Let $G$ be a finitely generated group with a reduced splitting $G\acts T$. Let $U\sq T$ be a subtree such that all the following hold:
    \begin{enumerate}
        \item the subtrees in the family $\{gU\mid g\in G\}$ cover $T$, and distinct ones share at most one point;
        \item the $G$--stabiliser $G_U$ is $p$--separable in $G$ for some prime $p$;
        \item there exists an integer $N\geq 0$ such that:
        \begin{enumerate}
            \item[(a)] either $G_U$ is elliptic in $T$ and $N=0$;
            \item[(b)] or $G_U$ is $N$--accessible over stabilisers of edges of $U$.
        \end{enumerate}
    \end{enumerate}
    Then the quotient $T/G$ has at most $4b_1(G;\mbb{F}_p)+N$ edges.
\end{prop}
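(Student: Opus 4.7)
The plan is to split into cases based on the normal closure $H:=\langle\langle G_U\rangle\rangle_G$.

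\emph{Trivial case.} If $H=G$, then because $G_U$ is $p$-separable, \Cref{lem:proper_normal_closure} forces $G_U=G$. Hence $U$ is $G$-invariant, and minimality of $G\acts T$ gives $U=T$. Case~(a) is then ruled out by the presence of edges in $T$, so case~(b) applies and bounds $|E(T/G)|$ by $N\leq 4b_1(G;\mathbb{F}_p)+N$.

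\emph{Main case.} When $H\neq G$, \Cref{lem:proper_normal_closure} yields an index-$p$ normal subgroup $K\lhd G$ with $G_U\leq K$, and hence a surjection $\chi\colon G\twoheadrightarrow\mathbb{F}_p$ that vanishes on $G_U$. Write $\mathcal{G}:=T/G$ with $e:=|E(\mathcal{G})|$, and let $\overline U:=U/G_U$ together with its natural morphism $\overline U\to\mathcal{G}$. The key reduction is that this morphism is \emph{bijective on edges}: since distinct translates of $U$ share at most one point, two edges of $U$ lie in the same $G$-orbit if and only if they lie in the same $G_U$-orbit. Thus $e=|E(\overline U)|$, and the task reduces to bounding the edges of $\overline U$.

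Next, I would exploit the structure of $\overline U$ via two combinatorial inputs. First, because $G\acts T$ is minimal, $T$ has no leaves, so every leaf of $U$ must lie on at least one other translate; consequently, every valence-$1$ vertex of $\overline U$ is an \emph{intersection vertex}, i.e.\ corresponds to a vertex of $\mathcal{G}$ with several preimages in $\overline U$. Second, in case~(a), $G_U$ being elliptic in $T$ implies (by projecting a $G_U$-fixed point of $T$ to $U$) that $G_U$ fixes a vertex of $U$, whence $\overline U$ is itself a tree; in case~(b), the $G_U$-minimal invariant subtree $U_0\subseteq U$ collapses to a reduced splitting of $G_U$ over the family of stabilisers of edges of $U$, contributing at most $N$ edges by $N$-accessibility, while the remaining edges of $U\setminus U_0$ sit in branches attached to $U_0$ through intersection vertices.

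Finally I would apply \Cref{lem:graph_computation}: in case~(a), $|E(\overline U)|\leq 2V_1(\overline U)+V_2(\overline U)-3$ by part~(1), and each valence-$1$ or valence-$2$ vertex of $\overline U$ that is an intersection vertex contributes to merging in the map $\overline U\to\mathcal{G}$ and hence to $b_1(\mathcal{G};\mathbb{F}_p)$. Since the natural surjection $G\twoheadrightarrow\pi_1^{\mathrm{top}}(\mathcal{G})$ (obtained by killing all vertex groups of the splitting) induces a surjection on mod-$p$ abelianisations, one has $b_1(\mathcal{G};\mathbb{F}_p)\leq b_1(G;\mathbb{F}_p)$. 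A careful valence count then yields the desired bound $4b_1(G;\mathbb{F}_p)+N$.

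\textbf{Main obstacle.} The subtle step is controlling valence-$2$ vertices of $\overline U$ that are not intersection vertices: reducedness of $G\acts T$ allows such vertices (with both incident edge groups proper in the vertex group), so they cannot be excluded combinatorially. I expect the factor $4$ (rather than the naive $3$ coming from \Cref{lem:graph_computation}(2)) in the bound to arise precisely from absorbing these vertices into the $b_1$ estimate, plausibly by passing to the $p$-fold cover of $\mathcal{G}$ corresponding to $\chi$ and exploiting that $G_U\leq\ker\chi$ forces each translate's contribution to appear with multiplicity $p$.
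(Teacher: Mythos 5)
Your reduction to counting edges of $U/G_U$ (via the observation that $\overline{U}\to T/G$ is bijective on edges) matches the paper's first step exactly, and your concern about valence-$2$ vertices of $\overline{U}$ that are not intersection vertices is precisely the right obstacle to identify. However, there is a genuine gap, and the $p$-fold-cover idea you sketch for closing it does not work: the homomorphism $\chi\colon G\twoheadrightarrow\mathbb{F}_p$ need not factor through $\pi_1^{\mathrm{top}}(\mathcal{G})$, so the "cover of $\mathcal{G}$ corresponding to $\chi$" is not defined in general; and the bound $b_1(\mathcal{G};\mathbb{F}_p)\leq b_1(G;\mathbb{F}_p)$ does not control the number of intersection vertices of $\overline{U}$ because those need not contribute loops to $\mathcal{G}$ at all (a $G$-orbit can hit $U$ in two $G_U$-distinct points that map to the same vertex of $\mathcal{G}$ without creating a cycle).

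What you are missing is the paper's Claim~1: a bound of the form
\[
\#\{\text{$G_U$--orbits of intersection vertices of $U$}\}\ \leq\ 2\,b_1(G;\mathbb{F}_p).
\]
This is proved not by passing to a cover of $\mathcal{G}$ but by constructing a \emph{new} simplicial $G$--tree $S$ dual to the transverse covering $\{gU\}$ (black vertices for translates of $U$, white vertices for intersection points, in the sense of \cite{Guirardel-G&T-2004,Guir-Fourier}). The induced graph-of-groups decomposition of $G$ has $G_U$ as the unique black vertex group and the intersection-point stabilisers $X_i$ as white vertex groups, with the $G_U$--point stabilisers $Y_i=X_i\cap G_U$ as edge groups. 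Since $G\acts T$ is minimal, each $Y_i$ is proper in $X_i$; since $G_U$ is $p$--separable, so is $Y_i$ in $X_i$, whence \Cref{lem:proper_normal_closure} furnishes $\rho_i\colon X_i\twoheadrightarrow\mathbb{Z}/p\mathbb{Z}$ killing $Y_i$. Killing $G_U$, all edge groups, and the remaining vertex groups, while sending $X_i\mapsto\mathbb{Z}/p\mathbb{Z}$ via $\rho_i$, yields an epimorphism $G\twoheadrightarrow F$ onto a free product of free factors and $\mathbb{Z}/p\mathbb{Z}$'s whose rank is at least half the number of edge orbits of $S$; comparing $b_1(\cdot;\mathbb{F}_p)$ gives the claimed bound. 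Your Claim~2-type observation (that leaves of $U$, non-reduced vertices of $\mathcal{M}_U$, and similar degenerate vertices are intersection vertices) is then exactly what is needed to feed into \Cref{lem:graph_computation}(1) and absorb the valence-$\leq 2$ count into the $2b_1$ bound, giving the constant $4$; no covering-space argument is involved.
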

\begin{proof}
Note that the actions $G\acts T$ and $G_U\acts U$ have the same number of edge orbits. Indeed, Condition~(1) implies that every edge of $T$ has a $G$--translate in $U$, and that two edges of $U$ lie in the same $G$--orbit if and only if they are in the same $G_U$--orbit. Therefore, our goal in the rest of the proof is to bound the number of edges of the quotient $U/G_U$. 

We cannot simply invoke accessibility of $G_U$, as the action $G_U\acts U$ might not be minimal or reduced. Still, this failure is located at vertices where $U$ meets one of its $G$--translates (since $T$ is minimal and reduced), and the number of such vertices can be bounded uniformly in terms of $G$ alone.

\smallskip
{\bf Claim~1.} \emph{There are at most $2b_1(G;\mbb{F}_p)$ $G_U$--orbits of vertices where $U$ intersects its $G$--translates.}

\smallskip\noindent
\emph{Proof of Claim~1.}
We will study an auxiliary splitting of $G$, which we now describe. By Condition~(1), the family $\mc{Y}:=\{gU\mid g\in G\}$ is a \emph{transverse covering} of $T$ in the sense of \cite[Definition~1.4]{Guir-Fourier}. As explained there, $\mc{Y}$ gives rise to an action $G\acts S$, where $S$ is the following simplicial tree:
\begin{itemize}
    \item $S$ has a ``black'' vertex for each element of $\mc{Y}$;
    \item $S$ has a ``white'' vertex for each point of intersection between distinct subtrees in $\mc{Y}$;
    \item edges of $S$ join the white vertex representing a vertex $x\in T$ to each black vertex of $S$ representing a subtree in $\mc{Y}$ containing $x$.
\end{itemize}
The action $G\acts S$ is minimal, since $G\acts T$ is. 

Since $G$ acts transitively on $\mc{Y}$, there is a single orbit of black vertices in $S$. As $S$ is bipartite with respect to the black-white colouring, we can pick a fundamental domain for $G\acts S$ that realises $G$ as the fundamental group of the following graph of groups:
\begin{equation}\label{split:1}
    \begin{tikzpicture}[baseline=(current bounding box.center)]
    \draw[fill] (0,0) circle [radius=0.1cm];
    \draw[line width=0.25mm] (2,1) circle [radius=0.07cm];
    \draw[line width=0.25mm] (2,0.5) circle [radius=0.07cm];
    \draw[line width=0.25mm] (2,-1) circle [radius=0.07cm];
    \draw[thick] (0,0) -- (1.94,0.97);
    \draw[thick] (0,0) -- (1.94,0.47);
    \draw[thick] (0,0) -- (1.94,-0.97);
    \draw[fill] (1.5,-0.2) circle [radius=0.02cm];
    \draw[fill] (1.5,-0.1) circle [radius=0.02cm];
    \draw[fill] (1.5,-0.3) circle [radius=0.02cm];
    \draw[line width=0.25mm] (-2,1) circle [radius=0.07cm];
    \draw[line width=0.25mm] (-2,-0.5) circle [radius=0.07cm];
    \draw[line width=0.25mm] (-2,-1) circle [radius=0.07cm];
    \draw[thick] (0,0) [out=-120, in=15] to (-1.94,-0.97); 
    \draw[thick] (0,0) -- (-1.94,-0.97); 
    \draw[thick] (0,0) -- (-1.94,-0.47); 
    \draw[thick] (0,0) [out=175, in=25] to (-1.94,-0.47);
    \draw[thick] (0,0) -- (-1.94,0.97); 
    \draw[thick] (0,0) [out=90, in=0] to (-1.94,0.97); 
    \draw[thick] (0,0) [out=120, in=-15] to (-1.94,0.97); 
    \draw[fill] (-1.5,0.2) circle [radius=0.02cm];
    \draw[fill] (-1.5,0.1) circle [radius=0.02cm];
    \draw[fill] (-1.5,0.3) circle [radius=0.02cm];
    \node[right] at (2,1) {$X_1$};
    \node[right] at (2,0.5) {$X_2$};
    \node[right] at (2,-1) {$X_s$};
    \node[left] at (-2,-1) {$X_{s+1}$};
    \node[left] at (-2,-0.5) {$X_{s+2}$};
    \node[left] at (-2,1) {$X_n$};
    \node[below] at (0.2,-0.22) {$G_U$};
    \node[above] at (1.2,0.65) {$Y_1$};
    \node[below] at (1.2,-0.6) {$Y_s$};
    \node at (3.5,0) {.};
    \end{tikzpicture}
\end{equation}
This graph has a central black vertex labelled by the group $G_U$ and a finite number $n$ of adjacent white vertices, labelled by groups $X_i$. The first $s$ white vertices are joined to the black vertex by a unique edge, and the remaining ones by multiple edges. Each vertex group $X_i$ is the $G$--stabiliser of a vertex of $U$ and for $i\leq s$ each edge group $Y_i$ is the $G_U$--stabiliser of that same vertex of $U$. We do not name the groups labelling the multiple edges. 

Note that $G$--orbits of edges of $S$ are in bijection with $G_U$--orbits of vertices of $T$ where $U$ intersects one of its translates. Thus, Claim~1 boils down to bounding the size of the quotient graph $S/G$, which is our next goal.

Since $G$ acts minimally on $S$, each $Y_i$ is a proper subgroup of $X_i$ for $i\leq s$. Since $G_U$ is $p$--separable in $G$ by Condition~(2), each intersection $Y_i=X_i\cap G_U$ is $p$--separable in $X_i$. Since $Y_i<X_i$ is proper and $p$--separable, \Cref{lem:p_separability}(1) yields an epimorphism $\rho_i\colon X_i\twoheadrightarrow\Z/p\Z$ vanishing on $Y_i$.

We can therefore define an epimorphism of $G$ onto a free product $F$ of copies of $\Z$ and $\Z/p\Z$. The latter is the fundamental group of the graph of groups described as follows: the underlying graph is the same as in Splitting~\ref{split:1}; we replace by the trivial group all edge groups, as well as the black vertex group in the middle, and the left-hand white vertex groups; the right-hand white vertex groups get replaced by copies of $\Z/p\Z$. Concretely, the epimorphism $G\twoheadrightarrow F$ vanishes on $G_U$ and $X_{s+1},\dots,X_n$, as well as on all edge groups of Splitting~\ref{split:1}, while it equals $\rho_i$ on each $X_i$ with $i\leq s$. 

If $\s$ is the number of edges in $S/G$, the rank of the free product $F$ is $\s-(n-s)\geq \frac{1}{2}\s$. Hence $b_1(G;\mbb{F}_p)\geq b_1(F;\mbb{F}_p)\geq \frac{1}{2}\s$, proving Claim~1.
\hfill$\blacksquare$

\smallskip
Now, let $\mc{M}_U\sq U$ be the $G_U$--minimal subtree of $U$, if $G_U$ is not elliptic in $T$; otherwise, set $\mc{M}_U:=\{u\}$ for some $G_U$--fixed vertex $u\in U$. The action $G_U\acts U$ is cocompact, as it has the same number of edge orbits as the minimal action $G\acts T$, where $G$ is finitely generated. Thus, $U$ remains within bounded distance of the subtree $\mc{M}_U$.  

Choose finitely many finite subtrees $\Phi_i\sq U$ such that each $\Phi_i$ intersects $\mc{M}_U$ at a single vertex $u_i$, such that distinct $\Phi_i$'s do not share edges, and such that each $G_U$--orbit of edges of $U\setminus\mc{M}_U$ intersects the union $\bigcup\Phi_i$ in a single edge. Let $\Delta_f$ be the set of vertices that have degree $\leq 2$ within some $\Phi_i$ (excluding the base vertex $u_i$, if relevant). Let $\Delta_m\sq\mc{M}_U$ be a set of representatives for the $G_U$--orbits of vertices where the action $G_U\acts\mc{M}_U$ fails to be reduced: these are vertices whose $G_U$--stabiliser fixes one incident edge of $\mc{M}_U$ and acts transitively on the remaining incident edges of $\mc{M}_U$. If $G_U$ is elliptic in $T$, we simply set $\Delta_m:=\emptyset$. 
\begin{equation*}
    \begin{tikzpicture}[baseline=(current bounding box.center)]
    \filldraw[black,opacity=0.1,draw=black] (0,0) ellipse (3cm and 0.25cm);
    \draw (0,0) ellipse (3cm and 0.25cm);
    \draw[thick] (-1,0.23) -- (-1.2,0.7);
    \draw[thick] (-1.2,0.7) -- (-0.9,1.1);
    \draw[thick] (-1.2,0.7) -- (-2,1.3);
    \draw[fill] (-1,0.23) circle [radius=0.05cm];
    \draw[fill] (-1.2,0.7) circle [radius=0.05cm];
    \draw[fill] (-0.9,1.1) circle [radius=0.05cm];
    \draw[fill] (-2,1.3) circle [radius=0.05cm];
    \draw[fill] (-1.6,1) circle [radius=0.05cm];
    \draw[thick] (1,0.23) -- (1.2,0.7);
    \draw[thick] (1.2,0.7) -- (0.9,1.1);
    \draw[thick] (1.2,0.7) -- (1.6,1);
    \draw[fill] (1,0.23) circle [radius=0.05cm];
    \draw[fill] (1.2,0.7) circle [radius=0.05cm];
    \draw[fill] (0.9,1.1) circle [radius=0.05cm];
    \draw[fill] (1.6,1) circle [radius=0.05cm];
    \draw[fill] (0,.75) circle [radius=0.02cm];
    \draw[fill] (0.1,.75) circle [radius=0.02cm];
    \draw[fill] (-0.1,.75) circle [radius=0.02cm];
    \node[above right] at (-1.1,0.15) {$u_1$};
    \node[above left] at (1.1,0.15) {$u_k$};
    \node at (-1,1.5) {$\Phi_1$};
    \node at (1,1.5) {$\Phi_k$};
    \node at (0,0) {$\mc{M}_U$};
    \node at (-1.45,1.15) {$w$};
    \node at (1.75,1.15) {$v$};
    \end{tikzpicture}
\end{equation*}

\smallskip
{\bf Claim~2.} \emph{Each vertex in $\Delta_f\cup\Delta_m$ lies in a translate $gU\neq U$ with $g\in G$.}

\smallskip\noindent
\emph{Proof of Claim~2.}
First, by the definition of the $\Phi_i$, each degree--$1$ vertex $v\in\Phi_i\setminus\{u_i\}$ must also have degree $1$ within $U$. Since $T$ does not have degree--$1$ vertices, by the minimality of the $G$--action, it follows that $v$ is incident to an edge of $T$ not contained in $U$. Hence $v$ lies in a $G$--translate of $U$ distinct from $U$, as these translates cover $T$ by assumption.

Second, suppose that some $w\in\Phi_i\setminus\{u_i\}$ has degree $2$ within $\Phi_i$. Since $G_U$ leaves invariant $\mc{M}_U$, the $G_U$--stabiliser of $w$ coincides with the $G_U$--stabiliser of the edge of $\Phi_i$ incident to $w$ in the direction of $u_i\in\mc{M}_U$, and it acts transitively on the remaining edges of $U$ incident to $w$, again by the construction of the $\Phi_i$. Since the action $G\acts T$ is reduced, this again implies that $w$ lies in a $G$--translate of $U$ distinct from $U$.

The same argument applies to the vertices of $\Delta_m$, showing that they are also intersections of distinct translates of $U$. This proves Claim~2.
\hfill$\blacksquare$

\smallskip
Since no two vertices of $\Delta_f\cup\Delta_m$ are in the same $G_U$--orbit, the combination of Claims~1 and~2 shows that $|\Delta_f\cup\Delta_m|\leq 2b_1(G;\mbb{F}_p)$. Let $\mc{E}_{\Phi}$ be the set of edges contained in $\Phi_i$ for some $i$. Considering the finite tree formed by wedging the $\Phi_i$ at their base vertices $u_i$, and applying standard estimates, we see that $|\mc{E}_{\Phi}|\leq 2|\Delta_f|$. Hence we get $|\mc{E}_{\Phi}\cup\Delta_m|\leq 4b_1(G;\mbb{F}_p)$.

If $G_U$ is elliptic, this shows that $U/G_U$ has at most $4b_1(G;\mbb{F}_p)$ edges. If $G_U$ is not elliptic, the above means that, after collapsing at most $4b_1(G;\mbb{F}_p)$ $G_U$--orbits of edges, the action $G_U\acts U$ becomes minimal and reduced. This collapse then has at most $N$ edge orbits by Condition~(3). Either way, $U/G_U$ has at most $4b_1(G;\mbb{F}_p)+N$ edges, and so does $T/G$ as desired.
\end{proof}

We now deduce \Cref{thm:accessible} from \Cref{prop:acc_from_transverse_cover}. Consider a special group $G$ and realise $G$ as a convex-cocompact subgroup of a RAAG, in order to be able to speak of $G$--semi-parabolic subgroups. Recall that $G$ acts by conjugation on the set of $G$--parabolics $\mc{P}(G)$, and the quotient $\mc{P}(G)/G$ is finite by \Cref{lem:parabolics_cofinite}. For each $P\in\mc{P}(G)$, the normaliser $N_G(P)$ is $G$--parabolic by \Cref{lem:parabolic_normaliser2}(2), and hence the quotient $N_G(P)/P$ is finitely presented (as $N_G(P)$ and $P$ are special and hence finitely presented).

By Bestvina and Feighn's accessibility \cite{BF-complexity}, each finitely presented group $H$ admits a constant $\g(H)$ such that every reduced splitting of $H$ over abelian subgroups has at most $\g(H)$ edge orbits. 

\begin{cor}\label{cor:acc_over_semi-parabolic}
    If $G\acts T$ is a reduced splitting of $G$ over $G$--semi-parabolic subgroups, then the number of edges of the quotient $T/G$ is at most:
    \[ 4b_1(G;\Q)\cdot|\mc{P}(G)/G|+\sum_{[P]\in\mc{P}(G)/G}\g(N_G(P)/P) .\]
\end{cor}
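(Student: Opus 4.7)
The plan is to partition the $G$-orbits of edges of $T$ by the $G$-conjugacy class of their parabolic part---well-defined by Proposition \ref{prop:parabolic_part}---and bound each piece separately. Writing $n_P$ for the number of $G$-orbits of edges whose parabolic part is conjugate to $P$, the goal is to establish $n_P \leq 4 b_1(G;\Q) + \g(N_G(P)/P)$ for every $P \in \mc{P}(G)$. Summing over the finite set $\mc{P}(G)/G$ (Lemma \ref{lem:parabolics_cofinite}) then yields the corollary.

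The algebraic foundation is Proposition \ref{prop:parabolic_part}(3): for a $G$-semi-parabolic $Q$ with parabolic part $P$, any $G$-conjugate of $P$ contained in $Q$ equals $P$. Applied to the edge-stabilisers $Q_e$, this implies that an edge with parabolic part conjugate to $P$ is fixed by exactly one $G$-conjugate of $P$, namely its own parabolic part. Setting $U := \Fix(P) \sq T$, with $G_U = N_G(P)$, it follows that distinct $G$-translates of $U$ share no edge of $T$, and that each edge with parabolic part conjugate to $P$ lies in a unique translate. Moreover, $N_G(P)$ acts cocompactly on $\Fix(P)$ by Lemma \ref{lem:normaliser_acts_cocompactly_on_Fix}.

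The strategy is to collapse every $G$-orbit of edges whose parabolic part is not conjugate to $P$, obtaining a minimal $G$-tree $T_P$ with exactly $n_P$ edge orbits, in which the $G$-translates of the image $U_P \sq T_P$ now form a genuine transverse covering. I would then apply Proposition \ref{prop:acc_from_transverse_cover} to $G \acts T_P$ with this $U_P$. The stabiliser $G_{U_P} = N_G(P)$ is either $G$ or $G$-parabolic (Lemma \ref{lem:parabolic_normaliser2}(2)), so it is $p$-separable for every prime $p$ by Lemma \ref{lem:parabolic_p_sep}; choosing $p$ to avoid the finitely many primes dividing the torsion of the finitely generated abelian group $G^{ab}$ ensures $b_1(G;\mbb{F}_p) = b_1(G;\Q)$. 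For hypothesis (3b), the $N_G(P)$-stabilisers of edges of $U_P$ are precisely the $G$-semi-parabolic subgroups $Q$ with parabolic part equal to $P$ (again by Proposition \ref{prop:parabolic_part}(3)); these contain $P$ as a normal subgroup with abelian quotient. Since $N_G(P)$ is finitely presented (being convex-cocompact) and $P$ is finitely generated, the quotient $\ov N := N_G(P)/P$ is finitely presented, and reduced splittings of $N_G(P)$ over such subgroups descend bijectively to reduced splittings of $\ov N$ over abelian subgroups, which are bounded in number by $\g(\ov N)$ by Bestvina--Feighn accessibility. This supplies hypothesis (3b) with $N = \g(N_G(P)/P)$, yielding the claimed bound on $n_P$.

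The step I expect to be most delicate is verifying that the collapsed tree $T_P$ is still a \emph{reduced} splitting---a hypothesis required by Proposition \ref{prop:acc_from_transverse_cover}. I expect this to follow from the observation that the collapsed edges carry a parabolic-part type different from that of any edge of $T_P$, so that the reducedness conditions at vertices of $T$ transfer to vertices of $T_P$ without obstruction. Should this fail in some corner case, I would fall back on rerunning the proof of Proposition \ref{prop:acc_from_transverse_cover} \emph{in situ} on the original reduced $T$, treating the $G$-orbit of $U$ as a \emph{partial} transverse covering whose translates cover precisely the edges with parabolic part conjugate to $P$; both the $p$-group counting of Claim~1 and the internal accessibility analysis of that proof adapt without essential change.
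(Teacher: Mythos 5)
Your proposal is correct and follows essentially the same route as the paper: partition edge orbits by the $G$-conjugacy class of the parabolic part of the edge stabiliser, collapse to isolate one class $[P]$, then apply Proposition \ref{prop:acc_from_transverse_cover} to the subtree $\Fix(P)$ inside the collapsed tree, verifying $p$-separability of $N_G(P)$ via Lemma \ref{lem:parabolic_normaliser2}(2) and Lemma \ref{lem:parabolic_p_sep}, and supplying hypothesis (3b) via Bestvina--Feighn accessibility of $N_G(P)/P$ over abelian subgroups. The one concern you flag at the end --- whether the collapsed tree $T_P$ is still reduced --- is not actually a gap: the paper records earlier, in the discussion of collapses preceding Lemma \ref{lem:unconditional_vs_conditional_acc}, the general fact that any collapse of a reduced (or minimal) splitting is again reduced (or minimal), which makes your fallback argument unnecessary.
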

\begin{proof}
    Every semi-parabolic subgroup contains a normal, co-abelian, $G$--parabolic subgroup. For each conjugacy class $[P]\in\mc{P}(G)/G$, consider the edges of $T$ whose stabiliser contains a conjugate of $P$ as a normal co-abelian subgroup, and collapse all other edges of $T$. We obtain a reduced splitting $G\acts T_{[P]}$. Bounding the number of edges in $T_{[P]}/G$ for each $[P]\in\mc{P}(G)/G$ then gives a bound on the number of edges in $T/G$.

    Thus, we assume in the rest of the proof that there exists $P\in\mc{P}(G)$ such that all edge-stabilisers of $T$ contain a conjugate of $P$ as a co-abelian subgroup. Our goal becomes showing that the quotient $T/G$ has at most $4b_1(G;\Q)+\g(N_G(P)/P)$ edges. For this, the plan is to invoke \Cref{prop:acc_from_transverse_cover} for the subtree of $P$--fixed points $\Fix(P)\sq T$, so we proceed to check that its assumptions are satisfied.
    
    Every edge-stabiliser $E$ has a conjugate that has $P$ itself as a co-abelian subgroup, and so the $G$--translates of $\Fix(P)$ cover $T$. At the same time, $E$ contains a unique $G$--conjugate of $P$ by \Cref{lem:weak_parabolic part}, and so no two $G$--translates of $\Fix(P)$ share an edge. The same argument shows that the $G$--stabiliser of $\Fix(P)$ is precisely the normaliser $N_G(P)$. The latter is $G$--parabolic by \Cref{lem:parabolic_normaliser2}(2), and hence $p$--separable for all primes $p$ by \Cref{lem:p_separability}(2). Finally, the action $N_G(P)\acts\Fix(P)$ factors through an action $N_G(P)/P\acts\Fix(P)$ with abelian edge-stabilisers, and the group $N_G(P)/P$ is $\g(N_G(P)/P)$--accessible over abelian subgroups by \cite{BF-complexity}. 
    
    \Cref{prop:acc_from_transverse_cover} now shows that $T/G$ has at most $4b_1(G;\mbb{F}_p)+\g(N_G(P)/P)$ edges, for all primes $p$. Finally, for large $p$, we have $b_1(G;\mbb{F}_p)=b_1(G;\Q)$, since $G$ and its abelianisation are finitely generated.
\end{proof}

\Cref{cor:acc_over_semi-parabolic} implies \Cref{thm:accessible}, using \Cref{lem:unconditional_vs_conditional_acc} and \Cref{rmk:semi-parabolic_chains} to promote accessibility to unconditional accessibility.

We conclude by recording an important consequence of \Cref{thm:accessible}. The following was proven in \cite{Fio11b} under the assumption that the special group $G$ be accessible over the family $\mc{Z}(G)$. Since centralisers are $G$--semi-parabolic, \Cref{thm:accessible} implies that this holds generally, and we obtain a powerful tool to analyse point-stabilisers of degenerations of $G$.

\begin{thm}\label{thm:acc_implies_nice}
    Let $G$ be a special group. Let $G\acts T$ be a minimal $\R$--tree with arc-stabilisers in $\mc{Z}(G)$. Let $\mscr{E}$ be the family of subgroups elliptic in $T$.
    \begin{enumerate}
        \item Point-stabilisers of $T$ are convex-cocompact in $G$, with respect to all convex-cocompact embeddings of $G$ into RAAGs.
        \item There are only finitely many $G$--conjugacy classes of point-stabilisers.
        \item If $H\leq G$ is not elliptic in $T$, then there exists a $(\mc{Z}(G),\mscr{E})$--splitting of $G$ in which $H$ is not elliptic.
    \end{enumerate}
\end{thm}

\section{Enhanced JSJ trees}\label{sect:JSJ}

In this section we prove \Cref{thmintro:JSJ}: every $1$--ended special group $G$ admits a (possibly trivial) $\Out(G)$--invariant simplicial $G$--tree over centralisers and cyclic subgroups, relative to singular subgroups. Vertex groups are of two kinds: either quadratically hanging, or rigid (see \Cref{thm:JSJ+}). We informally refer to such $G$--trees as \emph{enhanced JSJ decompositions} of $G$.

In \Cref{sub:canonical_splittings}, we develop techniques to construct $\Out(G)$--invariant splittings of $G$ over non-small subgroups, particularly over non-cyclic centralisers. Here the main result for special groups is \Cref{thm:Z_s-splitting}, though this material is more broadly applicable. Then, in \Cref{sub:JSJ-like}, we add classical JSJ theory in order to canonically split $G$ over cyclic subgroups as well, and prove \Cref{thmintro:JSJ} (or rather its strengthening \Cref{thm:JSJ+}). Finally, in \Cref{sub:JSJ_RAAG}, we briefly give examples of enhanced JSJ decompositions of RAAGs.

\subsection{Singular subgroups}\label{sub:singular}

Let $G$ be a special group. Let $\mc{VP}(G)$ be the collection of subgroups of $G$ that virtually split as the direct product of two infinite groups. We are interested in maximal such subgroups.

\begin{defn}
    Let $\mc{S}(G)$ denote the collection of maximal subgroups in $\mc{VP}(G)$. The elements of $\mc{S}(G)$ are the \emph{singular subgroups} of $G$. 
\end{defn}

The family $\mc{S}(G)$ is clearly $\Aut(G)$--invariant, and we show below that it consists of finitely many $G$--conjugacy classes of subgroups. Thus, a finite-index subgroup of $\Out(G)$ preserves each singular subgroup up to conjugacy, which is useful when running inductive arguments on ``complexity''. Many (though not all) elements of $\mc{Z}(G)$ are contained in singular subgroups, so this will naturally lead to considering splittings of $G$ relative to $\mc{S}(G)$.

Fix as usual some convex-cocompact embedding $G\hookrightarrow A_{\G}$ into a RAAG, so that we can speak of $G$--parabolics. We begin by focusing on a particular kind of singular subgroup. 

An abelian subgroup $A\leq G$ is \emph{isolated} if $Z_G(a)=A$ for all $a\in A\setminus\{1\}$. Rank--$1$ isolated abelians are plentiful: they are the maximal cyclic subgroups generated by contracting elements of $G$. By contrast, there are only finitely many isolated abelian subgroups of rank $\geq 2$ up to $G$--conjugacy:

\begin{lem}\label{lem:IA_properties}
      Isolated abelian subgroups of rank $\geq 2$ are $G$--parabolic.
\end{lem}
\begin{proof}
    Let $A$ be abelian, isolated, and of rank $\geq 2$. Note that $A$ is convex-cocompact (since it is a centraliser) and so it contains a convex-cocompact subgroup $C\leq A$ with $C\cong\Z$ (e.g.\ by \cite[Remark~3.7(6)]{Fio10e}). Now, the group $A=Z_G(C)$ virtually splits as $C\x A'$ for a $G$--parabolic subgroup $A'$, by \Cref{lem:cc_basics}(2). Since $\rk(A)\geq 2$, we have $A'\neq\{1\}$ and hence $Z_G(A')=A$. Thus, another application of \Cref{lem:cc_basics}(2) yields that $C\x A'$ has finite index also in the normaliser $N_G(A')$. Moreover, $N_G(A')$ is $G$--parabolic by \Cref{lem:parabolic_normaliser2}(2). In conclusion, the subgroups $A$ and $N_G(A')$ are commensurable and root-closed, and so they must coincide.
\end{proof}

The following collects the most important properties of the family $\mc{S}(G)$. To state part of this, it is convenient to introduce some notation:
\begin{align*}
    \mc{Z}_c(G)&:=\{Z_G(g) \mid g\in G,\ Z_G(g)\cong\Z\}, \\
    \mc{Z}_s(G)&:=\mc{Z}(G)\setminus\mc{Z}_c(G).
\end{align*}
Equivalently, $\mc{Z}_c(G)$ is the collection of contracting maximal cyclic subgroups of $G$. Note that $\mc{Z}_s(G)$ can still contain cyclic centralisers, just not any that are centralisers of a single element.

\begin{prop}\label{prop:S(G)_properties}
    The following hold.
    \begin{enumerate}
        \item Every element of $\mc{VP}(G)$ is contained in an element of $\mc{S}(G)$.
        \item All elements of $\mc{S}(G)$ are $G$--parabolic.
        \item We have $\mc{S}(G)=\emptyset$ if and only if $G$ is Gromov-hyperbolic.
        \item If $Z\in\mc{Z}_s(G)$ and $Z\neq\{1\}$, then the normaliser $N_G(Z)$ is contained in an element of $\mc{S}(G)$.
    \end{enumerate}
\end{prop}
\begin{proof}
    To begin with, consider a subgroup $H\leq G$ that splits as a nontrivial product $H_1\x H_2$. We claim that $H$ is contained either in an isolated abelian subgroup of $G$ (automatically of rank $\geq 2$), 
    or in the normaliser in $G$ of a co-abelian $G$--parabolic subgroup of some non-abelian element of $\mc{Z}(G)$. Note that these two kinds of subgroups are $G$--parabolic, by \Cref{lem:IA_properties} and \Cref{lem:parabolic_normaliser2}(2). Moreover, they lie in $\mc{VP}(G)$ by \Cref{lem:cc_basics}(2).
    
    To prove our claim, choose a co-abelian $G$--parabolic subgroup $P_1\lhd Z_G(H_1)$ (recall that centralisers are semi-parabolic). The subgroup $H_2$ normalises $P_1$ as $H_2\leq Z_G(H_1)$. At the same time, $H_1$ commutes with $Z_G(H_1)$ and thus it also normalises $P_1$. This shows that $H\leq N_G(P_1)$. If $Z_G(H_1)$ is non-abelian, this proves our claim. If instead $Z_G(H_1)$ is abelian, then $H_2$ is abelian and it follows that we have $H\leq Z_G(H_2)$. In this case, let $Z$ be a maximal element of $\mc{Z}(G)$ containing $H$. If $Z$ is non-abelian and $P\lhd Z$ is co-abelian and $G$--parabolic, we have $H\leq N_G(P)$, again proving our claim. Finally, suppose that $Z$ is abelian. This implies that $Z\leq Z_G(g)$ for each $g\in Z$ and, by maximality of $Z$, we must have $Z=Z_G(g)$ for $g\neq 1$. In other words, $Z$ is an isolated abelian subgroup, proving our claim.

    By the claim, each element of $\mc{VP}(G)$ is virtually contained in a $G$--parabolic element of $\mc{VP}(G)$. Since $G$--parabolics are root-closed, it follows that each element of $\mc{VP}(G)$ is \emph{entirely} contained in a $G$--parabolic element of $\mc{VP}(G)$. Since chains of $G$--parabolics have bounded length (\Cref{rmk:semi-parabolic_chains}), we obtain parts~(1) and (2) of the proposition.

    The rest is routine. We have $\mc{S}(G)=\emptyset$ if and only if $\mc{VP}(G)=\emptyset$, which occurs if and only if $G$ does not contain $\Z^2$ as a subgroup. For special groups, this is equivalent to hyperbolicity \cite{Gen-relhyp}. Regarding part~(4), note that each $Z\in\mc{Z}(G)$ with $Z\neq\{1\}$ is contained in $Z_G(g)$ for some $g\in G\setminus\{1\}$, and $Z_G(g)$ is either cyclic or in $\mc{VP}(G)$.
    Thus, each $Z\in\mc{Z}_s(G)$ with $Z\neq\{1\}$ is contained in an element of $\mc{VP}(G)$. Now, \Cref{lem:cc_basics}(2) shows that $N_G(Z)$ virtually splits as a product of the form $Z\x Z'$. If $Z'\neq\{1\}$, this implies that $N_G(Z)\in\mc{VP}(G)$. If instead $Z'$ is trivial, then $Z$ has finite-index in $N_G(Z)$, and so $N_G(Z)=Z$ because centralisers are root-closed. Either way, $N_G(Z)$ is contained in a virtual product and hence in a singular subgroup.
\end{proof}

By \Cref{prop:S(G)_properties}(4), an element $g\in G\setminus\{1\}$ lies in a singular subgroup of $G$ if and only if $Z_G(g)\not\cong\Z$. Moreover, the elements of the family $\mc{Z}_s(G)$ are precisely the elements of $\mc{Z}(G)$ that are contained in a singular subgroup (with the possible exception of the trivial subgroup). In particular, the family $\mc{Z}_s(G)$ is $\Aut(G)$--invariant and closed under intersections.

Soon, we will be considering splittings of $G$ whose edge groups are either centralisers or cyclic subgroups. Thus, it is convenient to denote by ${\rm Cyc}(G)$ the family of all infinite cyclic subgroups of $G$, and set
\[ \mc{ZZ}(G):=\mc{Z}(G)\cup{\rm Cyc}(G) .\]
We will often need to compare centralisers in the vertex groups of a splitting of $G$ to centralisers in $G$. The next result is the main tool for this.

\begin{lem}\label{lem:Z(V)_in_Z(G)}
    Let $V$ be a convex-cocompact
    vertex group of a tree $G\acts T$. 
    \begin{enumerate}
        \item If $G\acts T$ is a $(\mc{ZZ}(G),\mc{S}(G))$--splitting, then $\mc{Z}_s(V)\sq\mc{Z}_s(G)$ and $\mc{ZZ}(V)\sq\mc{ZZ}(G)$.
        \item If $G\acts T$ is a $(\mc{Z}(G),\mc{S}(G))$--splitting, then $\mc{Z}(V)\sq\mc{Z}(G)$.
    \end{enumerate}
\end{lem}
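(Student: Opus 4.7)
The strategy for both parts is to take $Z\in\mc{Z}(V)$, write $Z=Z_V(B)$ for the maximal choice $B:=Z_V(Z)$, and consider the natural $G$--candidate $\tilde{Z}:=Z_G(B)\in\mc{Z}(G)$. By construction, $\tilde{Z}\cap V=Z_V(B)=Z$, so to identify $Z$ with an element of $\mc{Z}(G)$ it suffices either to show $\tilde{Z}\leq V$ (whence $\tilde{Z}=Z$) or to produce an alternative explicit realisation of $Z$ as a centraliser in $G$.

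For part~(2), suppose $\tilde{Z}\not\leq V$ and pick $g\in\tilde{Z}\setminus V$; write $V=G_v$. Since $g$ centralises $B\leq V$, the subgroup $B$ fixes the segment $[v,gv]$, hence lies in the stabiliser $G_{e_0}$ of the first edge $e_0$ at $v$ on this segment. The hypothesis $G_{e_0}\in\mc{Z}(G)$ allows us to write $G_{e_0}=Z_G(C)$, and then the identity
\[ Z_G(\langle B,C\rangle)=Z_G(B)\cap Z_G(C)=\tilde{Z}\cap G_{e_0} \]
realises $\tilde{Z}\cap G_{e_0}$ as an element of $\mc{Z}(G)$; since $G_{e_0}\leq V$ this intersection is contained in $\tilde{Z}\cap V=Z$. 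Therefore $Z=Z_G(\langle B,C\rangle)\in\mc{Z}(G)$ will follow if $Z\leq G_{e_0}$, i.e.\ if $Z$ fixes $e_0$.

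The crux is this last step. If $\tilde{Z}$ is non-cyclic, then by \Cref{prop:S(G)_properties}(5) and \Cref{lem:Z_s_properties} it lies in some singular $S\in\mc{S}(G)$, which is elliptic in $T$ by $\mc{S}(G)$--relativity; thus $\tilde{Z}\leq G_{v'}$ for some vertex $v'$. Standard tree geometry for the elliptic element $g$ (with $v'\in\Fix(g)$ but $v\notin\Fix(g)$) shows that the closest point of $\Fix(g)$ to $v$ lies on both $[v,gv]$ and $[v,v']$, so the first edge $e_0$ at $v$ is shared by these two segments. Since $Z\leq V\cap G_{v'}$ fixes both $v$ and $v'$, it fixes $[v,v']$, and in particular $e_0$. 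If instead $\tilde{Z}$ is cyclic, then root-closedness of $V$ (\Cref{rmk:root-closed_vertices}, applicable because edge groups in $\mc{Z}(G)$ are root-closed centralisers) forces $\tilde{Z}\leq V$ immediately.

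For part~(1), $\mathrm{Cyc}(V)\sq\mathrm{Cyc}(G)$ is immediate, so $\mc{ZZ}(V)\sq\mc{ZZ}(G)$ reduces to $\mc{Z}_s(V)\sq\mc{Z}_s(G)$. The proof follows the same outline: non-trivial $Z\in\mc{Z}_s(V)$ lies in a singular $S_V\in\mc{S}(V)\sq\mc{VDP}(G)$, itself contained in some $S\in\mc{S}(G)$ that is elliptic in $T$, which directly supplies the ellipticity needed for the tree-geometric argument even when $\tilde{Z}$ itself is cyclic. The main additional obstacle is that edge groups in $\mc{ZZ}(G)$ need not be centralisers, so one must handle cyclic edge groups by working within the singular subgroup $S$; one finally checks that membership in $\mc{Z}_c(G)$ is ruled out, since $Z\in\mc{Z}_c(G)\cap V$ would force $Z\in\mc{Z}_c(V)$, contradicting $Z\in\mc{Z}_s(V)$.
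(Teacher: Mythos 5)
Your argument for part~(2) is correct and is essentially the paper's argument, repackaged: both boil down to the dichotomy between $Z_G(B)$ contracting cyclic (handled by root-closedness of $V$) and $Z_G(B)$ elliptic via a singular subgroup, followed by the observation that $Z$ fixes the first edge $e_0$ of a geodesic from $v$ to a vertex where $Z_G(B)$ acts elliptically, so that $Z = Z_G(B)\cap G_{e_0} = Z_G(\langle B, C\rangle)$. Your route through $g\in\tilde Z\setminus V$ and tree geometry around $\Fix(g)$ is somewhat longer than the paper's (which simply takes a vertex $u$ fixed by $Z_G(B)$ and uses $[v,u]$ directly, so that the step ``$B\leq G_{e_0}$'' is unnecessary), but the reasoning is sound.

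Part~(1) has a genuine gap, concentrated exactly where you wave your hands. You correctly reduce to showing $\mc{Z}_s(V)\sq\mc{Z}_s(G)$, and the computation above still gives $Z=Z_G(B)\cap G_{e_0}$. The problem is that when $G_{e_0}\in{\rm Cyc}(G)\setminus\mc{Z}(G)$, this intersection tells you only that $Z$ is an infinite cyclic subgroup of $G$ (so $Z\in\mc{ZZ}(G)$) --- it does \emph{not} exhibit $Z$ as a centraliser in $G$, which is what $Z\in\mc{Z}_s(G)$ requires. Ruling out $Z\in\mc{Z}_c(G)$, which you do, is a weaker statement and does not substitute for $Z\in\mc{Z}(G)$. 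The paper closes this case with a genuinely new idea: write $Z=Z_V(B)=\bigcap_{b\in B}Z_V(b)$, observe that each $Z_V(b)$ is a \emph{non-cyclic} element of $\mc{Z}_s(V)$ (if $Z_V(b)\cong\Z$ it would be a contracting cyclic centraliser, and root-closedness would force $Z=Z_V(b)\in\mc{Z}_c(V)$, a contradiction), apply the already-established non-cyclic case to conclude each $Z_V(b)\in\mc{Z}_s(G)$, and invoke closure of $\mc{Z}_s(G)$ under intersections from \Cref{lem:Z_s_properties}(1). ``Working within the singular subgroup $S$'' is not this argument, and I do not see how to make it one. A secondary issue: your claim that $S$ being elliptic ``directly supplies the ellipticity needed'' even when $\tilde Z$ is cyclic conflates the ellipticity of $Z$ with that of $\tilde Z=Z_G(B)$; if $S$ happens to fix only $v$ (i.e.\ $S\leq V$) yet $\tilde Z\not\leq V$, you still need $\tilde Z$ elliptic, which requires separately observing that $Z\in\mc{Z}_s(V)$ nontrivial forces $Z_G(B)\in\mc{Z}_s(G)$.
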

\begin{proof}
    We prove both parts simultaneously. Consider a $(\mc{ZZ}(G),\mc{S}(G))$--tree $G\acts T$. Consider a centraliser $Z\in\mc{Z}(V)$ and a subset $B\sq V$ such that $Z=Z_V(B)$. We can assume that $Z\neq\{1\}$.

    Suppose first that $Z_G(B)\in\mc{Z}(G)$ does not lie in $\mc{Z}_s(G)$. Thus, $Z_G(B)$ is a contracting cyclic subgroup of $G$, and hence $Z=V\cap Z_G(B)$ is a contracting cyclic subgroup of $V$. In particular, we have $Z\in{\rm Cyc}(G)\sq\mc{ZZ}(G)$ and $Z\not\in\mc{Z}_s(V)$, showing part~(1) in this case. Part~(2) follows by observing that, if edge groups of $T$ lie in $\mc{Z}(G)$, then vertex groups are root-closed by \Cref{rmk:root-closed_vertices}; hence, the cyclic group $Z_G(B)$ equals $Z=V\cap Z_G(B)$.

    Now, suppose instead that $Z_G(B)\in\mc{Z}_s(G)$. Thus, $Z_G(B)$ is contained in an element of $\mc{S}(G)$, and hence it is elliptic in $T$. If $Z_G(B)$ fixes a vertex $v\in T$ of which $V$ is the stabiliser, then $Z=Z_G(B)\in\mc{Z}_s(G)$. Otherwise $Z=V\cap Z_G(B)$ fixes an edge of $T$ incident to $v$, and hence $Z$ is the intersection between $Z_G(B)$ and an edge-stabiliser of $T$. In this case, $Z\in\mc{Z}(G)$ if edge-stabilisers of $T$ lie in $\mc{Z}(G)$, and $Z\in\mc{ZZ}(G)$ if edge-stabilisers of $T$ lie in $\mc{ZZ}(G)$. The latter also shows that $Z\in\mc{Z}_s(G)$ unless $Z\cong\Z$.
    
    We are left to show that $Z\in\mc{Z}_s(G)$ when $Z\cong\Z$ and $Z\in\mc{Z}_s(V)$. Note that $Z=Z_V(B)=\bigcap_{b\in B}Z_V(b)$, where each $Z_V(b)$ again lies in $\mc{Z}_s(V)$. Now, we have $Z_V(b)\not\cong\Z$ for each $b\in B$, as these are centralisers of single elements (and not contracting). The previous discussion then shows that all centralisers $Z_V(b)$ lie in $\mc{Z}_s(G)$ and, since the latter collection is closed under taking intersections, this finally shows that $Z\in\mc{Z}_s(G)$.
\end{proof}

There is no simultaneous refinement of the two parts of \Cref{lem:Z(V)_in_Z(G)}: for $(\mc{ZZ}(G),\mc{S}(G))$--splittings, there can be elements of $\mc{Z}_c(V)$ not in $\mc{Z}(G)$.

\subsection{Constructing canonical splittings}\label{sub:canonical_splittings}

The goal of this subsection is to develop a general procedure that, starting with a sufficiently nice splitting $G\acts T$, produces an $\Out(G)$--invariant splitting of $G$ with similar edge groups and elliptic subgroups. Our construction (\Cref{subsub:invariant_splitting}) shares some similarities with Guirardel and Levitt's construction of trees of cylinders \cite{GL-cyl,GL-JSJ}, but an important difference is that we allow edge-stabilisers to have a complicated intersection pattern. As a consequence, the construction can be applied beyond relatively hyperbolic groups and small splittings, provided that certain conditions are met. 

We will work with a general group $G$ and simply require the splittings to satisfy certain axioms. The main consequence for special groups is:

\begin{thm}\label{thm:Z_s-splitting}
    Let $G$ be special and let $\mc{O}\leq\Out(G)$ be a subgroup. Suppose that $G$ is $1$--ended relative to an $\mc{O}$--invariant collection of subgroups $\mc{H}$ that contains $\mc{S}(G)$. Then $G$ admits an $\mc{O}$--invariant $(\mc{Z}_s(G),\mc{H})$--tree whose vertex groups are $(\mc{Z}_s(G),\mc{H})$--rigid in $G$.
\end{thm}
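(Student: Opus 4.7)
The plan is to follow the two-step template sketched in the Introduction: first use accessibility over centralisers to build an $\mc{O}$--invariant deformation space of splittings whose vertex groups are automatically $(\mc{Z}_s(G),\mc{H})$--rigid, and then apply the canonical splittings machinery developed earlier in \Cref{sub:canonical_splittings} (presumably a result along the lines of \Cref{thm:invariant_splittings}(1)) to extract an $\mc{O}$--invariant tree from that deformation space.

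For the first step, I would consider the class of irredundant $(\mc{Z}_s(G),\mc{H})$--trees of $G$. Since $\mc{Z}_s(G)\sq\mc{Z}(G)$ and centralisers in special groups are $G$--semi-parabolic (\Cref{lem:centralisers_are_semi-parabolic}), unconditional accessibility over $G$--semi-parabolic subgroups (\Cref{thm:accessible}) yields a uniform bound on the number of edge orbits in any such tree. Pick a $(\mc{Z}_s(G),\mc{H})$--tree $T$ that maximises the number of edge orbits; equivalently, work in a deformation space that is maximal in the $\preceq$--order of deformation spaces. I claim every vertex group $V$ of $T$ is $(\mc{Z}_s(G),\mc{H})$--rigid. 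Indeed, if $V$ admitted a non-trivial action on some $(\mc{Z}_s(G),\mc{H})$--tree $T'$, then every subgroup of $V$ conjugate into an element of $\mc{H}$ would be elliptic in $T'$ (since it is already elliptic in any $(\mc{Z}_s(G),\mc{H})$--tree of $G$), so the $V$--minimal subtree of $T'$ could be used to refine $T$ at $V$, producing a $(\mc{Z}_s(G),\mc{H})$--tree with strictly more edge orbits, contradicting maximality. Finally, the class of $(\mc{Z}_s(G),\mc{H})$--trees with rigid vertex groups (equivalently: the corresponding JSJ-type deformation space $\mc{D}$) is $\mc{O}$--invariant because both defining data are $\mc{O}$--invariant: $\mc{Z}_s(G)$ by \Cref{lem:Z_s_properties}(1), and $\mc{H}$ by hypothesis.

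For the second step, I would feed $\mc{D}$ into the canonical splittings construction established earlier in \Cref{sub:canonical_splittings}. The crucial verification is that the family $\mc{Z}_s(G)$ has the structural features required by that construction: it is closed under intersections and $\Aut(G)$--invariant (\Cref{lem:Z_s_properties}(1)), and - the point that replaces the usual ``small'' hypothesis - for every non-trivial $Z\in\mc{Z}_s(G)$ the normaliser $N_G(Z)$ is contained in a singular subgroup (\Cref{lem:Z_s_properties}(3)), hence in an element of $\mc{H}$, and therefore elliptic in any tree of $\mc{D}$. This ``normalisers are elliptic'' property is what makes canonical refinements possible along the orbits of edge groups, without needing to impose the usual isolation between abelian subgroups required by the tree of cylinders. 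Applying the canonical splittings theorem to $\mc{D}$ then yields a distinguished tree in $\mc{D}$ that is $\mc{O}$--invariant, which is the one we want.

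The main obstacle will be verifying the hypotheses of the canonical splittings theorem in this setting. Classical tree-of-cylinders constructions (\cite{GL-cyl}) break down here because centralisers in a special group can intersect in arbitrarily complicated patterns (e.g.\ in thick RAAGs), and edge groups are non-small and often not finitely generated after intersecting. The novelty of the canonical splittings machinery of \Cref{sub:canonical_splittings} is precisely that it bypasses the isolation requirement, replacing it by a weaker condition that is automatic once one forces $\mc{S}(G)\sq\mc{H}$ to be elliptic; the genuine technical work of the proof therefore sits in \Cref{sub:canonical_splittings} rather than here, and the present theorem is essentially the clean specialisation of that machinery to the family $\mc{Z}_s(G)$ relative to any $\mc{O}$--invariant $\mc{H}\supseteq\mc{S}(G)$.
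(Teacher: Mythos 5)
Your proposal is correct, but it takes a genuinely different route from the paper, and it is worth spelling out the contrast.

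The paper works with $\mc{O}$--invariance from the very start: it considers the collection $\mscr{T}$ of $\mc{O}$--invariant $(\mc{Z}_s(G),\mc{H})$--trees (nonempty because it contains the single-vertex tree), takes a maximal element $T\in\mscr{T}$ under refinement (using \Cref{thm:accessible}), and then proves rigidity of the vertex groups by contradiction. The hard part of that contradiction argument is that the putative refinement of $T$ at a non-rigid vertex $V$ must itself be $\mc{O}$--invariant in order to violate maximality in $\mscr{T}$. To arrange this, the paper applies \Cref{cor:invariant_defspace} followed by \Cref{thm:invariant_splittings}(1) to the vertex group $V$ (not to $G$), to produce an $\mc{O}|_V$--invariant splitting $V\acts M''$, and then invokes \Cref{lem:blow-up}(2) to refine $T$ equivariantly. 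That last step has nontrivial hypotheses (trivial centre of both $G$ and $V$, neither tree a line, finite-diameter fixed sets of incident edge groups in $M''$) that the paper takes care to verify.

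Your route reverses the two steps and, I think, is tidier. You first build a maximal $(\mc{Z}_s(G),\mc{H})$--tree $T$ with no invariance requirement. Showing that its vertex groups are rigid is then much easier: a non-rigid vertex $V$ acts nontrivially on some $(\mc{Z}_s(G),\mc{H})$--tree; all edge groups of $T$ incident to $V$ are subgroups of elements of $\mc{S}(G)\sq\mc{H}$, hence elliptic there; so the plain \Cref{lem:blow-up}(1) produces a proper irredundant refinement (the new edge groups $V\cap Z$ lie in $\mc{Z}_s(G)$ since $\mc{Z}_s(G)$ is closed under intersections), contradicting accessibility. Rigidity of vertex groups forces all trees with rigid vertex groups to lie in a single deformation space $\mc{D}$, and the description of $\mc{D}$ is invariant under $\Aut(G)$ because $\mc{Z}_s(G)$ and $\mc{H}$ are; so $\mc{D}$ is $\mc{O}$--invariant, which is exactly Condition~$(i)$ of \Cref{subsub:invariant_splitting}. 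You then apply \Cref{thm:invariant_splittings}(1) \emph{once}, directly to $G$ and a tree $T\in\mc{D}$, rather than to vertex groups. As you say, Condition~$(ii)$ follows from \Cref{lem:cc_basics}(3) and Condition~$(iii)$ from the combination of \Cref{lem:Z_s_properties}(3) (normalisers of nontrivial $Z\in\mc{Z}_s(G)$ are elliptic) and \Cref{lem:normaliser_acts_cocompactly_on_Fix}; the output $T'$ has the same elliptic subgroups as $T$ (so is still relative to $\mc{H}$ and still has rigid vertex groups) and edge groups among those of $T$ (so in $\mc{Z}_s(G)$), and it is $\mc{O}$--invariant. The trivial-centre hypothesis is harmless: if $G\in\mc{S}(G)$ then $G\in\mc{H}$ and the single-vertex tree answers the theorem trivially.

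What each approach buys: the paper's inductive refinement keeps every intermediate tree $\mc{O}$--invariant, which is aesthetically aligned with the goal, but at the cost of one application of \Cref{thm:invariant_splittings}(1) per non-rigid vertex group and the more delicate equivariant blow-up \Cref{lem:blow-up}(2). Yours decouples the two concerns (rigidity on the one hand, invariance on the other), so that each machinery is used exactly once and \Cref{lem:blow-up}(2) is avoided entirely. One small point to make precise in a write-up: ``maximising edge orbits'' and ``sitting in a maximal deformation space'' are indeed equivalent for irredundant $(\mc{Z}_s(G),\mc{H})$--trees, but this deserves a sentence via \Cref{lem:ref_dom} — a tree whose deformation space fails to dominate some $U$ admits, by \Cref{lem:ref_dom}, a proper irredundant refinement over $\mc{Z}_s(G)$ relative to $\mc{H}$, violating edge-orbit maximality.
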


Taking $\mc{O}=\Out(G)$ and $\mc{H}=\mc{S}(G)$, one gets an $\Out(G)$--invariant splitting of $G$ if $G$ is $1$--ended and not $(\mc{Z}_s(G),\mc{S}(G))$--rigid.

\begin{rmk}\label{rmk:acylindrical}
    If $G$ is $1$--ended, then all $(\mc{Z}_s(G),\mc{S}(G))$--trees of $G$ are acylindrical. Indeed, for any such tree $G\acts T$, there are only finitely many $G$--conjugacy classes of subgroups of the form $E\cap E'$, where $E,E'$ are edge-stabilisers; this follows from \Cref{lem:cc_basics}(5). In addition, for any such intersection, the action $N_G(E\cap E')\acts\Fix(E\cap E')$ is elliptic by \Cref{prop:S(G)_properties}(4) (if $E\cap E'\neq\{1\}$) and cocompact by \Cref{lem:cc_edges}(2). As a consequence, there is a bound, depending only on $T$, on the diameter of $\Fix(E\cap E')$.

    However, a priori, $(\mc{Z}_s(G),\mc{S}(G))$--trees of $G$ are not \emph{uniformly} acylindrical; they might get less and less acylindrical as the number of $G$--orbits of edges increases. For this reason, we cannot make do with Sela's acylindrical accessibility \cite{Sela-acyl-acc} in this section, and we truly need \Cref{thm:accessible}.
\end{rmk}
%


We now discuss the construction of canonical splittings. After some preliminary material, \Cref{subsub:invariant_defspace} finds an $\mc{O}$--invariant deformation space using accessibility. Then \Cref{subsub:invariant_splitting} constructs an $\mc{O}$--invariant splitting within a (suitable) $\mc{O}$--invariant deformation space. Finally, in \Cref{subsub:special_consequences}, we restrict to special groups and prove \Cref{thm:Z_s-splitting}.

\subsubsection{Refinements, collapses and deformation spaces}\label{subsub:refinements}

Let $G$ be a group. If $T$ and $S$ are $G$--trees, a $G$--equivariant map $\pi\colon T\ra S$ is a \emph{collapse} if it preserves alignment of triples of vertices; equivalently, $S$ is obtained from $T$ by collapsing some $G$--orbits of edges. The tree $T$ is a \emph{refinement} of $S$.

\begin{rmk}\label{rmk:restriction_new}
    If a subgroup $\mc{O}\leq\Out(G)$ preserves the $G$--conjugacy class of a subgroup $H\leq G$, we can define a \emph{restriction} $\mc{O}|_H\leq\Out(H)$: this the subgroup of outer classes $[\varphi|_H]$, where $\varphi\in\Aut(G)$ is an automorphism with $\varphi(H)=H$ and $[\varphi]\in\mc{O}$. The conjugacy action $N_G(H)\acts H$ also determines a subgroup $C^G_H\leq\Out(H)$, and we always have $C^G_H\lhd\mc{O}|_H$. For $\phi\in\mc{O}$, the possible restrictions of $\phi$ to $H$ form a coset $\phi|_H\cdot C^G_H\sq \mc{O}|_H$.
\end{rmk}

The following observation allows us to blow up vertices of a $G$--tree.

\begin{lem}\label{lem:blow-up}
    Let $G\acts T$ be a splitting. Consider a vertex $v\in T$ and let $\mc{E}_v$ be the collection of $G$--stabilisers of edges of $T$ incident to $v$. Suppose that the stabiliser $G_v$ admits a splitting $G_v\acts S$ relative to $\mc{E}_v$.
    \begin{enumerate}
        \setlength\itemsep{.25em}
        \item There exists a splitting $G\acts T'$ with an equivariant collapse map $\pi\colon T'\ra T$ such that the preimage $\pi^{-1}(v)$ is $G_v$--equivariantly isomorphic to $S$, while $\pi^{-1}(w)$ is a singleton for all $w\not\in G\cdot v$.
        \item Let $\mc{O}\leq\Out(G)$ preserve the $G$--conjugacy class of $G_v$ and restrict to a subgroup $\mc{O}_v\leq\Out(G_v)$. Suppose that $T$ and $S$ are, respectively, $\mc{O}$-- and $\mc{O}_v$--invariant. Suppose that $G$ and $G_v$ have trivial centre, and that neither $T$ nor $S$ is a line. Finally suppose that, for each $E\in\mc{E}_v$, the tree $\Fix(E;S)$ has finite diameter. Then, the splitting $T'$ in part~(1) can be constructed so that it is $\mc{O}$--invariant. 
    \end{enumerate}
\end{lem}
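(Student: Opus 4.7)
\smallskip
The plan for part~(1) is the standard equivariant blow-up construction. Pick representatives $e_1,\dots,e_k$ of the $G_v$--orbits of edges of $T$ at $v$, with $G$--stabilisers $E_1,\dots,E_k\in\mc{E}_v$. Since each $E_i$ fixes a point $s_i\in S$ by hypothesis, one forms $T'$ by replacing each vertex $w$ in the orbit $G\cdot v$ with an equivariantly chosen copy $S_w$ of $S$, and attaching the edges of $T$ incident to $w$ at the $G_w$--translates of the chosen points $s_i$ (transported to $S_w$). The result is a minimal $G$--tree refining $T$ in the required way.

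For part~(2), the new ingredient is the finite-diameter assumption: for each $E\in\mc{E}_v$, the fixed-point subtree $\Fix_S(E)\sq S$ is bounded, hence it admits a canonical centre $c(E)\in S$ (either a vertex or a midpoint of an edge), which is preserved by every automorphism of $S$ that preserves $E$. I would carry out the blow-up of part~(1) using these canonical attachment points $c(E_i)$ in place of arbitrary $s_i$, and then verify that the resulting tree $T'$ inherits an $\mc{O}$--action.

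To this end, fix $\varphi\in\Aut(G)$ with outer class in $\mc{O}$ and let $\Phi\in\Aut(T)$ be an associated tree automorphism. Such $\Phi$ is unique: because $G$ has trivial centre and $T$ is not a line, the centraliser in $\Aut(T)$ of the image of $G$ is trivial. After composing $\varphi$ with an inner automorphism $\iota_g$ (which does not affect the outer class in $\mc{O}$), we may arrange that $\Phi(v)=v$, so $\varphi(G_v)=G_v$ and the restriction $\varphi|_{G_v}$ has outer class in $\mc{O}_v$ by the definition of $\mc{O}_v=\mc{O}|_{G_v}$. Applying $\mc{O}_v$--invariance of $S$, together with the analogous uniqueness on $S$ (which uses that $G_v$ has trivial centre and $S$ is not a line), we obtain a unique $\Psi\in\Aut(S)$ with $\Psi(h\cdot x)=\varphi(h)\cdot\Psi(x)$ for all $h\in G_v$, $x\in S$. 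I would then define $\Phi'\in\Aut(T')$ by acting as $\Phi$ on the uncollapsed part of $T$ and as $\Psi$ on the central copy of $S$ over $v$, extending equivariantly over the orbit $G\cdot v$.

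The main thing to verify is that this gluing is consistent at the attachment points, and that the construction is independent of the inner-automorphism modification used to arrange $\Phi(v)=v$. For consistency, the equivariance of $\Psi$ forces $\Psi(\Fix_S(E))=\Fix_S(\varphi(E))$ for each $E\in\mc{E}_v$, and hence $\Psi(c(E))=c(\varphi(E))$ by canonicity of the centre; on the other hand, $\Phi$ sends the edge of $T$ with stabiliser $E$ to the edge with stabiliser $\varphi(E)$, whose attachment point in $T'$ is by construction $c(\varphi(E))$, so the two descriptions of $\Phi'$ agree along the gluing locus. The hard part will be the bookkeeping needed to check that the assignment $\varphi\mapsto\Phi'$ descends to a well-defined $\mc{O}$--action on $T'$, respecting composition modulo inner automorphisms; this reduces, using the uniqueness of $\Phi$ and $\Psi$, to verifying that conjugation by elements $g\in G$ corresponds on $T'$ to the action of $g$, which is built into the equivariance of the construction.
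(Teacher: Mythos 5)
Your argument for part~(1) is essentially the standard construction and is fine. For part~(2), the key idea — using the canonical centre of the bounded fixed subtree $\Fix_S(E)$ as a distinguished attachment point — is exactly the paper's idea (the paper calls it the barycentre). What differs is the organisation. The paper packages the whole thing by introducing the group $G\utimes\mc{O}$ (the preimage of $\mc{O}$ in $\Aut(G)$, with $G$ sitting inside as inner automorphisms, using trivial centre), shows that $T$ and $S$ acquire actions of $G\utimes\mc{O}$ and $(G\utimes\mc{O})_v$ respectively, and then simply applies part~(1) to this larger group. The elliptic-stabiliser hypothesis for part~(1) is then verified by precisely your barycentre observation. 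That route avoids ever having to build $\Phi'$ by hand or glue two automorphisms along a locus. You instead build $T'$ over $G$ first and then try to extend each $\Phi$ to a $\Phi'$, which costs you the consistency bookkeeping you yourself flag as "the hard part."

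There is also a genuine soft spot in your write-up. You say "after composing $\varphi$ with an inner automorphism we may arrange $\Phi(v)=v$." This silently assumes $\Phi(v)\in G\cdot v$. What you actually know is that $\Phi(v)$ has $G$-stabiliser $\varphi(G_v)$, which is $G$-conjugate to $G_v$; but that does not force $\Phi(v)$ into the orbit $G\cdot v$, since distinct vertices of $T$ can share a stabiliser (indeed $G_v$ can also fix an incident edge, hence the neighbouring vertex). If $\Phi(v)\notin G\cdot v$, your gluing construction breaks outright: $\Phi'$ would have to carry the fibre $\pi^{-1}(v)\cong S$ onto a singleton fibre. The paper's $G\utimes\mc{O}$ trick sidesteps this entirely by working with $(G\utimes\mc{O})_v$ — the automorphisms that already fix $v$ — and never trying to normalise a general $\varphi$ so that its associated tree map fixes $v$. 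If you want to salvage your approach you should either justify $\Phi(v)\in G\cdot v$ from the hypotheses, or enlarge the set of vertices you blow up to the full orbit of $v$ under the extended automorphism group of $T$ (which is what the paper's construction implicitly does), accepting a mild imprecision in the phrase "the tree from part~(1)."
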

\begin{proof}
    For part~(1), see for instance \cite[Lemma~4.12]{GL-JSJ}. 

    We prove part~(2). Let $\tilde{\mc{O}}\leq\Aut(G)$ be the preimage of $\mc{O}$ under the quotient projection $\Aut(G)\ra\Out(G)$. The subgroup of inner automorphisms of $G$ is contained in $\tilde{\mc{O}}$ and, since the centre of $G$ is trivial, we can view this as a copy $G\lhd\tilde{\mc{O}}$. We will denote $\tilde{\mc{O}}$ by the more suggestive ``$G\utimes\mc{O}$'', bearing in mind that $\mc{O}$ might not lift to a subgroup of $G\utimes\mc{O}$. Since $G_v$ also has trivial centre, the restriction $\mc{O}_v$ similarly lifts to $G_v\utimes\mc{O}_v\leq\Aut(G_v)$.

    Let $\rho\colon G\ra\Aut(T)$ be the homomorphism corresponding to the action $G\acts T$. Since $T$ is $G$--minimal and $T\not\cong\R$, the centraliser of $\rho(G)$ in $\Aut(T)$ is trivial. Thus, each automorphism $\varphi\in G\utimes\mc{O}\leq\Aut(G)$ admits a \emph{unique} $\Phi\in\Aut(T)$ such that $\Phi(gx)=\varphi(g)\Phi(x)$ for all $g\in G$ and $x\in T$. Uniqueness guarantees that we obtain a homomorphism $\tilde\rho\colon G\utimes\mc{O}\ra\Aut(T)$ extending $\rho$. The action $G_v\acts S$ similarly extends to $G_v\utimes\mc{O}_v\acts S$.

    Now, pick a vertex $v\in T$ and consider its stabiliser $(G\utimes\mc{O})_v$. Its intersection with $G$ is the stabiliser $G_v$, and we have $G_v\lhd (G\utimes\mc{O})_v$. Thus, all automorphisms of $G$ lying in $(G\utimes\mc{O})_v$ leave $G_v$ invariant and, by restriction, we obtain a homomorphism $(G\utimes\mc{O})_v\ra G_v\utimes\mc{O}_v$. The action $G_v\utimes\mc{O}_v\acts S$ then induces an action $(G\utimes\mc{O})_v\acts S$ extending $G_v\acts S$.

    We now wish to apply part~(1) of the lemma to the actions $G\utimes\mc{O}\acts T$ and $(G\utimes\mc{O})_v\acts S$, so as to produce the required blowup $G\utimes\mc{O}\acts T'$. For this, we need to check that, for every edge $e\sq T$ incident to $v$, the stabiliser $(G\utimes\mc{O})_e$ is elliptic in $S$. As in the case of the stabiliser of $v$, we have $G_e\lhd (G\utimes\mc{O})_e$. By hypothesis, the set of fixed points of $G_e\in\mc{E}_v$ in $S$ has finite diameter, and so it has a unique barycentre. The group $(G\utimes\mc{O})_e$ preserves the fixed set of its normal subgroup $G_e$, and hence it fixes its barycentre. This shows that $(G\utimes\mc{O})_e$ is indeed elliptic in $S$, concluding the proof.
\end{proof}

\begin{rmk}
    The finite-diameter assumption in part~(2) of \Cref{lem:blow-up} is truly necessary. For instance, consider the free group $F_3=\langle a,b,c\rangle$ and $\varphi\in\Aut(F_3)$ given by $a\mapsto ab$, $b\mapsto bc$, $c\mapsto c$. There is a $\varphi$--invariant free HNN splitting $F_3\acts T$ with $V:=\langle b,c\rangle$ as a vertex group. The restriction $\varphi|_V$ in turn preserves a free HNN splitting $V\acts S$ with $\langle c\rangle$ as a vertex group. However, the refinement of $T$ given by $S$ is not $\varphi$--invariant.
\end{rmk}

Two $G$--trees lie in the same \emph{deformation space} if they have the same elliptic subgroups. A $G$--tree $T_1$ \emph{dominates} a $G$--tree $T_2$ if every subgroup of $G$ that is elliptic in $T_1$ is also elliptic in $T_2$. The following is a classical consequence of \Cref{lem:blow-up}(1); see e.g.\ \cite[Lemma~2.4]{Fio11b} for a proof.

\begin{lem}\label{lem:ref_dom}
    Let $G$ be finitely generated, and let $G\acts T_1$ and $G\acts T_2$ be two splittings. Suppose that the edge-stabilisers of $T_1\cup T_2$ are elliptic in both $T_1$ and $T_2$. Then there is a splitting $G\acts T$ such that:
    \begin{enumerate}
        \item $T$ is a refinement of $T_1$ that dominates $T_2$;
        \item a subgroup is elliptic in $T$ if and only if it is in both $T_1$ and $T_2$;
        \item edge-stabilisers of $T$ are either edge-stabilisers of $T_1\cup T_2$, or intersections between an edge-stabiliser of $T_1$ and an edge-stabiliser of $T_2$;
        \item $T$ does not have any degree--$2$ vertices whose stabiliser fixes the two incident edges, except for any that $T_1$ might have already had.
    \end{enumerate}
\end{lem}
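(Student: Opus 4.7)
The idea is to construct $T$ as a refinement of $T_1$ obtained by equivariantly blowing up each vertex into a suitable subtree of $T_2$, and then to appeal to \Cref{lem:blow-up}(1).

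For each vertex $v\in T_1$ with stabiliser $G_v$, set $Y_v$ to be the unique minimal $G_v$-invariant subtree of $T_2$ if $G_v$ is non-elliptic in $T_2$, and an arbitrary $G_v$-fixed vertex of $T_2$ otherwise; the assignment $v\mapsto Y_v$ is $G$-equivariant. The key preliminary observation is that, for every edge $e$ of $T_1$ incident to $v$, the stabiliser $G_e\leq G_v$ has a fixed point in $Y_v$. Indeed, $G_e$ is elliptic in $T_2$ by hypothesis, so its fixed set $F=F_{T_2}(G_e)$ is a non-empty subtree of $T_2$; both $F$ and $Y_v$ are $G_e$-invariant, so either they intersect (and any common point is $G_e$-fixed, as $F$ is pointwise fixed) or they are disjoint (in which case the unique endpoint on $Y_v$ of the bridge between them is $G_e$-invariant and hence $G_e$-fixed). \Cref{lem:blow-up}(1) then produces a $G$-tree $T$, refining $T_1$, in which each translate of $v$ is blown up to a copy of $Y_v$ with the incident edges of $T_1$ attached at $G_e$-fixed vertices.

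Properties (1) and (2) fall out quickly. If $H\leq G$ is elliptic in $T$, then $H$ is elliptic in $T_1$, so $H\leq G_v$ for some $v$, and $H$ fixes a vertex of $Y_v\subseteq T_2$, giving ellipticity in $T_2$; this yields both the domination claim and one direction of (2). Conversely, if $H$ is elliptic in both $T_1$ and $T_2$, then $H\leq G_v$ and $F_{T_2}(H)$ is non-empty, and the same projection argument applied to $F_{T_2}(H)$ and $Y_v$ produces an $H$-fixed vertex of $Y_v\subseteq T$.

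The crux is property (3). The edges of $T$ are of two kinds: (a) edges of $T_1$ lying outside any blow-up, with unchanged stabiliser; or (b) edges $e\subseteq Y_v\subseteq T_2$, whose stabiliser in $T$ is $G_v\cap G_e^{T_2}$, since any element of $G$ fixing such an $e$ in $T$ must preserve the particular copy of $Y_v$ inside $T$ and hence lie in $G_v$. In case~(b), if $G_e^{T_2}\leq G_v$ then the stabiliser equals the edge-stabiliser $G_e^{T_2}$ of $T_2$. Otherwise $G_e^{T_2}$ does not fix $v$ in $T_1$ but is elliptic there by hypothesis, so $F_{T_1}(G_e^{T_2})$ is a non-empty subtree of $T_1$ not containing $v$; let $f$ be the edge of $T_1$ incident to $v$ pointing towards this subtree. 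Then $G_v\cap G_e^{T_2}=G_f\cap G_e^{T_2}$, because any element of the left-hand side fixes both $v$ and $F_{T_1}(G_e^{T_2})$ and hence the arc between them (in particular $f$), while the reverse inclusion is immediate. This matches case~(iii) of the statement. Finally, property~(4) is handled by a cleanup pass: collapse any edge created inside a blow-up whose stabiliser equals both adjacent vertex-stabilisers in $T$. This removes any newly-created bad degree-$2$ vertices while preserving refinement of $T_1$, domination of $T_2$, and the form of the remaining edge-stabilisers. The main obstacle is securing the exactness demanded by~(3), which forces both the particular choice of $Y_v$ and the identification of the edge $f$ of $T_1$ in the second subcase of~(b).
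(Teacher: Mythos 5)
Your construction is the same as the paper's: blow up each vertex $v\in T_1$ (non-elliptic in $T_2$) to the $G_v$-minimal subtree of $T_2$, appeal to \Cref{lem:blow-up}(1), and verify the edge-stabiliser formula by the ``$G_v\cap G_e^{T_2}=G_f\cap G_e^{T_2}$'' trick. Parts (1)--(3) are argued essentially as in the paper, and those arguments are correct. Two remarks, one minor and one substantive.

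The minor point: when $G_v$ is infinitely generated (which vertex groups of $T_1$ can be), the existence of a $G_v$-minimal subtree of $T_2$ is not automatic. The paper obtains it from the fact that $G_v$ is finitely generated \emph{relative to} the finitely many incident edge groups of $T_1$, which are elliptic in $T_2$ by hypothesis; you should flag this.

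The substantive gap is in your treatment of property (4). You propose to ``collapse any edge created inside a blow-up whose stabiliser equals \emph{both} adjacent vertex-stabilisers in $T$.'' That criterion does not target the bad degree-$2$ vertices. A new bad vertex $x\in\pi^{-1}(v)$ satisfies $G_x=G_{e_1}=G_{e_2}$ for its two incident edges, but there is no reason the stabilisers of the \emph{other} endpoints $y_1,y_2$ of $e_1,e_2$ should equal $G_{e_i}$; generically $G_{e_i}\lneq G_{y_i}$. So your rule will leave such an $x$ untouched, and the lemma's conclusion (4) fails. The correct cleanup, which is what the paper does, is the standard one: for each bad degree-$2$ vertex $x$ inside a blow-up with no external edge attached, collapse the $G_i$-orbit of \emph{one} of its two incident edges. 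This is a deformation-space-preserving collapse precisely because $G_x=G_{e_1}$ (the stabiliser of an endpoint equals the edge stabiliser), it strictly decreases the number of edge orbits so it terminates, and it introduces no new bad vertices. Your phrasing needs to be replaced by this to close the argument.
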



\subsubsection{Finding an invariant deformation space}\label{subsub:invariant_defspace}

Let $G$ be a finitely generated group. Given a suitable $G$--tree and provided that $G$ is suitably accessible, our first goal is to construct an $\mc{O}$--invariant deformation space. 

Say that $G$ is \emph{unconditionally $(\mc{F},\mc{K})$--accessible}, for two families $\mc{F}$ and $\mc{K}$, if there is a uniform bound on the number of edge orbits in irredundant $(\mc{F},\mc{K})$--splittings of $G$. For $\mc{K}=\emptyset$, we recover \Cref{defn:accessible}. If $\mc{O}\leq\Out(G)$, we say that a subgroup family $\mc{F}$ is \emph{$\mc{O}$--invariant} if it is closed under taking $G$--conjugates and if $\mc{O}$ permutes the conjugacy classes of subgroups in $\mc{F}$. We denote by $\mc{F}_{\rm int}$ the collection of finite intersections of elements of $\mc{F}$. 

\begin{cor}\label{cor:invariant_defspace}
    Let $G$ be finitely generated. Consider a splitting $G\acts T$ and some $\mc{O}\leq\Out(G)$. Suppose that there exist two $\mc{O}$--invariant families $\mc{K}\supseteq\mc{F}$ such that $T$ is an $(\mc{F},\mc{K})$--splitting. Then, if $G$ is unconditionally $(\mc{F}_{\rm int},\mc{K})$--accessible, there is a refinement $G\acts T'$ of $T$ such that:
    \begin{enumerate}
        \item the edge-stabilisers of $T'$ belong to $\mc{F}_{\rm int}$;
        \item the subgroups of $G$ that are elliptic in $T'$ are precisely those whose entire $\mc{O}$--orbit is elliptic in $T$.
    \end{enumerate}
\end{cor}
\begin{proof}
    Let $\mc{T}$ be the collection of splittings obtained by twisting $G\acts T$ by the elements of $\mc{O}$. All elements of $\mc{T}$ are $(\mc{F},\mc{K})$--splittings, because $\mc{F}$ and $\mc{K}$ are $\mc{O}$--invariant. Moreover, the edge-stabilisers of any element of $\mc{T}$ are elliptic in all elements of $\mc{T}$, because $\mc{F}\subseteq\mc{K}$.

    Let $G\acts T_n$ be an enumeration of the elements of $\mc{T}$, with $n\in\N$. \Cref{lem:ref_dom} yields a sequence $G\acts T_n'$ of $(\mc{F}_{\rm int},\mc{K})$--trees such that $T_0'=T_0$, each $T_{n+1}'$ is a refinement of $T_n'$, and a subgroup of $G$ is elliptic in $T_n'$ if and only if it is elliptic in all of $T_0,\dots,T_n$. Moreover, the trees $T_n'$ have a uniformly bounded number of orbits of degree--$2$ vertices whose stabiliser fixes the two incident edges. Since $G$ is unconditionally $(\mc{F}_{\rm int},\mc{K})$--accessible, there is a uniform bound on the number of edge orbits of the $T_n'$, and hence the sequence of refinements $T_n'$ eventually stabilises to a splitting $T'$. This has the properties claimed by the corollary. 
\end{proof}

\subsubsection{Finding an invariant splitting}\label{subsub:invariant_splitting}

Consider now a group $G$, a cocompact\footnote{Cocompactness is immediate if $G$ is finitely generated.} splitting $G\acts T$, and a subgroup $\mc{O}\leq\Out(G)$. Let $\mscr{E}$ be the collection of subgroups of $G$ that are elliptic in $T$, and let $\mscr{A}$ be the collection of $G$--stabilisers of edges of $T$. Equivalently, $\mscr{A}$ and $\mscr{E}$ are, respectively, the smallest and largest families such that $T$ is an $(\mscr{A},\mscr{E})$--splitting.

We are interested in splittings satisfying the following three conditions:
\begin{enumerate}
    \item[$(i)$] the collection $\mscr{E}$ is $\mc{O}$--invariant;
    \item[$(ii)$] no element of $\mscr{A}$ properly contains one of its $G$--conjugates;
    \item[$(iii)$] for every $A\in\mscr{A}$, the normaliser $N_G(A)$ is elliptic in $T$.
\end{enumerate}
Condition~(i) asks that the deformation space of $T$ be $\mc{O}$--invariant, while Condition~(iii) can be viewed as a weak form of acylindricity plus a ban on edges with trivial stabiliser. Our goal is to prove:

\begin{thm}\label{thm:invariant_splittings}
    If $G,\mc{O},T$ satisfy Conditions~$(i)$--$(iii)$, then there exists an $\mc{O}$--invariant $(\mscr{A},\mscr{E})$--splitting $G\acts T'$.
\end{thm}

The first version of this article had a stronger statement in place of \Cref{thm:invariant_splittings} (the deformation space of $T'$ can be chosen to equal that of $T$, rather than just be dominated by it). I am grateful to Vincent Guirardel for pointing out that the above suffices and can be obtained more easily. 

Before proving \Cref{thm:invariant_splittings}, we need to obtain a few lemmas. Here, a fourth and last condition is useful:
\begin{enumerate}
    \item[$(*)$] no vertex-stabiliser of $T$ fixes an edge of $T$.
\end{enumerate}
Denote by $\mscr{E}_{\max}\sq\mscr{E}$ the subset of maximal elliptic subgroups.

\begin{lem}\label{lem:(*)}
    If $G\acts T$ satisfies Condition~$(ii)$, then it admits a collapse $G\acts T_*$ satisfying Condition~$(*)$. A subgroup of $G$ is elliptic in $T_*$ if and only if it either lies in $\mscr{E}$ or normalises a subgroup in $\mscr{E}_{\max}$.
\end{lem}
\begin{proof}
    Say that $(e,v)$ is a \emph{bad pair} if $e\sq T$ is an edge, $v\in e$ is a vertex, and the stabiliser of $v$ fixes $e$. If $w$ is the other vertex of $e$, the stabilisers of $e,v,w$ satisfy $G_v=G_e\leq G_w$. We say that $(e,v)$ is of the \emph{first kind} if $G_e=G_w$, and of the \emph{second kind} if $G_e\lneq G_w$. Condition~$(*)$ is satisfied precisely when there are no bad pairs, of any kind.

    Consider a bad pair $(e,v)$. If $g\in G$ is such that $v\in ge$ and $ge\neq e$, then we cannot have $gv=v$, and so we must have $v=gw$. By Condition~$(ii)$, the latter can only happen if $G_v=G_w$. In particular, if the pair is of the second kind, then $e$ is the only edge in its $G$--orbit to contain the vertex $v$. In this case, the edges in the orbit $G\cdot e$ come arranged in pairwise-disjoint subtrees of $T$ of diameter $\leq 2$: each of these subtrees is a $G$--translate of the union of the edges in $G\cdot e$ that contain $w$. Collapsing these subtrees to points does not affect which subgroups of $G$ are elliptic. At the same time, such a collapse strictly reduces the number of edges in the quotient graph $T/G$, which is finite by the assumption that $G$ acts cocompactly. Thus, after a finite sequence of collapses preserving the collection of elliptic subgroups, we can assume that all bad pairs are of the first kind. 

    Now, let $(e,v)$ be a bad pair and consider the fixed subtree $\Fix(G_e)\sq T$. Since there are no bad pairs of the second kind, all vertices and all edges of $\Fix(G_e)$ have the same $G$--stabiliser, namely $G_e$.
    In particular, for any edge $f\sq T$ intersecting $\Fix(G_e)$ at a single vertex, we have $G_f\lneq G_e$. This also shows that $G_e\in\mscr{E}_{\rm max}$ and that the $G$--stabiliser of $\Fix(G_e)$ is precisely the normaliser $N_G(G_e)$. Finally, for any subgroup $H\in\mscr{E}_{\max}$, either $\Fix(H)$ is a single vertex and $N_G(H)$ fixes it, or $\Fix(H)$ contains an edge, in which case all vertices and edges of $\Fix(H)$ have stabiliser exactly $H$, and all edges of $\Fix(H)$ belong to bad pairs. In particular, we have $\Fix(H)\cap\Fix(H')=\emptyset$ for all distinct subgroups $H,H'\in\mscr{E}_{\max}$.

    Summing up, edges belonging to bad pairs are precisely those fixed by elements of $\mscr{E}_{\max}$. Collapsing all these edges ensures Condition~$(*)$, and a subgroup is elliptic in the resulting tree $T_*$ if and only if it either lies in $\mscr{E}$ or normalises an element of $\mscr{E}_{\max}$. 
\end{proof}

To some extent, Condition~$(*)$ allows us to reconstruct the splitting $T$ algebraically, as the next remark and lemma show. Let $\mscr{A}_{\min}\sq\mscr{A}$ be the subset of subgroups that are not properly contained in an element of $\mscr{A}$.

\begin{rmk}\label{rmk:E_max_vs_vertices}
    Condition~$(*)$ implies that vertex-stabilisers of $T$ are the same as elements of $\mscr{E}_{\max}$, and each of these has a unique fixed point in $T$.
\end{rmk}

\begin{lem}\label{lem:A_is_invariant}
    If $G\acts T$ satisfies Conditions~$(i)$, $(ii)$ and $(*)$, then:
    \begin{enumerate}
        \item the collection $\mscr{A}$ is $\mc{O}$--invariant;
        \item collapsing the edges of $T$ with stabiliser in $\mscr{A}\setminus\mscr{A}_{\min}$ yields a splitting $G\acts T'$ whose deformation space is still $\mc{O}$--invariant.
    \end{enumerate}
\end{lem}
\begin{proof}
    We stratify $\mscr{A}$ by inclusion. That is, let $\mscr{A}_1\sq\mscr{A}$ be the subset of maximal elements, then inductively define $\mscr{A}_{i+1}$ as the set of maximal elements of $\mscr{A}\setminus\mscr{A}_i$. Since $G\acts T$ is cocompact and Condition~(ii) holds, we have $\mscr{A}_{k+1}=\emptyset$ for some integer $k\geq 1$; we define $k$ as the smallest such integer. 
    Each $\mscr{A}_i$ is conjugacy-invariant, and we have $\mscr{A}=\mscr{A}_1\sqcup\dots\sqcup\mscr{A}_k$. 
    
    By \Cref{rmk:E_max_vs_vertices}, the family $\mscr{E}_{\max}$ is $\mc{O}$--invariant, it coincides with the set of vertex-stabilisers of $T$, and each of its elements fixes a unique vertex of $T$. Define $\mscr{E}^1$ as the set of intersections $E\cap E'$ for distinct elements $E,E'\in\mscr{E}_{\max}$. Elements of $\mscr{E}^1$ are precisely $G$--stabilisers of non-degenerate arcs of $T$; moreover, $\mscr{E}^1$ is again $\mc{O}$--invariant. The set $\mscr{A}_1$ coincides with the set of maximal elements of $\mscr{E}^1$, so $\mscr{A}_1$ is $\mc{O}$--invariant. 
    
    Now, inductively for $i\geq 1$, let $\mscr{E}^{i+1}$ be the set of intersections $E\cap E'$ for distinct elements $E,E'\in\mscr{E}_{\max}$ such that there does not exist a sequence $A_1,\dots,A_k\in\mscr{A}_i$ with $A_1\leq E$, $A_k\leq E'$ and $\langle A_j, A_{j+1}\rangle\in\mscr{E}$ for all $1\leq j<k$. In other words, if $v,v'\in T$ are the unique vertices fixed by $E$ and $E'$ respectively, then the arc $[v,v']\sq T$ is non-degenerate and it is not covered by the fixed sets of the subgroups in $\mscr{A}_i$, so this arc contains an edge with stabiliser in $\mscr{A}\setminus\mscr{A}_i$. Since $\mscr{A}_i$ is $\mc{O}$--invariant by the inductive hypothesis, the set $\mscr{E}^{i+1}$ is $\mc{O}$--invariant, because of its algebraic description. It follows that the set of maximal elements of $\mscr{E}^{i+1}$ is also invariant, and this set is precisely $\mscr{A}_{i+1}$. This shows that $\mscr{A}_i$ is $\mc{O}$--invariant for every $i$, and hence $\mscr{A}$ is itself $\mc{O}$--invariant, proving part~(1).

    Regarding part~(2), let $\mc{E}_+$ be the collection of edges of $T$ with stabiliser in $\mscr{A}\setminus\mscr{A}_{\min}$, and let $T'$ be the tree obtained by collapsing all edges in $\mc{E}_+$. Denote by $\mc{U}_+\sq T$ the union of the $0$--skeleton of $T$ with all edges in $\mc{E}_+$; thus, a subgroup is elliptic in $T'$ precisely when it stabilises a connected component of $\mc{U}_+$. Define an equivalence relation on $\mscr{E}_{\max}$ as follows: we have $E\sim E'$ if there exists a finite sequence $E=:E_0,E_1,\dots,E_k:=E'$ such that each intersection $E_i\cap E_{i+1}$ contains an element of $\mscr{A}\setminus\mscr{A}_{\min}$. By part~(1), the families $\mscr{A}$ and $\mscr{A}_{\min}$ are $\mc{O}$--invariant, and so the equivalence relation $\sim$ is also $\mc{O}$--invariant. Note that we have $E\sim E'$ if and only if the vertices of $T$ fixed by $E$ and $E'$ lie in the same connected component of $\mc{U}_+$. Thus, a subgroup $H\leq G$ is elliptic in $T'$ if and only if there exists $E\in\mscr{E}_{\max}$ such that $hEh^{-1}\sim E$ for all $h\in H$. This shows that the family of elliptic subgroups of $T'$ is $\mc{O}$--invariant, proving part~(2).
\end{proof}

We are finally ready to prove \Cref{thm:invariant_splittings}.

\begin{proof}[Proof of \Cref{thm:invariant_splittings}]
    Let $G\acts T$ satisfy Conditions~$(i)$--$(iii)$. By Lem\-mas~\ref{lem:(*)} and~\ref{lem:A_is_invariant}(2), there is a collapse $G\acts T''$ whose deformation space is still $\mc{O}$--invariant, and such that $T''$ has additionally gained Condition~$(*)$ and the property that there are no proper inclusions between edge-stabilisers of $T''$. Note that, being a collapse $T$, it is automatic that $T''$ still satisfies Conditions~$(ii)$ and~$(iii)$ and that $T''$ is still an $(\mscr{A},\mscr{E})$--tree.

    Thus, we can simply assume that $T$ itself satisfies Conditions~$(i)$--$(iii)$ and Condition~$(*)$, and that $\mscr{A}=\mscr{A}_{\min}$. Now, consider the family of subtrees $\mc{Y}:=\{\Fix(A;T)\mid A\in\mscr{A}\}$. These cover $T$ and, since there are no proper inclusions between edge-stabilisers, distinct subtrees share at most one point. This means that $\mc{Y}$ is a \emph{transverse covering} in the sense of \cite{Guirardel-G&T-2004,Guir-Fourier}, and so it gives rise to a dual splitting $G\acts S$ constructed as follows. There is a ``black'' vertex of $S$ for each element of $\mc{Y}$, and a ``white'' vertex of $S$ for each intersection point between distinct elements of $\mc{Y}$; edges correspond to point-subset inclusions. Equivalently, $S$ has a black vertex for each element of $\mscr{A}$, a white vertex for each element of $\mscr{E}_{\max}$, and edges correspond to subgroup inclusions; the action $G\acts S$ is induced by the conjugacy action of $G$ on $\mscr{A}$ and $\mscr{E}_{\max}$. The latter description makes it clear that $S$ is $\mc{O}$--invariant. 
    
    Note that black vertex-stabilisers are of the form $N_G(A)$ with $A\in\mscr{A}$, while white vertex-stabilisers are precisely the elements of $\mscr{E}_{\max}$. The edge-stabilisers of $S$ are the vertex-stabilisers of the actions $N_G(A)\acts\Fix(A)$. Since $\mscr{A}=\mscr{A}_{\min}$, each action $N_G(A)\acts\Fix(A)$ factors through an action $N_G(A)/A\acts\Fix(A)$ with \emph{trivial} edge-stabilisers.

    Now, Condition~$(iii)$ implies that the action $N_G(A)\acts\Fix(A)$ is elliptic for each $A\in\mscr{A}$. Since this action factors through a free splitting of $N_G(A)/A$, it follows that there is a unique vertex $v_A\in\Fix(A)$ that is fixed by $N_G(A)$, and the $N_G(A)$--stabiliser of every vertex of $\Fix(A)\setminus\{v_A\}$ is precisely $A$. Also note that $\Fix(A)$ contains at least two vertices, because $A$ is an edge-stabiliser of $T$. This shows that, for each $A\in\mscr{A}$, the black vertex of $S$ corresponding to $A$ has at least one incident edge whose $G$--stabiliser is precisely $A$ (indeed, $S$ is minimal and so it has no degree--$1$ vertices). In conclusion, collapsing all edges of $S$ whose $G$--stabiliser does not lie in $\mscr{A}$, we obtain the required $\mc{O}$--invariant $(\mscr{A},\mscr{E})$--splitting of $G$.
\end{proof}

\subsubsection{Consequences for special groups}\label{subsub:special_consequences}

We now use the previous discussion to prove \Cref{thm:Z_s-splitting}. We will need the following notation.

\begin{nota}\label{nota:restrictions}
    Let $\mc{F}$ be a family of subgroups of $G$. For any subgroup $H\leq G$, there are two natural ways of restricting $\mc{F}$ to $H$, namely:
    \begin{align*}
        \mc{F}|^H&:=\{K\in\mc{F}\mid K\leq H\} , & \mc{F}|_H&:=\{K\leq H\mid \exists F\in\mc{F} \text{ such that } K\leq F\} .
    \end{align*}
\end{nota}

\begin{proof}[Proof of \Cref{thm:Z_s-splitting}]
    Let $\mscr{T}$ be the collection of minimal, irredundant, $\mc{O}$--invariant $(\mc{Z}_s(G),\mc{H})$--trees of $G$. We have $\mscr{T}\neq\emptyset$, as $\mscr{T}$ contains the tree that is a single vertex. We order $\mscr{T}$ by refinements: $T_1\preceq T_2$ if $T_1$ is a $G$--equi\-va\-riant collapse of $T_2$. Since $G$ is unconditionally accessible over centralisers (\Cref{thm:accessible}), the collection $\mscr{T}$ admits a maximal element $T$. The rest of the proof checks that the vertex groups of $T$ are $(\mc{Z}_s(G),\mc{H})$--rigid.

    Let $V$ be the $G$--stabiliser of a vertex $v\in T$, and let $\mc{E}_v$ be the collection of stabilisers of edges of $T$ incident to $v$. We begin with some observations on $V$. First, $V$ is convex-cocompact in $G$, by \Cref{lem:cc_edges}(1). Observing that $\mc{E}_v\sq\mc{Z}_s(G)|^V\sq\mc{S}(G)|_V\sq\mc{H}|_V$, we also see that $V$ is $1$--ended relative to $\mc{H}|_V$, otherwise \Cref{lem:blow-up}(1) could be used to violate $1$--endedness of $G$ relative to $\mc{H}$. By the same argument, accessibility of $G$ over centralisers implies that $V$ is unconditionally $(\mc{Z}_s(G)|^V,\mc{H}|_V)$--accessible.
    
    Now, suppose for the sake of contradiction that $V$ is non-elliptic in some $(\mc{Z}_s(G),\mc{H})$--splitting $G\acts U$, and let $M\sq U$ be the $V$--minimal subtree. 
    
    We claim that the action $V\acts M$ is a $(\mc{Z}_s(G)|^V,\mc{H}|_V)$--splitting. Indeed, the $V$--stabiliser of an edge of $M$ is an intersection $V\cap Z$ with $Z\in\mc{Z}_s(G)$. The centraliser $Z$ is contained in an element of $\mc{S}(G)\sq\mc{H}$, so it is elliptic in $T$. If $Z$ fixes $v$, then $V\cap Z=Z$. If it does not, we have $Z\cap V=Z\cap G_e$, where $e\sq T$ is an edge incident to $v$ and $G_e\in\mc{Z}_s(G)$. Either way, we have $V\cap Z\in\mc{Z}_s(G)|^V$. (We stress that $\mc{Z}_s(G)|^V\not\sq\mc{Z}(V)$ in general.)

    Let $\mc{O}|_V\leq\Out(V)$ be the restriction of $\mc{O}$ (\Cref{rmk:restriction_new}). We now claim that the action $V\acts M$ can be promoted to an (irredundant) $\mc{O}|_V$--invariant $(\mc{Z}_s(G)|^V,\mc{H}|_V)$--splitting of $V$. Once this is proved, it will complete the proof of the theorem: we can then use this splitting of $V$ and \Cref{lem:blow-up}(2) to refine $T$ into a larger (irredundant) $\mc{O}$--invariant $(\mc{Z}_s(G),\mc{H})$--splitting of $G$, contradicting the definition of $T$. The assumptions of \Cref{lem:blow-up}(2) are indeed satisfied: first, since $G\not\in\mc{S}(G)$ and $V\not\in\mc{S}(V)\sq\mc{H}|_V$, 
    both $G$ and $V$ have trivial centre and neither has a line-splitting over centralisers (as the kernel of the latter would virtually split by \Cref{lem:cc_basics}(2)). Moreover, $(\mc{Z}_s(\cdot),\mc{S}(\cdot))$--splittings are acylindrical, as shown in \Cref{rmk:acylindrical}.

    Thus, we are left to construct an $\mc{O}|_V$--invariant $(\mc{Z}_s(G)|^V,\mc{H}|_V)$--splitting of $V$. Recall that the collection $\mc{Z}_s(G)$ is $\mc{O}$--invariant, closed under intersections, and each element of $\mc{Z}_s(G)$ is contained in an element of $\mc{S}(G)\sq\mc{H}$ by \Cref{prop:S(G)_properties}(4). Thus, the collection $\mc{Z}_s(G)|^V$ is $\mc{O}|_V$--invariant and closed under intersections, the collection $\mc{H}|_V$ is $\mc{O}|_V$--invariant, and we have $\mc{Z}_s(G)|^V\sq\mc{H}|_V$. Moreover, as observed above, $V$ is unconditionally $(\mc{Z}_s(G)|^V,\mc{H}|_V)$--accessible. All this means that we can apply \Cref{cor:invariant_defspace} to $V\acts M$ and we obtain a new $(\mc{Z}_s(G)|^V,\mc{H}|_V)$--splitting $V\acts M'$ with the additional property that its deformation space is $\mc{O}|_V$--invariant.

    We then wish to apply \Cref{thm:invariant_splittings} to $V\acts M'$, for which we need to check the conditions of \Cref{subsub:invariant_splitting}. Condition~$(i)$ holds by construction, and Condition~(ii) follows from convex-cocompactness of the elements of $\mc{Z}_s(G)|^V$ (\Cref{lem:cc_basics}(3)). Finally, regarding Condition~$(iii)$, note that each edge-stabiliser of $M'$ is an element $Z\in\mc{Z}_s(G)|^V$ with $Z\neq\{1\}$, because $V$ is $1$--ended relative to $\mc{H}|_V$. Hence the normaliser $N_G(Z)$ is contained in an element of $\mc{S}(G)$ by \Cref{prop:S(G)_properties}(4), showing that $N_V(Z)$ is elliptic in $M'$. We can thus indeed appeal to \Cref{thm:invariant_splittings}, which yields the required $\mc{O}|_V$--invariant $(\mc{Z}_s(G)|^V,\mc{H}|_V)$--splitting $V\acts M''$. 
    
    Up to removing any degree--$2$ vertices, $M''$ is irredundant and this leads to the required contradiction, as explained earlier in the proof.
\end{proof}

\subsection{The enhanced JSJ tree}\label{sub:JSJ-like}

Let $G$ be special. In \Cref{thm:Z_s-splitting}, we con\-structed a canonical $(\mc{Z}_s(G),\mc{S}(G))$--tree with $(\mc{Z}_s(G),\mc{S}(G))$--rigid vertex groups (provided $G$ is $1$--ended). We now wish to refine this tree so that the vertex groups become $(\mc{Z}(G),\mc{S}(G))$--rigid (\Cref{thm:JSJ+}). 

In order to obtain the refinement, we need to consider splittings of $G$ over elements of $\mc{Z}_c(G)$, and this inevitably leads us to split over more general cyclic subgroups of $G$. We will use the classical JSJ decomposition of $G$ over cyclic subgroups \cite{RS97} and its canonical tree of cylinders \cite{GL-cyl,GL-JSJ}. In relation to this, it is convenient to introduce the following notion.

\begin{defn}[Optimal QH subgroup]\label{defn:optimal_QH}
    Let $G\acts T$ be a splitting relative to a family $\mc{H}$. Let $q\in T$ be a vertex whose stabiliser $Q$ is quadratically hanging relative to $\mc{H}$, and let $\Sigma$ be the compact hyperbolic surface with $Q=\pi_1(\Sigma)$. We say that $Q$ is \emph{optimal} if all the following hold:
    \begin{enumerate}
        \item the surface $\Sigma$ is not a pair of pants;
        \item for each edge $e\sq T$ incident to $q$, the stabiliser $G_e$ is either trivial or $Q$--conjugate to the entire fundamental group of a component of $\partial\Sigma$ (rather than to any subgroup thereof);
        \item for each component $B\sq\partial\Sigma$, there is at most one\footnote{Exactly one, if $G$ is $1$--ended relative to $\mc{H}$; see \cite[Lemma~5.16]{GL-JSJ}.} $Q$--orbit of edges $e\sq T$ incident to $q$ such that $G_e$ is $Q$--conjugate to $\pi_1(B)$.
    \end{enumerate}
    If these hold, we also say that $q$ is an \emph{optimal QH vertex} of the tree $T$.
\end{defn}

If $\Sigma$ is a compact surface, a simple closed curve $\g\sq\Sigma$ is \emph{essential} if it is neither nulhomotopic nor homotopic into the boundary of $\Sigma$. An \emph{essential multicurve} on $\Sigma$ is a finite union of pairwise disjoint, pairwise non-homotopic, essential simple closed curves on $\Sigma$. 

\begin{rmk}\label{rmk:non_cc_QH}
    Let $Q\leq G$ be a QH subgroup of $G$ (optimal or not) and let $\Sigma$ be the associated surface. Essential curves on $\Sigma$ give cyclic subgroups of $Q$ that lie in $\mc{Z}_c(G)$; in particular, these cyclic subgroups are always convex-cocompact. By contrast, the cyclic subgroups of $G$ corresponding to the boundary components of $Q$ are \emph{not} convex-cocompact, in general. For instance, $G$ might be the fundamental group of the special square complex obtained by gluing along the zig-zag diagonal of the standard squared $2$--torus a boundary component of a squared surface (with sufficiently big total angle at the boundary vertices, to ensure that no $3$--cubes are needed).
\end{rmk}

We now start working towards the construction of the enhanced JSJ tree (\Cref{thm:JSJ+}). \Cref{rmk:non_cc_QH} shows that we will be forced to split $G$ over non-convex-cocompact cyclic subgroups, and that our QH subgroups will not be convex-cocompact in general. At the same time, we still would like our non-QH vertex groups to be convex-cocompact, and the key to ensuring this lies in the following observation. This is where Items~(1) and~(2) of \Cref{defn:optimal_QH} become important.

\begin{lem}\label{lem:cc_near_QH}
    Let $G\acts T$ be any splitting with an optimal, quadratically hanging vertex $q\in T$. Suppose that every edge of $T$ has a $G$--translate that is incident to $q$. Then, for any vertex $w\in T\setminus (G\cdot q)$, the stabiliser $G_w$ is convex-cocompact and root-closed. 
\end{lem}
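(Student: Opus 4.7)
The plan is to derive both properties from the combined consequences of optimality of $q$ and the hypothesis that every edge of $T$ has a translate incident to $q$. Together these imply a rigid structure on the star of $w$: every edge $e$ incident to $w$ has its other endpoint in some translate $g\cdot q$, and by Items~(2)--(3) of \Cref{defn:optimal_QH}, the stabiliser $G_e$ is either trivial or a $G$--conjugate of the entire fundamental group of some boundary component of $\Sigma$. Moreover, since two edges of a tree cannot form a bigon, each translate $g\cdot q$ adjacent to $w$ contributes exactly one edge, so $G_w$ acts on its link with stabilisers in this restricted class. These observations apply to every vertex $w\in T\setminus(G\cdot q)$ and, in fact, to every edge of $T$, since each edge has a translate in the star of $q$.

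For root-closedness, I would apply \Cref{rmk:root-closed_vertices}, which reduces root-closedness of $G_w$ to root-closedness of every edge stabiliser of $T$. Trivial stabilisers are root-closed since $G$ is torsion-free. For a peripheral stabiliser $G_e=g\,\pi_1(B)\,g^{-1}$, suppose $h\in G$ satisfies $h^n\in G_e$ with $n\geq 2$. Then $h^n$ is elliptic in $T$, so $h$ itself is elliptic and fixes some vertex $y$; since $h^n$ fixes $e$, the element $h^n$ fixes every vertex on the geodesic from $e$ to $y$. The key step is to argue that $h$ must already fix the adjacent $q$-translate $g\cdot q$: since $h^n$ lies in the \emph{full} peripheral subgroup $g\,\pi_1(B)\,g^{-1}$ of $g\cdot q$ (by the ``full'' part of optimality), and the corresponding incident-edge orbit at $g\cdot q$ is unique (by Item~(3) of \Cref{defn:optimal_QH}), one uses root-closedness of peripheral cyclic subgroups in $\pi_1(\Sigma)$ to descend from $h^n$ to $h$ inside $g\,\pi_1(B)\,g^{-1}$.

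For convex-cocompactness, the edge stabilisers of $T$ are cyclic (or trivial) but need not be convex-cocompact in $G$, so \Cref{prop:cc_edge->vertex} does not apply directly to $T$. The plan is to build an auxiliary refinement $T'\to T$ with convex-cocompact edge stabilisers. Since $\Sigma$ is not a pair of pants, it admits a decomposition along essential simple closed curves into pairs of pants (and possibly annuli), producing a graph-of-groups decomposition of $Q=\pi_1(\Sigma)$ relative to its peripheral subgroups. Using \Cref{lem:blow-up}(1), I blow up each $G$--translate of $q$ in $T$ into a copy of the corresponding Bass--Serre tree; because peripheral subgroups are elliptic in this decomposition, the incident edge-stabilisers at $q$-translates remain elliptic and the blow-up can be performed equivariantly. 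In the resulting tree $T'$, the stabiliser of $w$ is unchanged, and every edge-stabiliser is either an edge-stabiliser of $T$ or comes from a separating simple closed curve in $\Sigma$; the latter are cyclic subgroups of $Q\leq G$ that we can arrange to be convex-cocompact in $G$ by exploiting \Cref{lem:parabolic_normaliser2} and the standard fact that semi-simple cyclic subgroups of a CAT(0) cube complex act cocompactly on their axes. Applying \Cref{prop:cc_edge->vertex} to $T'$ then yields convex-cocompactness of $G_w$.

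The main obstacle is the technical step of ensuring that, in the refinement $T'$, both the old peripheral edge-stabilisers and the new separating-curve edge-stabilisers are convex-cocompact in $G$ (not merely in the surface group $Q$). Peripheral subgroups of $Q$ need not be convex-cocompact in $G$ a priori: the footnote in \Cref{thm:JSJ-like} acknowledges exactly this. My fallback, if this refinement strategy fails globally, is to perform the refinement only for edges relevant to $w$, by pushing each incident peripheral edge ``into'' the adjacent $q$-translate just enough to expose a convex-cocompact cyclic subgroup --- a move that is internal to the $Q$--translate and so does not affect $G_w$. Combined with the fact that $G_w$ has only finitely many orbits of incident edges in $T$ (by cocompactness of $G\curvearrowright T$), this localised refinement should suffice to run \Cref{prop:cc_edge->vertex} on a suitable tree with convex-cocompact edge stabilisers containing the same vertex $w$ with stabiliser $G_w$.
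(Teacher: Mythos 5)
Your proposal has genuine gaps in both halves, and the paper's proof proceeds quite differently.

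For root-closedness: the step where you assert ``$h$ must already fix the adjacent $q$-translate'' is not justified, and the whole reduction to root-closedness of the edge stabilisers of $T$ does not work. If $h$ fixes a vertex $y$ on the $w$--side of $e$ (or beyond), nothing in Items~(2)--(3) of \Cref{defn:optimal_QH} forces $h$ into $G_{g\cdot q}$; $h$ might fix $w$ and permute several edges incident to $w$ (whose other endpoints are distinct $q$--translates) cyclically, with $h^n$ fixing each of them. In that scenario $h^n\in G_e$ but $h\notin G_e$, and $G_e$ is \emph{not} root-closed in $G$, yet the conclusion about $G_w$ can still hold. The paper never claims (and does not need) root-closedness of the edge stabilisers of the original tree $T$; instead it replaces $T$ by auxiliary trees whose edge stabilisers are cyclic subgroups carried by \emph{non-peripheral} simple closed curves on $\Sigma$ — these equal their own centralisers in $G$, hence are root-closed — and then applies \Cref{rmk:root-closed_vertices} to \emph{those} trees.

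For convex-cocompactness: you correctly identify the obstacle, but your fix does not resolve it. After blowing up the $q$--translates along a pants decomposition, the refined tree $T'$ still contains every original edge of $T$, whose stabilisers are full peripherals and are precisely the subgroups that need not be convex-cocompact (as the footnote to \Cref{thm:JSJ-like} records). So \Cref{prop:cc_edge->vertex} cannot be applied to $T'$, and the ``push the peripheral edge into the $q$--translate'' fallback is not an argument. The paper's solution is structurally different: after refining $T$ along multicurve splittings of $\Sigma$, it \emph{collapses all the original edges of $T$} so that the resulting trees $T''_{B,i}$ have \emph{only} non-peripheral cyclic edge stabilisers, to which \Cref{prop:cc_edge->vertex} applies. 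Crucially, because the collapse merges $w$ with adjacent $q$--translates, a single such tree does not isolate $G_w$; the paper therefore uses \emph{two transverse multicurves} $\gamma_1,\gamma_2$ per boundary component $B$, chosen so that $\pi_1(B)$ is exactly the common stabiliser of a pair of vertices in $(T''_{B,1},T''_{B,2})$, and then recovers $G_w$ as an intersection of vertex stabilisers over all $B$ and $i\in\{1,2\}$ (this is where Item~(2) of optimality — that incident edge groups are \emph{full} peripherals, not proper subgroups — is used). This ``two transverse splittings'' device is the key missing idea in your proposal. Finally, the paper also handles non-orientable $\Sigma$ by first passing to the orientable double cover, a step you omit.
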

\begin{proof}
    Let $\Sigma$ be the surface with $Q=\pi_1(\Sigma)$. We can assume that $\Sigma$ is orientable. Otherwise, let $\Sigma_0$ be its orientable double cover and note that $Q_0:=\pi_1(\Sigma_0)$ contains $\pi_1(B)$ for each component $B\sq\partial\Sigma$.
    Thus, $G$ has an index--$2$ subgroup $G_0$ that contains the $G$--stabiliser of each vertex of $T\setminus (G\cdot q)$, and has $G_0\cap Q=Q_0$. We can then replace $G$ with $G_0$.

    Now, $\Sigma$ is the genus--$g$ surface with $b\geq 1$ boundary components $S_{g,b}$. 
    If $g=0$, we have $b\geq 4$; indeed, the pair of pants is ruled out by Item~(1) in \Cref{defn:optimal_QH}, and it is not a double cover of any surface.

    Let $B\sq\partial\Sigma$ be a component. We claim that there are two essential multicurves $\g_1,\g_2\sq\Sigma$ such that, if $T_1,T_2$ are the corresponding cyclic splittings of $\pi_1(\Sigma)$, then $\pi_1(B)$ is precisely the $\pi_1(\Sigma)$--stabiliser of some pair of vertices $(v_1,v_2)\in T_1\x T_2$. If $g=0$ and $b\geq 4$, we can take $\g_1,\g_2$ to be single curves, each bounding a pant with boundary components $B,\g_i,B_i$, for components $B_i\sq\partial \Sigma$ such that $B,B_1,B_2$ are pairwise distinct. If $g\geq 1$ and $b\geq 1$, then we can represent $\Sigma$ as a gluing $S_{g-1,b+1}\cup P$, where $P$ is a pair of pants with a component equal to $B$ and the remaining two components glued to two boundary components of $S_{g-1,b+1}$. Call $\g_1$ the multicurve in $\Sigma$ formed by the two boundary components of $P$ other than $B$ (to be precise, $\g_1$ is just one curve for $(g,b)=(1,1)$). We then define $\g_2=\phi\cdot\g_1$ for an element $\phi\in{\rm Mod}(\Sigma)$ that does not preserve $\g_1$.

    Now, the two splittings $Q\acts T_i$ can be used to refine $G\acts T$ into two splittings $G\acts T_{B,i}'$ (\Cref{lem:blow-up}(1)). We then collapse all edges of $T_{B,i}'$ coming from edges of $T$, obtaining two splittings $G\acts T_{B,i}''$ whose edge-stabilisers are conjugate to maximal cyclic subgroups represented by essential simple closed curves on $\Sigma$. In particular, these cyclic edge-stabilisers coincide with their centralisers in $G$, and hence they are root-closed and convex-cocompact. Thus, the $G$--stabiliser of any vertex of $T''_{B,i}$ is convex-cocompact by \Cref{lem:cc_edges}(1), and root-closed by \Cref{rmk:root-closed_vertices}.

    Finally, $G$--stabilisers of vertices of $T\setminus (G\cdot q)$ are intersections of vertex-stabilisers of the trees $T_{B,i}''$ as $B$ varies among the components of $\partial\Sigma$ and $i$ varies in $\{1,2\}$ (this uses Item~(2) in \Cref{defn:optimal_QH}). In conclusion, such stabilisers are root-closed and convex-cocompact, by \Cref{lem:cc_basics}(1).
\end{proof}

We also record here the following observation for later use. The proof is straightforward, using the same splittings as in the proof of \Cref{lem:cc_near_QH}. This is where Item~(3) of \Cref{defn:optimal_QH} is used.

\begin{lem}\label{lem:rigid->semi-rigid}
    Let $G\acts T$ be a splitting relative to a collection $\mc{H}$. Let $q\in T$ be an optimal, quadratically hanging vertex relative to $\mc{H}$. If a subgroup $H\leq G$ is $(\mc{Z}_c(G),\mc{H})$--rigid in $G$, then:
    \begin{enumerate}
        \item if $h\in H$ is loxodromic in $T$, the axis of $h$ does not contain $q$;
        \item if $H$ is contained in $Q$, it is peripheral.
    \end{enumerate}
\end{lem}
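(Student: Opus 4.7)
The strategy is to leverage, for each essential multicurve $\g$ on $\Sigma$, the $(\mc{Z}_c(G),\mc{H})$--splitting $G\acts T_\g''$ obtained exactly as in \Cref{lem:cc_near_QH}: first, using \Cref{lem:blow-up}(1), I would blow up every $G$--translate of $q$ in $T$ to a copy of the Bass--Serre tree $T_\g$ of the cyclic splitting $Q\acts T_\g$ dual to $\g$ (valid because peripherals and subgroups in $\mc{H}|_Q$ are elliptic in $T_\g$), then collapse all edges originating from $T$. The edge-stabilisers of $T_\g''$ are maximal cyclic subgroups represented by essential simple closed curves on $\Sigma$ and thus lie in $\mc{Z}_c(G)$, while subgroups in $\mc{H}$ remain elliptic (those contained in $Q$ are peripheral by QH-relativity, hence $T_\g$--elliptic; the others are already $T$--elliptic). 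The rigidity hypothesis then forces $H$ to be elliptic in every such $T_\g''$, and the whole argument is about choosing $\g$ cleverly to derive a contradiction in the two cases.

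The key geometric input I would appeal to is: any non-peripheral subgroup $K\leq Q=\pi_1(\Sigma)$ acts non-elliptically on some $T_\g$. Since $\Sigma$ is not a pair of pants (Item~(1) of \Cref{defn:optimal_QH}) and $\partial\Sigma\neq\emptyset$, the group $Q$ is free and hyperbolic relative to its peripheral subgroups, so $K$ contains an element $k$ not conjugate into any boundary subgroup; a pants-decomposition argument (perturbing $\g$ if $k$ happens to be realised by a curve of the decomposition) then provides an essential multicurve $\g$ such that $k$ is loxodromic in $T_\g$.

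For part~(2), if $H\leq Q$ were non-peripheral, the geometric input would yield $\g$ such that $H$ is non-elliptic in $T_\g$; since the copy of $T_\g$ at $q$ embeds $Q$--equivariantly in $T_\g''$, this forces $H$ to be non-elliptic in $T_\g''$, contradicting rigidity. Part~(1) is where Item~(3) of \Cref{defn:optimal_QH} enters. Let $e,e'\sq T$ be the two edges incident to $q$ on the axis of $h$; by Item~(2), each of $G_e,G_{e'}$ is $Q$--conjugate to the full fundamental group of a boundary component, so both are maximal peripheral subgroups of $Q$. I would first argue that $G_e\neq G_{e'}$ as subgroups: otherwise Item~(3) would put $e$ and $e'$ in the same $Q$--orbit via some $\delta\in Q$, and then $\delta$ would normalise $G_e$; but peripherals are self-normalising in the free group $Q$, so $\delta\in G_e$ would fix $e$, contradicting $e\neq e'$. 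Hence $G_e$ and $G_{e'}$ are distinct maximal peripherals, and their join $\langle G_e,G_{e'}\rangle$ is non-peripheral (distinct maximal cyclic subgroups of a free group together generate a free group of rank two, in which neither boundary conjugate is a proper subgroup of a common peripheral). The geometric input then yields $\g$ such that $G_e$ and $G_{e'}$ fix \emph{distinct} vertices of $T_\g$, which forces the axis of $h$ to cross at least one $T_\g$--edge every time it traverses a blown-up translate of $q$; hence $h$ becomes loxodromic in $T_\g''$, again contradicting rigidity.

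The main obstacle, and the reason the hint calls the argument ``straightforward'' rather than trivial, will be stating and verifying the geometric input about non-peripheral subgroups of $Q$ together with the non-peripherality of $\langle G_e,G_{e'}\rangle$ cleanly; both rely on the free/relatively-hyperbolic structure of surface groups with boundary, which is folklore but deserves a careful citation.
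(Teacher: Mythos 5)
The proposal is correct and takes essentially the same route the paper intends: refine $T$ at $q$ by the cyclic splittings of $Q$ used in \Cref{lem:cc_near_QH}, collapse the original $T$-edges to get a $(\mc{Z}_c(G),\mc{H})$--splitting, and invoke rigidity of $H$, with Item~(3) of \Cref{defn:optimal_QH} used in part~(1) to rule out $G_e = G_{e'}$ and thereby force the axis of $h$ to cross a $T_\g$--edge in the blow-up. (One minor omission: Item~(2) also allows $G_e$ or $G_{e'}$ to be trivial, a case your phrasing silently drops; it is harmless, since a trivially-stabilised edge can be attached to any vertex of $T_\g$ in the refinement, and that freedom lets one arrange $v_e\neq v_{e'}$ directly.)
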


In order to canonically split $G$ over cyclic subgroups, we will use the following result, which can be quickly deduced from \cite{GL-JSJ}. The only subtlety is that, since we want non--QH vertex groups to be convex-cocompact, we need to be a little careful over which cyclic subgroups we split. From now on, all QH subgroups will be meant relative to $\mc{H}$ without explicit mention.

\begin{prop}\label{prop:actual_JSJ}
    Let $\mc{O}\leq\Out(G)$ and let $G$ be $1$--ended relative to an $\mc{O}$--invariant collection $\mc{H}\supseteq\mc{S}(G)$. There is an $\mc{O}$--invariant $({\rm Cyc}(G),\mc{H})$--tree $G\acts T$ such that the following hold.
    \begin{enumerate}
        \item Each vertex-stabiliser is of one of two kinds:
        \begin{enumerate}
            \item[(a)] an optimal quadratically hanging subgroup;
            \item[(b)] convex-cocompact, root-closed and $(\mc{Z}_c(G),\mc{H})$--rigid in $G$.
        \end{enumerate}
        \item Each edge $e\sq T$ with both vertices of type~(b) has $G_e\in\mc{Z}_c(G)$.
    \end{enumerate}
\end{prop}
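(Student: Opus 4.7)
The strategy is to combine classical cyclic JSJ theory with the canonical tree-of-cylinders construction, modulated by the structural results for special groups established earlier. Since $G$ is finitely presented and $1$--ended relative to $\mc{H}$, Guirardel--Levitt's machinery \cite{GL-JSJ} provides a JSJ deformation space $\mathfrak{D}$ of $G$ over ${\rm Cyc}(G)$ relative to $\mc{H}$, with vertex groups of two kinds: quadratically hanging relative to $\mc{H}$, and rigid. This deformation space is canonical, so it is preserved by every outer automorphism of $G$ that preserves the conjugacy classes of $\mc{H}$, and in particular it is $\mc{O}$--invariant. To select an $\mc{O}$--invariant tree inside $\mathfrak{D}$, I apply the tree-of-cylinders construction of \cite{GL-cyl} using the commensurability equivalence relation on ${\rm Cyc}(G)$, which is admissible because $G$ is torsion-free and each cyclic subgroup has a unique maximal cyclic root. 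I then perform the standard collapse and expansion moves within $\mathfrak{D}$ (as in \cite[Sections~5--6]{GL-JSJ}) to make each QH vertex optimal in the sense of \Cref{defn:optimal_QH}: absorb any pair-of-pants QH vertex into an adjacent vertex (Item~(1)); enlarge each cyclic edge stabiliser of peripheral type to the full fundamental group of the corresponding boundary component (Item~(2)); and collapse all duplicate edges lying in a single $Q$--orbit above a given boundary component (Item~(3)).

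The core step is verifying Item~(2) of the proposition: any cyclic edge $e\sq T$ with $G_e\not\in\mc{Z}_c(G)$ must be incident to a QH (type~(a)) vertex. Such a $G_e$ is generated by a non-contracting element, so by \Cref{prop:S(G)_properties}(6) it is contained in some singular $S\in\mc{S}(G)\sq\mc{H}$, which is therefore elliptic in $T$. The commensurator of $G_e$ lies inside $S$ and is itself elliptic; by the cylinder construction, the entire cylinder containing $e$ is elliptic. An edge both of whose endpoints are rigid would then correspond to a cylinder that could be collapsed without leaving the JSJ deformation space, contradicting the minimality ensured by the above collapse moves. Hence any surviving non-contracting cyclic edge is incident to a QH vertex, where $G_e$ appears as (a root of) a peripheral subgroup of the associated surface.

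For the properties of a non-QH vertex group $V$: by Item~(2), every edge of $T$ incident to $V$ has stabiliser in $\mc{Z}_c(G)$, and any element of $\mc{Z}_c(G)$ is a maximal cyclic (hence root-closed) convex-cocompact subgroup of $G$, using Servatius' Centraliser Theorem as in the proof of \Cref{lem:centralisers_are_semi-parabolic}. Convex-cocompactness of $V$ in $G$ then follows from \Cref{lem:cc_near_QH} (using that every edge of $T$ is either incident to a QH vertex, or has $\mc{Z}_c(G)$--stabiliser, which is convex-cocompact), and root-closedness of $V$ follows from \Cref{rmk:root-closed_vertices}. Finally, $(\mc{Z}_c(G),\mc{H})$--rigidity of $V$ in $G$ is a direct consequence of the JSJ property together with \Cref{lem:blow-up}(1): any $(\mc{Z}_c(G),\mc{H})$--splitting of $V$ in which $V$ is non-elliptic would refine $T$ into a strictly finer $({\rm Cyc}(G),\mc{H})$--splitting of $G$, contradicting maximality inside $\mathfrak{D}$.

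The hardest step is the second one: controlling non-contracting cyclic edges requires tying together the algebraic input of \Cref{prop:S(G)_properties}(6) with the geometric behaviour of the commensurability tree of cylinders, so as to rule out rigid-rigid non-contracting cyclic edges after the standard JSJ collapse/expansion moves.
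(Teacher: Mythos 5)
Your plan starts from the same place as the paper (Guirardel--Levitt JSJ with the commensurability equivalence, followed by collapse/expansion moves to make QH vertices optimal), but there is a genuine gap in how you establish Item~(2) of the proposition, and a related gap in the rigidity argument.

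You argue that after your normalisations, an edge $e$ with $G_e$ non-contracting cyclic and both endpoints rigid ``would correspond to a cylinder that could be collapsed without leaving the JSJ deformation space, contradicting minimality.'' This is not true: rigid--rigid edges with non-contracting cyclic stabiliser do occur in the tree of cylinders, and collapsing them moves to a strictly coarser deformation space. There is no sense in which such a collapse ``stays inside $\mathfrak{D}$,'' so there is no contradiction, and your argument does not rule these edges out. The paper's route is to \emph{accept} such edges in $T$ and then explicitly \emph{define} $T'$ by collapsing every edge that is not incident to an optimal QH vertex and does not have stabiliser in $\mc{Z}_c(G)$; Item~(2) then holds by construction. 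The price is that $T'$ leaves the JSJ deformation space, so the remaining work is to verify that the (now larger) non-QH vertex groups of $T'$ still have the properties in (b).

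This is where your rigidity argument also goes wrong. You deduce $(\mc{Z}_c(G),\mc{H})$--rigidity of a non-QH vertex group $V$ from ``maximality inside $\mathfrak{D}$'' via a refinement argument, but once you collapse to $T'$ you have left $\mathfrak{D}$, and being a JSJ tree only gives ellipticity of rigid vertex groups in trees of that same deformation space or dominated by it; it does \emph{not} by itself give ellipticity of a vertex group of a further collapse in arbitrary $(\mc{Z}_c(G),\mc{H})$--trees. The proof actually needs the stronger \emph{compatibility} property of the tree of cylinders (\cite[Corollary~9.1]{GL-JSJ}): $T$ is compatible with every one-edge $({\rm Cyc}(G),\mc{H})$--splitting $U$, and one checks case-by-case (either $U$ is a collapse of $T$ along a $\mc{Z}_c(G)$--edge, or $U$ comes from splitting a QH vertex along a simple closed curve) that a fibre $\pi^{-1}(v)\sq T$ of the collapse $\pi\colon T\to T'$ contains no QH vertices and no $\mc{Z}_c(G)$--edges, so $G_v$ is elliptic in $U$. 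Finally, your plan does not address the ``third type'' of vertex group that \cite[Corollary~9.1]{GL-JSJ} can produce (subgroups of elements of $\mc{Z}(G)$); the paper deals with these by showing they are contained in $\mc{S}(G)\sq\mc{H}$ or commensurable to edge groups, hence rigid. Your convex-cocompactness and root-closedness step (via \Cref{lem:cc_near_QH} and \Cref{rmk:root-closed_vertices}) does match the paper and is fine.
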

\begin{proof}
    We will use the terminology of \cite{GL-JSJ}. Normalisers of elements of ${\rm Cyc}(G)$ lie in $\mc{Z}(G)$, hence they are either cyclic or contained in elements of $\mc{S}(G)\sq\mc{H}$; in particular, they are small in $({\rm Cyc}(G),\mc{H})$--trees. We can therefore appeal to \cite[Corollary~9.1]{GL-JSJ}, using commensurability as the equivalence relation on ${\rm Cyc}(G)$. The result is an $\mc{O}$--invariant $({\rm Cyc}(G),\mc{H})$--tree $G\acts T$ (denoted $(T_a)_c^*$ in the reference) with the following properties:
    \begin{itemize}
        \setlength\itemsep{.25em}
        \item vertex-stabilisers are either QH, or $({\rm Cyc}(G),\mc{H})$--rigid in $G$;
        \item $T$ is a \emph{JSJ tree} for $({\rm Cyc}(G),\mc{H})$: its edge-stabilisers are elliptic in all $({\rm Cyc}(G),\mc{H})$--trees, and its elliptic subgroups are elliptic in all $({\rm Cyc}(G),\mc{H})$--trees with such edge-stabilisers;
        \item $T$ is \emph{compatible} with all $({\rm Cyc}(G),\mc{H})$--trees: if $G\acts U$ is a one-edge $({\rm Cyc}(G),\mc{H})$--splitting, then $U$ is a collapse of $T$, or a collapse of the refinement of $T$ given by splitting a single QH vertex group over an essential simple closed curve on the associated surface. 
    \end{itemize} 
    (According to \cite{GL-JSJ}, we would have to worry about a third type of vertex group: subgroups of elements of $\mc{Z}(G)$. However, by \Cref{prop:S(G)_properties}(4), such vertex groups are either contained in elements of $\mc{S}(G)\sq\mc{H}$, or commensurable to edge groups of $T$. Hence they are $({\rm Cyc}(G),\mc{H})$--rigid.)

    We can assume that all QH vertex groups of $T$ are optimal. Indeed, QH vertex groups whose associated surface is a pair of pants are $({\rm Cyc}(G),\mc{H})$--rigid in $T$ (see \cite[Proposition~5.4]{GL-JSJ}), 
    and we regard them as such. For any other QH vertex $q\in T$, with stabiliser $Q$ and associated surface $\Sigma$, we can modify the tree $T$ near $q$ as follows. Let $Q\acts S_{\Sigma}$ be the $Q$--action on the diameter--$2$ tree that has a central vertex fixed by $Q$, and one $Q$--orbit of edges for each component of $B\sq\partial\Sigma$, with $Q$--stabilisers that are the conjugates of $\pi_1(B)$ within $Q=\pi_1(\Sigma)$. We can replace $q\in T$ with a copy of $S_{\Sigma}$: attach each edge $e\sq T$ incident to $q$ to the unique vertex of $S_{\Sigma}$ whose $Q$--stabiliser contains $G_e$ as a subgroup of finite index (we have $G_e\neq\{1\}$ because $G$ is $1$--ended relative to $\mc{H}$). Performing this procedure on all QH vertices of $T$, we obtain a refinement of $T$ in the same deformation space; we create some new edges and vertices, but their $G$--stabilisers are commensurable to $G$--stabilisers of previously existing edges, which are $({\rm Cyc}(G),\mc{H})$--rigid. Thus, the above three items still hold for the modified tree, and all QH vertices have become optimal. We are left to check that the modified tree is still $\mc{O}$--invariant. This follows from the fact that $\mc{O}$ preserves the collection of fundamental groups of boundary components of the surfaces associated to the QH vertices $q$ of the original tree $T$. In turn, this holds because, for each such boundary component $B$, there exists at least one edge incident to $q$ with stabiliser commensurable to $B$; indeed, this is due to the fact that $G$ is $1$--ended relative to $\mc{H}$, see \cite[Lemma~5.16]{GL-JSJ}.

    Thus, we assume that all QH vertices of $T$ are optimal. Let $G\acts T'$ be obtained from $T$ by collapsing edges as follows: we retain an edge $e\sq T$ only if $e$ is incident to an (optimal) QH vertex, or if its stabiliser $G_e$ lies in $\mc{Z}_c(G)$. 
    Note that $G\acts T'$ is still an $\mc{O}$--invariant $({\rm Cyc}(G),\mc{H})$--tree. We claim that $T'$ is the tree that we are looking for. Part~(2) is clear, as is the fact that all QH vertex groups of $T'$ are optimal. We only need to prove that the remaining vertex groups are of type~(b).
    
    If $v\in T'$ is a non--QH vertex, then all its incident edges $e\sq T'$ either have stabiliser $G_e\in\mc{Z}_c(G)$, or their other vertex is QH and optimal. Since all elements of $\mc{Z}_c(G)$ are convex-cocompact and root-closed, the combination of \Cref{lem:cc_edges}(1), \Cref{rmk:root-closed_vertices} and \Cref{lem:cc_near_QH} (applied to a suitable collapse of $T'$) shows that the stabiliser $G_v$ is convex-cocompact and root-closed.

    We are left to show that $G_v$ is elliptic in all $(\mc{Z}_c(G),\mc{H})$--splittings $G\acts U$. It suffices to show this when there is a single $G$--orbit of edges in $U$. In this case, the fact that $U$ is compatible with $T$ leaves only two options:
    \begin{itemize}
        \item either there is an edge $e\sq T$ with $G_e\in\mc{Z}_c(G)$ such that $U$ is obtained from $T$ by collapsing all edges outside of the orbit $G\cdot e$;
        \item or there are a QH vertex $q\in T$ and an essential curve $\g$ on the associated surface such that $U$ is obtained by first refining $T$, splitting $G_q$ over $\langle\g\rangle$, and then collapsing all original edges of $T$.
    \end{itemize}
    Either way, the construction of $T'$ makes it clear that $G_v$ is elliptic in such a tree $U$. Indeed, denoting by $\pi\colon T\ra T'$ the collapse map, $G_v$ stabilises the fibre $\pi^{-1}(v)\sq T$, which does not contain any QH vertices of $T$, nor does it contain any edges with $G$--stabiliser in $\mc{Z}_c(G)$. This completes the proof.
\end{proof}

Since we only consider \emph{optimal} QH vertex groups, we have the following.

\begin{lem}\label{lem:type(b)_addenda}
    Let $G\acts T$ be the splitting provided by \Cref{prop:actual_JSJ}. Considering a type~(b) vertex $v\in T$ and its stabiliser $V\leq G$.
    \begin{enumerate}
        \item We have $\mc{Z}_c(V)\sq\mc{Z}_c(G)$.
        \item If the collection $\mc{H}$ contains all cyclic subgroups of $G$ with finite $\mc{O}$--orbit, then the group $V$ is $(\mc{Z}_c(V),\mc{H}|_V)$--rigid in itself.
        \item The splitting $G\acts T$ is acylindrical.
    \end{enumerate}
\end{lem}
\begin{proof}
    For part~(1), consider an element $g\in V$ such that $Z_V(g)=\langle g\rangle$ and suppose for the sake of contradiction that $Z_V(g)\lneq Z_G(g)$. Since $V$ is root-closed in $G$, it follows that $Z_G(g)\in\mc{Z}_s(G)$; thus, $Z_G(g)$ is contained in an element of $\mc{S}(G)$ and it must fix a vertex $x\in T$. Since $Z_V(g)\neq Z_G(g)$, we have $x\neq v$; let $e\sq T$ be the edge incident to $v$ in the direction of $x$. Then $g$ fixes $e$ and, since $\langle g\rangle$ is maximal cyclic in $V$, we have $G_e=\langle g\rangle$. Since $Z_V(g)$ is contained in $Z_G(g)\in\mc{Z}_s(G)$, we have $G_e\not\in\mc{Z}_c(G)$. In particular, the vertex $w\in e\setminus\{v\}$ must be an optimal QH vertex. Since $G_w$ is hyperbolic, we have $x\neq w$, and hence there exists an edge $f\sq T$ incident to $w$ in the direction of $x$. Now, we have $g\in G_e\cap G_f$, but $G_e\cap G_f=\{1\}$ by \Cref{defn:optimal_QH}(3), yielding the required contradiction.

    Regarding part~(2), the assumption on $\mc{H}$ implies that it contains the $G$--stabiliser of every edge of $T$ incident to $v$. Thus, a $(\mc{Z}_c(V),\mc{H}|_V)$--splitting of $V$ could be used to refine $T$ and, collapsing the original edges of $T$, we would obtain a splitting of $G$ over an element of $\mc{Z}_c(V)$ relative to $\mc{H}$, in which $V$ is not elliptic. Since $\mc{Z}_c(V)\sq\mc{Z}_c(G)$ by part~(1), this would violate the fact that $V$ is $(\mc{Z}_c(G),\mc{H})$--rigid in $G$.

    As to part~(3), distinct edges incident to an optimal QH vertex have $G$--stabilisers with trivial intersection. Moreover, the fixed set in $T$ of any $Z\in\mc{Z}_c(G)$ has diameter at most the number of edges of the quotient $T/G$: otherwise, some $G$--orbit would contain distinct edges of $T$ with stabiliser equal to $Z$, violating the fact that $N_G(Z)=Z$. This proves the lemma.
\end{proof}

The following is the main result of \Cref{sect:JSJ} and it implies \Cref{thmintro:JSJ}. Recall that the family $\mc{ZZ}(G)$ was introduced in \Cref{sub:singular}.

\begin{thm}\label{thm:JSJ+}
Let $\mc{O}\leq\Out(G)$ be a subgroup, and let $G$ be $1$--ended relative to an $\mc{O}$--invariant collection $\mc{H}\supseteq\mc{S}(G)$. Then there exists an $\mc{O}$--invariant $(\mc{ZZ}(G),\mc{H})$--tree $G\acts T$ with the following properties.
\begin{enumerate}
    \setlength\itemsep{.25em}
    \item The $G$--stabiliser of each vertex of $T$ is of one of two kinds:
    \begin{enumerate}
        \item[(a)] an optimal quadratically hanging subgroup relative to $\mc{H}$;
        \item[(b)] convex-cocompact, root-closed and $(\mc{Z}(G),\mc{H})$--rigid in $G$.
    \end{enumerate}
    \item Each edge $e\sq T$ with both vertices of type~(b) has $G_e\in\mc{Z}(G)$.
    \item Let ${\rm Cyc}_{\mc{O}}(G)$ be the set of infinite cyclic subgroups of $G$ whose conjugacy class has finite $\mc{O}$--orbit. If $\mc{H}\supseteq{\rm Cyc}_{\mc{O}}(G)$, then each type~(b) vertex group $V\leq G$ is additionally $(\mc{Z}(V),\mc{H}|_V)$--rigid in itself.
    \item If a subgroup $H\leq G$ is $(\mc{Z}(G),\mc{H})$--rigid in $G$, then either $H$ fixes a type~(b) vertex of $T$, or $H$ fixes an edge of $T$.
\end{enumerate}
\end{thm}
\begin{proof}
    By \Cref{thm:Z_s-splitting}, there is an $\mc{O}$--invariant $(\mc{Z}_s(G),\mc{H})$--tree $G\acts T'$ whose vertex groups are $(\mc{Z}_s(G),\mc{H})$--rigid in $G$. By \Cref{lem:cc_edges}(1) and \Cref{rmk:root-closed_vertices}, its vertex groups are convex-cocompact and root-closed. 
    
    Consider a vertex-stabiliser $V$ of $T'$ and the restriction $\mc{O}|_V\leq\Out(V)$ (\Cref{rmk:restriction_new}). As in the proof of \Cref{thm:Z_s-splitting}, relative $1$--endedness of $G$ implies that $V$ is $1$--ended relative to $\mc{H}|_V$.
    We also have $\mc{S}(V)\sq\mc{H}|_V$ and $\mc{Z}_s(G)|^V\sq\mc{H}|_V$, by \Cref{prop:S(G)_properties}(4) and the fact that $\mc{S}(G)\sq\mc{H}$.
    
    We can thus apply \Cref{prop:actual_JSJ} to $V$. We obtain an $\mc{O}|_V$--invariant $({\rm Cyc}(V),\mc{H}|_V)$--tree $V\acts T_V$ whose QH vertex groups are optimal, and whose non--QH vertex groups are $(\mc{Z}_c(V),\mc{H}|_V)$--rigid in $V$, convex-cocompact and root-closed. Each edge of $T_V$ either has stabiliser in $\mc{Z}_c(V)$, or is incident to a QH vertex. Since $\mc{Z}_s(G)|^V\sq\mc{H}|_V$, \Cref{lem:blow-up}(2) allows us to use $T_V$ to refine $T'$ into an $\mc{O}$--invariant $(\mc{ZZ}(G),\mc{H})$--tree of $G$. To check that the hypotheses of the lemma are satisfied: since $G\not\in\mc{S}(G)$ and $V\not\in\mc{S}(V)\sq\mc{H}|_V$, both $G$ and $V$ have trivial centre, and neither $T'$ nor $T_V$ is a line. Moreover, $T_V$ is acylindrical by \Cref{lem:type(b)_addenda}(3).

    We perform this refinement for all $\mc{O}$--orbits of vertex groups of $T'$, and call $G\acts T$ the result. We claim that $T$ is the $G$--tree that we are looking for. Ignoring part~(4) for the moment, we only need to show that the non--QH vertex groups of $T$ satisfy the required rigidity properties. In turn, this simply amounts to showing that each non--QH vertex-stabiliser $W$ of one of the actions $V\acts T_V$ is $(\mc{Z}(G),\mc{H})$--rigid in $G$, and additionally $(\mc{Z}(W),\mc{H}|_W)$--rigid in itself if $\mc{H}\supseteq{\rm Cyc}_{\mc{O}}(G)$.

    Thus, let $W$ be a vertex-stabiliser of $V\acts T_V$. Clearly, $W$ is $(\mc{Z}_s(G),\mc{H})$--rigid in $G$, since $W$ is contained in $V$, which has this property. In addition, $W$ is $(\mc{Z}_c(G),\mc{H})$--rigid in $G$, because it is $(\mc{Z}_c(V),\mc{H}|_V)$--rigid in $V$ by construction. 
    This shows that $W$ is $(\mc{Z}(G),\mc{H})$--rigid in $G$.

    Finally, suppose that $\mc{H}\supseteq{\rm Cyc}_{\mc{O}}(G)$. If $w\in T$ is a vertex of which $W$ is the $G$--stabiliser, it follows that the collection $\mc{H}|_W$ contains all $G$--stabilisers of edges of $T$ incident to $W$: indeed, each of these subgroups has finite $\mc{O}$--orbit, by the $\mc{O}$--invariance of $T$, and all non-cyclic edge-stabilisers lie in $\mc{Z}_s(G)|^W\sq\mc{S}(G)|_W\sq\mc{H}|_W$. Thus, if $W$ were to admit a $(\mc{Z}_s(W),\mc{H}|_W)$--splitting, this could be used to refine $G\acts T$ into a $(\mc{Z}(G),\mc{H})$--splitting of $G$ in which $W$ is not elliptic, a contradiction (this uses that $\mc{Z}_s(W)\sq\mc{Z}(G)$, see \Cref{lem:Z(V)_in_Z(G)}). This shows that $W$ is $(\mc{Z}_s(W),\mc{H}|_W)$--rigid in itself, while it is $(\mc{Z}_c(W),\mc{H}|_W)$--rigid by \Cref{lem:type(b)_addenda}(2). In conclusion, $W$ is $(\mc{Z}(W),\mc{H}|_W)$--rigid in itself, completing the proof of parts~(1)--(3) of the theorem.

    As to part~(4), consider a subgroup $H\leq G$ that is $(\mc{Z}(G),\mc{H})$--rigid in $G$. If $H$ were not elliptic in $T$, then it would contain an element $h\in H$ that is loxodromic in $T$ (this holds even when $H$ is not finitely generated, using that chains in $\mc{ZZ}(G)$ have bounded length and e.g.\ \cite[Lemma~2.18]{Fioravanti-Kerr}). Let $\alpha\sq T$ be the axis of $h$. On the one hand, no edge of $\alpha$ can have $G$--stabiliser in $\mc{Z}(G)$ because $H$ is $(\mc{Z}(G),\mc{H})$--rigid. On the other, no vertex of $\alpha$ can be optimal and QH, by \Cref{lem:rigid->semi-rigid}(1), and this is a contradiction. In conclusion, $H$ fixes a vertex $v\in T$. If $v$ is of type~(a), then $H$ is peripheral in $G_v$ by \Cref{lem:rigid->semi-rigid}(2), and so $H$ fixes an edge of $T$.
\end{proof}

\begin{rmk}
    There is a slight mismatch between the properties of edge and vertex groups of $T$ in \Cref{thm:JSJ+}: we allow cyclic edge groups that are not centralisers, but we claim that non-QH vertex groups are rigid only over $\mc{Z}(G)$. This is the best one can do if we want rigid vertex groups to be \emph{convex-cocompact}, which in turn is essential to carry out many inductive arguments in the paper. On the other hand, we emphasise that type~(a) vertex groups in \Cref{thm:JSJ+} are not convex-cocompact in general (\Cref{rmk:non_cc_QH}).
\end{rmk} 

The tree $T$ in \Cref{thm:JSJ+} is allowed to be a single vertex. When this occurs for $\mc{H}=\mc{S}(G)$ (and if $G$ is not a surface group), we gain the important information that $G$ is $(\mc{Z}(G),\mc{S}(G))$--rigid. As the next lemma shows, rigidity implies that every element of $\Out(G)$ has its top growth rate realised on a singular subgroup of $G$, allowing us to reduce the study of growth to lower-complexity subgroups of $G$. We will return to this idea in \Cref{sub:osing}.

\begin{lem}\label{lem:rigid_bounds_otop}
    Suppose that $G$ is $(\mc{Z}(G),\mc{H})$--rigid, for a collection of subgroups $\mc{H}\supseteq\mc{S}(G)$. 
    Consider an outer automorphism $\phi\in\Out(G)$ and suppose that there exists an abstract growth rate $\mf{o}\in\mf{G}$ such that all subgroups in $\mc{H}$ are $\mf{o}$--controlled (\Cref{sub:beat}). Then, we have $\overline{\mf{o}}_{\rm top}(\phi)\preceq\mf{o}$.
\end{lem}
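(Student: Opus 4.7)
The plan is to argue by contradiction via the Bestvina--Paulin construction, extracting a $(\mc{Z}(G),\mc{H})$--splitting of $G$ from a degeneration and thereby violating the rigidity hypothesis.

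Suppose for contradiction that $\overline{\mf{o}}_{\rm top}(\phi)\not\preceq\mf{o}$. By \Cref{rmk:fail->fail_mod_omega}, there is an ultrafilter $\om$ with $\mf{o}\prec_\om\overline{\mf{o}}_{\rm top}(\phi)$; in particular $\phi$ has infinite order (otherwise $\overline{\mf{o}}_{\rm top}(\phi)\sim[1]\preceq\mf{o}$). Form the degeneration $G\acts\X_\om$ associated with the sequence $\phi^n$ and the ultrafilter $\om$. Its scaling parameters $\tau_n$ satisfy $[\tau_n]\sim_\om\overline{\mf{o}}_{\rm top}(\phi)$ (recall the end of \Cref{subsub:growth_of_automorphisms}), so for every $H\in\mc{H}$ and $h\in H$ we have $\|\phi^n(h)\|\preceq\mf{o}\prec_\om[\tau_n]$. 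Hence \Cref{lem:beat_vs_degeneration} guarantees that $H$ is elliptic in $\X_\om$, and therefore in each of the coordinate $\R$--trees $T^v_\om$ provided by \Cref{prop:degeneration_is_median}.

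Since $G$ has unbounded orbits in $\X_\om$, there exists $v\in\G$ such that $G$ does not fix a point of $T^v_\om$. By \Cref{thm:10e+}, the action on $\Min(G,T^v_\om)$ is BF--stable, and each of its arc-stabilisers either lies in $\mc{Z}(G)$ or is the pointwise stabiliser of a perverse line (whose ambient line-stabiliser is itself an element of $\mc{Z}(G)$). Using the unconditional accessibility of $G$ over centralisers (\Cref{thm:accessible}), together with the structural results on $\R$--tree actions obtained in \Cref{app:acc_R_trees} (in particular \Cref{thm:acc_implies_nice}), one extracts a simplicial $G$--tree $T$ with the following properties: every point-stabiliser of $T^v_\om$ is elliptic in $T$ (so every $H\in\mc{H}$ is elliptic in $T$); after collapsing each perverse-line piece to a point with its line-stabiliser as new vertex-stabiliser, every edge-stabiliser of $T$ lies in $\mc{Z}(G)$; and $G$ fails to be elliptic in $T$. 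This yields a $(\mc{Z}(G),\mc{H})$--splitting of $G$, contradicting $(\mc{Z}(G),\mc{H})$--rigidity and completing the proof.

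The main obstacle is the last step: upgrading the $\R$--tree action on $T^v_\om$ to a genuine simplicial $(\mc{Z}(G),\mc{H})$--splitting. Two subtleties must be managed. First, arc-stabilisers of perverse lines are only co-abelian subgroups of centralisers and not centralisers themselves, so each perverse-line component must be collapsed to a point whose stabiliser is the associated line-stabiliser, which does lie in $\mc{Z}(G)$. Second, the passage from $\R$--tree to simplicial tree (and the control it gives on edge groups and on which subgroups remain elliptic) is exactly the sort of Rips--Sela output that \Cref{app:acc_R_trees} is designed to provide, and it is at this point that accessibility over centralisers (\Cref{thmintro:acc}) is essential.
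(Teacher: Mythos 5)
Your broad strategy matches the paper's --- contradiction via a degeneration, ellipticity of $\mc{H}$ from \Cref{lem:beat_vs_degeneration}, and extraction of a splitting from $T^v_{\om}$ --- but you miss the key observation that makes the last step legitimate, and your substitute for it does not work as described.

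The hypotheses of \Cref{cor:acc_implies_nice} require that \emph{all} arc-stabilisers of $T^v_{\om}$ lie in $\mc{Z}(G)$; this is the starting point of the entire Rips--Sela/accessibility machinery in \Cref{app:acc_R_trees}, and it cannot be relaxed. Your plan to first ``collapse each perverse-line piece to a point with its line-stabiliser as new vertex-stabiliser'' and only then invoke the appendix is not something the paper's results support: after such a collapse the elliptic structure of the tree changes (the non-elliptic line-stabiliser $G_{\alpha}$ becomes elliptic), arc-stabilisers through the collapsed points become intersections involving $G_{\alpha}$, and none of \Cref{thm:acc_implies_nice} or \Cref{cor:acc_implies_nice} is stated for the modified tree. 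This is a genuine gap, not a technicality you can wave at.

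What you are missing is that under the hypotheses of the lemma there are \emph{no perverse lines at all}, so the appendix applies directly. The paper first reduces to the case $G\not\in\mc{S}(G)$ (if $G\in\mc{S}(G)$ then $G\in\mc{H}$ and the claim is immediate by \Cref{rmk:dominating_otop}), and this reduction matters: it gives that $G$ has trivial centre. Then: if $\alpha$ were a perverse line, its stabiliser $G_{\alpha}$ would be a centraliser acting non-elliptically, hence not contained in any element of $\mc{S}(G)\sq\mc{H}$; by \Cref{lem:Z_s_properties}(2) this forces $G_{\alpha}\in\mc{Z}_c(G)$, so $G_{\alpha}\cong\Z$, so the pointwise stabiliser of $\alpha$ is trivial --- which does lie in $\mc{Z}(G)$ since $G$ has trivial centre --- contradicting \Cref{defn:perverse}(3). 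With perverse lines excluded, \Cref{thm:10e+}(2) gives that every arc-stabiliser of $\Min(G,T^v_{\om})$ lies in $\mc{Z}(G)$, and \Cref{cor:acc_implies_nice}(3) then produces the desired $(\mc{Z}(G),\mc{H})$--splitting. You should replace the collapsing step with this argument.
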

\begin{proof}
Suppose for the sake of contradiction that $\overline{\mf{o}}_{\rm top}(\phi)\not\preceq\mf{o}$. This implies that there exists a non-principal ultrafilter $\om$ such that $\mf{o}^{\om}_{\rm top}(\phi)\succ_{\om}\mf{o}$ (\Cref{rmk:fail->fail_mod_omega}). 
Realising $G$ as a convex-cocompact subgroup of a RAAG $A_{\G}$, we can thus consider the degeneration $G\acts\X_{\om}$ associated with $\phi$ and $\om$ as in \Cref{sub:degenerations}. Choose $v\in\G$ so that the $\R$--tree $G\acts T^v_{\om}$ is non-elliptic.

By \Cref{lem:beat_vs_degeneration}, the subgroups in $\mc{H}$ are elliptic in $T^v_{\om}$ and, in particular, the elements of $\mc{S}(G)$ are elliptic. All non-cyclic elements of $\mc{Z}(G)$ are contained in singular subgroups, and so they are also elliptic in $T^v_{\om}$. As a consequence, \Cref{thm:10e-}(2b) implies that the $G$--stabiliser of every arc of $\Min(G,T^v_{\om})$ lies in $\mc{Z}(G)$. We can thus appeal to \Cref{thm:acc_implies_nice}(3) to extract from $T^v_{\om}$ a $(\mc{Z}(G),\mc{H})$--splitting of $G$, contradicting rigidity of $G$.
\end{proof}

\subsection{JSJs of RAAGs}\label{sub:JSJ_RAAG}

For a $1$--ended RAAG $A_{\G}$, it should be possible to exhibit an $\Out(A_{\G})$--invariant $(\mc{ZZ}(A_{\G}),\mc{S}(A_{\G}))$ splitting as in \Cref{thm:JSJ+} directly from the presentation graph $\G$. It also seems reasonable that all edge groups should be parabolic and, hence, that there should be no quadratically hanging vertices. We would further expect the edge groups to always be abelian and equal to their centraliser in $A_{\G}$. 

None of this is shown in this article. Here we only give two simple examples of enhanced JSJ decompositions of RAAGs as a proof of concept.

\begin{ex}
    Let $\G$ be a finite tree of diameter $\geq 3$. Let $\G'\sq\G$ be the subgraph obtained by removing all degree--$1$ vertices of $\G$ and their incident edges. The group $A_{\G}$ admits an $\Out(A_{\G})$--invariant $(\mc{Z}(A_{\G}),\mc{S}(A_{\G}))$--splitting $A_{\G}\acts T$ such that the quotient $T/A_{\G}$ is naturally identified with $\G'$. The vertex group associated to $v\in\G'$ is $A_{\St(v)}$, computing stars within the entire tree $\G$. In particular, the vertex groups are singular subgroups of $A_{\G}$ and hence rigid in $A_{\G}$. Edge groups of $T$ are all isomorphic to $\Z^2$ and parabolic, corresponding to the edges of $\G'\sq\G$.
\end{ex}

\begin{ex}
    Let $\G$ be the graph pictured in \Cref{ex:fix_not_raag} below. With a little work, it is possible to use \cite{Hull} to check that $A_{\G}$ is $(\mc{Z}(A_{\G}),\mc{S}(A_{\G}))$--rigid, and so its enhanced JSJ is trivial. Nevertheless, the group $\Out(A_{\G})$ is infinite. \Cref{lem:rigid_bounds_otop} guarantees that all outer automorphisms of $A_{\G}$ achieve their top growth rate on a singular subgroup ($\cong F_2\x F_2$ or $\cong F_3\x\Z$).  
\end{ex}

\section{The complexity-reduction homomorphism}\label{sect:exact}

This section is devoted to the proof of \Cref{thmintro:exact}: for any virtually special group $U$, the outer automorphism group $\Out(U)$ is boundary amenable, satisfies the Tits alternative, and has finite virtual cohomological dimension (\Cref{cor:exact}). The main ingredient is a ``complexity-reduction'' homomorphism, whose existence can be quickly deduced from the enhanced JSJ decomposition constructed in \Cref{sect:JSJ}. More precisely, for $G$ special, there is a homomorphism from $\Out(G)$ to a finite product of groups $\Out(P_i)$, where $P_i\leq G$ are lower-complexity special groups (\Cref{prop:product_embedding}).

``Complexity'' is always meant in the following sense. Here and through most of the section, we work with an \emph{actual} special group $G$.

\begin{defn}\label{defn:ambient_rank}
    The \emph{ambient rank} $\ar(G)$ is the smallest integer $r$ such that $G$ is a convex-cocompact subgroup of a RAAG $A_{\G}$ where $\G$ has $r$ vertices.
\end{defn}

If $G$ does not virtually split as a direct product, then $\ar(S)<\ar(G)$ for all $S\in\mc{S}(G)$, by \Cref{prop:S(G)_properties}(2). We also need the following.

\begin{rmk}\label{rmk:restriction+}
    Let $G$ be special, let $H\leq G$ be convex-cocompact, and let $\mc{O}\leq\Out(G)$ preserve the $G$--conjugacy class of $H$. As in \Cref{rmk:restriction_new}, let $C^G_H\leq\Out(H)$ be the the image of the conjugacy action $N_G(H)\acts H$. Note that $C^G_H$ is finite because, by \Cref{lem:cc_basics}(2), $N_G(H)$ is virtually generated by $H$ and $Z_G(H)$. If $C^G_H=\{1\}$ (for instance, if $N_G(H)=H$), then each $\phi\in\mc{O}$ has a uniquely defined restriction $\phi|_H\in\Out(H)$. In this case, there is a well-defined restriction homomorphism $\mc{O}\ra\Out(H)$.
	
    In general, we can always find a finite-index subgroup $\mc{O}'\leq\mc{O}$ with a well-defined restriction homomorphism $\mc{O}'\ra\Out(H)$. Indeed, let $\tilde{\mc{O}}\leq\Aut(G)$ be the group of automorphisms $\varphi$ with $\varphi(H)=H$ and $[\varphi]\in\mc{O}$. Let $r\colon\tilde{\mc{O}}\ra\Out(H)$ denote the homomorphism $\varphi\mapsto[\varphi|_H]$. Since $H$ is special, $\Out(H)$ is residually finite by \cite{AMS}, and so there exists a finite-index subgroup $\Out'(H)\leq\Out(H)$ that intersects $C^G_H$ trivially. Consider $\tilde{\mc{O}}':=r^{-1}(\Out'(H))$, and let $\mc{O}'$ be its projection to a finite-index subgroup of $\mc{O}$. Any inner automorphism of $G$ that lies in $\tilde{\mc{O}}'$ must restrict to an inner automorphism of $H$, and thus the restriction of $r$ to $\tilde{\mc{O}}'$ descends to a well-defined homomorphism $\mc{O}'\ra\Out(H)$.
\end{rmk}

The next result gives the complexity-reduction homomorphism mentioned above. This is a fairly direct consequence of \Cref{thm:JSJ+}, using the work of Levitt on automorphisms preserving a graph of groups \cite{Levitt-GD}. 

\begin{prop}\label{prop:product_embedding}
    Let $G$ be special, $1$--ended, and not virtually a direct product. There exist a finite-index subgroup $\Out^1(G)\leq\Out(G)$ and finitely many convex-cocompact subgroups $H_1,\dots,H_k\leq G$ such that the following hold:
    \begin{enumerate}
        \item we have $N_G(H_i)=H_i$ and the $G$--conjugacy class of $H_i$ is $\Out^1(G)$--invariant;
        \item the restriction morphism $\rho\colon\Out^1(G)\ra\Out(H_1)\x\dots\x\Out(H_k)$ has kernel isomorphic to a special group;
        \item we have $\ar(H_i)<\ar(G)$ unless $H_i$ is a free or surface group.
    \end{enumerate}
\end{prop}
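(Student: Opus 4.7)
The plan is to apply the enhanced JSJ decomposition of \Cref{thm:JSJ+} to $G$ and choose the $H_i$ to be representatives of the $G$--conjugacy classes of QH vertex groups of the resulting tree, together with representatives of the $G$--conjugacy classes of $\mc{S}(G)$. Concretely, I would apply \Cref{thm:JSJ+} with $\mc{O}=\Out(G)$ and $\mc{H}$ equal to the union of $\mc{S}(G)$ with the collection of cyclic subgroups of $G$ whose $G$--conjugacy class has finite $\Out(G)$--orbit. Since every element of $\mc{H}$ is infinite and $G$ is $1$--ended, $G$ is also $1$--ended relative to $\mc{H}$, so the theorem produces an $\Out(G)$--invariant $(\mc{ZZ}(G),\mc{H})$--tree $G\acts T$ whose vertex groups are either optimal QH subgroups (surface or free groups) or convex-cocompact root-closed subgroups $V$ that are $(\mc{Z}(V),\mc{H}|_V)$--rigid in themselves.

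Properties~(1) and~(3) are essentially immediate with this choice. For~(1), each QH vertex stabiliser $Q$ satisfies $N_G(Q)=Q$ because $Q$ fixes a unique vertex of the minimal tree $T$ (using that $G$ has trivial centre, as $G\not\in\mc{S}(G)$), and each singular subgroup $S$ satisfies $N_G(S)=S$ by \Cref{prop:S(G)_properties}(4). Since both lists contain finitely many $G$--conjugacy classes, a finite-index subgroup of $\Out(G)$ preserves each individually; a further finite-index passage via \Cref{rmk:restriction+} yields $\Out^1(G)$ together with a well-defined restriction homomorphism $\rho$. For~(3), QH vertex groups are surface or free groups by construction, while every $S\in\mc{S}(G)$ is $G$--parabolic by \Cref{prop:S(G)_properties}(2), hence embeds convex-cocompactly in a proper parabolic $A_{\Delta}\lneq A_{\G}$, giving $\ar(S)<\ar(G)$.

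The main work lies in property~(2). Any $\phi\in\ker\rho$ preserves the $\Out^1(G)$--invariant tree $T$ and restricts trivially (modulo inner automorphisms) to every QH vertex group and every singular subgroup. I would next argue that, after passing to a further finite-index subgroup of $\Out^1(G)$, $\phi$ additionally acts as an inner automorphism on every type~(b) vertex group $V$ of $T$. The restriction $\phi|_V\in\Out(V)$ must fix the $V$--conjugacy class of every $S'\in\mc{S}(V)$, since $\mc{S}(V)\sq\mc{S}(G)|_V$ by \Cref{prop:S(G)_properties}(1) and $\phi$ already fixes every class in $\mc{S}(G)$. Were $\phi|_V$ of infinite order, the Bestvina--Paulin construction of \Cref{sub:degenerations} applied to $\phi|_V$ would produce a degeneration $V\acts\X_{\om}$ in which every element of $\mc{S}(V)$ is elliptic; combining \Cref{thm:10e+} and accessibility (\Cref{thm:accessible}), along the lines of the proof of \Cref{lem:rigid_bounds_otop}, would then extract from this $\R$--tree a forbidden $(\mc{Z}(V),\mc{H}|_V)$--splitting of $V$, contradicting rigidity. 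Hence $\phi|_V$ has finite order, and a finite-index reduction trivialises it.

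Once $\ker\rho$ consists of automorphisms acting as inner automorphisms on every vertex group of $T$, Levitt's theory of automorphisms preserving a graph-of-groups decomposition \cite{Levitt-GD} identifies $\ker\rho$ with a twist group, generated by Dehn twists along edges of $T/G$. Each twist is determined by a central element of an edge group in $\mc{ZZ}(G)$; since such edge groups are either cyclic or centralisers, their centres are finitely generated free abelian, so the twist group is itself finitely generated abelian. A final finite-index passage (available by residual finiteness of $\Out(G)$ from \cite{AMS}) renders $\ker\rho$ torsion-free and hence isomorphic to some $\Z^n$, which is a special group. The main obstacle throughout will be the Bestvina--Paulin step in the previous paragraph: although it is an evident analogue of \Cref{lem:rigid_bounds_otop} for the vertex group $V$ in place of $G$, executing it rigorously requires carefully transferring the singular subgroups of $V$ into the framework of $\mc{S}(G)|_V$ and invoking the degeneration structure theorems of \Cref{sub:degenerations} and the appendix \Cref{app:acc_R_trees} with $V$ in place of $G$.
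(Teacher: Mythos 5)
Your overall architecture is the right one — pass to the enhanced JSJ tree from \Cref{thm:JSJ+}, set up a restriction homomorphism whose targets have ambient rank dropping or are free/surface groups, and use a Bestvina--Paulin degeneration against rigidity plus \Cref{cor:acc_implies_nice} to force finiteness where needed. Your routing differs from the paper's: you map $\rho$ directly into outer automorphism groups of QH vertex groups and singular subgroups, whereas the paper first maps into $\prod_i\Out(V_i)$ over \emph{all} vertex groups (obtaining a twist-group kernel) and then, for rigid $V_i$ with $\ar(V_i)=\ar(G)$, composes with a further injective map from a finite-index subgroup of the image of $\Out(G)\to\Out(V_i)$ into a product $\prod_j\Out(H_j)$ with $\ar(H_j)<\ar(G)$. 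Your one-stage version is tenable and arguably cleaner, but it shifts the burden onto the kernel analysis, which is where the gaps lie.

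The serious error is in your description of the twist group. You write that ``each twist is determined by a central element of an edge group in $\mc{ZZ}(G)$'' and conclude that $\ker\rho$ is finitely generated abelian, hence $\Z^n$. This is wrong. A twist around an edge $e$ with vertex groups $G_v,G_w$ is parametrised by an element of $Z_{G_v}(G_e)$ or $Z_{G_w}(G_e)$ — the centraliser of the \emph{edge group inside a vertex group} — not by a central element of $G_e$. These centralisers are convex-cocompact subgroups of $G_v,G_w$ and need not be abelian: for instance, if $G_e\cong\Z$ sits inside a vertex group containing $F_2\times\Z$, then $Z_{G_v}(G_e)\cong F_2\times\Z$ already. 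The paper accordingly concludes only that $\ker\rho$ is isomorphic to a finite direct product of cyclic edge groups and convex-cocompact subgroups of non-QH vertex groups, hence \emph{special} — which is all the proposition claims and all one can hope for. Your conclusion $\ker\rho\cong\Z^n$ overreaches.

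Two smaller points. First, you deduce from the degeneration applied to $\phi|_V^n$ that $\phi|_V$ has finite order, for each individual $\phi\in\ker\rho$; but this only shows the image of $\ker\rho$ in $\Out(V)$ is torsion, which is not enough to trivialise it after a finite-index passage. You need the stronger statement that this image is \emph{finite}, which the paper obtains by running the Bestvina--Paulin argument against an arbitrary infinite sequence in the kernel rather than the iterates of a single element. Second, once you reach bitwists via \cite{Levitt-GD}, reducing from bitwists to genuine twists requires the additional finite-index reduction (as in the paper's choice of $\Out^2(G)$ trivialising the $C^G_E$ factors on edge groups), which you elide.
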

\begin{proof}
    Denote by $\mc{S}^*(G)$ the union of $\mc{S}(G)$ with the collection of cyclic subgroups of $G$ whose conjugacy class has finite $\Out(G)$--orbit. 
    
    \Cref{thm:JSJ+} provides an $\Out(G)$--invariant $(\mc{ZZ}(G),\mc{S}^*(G))$--tree $G\acts T$ such that each vertex-stabiliser $V$ is either QH relative to $\mc{S}^*(G)$, or convex-cocompact and $(\mc{Z}(V),\mc{S}^*(G)|_V)$--rigid in itself. Let $\Out^1(G)\leq\Out(G)$ be the finite-index subgroup that acts trivially on the quotient graph $T/G$. Let $V_1,\dots,V_s$ be representatives of the $G$--conjugacy classes of (maximal) vertex-stabilisers of $T$. By the fact that $T$ is relative to $\mc{S}(G)$, each normaliser $N_G(V_i)$ is elliptic in $T$ (recall \Cref{lem:cc_basics}(2)), and hence $N_G(V_i)=V_i$ for all $i$ (by maximality of $V_i$).
    Thus, by \Cref{rmk:restriction+}, there is no ambiguity in restricting outer automorphisms of $G$ to the $V_i$, and we obtain a diagonal restriction homomorphism $\rho\colon\Out^1(G)\ra \prod_i\Out(V_i)$. 
    
    Passing to a further finite-index subgroup $\Out^2(G)\leq\Out^1(G)$, we can assume that we have a restriction homomorphism $\Out^2(G)\ra\Out(E)$ for each edge group $E$, and that the image of this homomorphism has trivial intersection with the finite subgroup $C^G_E\leq\Out(E)$ given by the conjugacy action $N_G(E)\acts E$ (see again \Cref{rmk:restriction+}).

    The kernel of $\rho\colon\Out^2(G)\ra\prod_i\Out(V_i)$ can be described using \cite[Section~2]{Levitt-GD}. In Levitt's terminology, $\ker\rho$ is generated by \emph{bitwists} around the edges of $T/G$ \cite[Proposition~2.2]{Levitt-GD}. In fact, we can say more: for any $\phi\in\ker\rho$, the restriction of $\phi$ to each (maximal) vertex group of $T$ is inner and hence, for each edge group $E$ of $T$, the restriction $\phi|_E$ lies in $C^G_E$. By our choice of $\Out^2(G)$, the latter implies that $\phi|_E$ is trivial. Thus, any element of $\ker\rho$ restricts to the trivial outer automorphism on each edge group of $T$. A bitwist that is inner on the corresponding edge group represents the same outer class in $G$ as a product of two \emph{twists} (again in Levitt's terminology), so we conclude that $\ker\rho$ is generated by twists. Now, twists around distinct edges commute in $\Out(G)$, while twists around a given edge $e$ generate a subgroup of $\Out(G)$ isomorphic to a direct product of (virtual) direct factors of the centralisers of $G_e$ in the two vertex groups of $e$ (this uses \Cref{lem:cc_basics}(2)). From this, one can deduce that $\ker\rho$ is isomorphic to a finite direct product of cyclic edge groups and convex-cocompact subgroups of non-QH vertex groups of $T$ (we leave the details of this to the reader). This implies that $\ker\rho$ is special.

    Now, if $V_i$ is a QH vertex group of $T$, then $V_i$ is isomorphic to a free or surface group. However, when $V_i$ is a non-QH vertex group of $T$, we might still have $\ar(V_i)=\ar(G)$. In order to conclude the proof, we thus consider a vertex $x\in T$ whose stabiliser $V$ is not QH. Let $\mc{O}$ be the image of the restriction homomorphism $\Out^2(G)\ra\Out(V)$. We wish to construct an injective homomorphism from a finite-index subgroup of $\mc{O}$ to a finite product of outer automorphism groups of subgroups of $V$ with strictly lower ambient rank (this will prove the proposition). 

    Let $\mc{E}_x$ be the collection of $G$--stabilisers of edges of $T$ incident to the vertex $x$, and let ${\rm Cyc}_{\mc{O}}(V)$ be the collection of cyclic subgroups of $V$ whose $V$--conjugacy class has finite $\mc{O}$--orbit. Recall that $V$ is $(\mc{Z}(V),\mc{S}^*(G)|_V)$--rigid in itself. Since all elements of $\mc{S}^*(G)$ are elliptic in $T$, the maximal elements of $\mc{S}^*(G)|_V$ are either elements of $\mc{E}_x$, or elements of $\mc{S}^*(G)$ that happen to be contained in $V$. The latter lie in $\mc{S}(V)\cup{\rm Cyc}_{\mc{O}}(V)$. In conclusion, the group $V$ is $(\mc{Z}(V),\mc{S}(V)\cup\mc{E}_x\cup{\rm Cyc}_{\mc{O}}(V))$--rigid in itself.

    Let $\mc{O}_0\leq\mc{O}$ be a finite-index subgroup that preserves the $V$--conjugacy class of each subgroup in $\mc{S}(V)\cup\mc{E}_x$, and let $H_1,\dots,H_k$ be representatives of the finitely many $V$--conjugacy classes of non-cyclic, maximal subgroups in this collection. Each $H_i$ satisfies $N_V(H_i)=H_i$ (by maximality), and so we obtain a restriction homomorphism $\rho'\colon\mc{O}_0\ra\prod_i\Out(H_i)$. Each $H_i$ is a subgroup of a singular subgroup of $G$ (using \Cref{prop:S(G)_properties}(4) when $H_i\in\mc{E}_x$), and hence $\ar(H_i)<\ar(G)$ for all $i$. 
    
    Now, the kernel of $\rho'$ is finite: indeed, any infinite sequence in $\ker\rho'$ would give a degeneration $V\acts\X_{\om}$ in which all elements of the family $\mc{S}(V)\cup\mc{E}_x\cup{\rm Cyc}_{\mc{O}}(V)$ are elliptic, and we would be able to use \Cref{thm:acc_implies_nice}(3) to extract a $(\mc{Z}(V),\mc{S}(V)\cup\mc{E}_x\cup{\rm Cyc}_{\mc{O}}(V))$--splitting of $V$, violating rigidity (see the proof of \Cref{lem:rigid_bounds_otop} for details). Finally, residual finiteness of $\Out(V)$ yields a finite-index subgroup $\mc{O}_1\leq\mc{O}_0$ on which $\rho'$ is injective.
    
    Summing up, we have found a homomorphism $\rho\colon\Out^2(G)\ra\prod_i\Out(V_i)$ with special kernel and, for each $V_i$ such that $\ar(V_i)=\ar(G)$ and $V_i$ is neither free nor surface, we have constructed a further injective homomorphism $\rho_i'\colon\mc{O}_1^i\ra\prod_j\Out(H_j)$ where $\ar(H_j)<\ar(G)$ and $\mc{O}_1^i$ has finite index in the image of $\Out^2(G)\ra\Out(V_i)$. Composing $\rho$ with the product of the $\rho_i'$ (and some identity homomorphisms), this proves the proposition.
\end{proof}

It is important to stress that \Cref{prop:product_embedding} gives no information whatsoever on the \emph{image} of the homomorphism $\rho\colon\Out^1(G)\ra\prod_i\Out(H_i)$. Better understanding this image would be a reasonable approach to proving finite generation of $\Out(G)$, though this would require further ideas.

Before we continue with the proof of \Cref{thmintro:exact}, we briefly introduce the three properties in its statement. Let $U$ be a countable, discrete group.

\emph{Boundary amenability} is also known as coarse amenability, exactness, or property A \cite{Yu,Higson-Roe}; see \cite{Ananth,Ozawa} for background. If $U$ is boundary amenable, then it satisfies Novikov's conjecture on higher signatures \cite{Higson,Baum-Connes-Higson}. If in addition $U$ is finitely generated, then $U$ satisfies the coarse Baum--Connes and strong Novikov conjectures. It remains unknown whether $\Out(G)$ is finitely generated for all special groups $G$.

A group $U$ satisfies the \emph{Tits alternative} if, for each subgroup $H\leq U$ (possibly not finitely generated), 
either $H$ is virtually solvable or $H$ contains a subgroup isomorphic to the free group $F_2$. We will work with a slightly stronger version that we refer to as the \emph{Tits$^*$ alternative}: either $H$ is virtually polycylic or $H$ contains $F_2$.

The \emph{cohomological dimension} ${\rm cd}(U)$ is the supremum of degrees in which $U$ has nontrivial group cohomology, over all coefficient modules. If $U$ has torsion, then ${\rm cd}(U)$ is infinite. If $U$ is torsion-free, we have ${\rm cd}(U)={\rm cd}(U')$ for all finite-index subgroups $U'\leq U$. Thus, if $U$ is virtually torsion-free, one can define the \emph{virtual cohomological dimension} (\emph{vcd} for short) ${\rm vcd}(U)$ as the cohomological dimension of any finite-index torsion-free subgroup of $U$.

\begin{rmk}\label{rmk:stability_properties}
    Let $U$ be a countable, discrete group. Let $\mscr{P}$ mean any of the following four properties: boundary amenability, the Tits$^*$ alternative, virtual torsion-freeness, finiteness of vcd. The property $\mscr{P}$ is stable under the following moves. (See e.g.\ \cite[Section~2.4]{BGH22} for boundary amenability and \cite{Brown} for vcd, 
    when we do not give other references.)
    \begin{enumerate}
        \item \emph{Arbitrary subgroups.} If $U$ satisfies $\mscr{P}$ and $V\leq U$ is any subgroup, then $V$ satisfies $\mscr{P}$.
        \item \emph{Finite-index overgroups.} If $U\leq V$ has finite-index and $U$ satisfies $\mscr{P}$, then $V$ satisfies $\mscr{P}$.
        \item \emph{Finite direct products.} If $U_1,\dots,U_k$ satisfy $\mscr{P}$, then the product $U_1\x\dots\x U_k$ satisfies $\mscr{P}$.
        \item \emph{Extensions.} Consider a short exact sequence $1\ra N\ra U\ra Q\ra 1$. If the groups $N$ and $Q$ are boundary amenable, then so is $U$ \cite[Theorem~5.1]{Kirchberg-Wassermann}. If $N$ and $Q$ satisfy the Tits$^*$ (resp.\ Tits) alternative, then so does $U$; indeed, if $N$ and $Q$ are virtually polycyclic (resp.\ virtually solvable), then so is $U$ (see e.g.\ \cite[Lemma~5.5]{Dinh}). 
        If $N$ and $Q$ have finite vcd and $U$ is virtually torsion-free, then ${\rm vcd}(U)\leq{\rm vcd}(N)+{\rm vcd}(Q)<+\infty$. Finally, if $N$ is torsion-free and $Q$ is virtually torsion-free, then $U$ is virtually torsion-free.
        \item \emph{Quotients.} Suppose that $N\lhd U$. If $N$ is amenable and $U$ is boundary amenable, then $U/N$ is boundary amenable. If $N$ is finite and $U$ satisfies the Tits$^*$ alternative, then $U/N$ satisfies the Tits$^*$ alternative (finite-by-polycyclic groups are polycyclic-by-finite).
    \end{enumerate}
\end{rmk}

The next lemma is needed to extend results on $\Out(G)$, with $G$ special, to results on $\Out(U)$, with $U$ \emph{virtually} special. The proof is similar to that of \cite[Section~2]{Krstic} and \cite[Lemma~5.4]{GL-outerspace}.

\begin{lem}\label{lem:Out(fi)_new}
    Let $U$ be finitely generated and let $L\leq U$ be finite-index.
    \begin{enumerate}
        \item If $\Out(L)$ is boundary amenable, then $\Out(U)$ is boundary amenable.
        \item If $\Out(L)$ satisfies the Tits alternative, then so does $\Out(U)$. If $L$ is normal, has finitely generated centre, and $\Out(L)$ satisfies the Tits$^*$ alternative, then $\Out(U)$ satisfies the Tits$^*$ alternative. 
        \item If $U$ and $\Out(L)$ are virtually torsion-free and have finite ${\rm vcd}$, and if $\Out(U)$ is virtually torsion-free, then $\Out(U)$ has finite ${\rm vcd}$.
        \item If $L$ is normal, the centre of $L$ is finitely generated, $\Out(L)$ is virtually torsion-free, and $\Out(U)$ is residually finite, then $\Out(U)$ is virtually torsion-free.
    \end{enumerate}
\end{lem}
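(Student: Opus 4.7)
The strategy for all four parts is to construct, via restriction to $L$, a short exact sequence
$$1 \to K \to \Out^{(0)}(U) \to H \to 1,$$
where $\Out^{(0)}(U) \leq \Out(U)$ has finite index, $H$ is a subgroup of $\Out(L)$, and $K$ is controlled by the centraliser $Z_U(L')$ for a suitable finite-index normal subgroup $L' \leq L$ of $U$ (either $L' = L$ in parts~(2) and~(4), where $L$ is already assumed normal, or $L'$ the normal core of $L$ in $U$ in parts~(1) and~(3)). Each of the four properties is then derived for $\Out(U)$ by invoking the stability properties recorded in Remark~\ref{rmk:stability_properties}.

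Setup: Since $U$ is finitely generated, only finitely many subgroups of $U$ have index $[U:L']$, so a finite-index subgroup $\Aut^{(0)}(U) \leq \Aut(U)$ preserves both $L$ and $L'$, and $\mathrm{Inn}(U) \cap \Aut^{(0)}(U)$ has finite index in $\mathrm{Inn}(U)$. Restriction gives $\rho \colon \Aut^{(0)}(U) \to \Aut(L)$; composing with $\Aut(L) \twoheadrightarrow \Out(L)$ and then passing to appropriate finite-index quotients (to absorb the finite discrepancy $N_U(L)/L Z_U(L)$ between ``inner on $U$'' and ``inner on $L$'') yields the desired exact sequence with $\Out^{(0)}(U)$ a finite-index subgroup of $\Out(U)$. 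The key computation is that for $\varphi \in \ker(\rho)$, $u \in U$ and $l \in L'$, applying $\varphi$ to $ulu^{-1} \in L'$ forces $\varphi(u)u^{-1} \in Z_U(L')$; consequently $\ker(\rho)$ is, up to the finite action on $U/L'$, a subgroup of $Z(L')^{[U:L']}$, and hence $K$ is controlled by $Z_U(L')$. Since $Z_U(L') \cap L' = Z(L')$ and $Z_U(L')/Z(L')$ embeds in $U/L'$, the group $Z_U(L')$ is virtually abelian.

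For Part~(1), $K$ is therefore virtually abelian, hence amenable; boundary amenability is preserved under amenable extensions, subgroups, and finite-index overgroups (Remark~\ref{rmk:stability_properties}). For Part~(2), virtual solvability of $K$ together with the Tits alternative for $H \leq \Out(L)$ yields the Tits alternative for $\Out(U)$; adding that $Z(L)$ is finitely generated makes $K$ virtually polycyclic and gives the Tits$^*$ alternative. For Part~(3), the hypothesis that $U$ is virtually torsion-free with finite vcd passes to its subgroup $Z_U(L')$, hence to $K$; since $\Out(U)$ is assumed virtually torsion-free, passing to a torsion-free finite-index subgroup of $\Out^{(0)}(U)$ gives an extension with both kernel and quotient of finite vcd, so the whole has finite vcd.

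The main obstacle is Part~(4): we must combine virtual polycyclicity of $K$ (from finite generation of $Z(L)$) with the hypothesis that $\Out(U)$ is residually finite to promote virtual torsion-freeness. The plan is to pass to the preimage $\Out^\dagger(U) \leq \Out^{(0)}(U)$ of a torsion-free finite-index subgroup of $H$; all torsion of $\Out^\dagger(U)$ then lies in $K$. Since $K$ is finitely generated virtually polycyclic, it has only finitely many conjugacy classes of finite subgroups (and hence of torsion elements), and these can only fuse when passing to $\Out^\dagger(U)$-conjugacy; picking one representative from each class, the residual finiteness of $\Out(U)$ furnishes a finite-index normal subgroup of $\Out(U)$ avoiding it (via normal cores), and intersecting these finitely many subgroups with $\Out^\dagger(U)$ yields a finite-index subgroup of $\Out(U)$ that, by normality, avoids all torsion.
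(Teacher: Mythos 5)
Your argument is essentially the same as the paper's: both construct the restriction map $\Aut_L(U)\to\Out(L)$, observe that an automorphism fixing $L$ pointwise multiplies each $u\in U$ by something commuting with a normal finite-index subgroup $L'\leq L$, conclude the kernel is virtually abelian (and virtually polycyclic once $Z(L)$ is finitely generated), and then invoke the stability properties of Remark~\ref{rmk:stability_properties} for parts~(1)--(3). The one place you genuinely diverge is part~(4). The paper exploits the stronger structure available there: the relevant kernel (after passing to $\Aut_L^0(U)$ and dividing by the finite group $F'$) is \emph{abelian} and finitely generated, hence has only finitely many torsion elements outright, and residual finiteness then produces a finite-index subgroup avoiding that finite set. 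You instead work with the kernel as a finitely generated virtually polycyclic group, cite the (nontrivial but standard) fact that such groups have finitely many conjugacy classes of finite subgroups, and combine this with normal cores and normality of the resulting finite-index subgroup to kill all torsion. Both routes are correct; yours is more robust (it would work even if the kernel were only virtually polycyclic rather than virtually abelian, e.g.\ without having identified the centre structure so precisely), at the cost of importing the finiteness-of-conjugacy-classes theorem for polycyclic-by-finite groups, whereas the paper's version stays elementary by squeezing out abelianness first.

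One small caution for part~(3): you write that finite vcd ``passes to its subgroup $Z_U(L')$, hence to $K$,'' but $K$ is a \emph{quotient} of (a finite-index subgroup of) a subgroup of $Z_U(L')^{[U:L']}$, not a subgroup, and quotients of finite-vcd groups need not have finite vcd (e.g.\ Pr\"ufer quotients of rank-one torsion-free abelian groups). What rescues the argument --- and what the paper also relies on, with similar brevity --- is the combination of the rank bound (the free rank of $K$ is bounded by that of $Z_U(L')^{[U:L']}$, which is finite since $U$ has finite vcd) together with the hypothesis that $\Out(U)$ is virtually torsion-free, which supplies the torsion-free finite-index subgroup of $K$ whose cohomological dimension is then controlled by the rank. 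Your write-up does invoke the vtf hypothesis on $\Out(U)$ for exactly this purpose, so the conclusion stands; just be aware that ``subgroup, hence finite vcd'' is not literally the mechanism.
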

\begin{proof}
	For convenience, let $\mscr{P}$ denote any of the properties under consideration: boundary amenability, the Tits$^*$ (or Tits) alternative,  finiteness of vcd, virtual torsion-freeness. Let $K_L\leq\Aut(U)$ be the subgroup of automorphisms that fix $L$ pointwise. 

    \smallskip
    {\bf Claim.} \emph{The group $K_L$ virtually embeds in a finite product $C\x\dots\x C$, where $C$ is the centre of a finite-index subgroup of $L$.}

    \smallskip\noindent
    \emph{Proof of claim.}
    Let $L_0\leq L$ be a finite-index subgroup that is normal in $U$. 
    Let $K_L^0\leq K_L$ be a finite-index subgroup acting trivially on the coset space $U/L_0$. Write $U=u_1L_0\sqcup\dots\sqcup u_kL_0$, choosing some coset representatives $u_i\in U$. For any $\varphi\in K_L^0$ and $1\leq i\leq k$, we have $\varphi(u_iL_0)=(u_iL_0)$ and so there are elements $\ell_i(\varphi)\in L_0$ such that $\varphi(u_i)=u_i\ell_i(\varphi)$. Note that, for $\varphi,\psi\in K_L^0$, we have $\varphi\psi(u_i)=\varphi(u_i)\ell_i(\psi)=u_i\ell_i(\varphi)\ell_i(\psi)$ and thus each map $\ell_i\colon K_L^0\ra L_0$ is a homomorphism. It follows that the tuple $(\ell_1,\dots,\ell_k)$ gives a group embedding $K_L^0\hookrightarrow L_0^k$. Now, for any $x\in L_0$, we have $u_ixu_i^{-1}\in L_0$ because $L_0$ is normal in $U$. Hence, for any $\varphi\in K_L^0$, we have
    \[ u_ixu_i^{-1}=\varphi(u_ixu_i^{-1})=\varphi(u_i)x\varphi(u_i)^{-1}=u_i\ell_i(\varphi)x\ell_i(\varphi)^{-1}u_i^{-1} ,\]
    which yields $x=\ell_i(\varphi)x\ell_i(\varphi)^{-1}$ for all $x\in L_0$. This shows that each homomorphism $\ell_i$ takes values in the centre of $L_0$, proving the claim.
    \hfill$\blacksquare$

    \smallskip
    Let $\Aut_L(U)\leq\Aut(U)$ be the subgroup of automorphisms leaving $L$ invariant. Since $U$ is finitely generated, it has only finitely many subgroups of the same index as $L$, and hence $\Aut_L(U)$ has finite index in $\Aut(U)$. Consider the homomorphisms
    \begin{align*}
        &\alpha\colon\Aut_L(U)\ra\Out(U), & &\beta\colon\Aut_L(U)\ra\Out(L),
    \end{align*}
    defined as follows: $\alpha$ is the composition of the inclusion $\Aut_L(U)\hookrightarrow\Aut(U)$ with the quotient projection to $\Out(U)$, while $\beta$ is the composition of the restriction $\Aut_L(U)\ra\Aut(L)$ with the quotient projection to $\Out(L)$. 
    
    The image ${\rm im}(\alpha)$ has finite index in $\Out(U)$, while the image ${\rm im}(\beta)$ is just some subgroup of $\Out(L)$. By \Cref{rmk:stability_properties}(1) we know that ${\rm im}(\beta)$ satisfies property $\mscr{P}$, and by \Cref{rmk:stability_properties}(2) it suffices to prove that ${\rm im}(\alpha)$ satisfies $\mscr{P}$ in order to obtain it for $\Out(U)$ (in each of the four parts of the lemma). Note that $\ker(\alpha)\leq \Aut(U)$ is the group of conjugations by elements of the normaliser $N_U(L)$. As such, it has a finite-index subgroup $\ker^0(\alpha)$ formed by conjugations by elements of $L$, and we have $\ker^0(\alpha)\lhd\Aut_L(U)$. The group $\ker(\beta)$ is the group of automorphisms of $U$ restricting to an inner automorphism on $L$, so it splits as a product $\ker^0(\alpha)\cdot K_L$.

    Now, we have an epimorphism
    \[ \eta\colon\Aut_L(U)/\ker^0(\alpha)\twoheadrightarrow\Aut_L(U)/\ker(\beta) \cong{\rm im}(\beta) \]
    with kernel $\ker(\beta)/\ker^0(\alpha)\cong K_L/(K_L\cap\ker^0(\alpha))$. By the claim, $K_L$ is virtually abelian, and hence $\ker(\eta)$ is virtually abelian as well. Moreover, ${\rm im}(\eta)$ satisfies property $\mscr{P}$, and the group $\Aut_L(U)/\ker^0(\alpha)$ maps onto ${\rm im}(\alpha)\cong\Aut_L(U)/\ker(\alpha)$ with finite kernel $\ker(\alpha)/\ker^0(\alpha)$. 
    
    Using parts~(4) and~(5) of \Cref{rmk:stability_properties}, these observations imply that ${\rm im}(\alpha)$ is boundary amenable and satisfies the Tits alternative, provided that the respective property holds for ${\rm im}(\beta)$. As discussed above, this proves part~(1) and the first half of part~(2) of the lemma.

    Let $\Aut_L^0(U)\leq\Aut_L(U)$ be the finite-index subgroup that acts trivially on the coset space $U/L$. If $L$ is normal in $U$, we can take $L_0=L$ in the proof of the claim, which then shows that $K_L\cap\Aut_L^0(U)$ embeds in a product of finitely many copies of the centre of $L$. In particular $K_L\cap\Aut_L^0(U)$ is abelian, and it is finitely generated if the centre of $L$ is finitely generated. Thus, under the assumptions of the second half of part~(2), $K_L$ and $\ker(\eta)$ are virtually polycyclic, 
    and \Cref{rmk:stability_properties} implies that $\Out(U)$ satisfies the Tits$^*$ alternative, completing the proof of part~(2).
    
    We now prove part~(3). Let $\mc{U}\leq\Out(U)$ and $\mc{L}\leq\Out(L)$ be finite-index torsion-free subgroups, and let $\mc{I}$ be the projection of the intersection $\alpha^{-1}(\mc{U})\cap\beta^{-1}(\mc{L})$ to  $\Aut_L(U)/\ker^0(\alpha)$. Thus, $\mc{I}$ has finite index in $\Aut_L(U)/\ker^0(\alpha)$, and the image of the restricted homomorphism $\eta|_{\mc{I}}$ is torsion-free with finite cohomological dimension, as it is a subgroup of $\mc{L}$. 
    
    Let $F\lhd\mc{I}$ be the finite subgroup obtained by intersecting $\ker(\alpha)/\ker^0(\alpha)$ with $\mc{I}$. The quotient $\mc{I}/F$ is a finite-index subgroup of $\Aut_L(U)/\ker(\alpha)\cong{\rm im}(\alpha)$, and it is identified with a finite-index subgroup of $\mc{U}\leq\Out(U)$, so it is torsion-free. Since there is no torsion in the image of $\eta|_{\mc{I}}$, we have $F\lhd\ker(\eta|_{\mc{I}})$ and $\eta$ yields a short exact sequence
    \[ 1\ra \ker(\eta|_{\mc{I}})/F \ra \mc{I}/F\ra {\rm im}(\eta|_{\mc{I}}) \ra 1 .\]
    The kernel of $\eta|_{\mc{I}}$ has finite index in $\ker(\eta)\cong K_L/(K_L\cap\ker^0(\alpha))$, and so it is virtually abelian with finite vcd by the claim (since $U$ has finite vcd). Any finite-index torsion-free subgroup of $\ker(\eta|_{\mc{I}})$ projects injectively to a finite-index subgroup of $\ker(\eta|_{\mc{I}})/F$, and thus the latter has finite vcd as well. Summing up, $\mc{I}/F$ is torsion-free, and both image and kernel of $\eta$ restricted to $\mc{I}/F$ have finite vcd. \Cref{rmk:stability_properties}(4) now shows that $\mc{I}/F$ has finite vcd. Since the latter has finite index in $\Out(U)$, this proves part~(3).
    
    As to part~(4), let $\mc{U}'$ be the finite-index subgroup of $\Aut_L(U)/\ker^0(\alpha)$ that is the projection of $\Aut_L^0(U)$. We have already observed that the hypotheses of part~(4) imply that $K_L\cap\Aut_L^0(U)$ is abelian 
    and finitely generated. Let again $\mc{L}\leq\Out(L)$ be a finite-index torsion-free subgroup, and consider $\mc{L}':=\beta^{-1}(\mc{L})/\ker^0(\alpha)$ and $\mc{I}':=\mc{U}'\cap\mc{L}'$, which have finite index in $\Aut_L(U)/\ker^0(\alpha)$. The restriction of $\eta$ to $\mc{I}'$ gives an exact sequence
    \[ 1\ra \ker(\eta|_{\mc{I}'}) \ra\mc{I}' \ra \mc{L} ,\]
    where as above $\ker(\eta|_{\mc{I}'})$ is a quotient of $K_L\cap \Aut_L^0(U)$. 
    In particular, this kernel is abelian and finitely generated. Let $F'\lhd\mc{I}'$ be the intersection of $\mc{I}'$ with the finite group $\ker(\alpha)/\ker^0(\alpha)$. As above, the fact that $\mc{L}$ is torsion-free implies that $F'$ is contained in the kernel of $\eta|_{\mc{I}'}$ and we obtain
    \[ 1\ra \ker(\eta|_{\mc{I}'})/F' \ra\mc{I}'/F' \ra \mc{L} .\]
    Now, $\mc{I}'/F'$ has finite index in $\Out(U)$, and so it is residually finite by hypothesis. Since $\ker(\eta|_{\mc{I}'})/F'$ is abelian and finitely generated, it has only finitely many torsion elements. A finite-index subgroup of $\mc{I}'/F'$ avoids all of the latter, and so it is torsion-free by \Cref{rmk:stability_properties}(4). This is the required finite-index torsion-free subgroup of $\Out(U)$, proving part~(4).
\end{proof}

We are finally ready to prove \Cref{thmintro:exact}, which we restate here.

\begin{cor}\label{cor:exact}
    The following hold for every virtually special group $U$:
    \begin{enumerate}
        \item $\Out(U)$ is boundary amenable;
        \item $\Out(U)$ is virtually torsion-free and it has finite vcd;
        \item $\Out(U)$ satisfies the Tits$^*$ alternative;
        \item any Baumslag--Solitar subgroup ${\rm BS}(m,n)\leq\Out(U)$ has $|m|=|n|$.
    \end{enumerate}
\end{cor}
\begin{proof}
    Part~(4) is immediate from part~(3): if ${\rm BS}(m,n)$ is contained in $\Out(G)$ for $0<m<|n|$, then ${\rm BS}(m,n)$ is residually finite by \cite{AMS}, and so we have $m=1$. At the same time, ${\rm BS}(1,n)$ is solvable and not virtually polycyclic for $|n|>1$, violating the Tits$^*$ alternative.
    
    Now, every virtually special group $U$ has a finite-index, normal, special subgroup $G$. Special groups have finite cohomological dimension and their abelian subgroups have bounded rank. Moreover, $\Out(U)$ is residually finite by \cite{AMS}. Thus, using \Cref{lem:Out(fi)_new}, it suffices to prove each of parts~(1)--(3) of the corollary for a finite-index special subgroup. 

    In the rest of the proof we therefore assume that $U=G$ is special. We argue by induction on the ambient rank of $G$, with the base case being trivial. For the inductive step, suppose that, for every special group $H$ with $\ar(H)<\ar(G)$, we know that $\Out(H)$ satisfies the corollary.

    We can assume that $G$ is $1$--ended. Indeed, in general $G$ admits a free splitting $G=G_1\ast\dots\ast G_k\ast F_m$, where the $G_i$ are $1$--ended. The factors $G_i$ are special by \Cref{lem:cc_edges}(1) and thus they are boundary amenable \cite[Theorem~13]{Campbell-Niblo}, satisfy the Tits$^*$ alternative \cite{Sageev-Wise}, and are torsion-free with finite vcd. If $Z_i$ denotes the centre of $G_i$, then $G_i/Z_i$ is virtually special by \Cref{lem:cc_basics}(2), and so it has finite vcd as well. Now, if the $\Out(G_i)$ are boundary amenable, then so is $\Out(G)$ by \cite[Corollary~5.3]{BGH22}; if the $\Out(G_i)$ satisfy the Tits$^*$ alternative, then so does $\Out(G)$ by \cite[Theorem~6.1]{Horbez14}; and if the $\Out(G_i)$ are virtually torsion-free and have finite vcd, then so does $\Out(G)$ by \cite[p.\,709, Theorem~5.2(i)]{GL-outerspace}.

    Now, suppose that $G\in\mc{S}(G)$. Let $L\leq G$ be a finite-index subgroup 
    of the form $L_1\x\dots\x L_k\x\Z^m$, where each $L_i$ is directly indecomposable and has trivial centre. By \Cref{lem:Out(fi)_new}, it again suffices to prove the corollary for $\Out(L)$. Note that $\ar(L_i)<\ar(G)$ for all $i$. Automorphisms of $L$ permute the subgroups $\langle L_i,\Z^m\rangle$, e.g.\ by \cite[Lemma~3.5]{Fio11a}. Let $\Out^*(L)$ be the group of outer classes $[\varphi]$ for $\varphi\in\Aut(L)$ with $\varphi|_{\Z^m}=\id$ and $\varphi(L_i)=L_i$ for all $i$. Denoting by $L_{\rm ab}$ the abelianisation of $L$, the natural homomorphism $\eta\colon\Out(L)\ra\Out(L_{\rm ab})$ has $\ker(\eta)$ virtually contained in $\Out^*(L)$. 
    
    The group $\Out(L_{\rm ab})$ is isomorphic to ${\rm GL}_p(\Z)$ for some $p\in\N$, which is boundary amenable by \cite{GHW}, satisfies the Tits$^*$ alternative by \cite{Tits,Malcev}, 
    is virtually torsion-free by Selberg's lemma, and has finite vcd because it acts properly on the symmetric space of ${\rm SL}_p(\R)$. On the other hand, we have $\Out^*(L)\cong\prod_i\Out(L_i)$, which satisfies all parts of the corollary by the inductive hypothesis; by \Cref{rmk:stability_properties}(1), so does $\ker(\eta)$. Using \Cref{rmk:stability_properties}(4), this implies that $\Out(L)$ is boundary amenable and satisfies the Tits$^*$ alternative, as this is true of both ${\rm im}(\eta)$ and $\ker(\eta)$. The same shows that $\Out(L)$ has finite vcd, provided that we find a finite-index subgroup of $\Out(L)$ that has torsion-free intersection with $\Out^*(L)$; the existence of the latter follows from the description of automorphisms of products in terms of formal triangular matrices, as in \cite[Section~3.1]{Fio11a}, using that $\Out^*(L)$ is virtually torsion-free by the inductive hypothesis.
    Summing up, this proves the entire corollary when $G$ virtually splits as a direct product.
    
    In conclusion, we can assume that $G$ is $1$--ended and $G\not\in\mc{S}(G)$. We can then invoke \Cref{prop:product_embedding}, which yields a short exact sequence
    \[ 1\longrightarrow K\longrightarrow\Out^1(G)\longrightarrow \Out(H_1)\x\dots\x\Out(H_k) ,\]
    where $\Out^1(G)\leq\Out(G)$ has finite index, $K$ is special (in particular, torsion-free), and we have $\ar(H_i)<\ar(G)$ for each $H_i$ that is not a free or surface group. Outer automorphism groups of surface and free groups are boundary amenable \cite{Ham-exact,BGH22}, satisfy the Tits$^*$ alternative \cite{McCarthy,BFH2}, and have finite vcd \cite{CV86}. Using the inductive assumption, it follows that the product $\prod_i\Out(H_i)$ has all these properties, and so does the group $K$ by specialness. One last application of \Cref{rmk:stability_properties} finally implies that $\Out(G)$ satisfies these properties as well, concluding the proof.
\end{proof}

\section{Growth of general automorphisms}\label{sect:tameness}

In this section we prove \Cref{thmintro:general_aut}, describing the top growth rate of a general automorphism of a (virtually) special group $G$. In fact, we will prove a stronger, more technical statement (\Cref{thm:tame}). 

We begin by introducing an important property of automorphisms, \emph{docility}, which plays a fundamental role in proofs. Most of \Cref{sect:tameness} is devoted to showing that exponentially-growing automorphisms of special groups are docile (\Cref{sub:main_step}), after which \Cref{thmintro:general_aut} can be deduced relatively quickly from the existence of the enhanced JSJ tree. For \emph{virtually} special groups, \Cref{thmintro:general_aut} follows by combining \Cref{thm:tame} with \Cref{lem:fi_fix} below.

\subsection{Soundness and docility}\label{sub:sound_docile}

We use the notation from \Cref{sub:growth}.

\begin{defn}\label{defn:tame}
    An abstract growth rate $[x_n]\in\mf{G}$ is: 
    \begin{enumerate}
        \item \emph{pure} if $[x_n]\sim [n^p\lambda^n]$ for some $\lambda>1$ and $p\in\N$;
        \item \emph{$(\lambda,p)$--tame}, for some $\lambda>1$ and $p\in\N$, if we have $[x_n]\sim[a_n\lambda^n]$ for a weakly increasing sequence $a_n$ with $[1]\preceq [a_n]\preceq [n^p]$.
    \end{enumerate}
\end{defn}

Pure growth rates are tame, but the converse does not hold. Pure growth rates form a \emph{totally} ordered subset of $(\mf{G},\preceq)$, while tame ones do not. Also note that tame rates are at least exponential, as we ask that $\lambda>1$.

Here in \Cref{sect:tameness}, we are mostly interested in tame growth rates: we will show that the top growth rate of any automorphism of a special group is either sub-polynomial or tame. We do not know if this top growth rate is pure in general, so pure rates will only return in \Cref{sect:cmp}, where we study coarse-median preserving automorphisms and prove \Cref{thmintro:cmp_aut}.

Recall that, for a finitely generated group $G$ and any $\varphi\in\Aut(G)$ with outer class $\phi\in\Out(G)$, we always have $\overline{\mc{O}}_{\rm top}(\varphi)\succeq\overline{\mf{o}}_{\rm top}(\phi)$.

\begin{defn}\label{defn:tame_aut}
    Consider $\varphi\in\Aut(G)$ with outer class $\phi\in\Out(G)$. 
    \begin{enumerate}
    \item The automorphism $\varphi$ is \emph{sound} if we have $\overline{\mc{O}}_{\rm top}(\varphi)\sim\overline{\mf{o}}_{\rm top}(\phi)$. Similarly, $\phi$ is \emph{sound} if all its representatives $\varphi'\in\Aut(G)$ are sound.
    \item The automorphism $\varphi$ is \emph{docile} if, at the same time, $\varphi$ is sound and $\overline{\mc{O}}_{\rm top}(\varphi)$ is tame. The outer automorphism $\phi$ is \emph{docile} if $\varphi$ is docile (this is independent of the representative $\varphi$, by \Cref{rmk:tameness_independent_of_representative} below). We also say that $\varphi$ and $\phi$ are $(\lambda,p)$--docile if we wish to specify the parameters for which $\overline{\mc{O}}_{\rm top}(\varphi)$ and $\overline{\mf{o}}_{\rm top}(\phi)$ are $(\lambda,p)$--tame.
    \end{enumerate}
\end{defn}

Thus, automorphisms are sound when, under their iterates, word length does not grow much faster than conjugacy length. Exponentially-growing automorphisms of free and surface groups are all sound, while inner automorphisms clearly are not. 

The crux of \Cref{sect:tameness} lies in showing that exponentially-growing automorphisms of special groups are sound (and docile). As train-track techniques are not available in this context, settling this harmless-looking point will require some work. In turn, soundness and docility are extremely important, because they are necessary to relate the growth of an automorphism of $G$ to its growth on the vertex groups of the enhanced JSJ decomposition of $G$.

\begin{rmk}\label{rmk:tame_implies_sum-stable}
    If $[x_n]\in\mf{G}$ is tame, then we have $\big[\sum_{i\leq n}x_i\big]\sim[x_n]$. Indeed, since $[x_n]\sim[a_n\lambda^n]$ for a weakly increasing sequence $a_n$ by definition, we obtain $\sum_{i\leq n}a_i\lambda^i\leq a_n\sum_{i\leq n}\lambda^i\preceq a_n\lambda^n$.
\end{rmk}

The need for \Cref{rmk:tame_implies_sum-stable} is precisely what motivated us to require the sequence $a_n$ in \Cref{defn:tame} to be weakly increasing (and that $\lambda\neq 1$). We will use the previous remark in many forms, the first of which is the following. Given $\varphi\in\Aut(G)$ and an element $g\in G$, we can define the elements $g_n:=g\varphi(g)\varphi^2(g)\dots\varphi^n(g)$. If the growth rate $\big[\,|\varphi^n(g)|\,\big]$ is tame, then $|g_n|\preceq |\varphi^n(g)|$. 
This leads to the next observation:

\begin{rmk}\label{rmk:tameness_independent_of_representative}
    If $\phi\in\Out(G)$ has a docile representative $\varphi\in\Aut(G)$, then \emph{all} its representatives are docile. Indeed, suppose that $\varphi$ is docile and let $\psi(x)=g\varphi(x)g^{-1}$ be another representative. On the one hand, we immediately get $\overline{\mc{O}}_{\rm top}(\psi)\succeq\overline{\mf{o}}_{\rm top}(\phi)\sim \overline{\mc{O}}_{\rm top}(\varphi)$. On the other, using the above elements $g_n$:
    \[ |\psi^n(x)|=|g_{n-1}\varphi^n(x)g_{n-1}^{-1}|\leq 2|g_{n-1}| + |\varphi^n(x)| ,\]
    for all $x\in G$ and $n\in\N$. Thus, \Cref{rmk:tame_implies_sum-stable} implies that $\overline{\mc{O}}_{\rm top}(\psi)\preceq\overline{\mc{O}}_{\rm top}(\varphi)$. In conclusion $\overline{\mc{O}}_{\rm top}(\psi)\sim\overline{\mc{O}}_{\rm top}(\varphi)$, showing that $\psi$ is docile.
\end{rmk}

Finally, a rate $[x_n]\in\mf{G}$ is \emph{sub-polynomial} if $[x_n]\preceq[n^p]$ for some $p\in\N$. An automorphism $\varphi\in\Aut(G)$ is \emph{sub-polynomial} if $\overline{\mc{O}}_{\rm top}(\varphi)$ is sub-polynomial.
A priori, the fact that an outer class $\phi\in\Out(G)$ has sub-polynomial $\overline{\mf{o}}_{\rm top}(\phi)$ does not imply that any automorphism representing $\phi$ is sub-polynomial. However, we have the following analogue of \Cref{rmk:tameness_independent_of_representative}.

\begin{rmk}\label{rmk:sub-polynomial_independent_of_representative}
    If $\phi\in\Out(G)$ has a representative $\varphi$ with $\overline{\mc{O}}_{\rm top}(\varphi)\preceq n^p$, then all representatives $[\psi]=\phi$ satisfy the weaker inequality $\overline{\mc{O}}_{\rm top}(\psi)\preceq n^{p+1}$. This is shown as in the previous remark: if $\psi(x)=g\varphi(x)g^{-1}$, we have $|\psi^n(x)|\leq 2|g_{n-1}|+|\varphi^n(x)|$ and:
    \[ |g_n|\leq\sum_{i\leq n}|\varphi^i(x)|\preceq \sum_{i\leq n} i^p\leq n^{p+1}. \]
\end{rmk}

To extend results from special to \emph{virtually} special groups, we can use:

\begin{lem}\label{lem:fi_fix}
    Let $G$ be a finitely generated group with a finite-index characteristic subgroup $G_0\lhd G$. Let $\varphi\in\Aut(G)$ be an automorphism. 
    \begin{enumerate}
        \item If $\varphi|_{G_0}$ is docile, then $\overline{\mc{O}}_{\rm top}(\varphi)\sim\overline{\mc{O}}_{\rm top}(\varphi|_{G_0})$ and $\overline{\mf{o}}_{\rm top}(\varphi)\sim\overline{\mf{o}}_{\rm top}(\varphi|_{G_0})$.
        \item If $\overline{\mc{O}}_{\rm top}(\varphi|_{G_0})\preceq[n^p]$ for some $p\in\N$, then $\overline{\mc{O}}_{\rm top}(\varphi)\preceq[n^{p+1}]$.
    \end{enumerate}
\end{lem}
\begin{proof}
    It suffices to prove the lemma for a power of $\varphi$, since $\overline{\mc{O}}_{\rm top}(\varphi^k)$ and $\overline{\mf{o}}_{\rm top}(\varphi^k)$ completely determine $\overline{\mc{O}}_{\rm top}(\varphi)$ and $\overline{\mf{o}}_{\rm top}(\varphi)$, by \Cref{lem:length_generalities}(1). Up to raising $\varphi$ to a power, we can assume that $\varphi$ acts trivially on the quotient $G/G_0$. For all elements $g\in G$, we then have $\varphi(g)g^{-1}\in G_0$ and hence $|\varphi^n(g)|\sim |\varphi^n(g)g^{-1}|\preceq \sum_{k=0}^{n-1}|\varphi^k\big(\varphi(g)g^{-1}\big)|$. The right-hand side is $\preceq\overline{\mc{O}}_{\rm top}(\varphi|_{G_0})$ if the latter growth rate is tame, and it is $\preceq[n^{p+1}]$ if we have $\overline{\mc{O}}_{\rm top}(\varphi|_{G_0})\preceq[n^p]$. \Cref{rmk:dominating_otop} then implies that $\overline{\mc{O}}_{\rm top}(\varphi)\preceq\overline{\mc{O}}_{\rm top}(\varphi|_{G_0})$ in the former case, and $\overline{\mc{O}}_{\rm top}(\varphi)\preceq[n^{p+1}]$ in the latter. The inequality $\overline{\mc{O}}_{\rm top}(\varphi)\succeq\overline{\mc{O}}_{\rm top}(\varphi|_{G_0})$ is immediate, so this proves part~(2) and the first equivalence in part~(1). Finally, if $\varphi|_{G_0}$ is docile, we also have $\overline{\mc{O}}_{\rm top}(\varphi|_{G_0})\sim\overline{\mf{o}}_{\rm top}(\varphi|_{G_0})\preceq\overline{\mf{o}}_{\rm top}(\varphi)\preceq\overline{\mc{O}}_{\rm top}(\varphi)$, and so all these growth rates coincide.
\end{proof}

\subsection{Proof of soundness}\label{sub:main_step}

In this subsection, we isolate the most technical part of the proof of \Cref{thmintro:general_aut}, which mainly has to do with the proof of soundness. For this, we introduce the following auxiliary concept.

\begin{defn}
    Let $H$ be a special group. If $\psi\in\Aut(H)$ is an automorphism with outer class $[\psi]\in\Out(H)$, we say that $\psi$ is \emph{$\alpha$--good}, for some $\alpha\in\N$, if {\bf both} the following conditions hold:
    \begin{enumerate}
        \item[(a)] either $\overline{\mf{o}}_{\rm top}([\psi])$ is sub-polynomial or $\psi$ is docile;
        \item[(b)] if $\overline{\mf{o}}_{\rm top}([\psi])\preceq [n^p]$ for some $p\in\N$, then $\overline{\mc{O}}_{\rm top}(\psi)\preceq [n^{p+\alpha}]$.
    \end{enumerate}
    We say that the group $H$ is \emph{$\alpha$--good} if all automorphisms of $H$ are $\alpha$--good. Finally, $H$ is \emph{good} if there exists $\alpha$ such that $H$ is $\alpha$--good.
\end{defn}

Our goal for this subsection is then to prove the following proposition. 

\begin{prop}\label{prop:alpha_good}
    All special groups are good.
\end{prop}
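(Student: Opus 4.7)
The plan is to proceed by induction on the ambient rank $r = \ar(G)$, with the enhanced JSJ decomposition from \Cref{prop:JSJ_osing} as the main tool. For $r \leq 1$ the group $G$ is trivial or infinite cyclic, every automorphism has bounded order in $\Out(G)$, and both conditions of $\alpha$-goodness hold trivially with $\alpha = 0$. Assume inductively that all special groups of ambient rank $< r$ are $\alpha(r-1)$-good for a uniform $\alpha(r-1)$, and let $G$ be special with $\ar(G) = r$. First I would reduce to the case where $G$ is $1$--ended and not virtually a direct product: the infinitely-ended case is handled by Grushko-type rigidity (a finite-index subgroup of $\Out(G)$ preserves the conjugacy class of each freely-indecomposable factor) combined with the free-product analysis in \Cref{app:growth&splittings}, the factors being either free groups (where goodness follows from classical Bestvina--Handel train-track theory) or special groups of strictly smaller ambient rank; the virtually-product case reduces similarly through the direct-product analysis of the same appendix.

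Fix $\phi \in \Out(G)$ with representative $\varphi \in \Aut(G)$, and apply \Cref{prop:JSJ_osing} to obtain the $\phi$-invariant enhanced JSJ tree $G \acts T_{\rm sing}(\phi)$ relative to $\mc{K}_{\rm sing}(\phi)$. Each singular subgroup $S \in \mc{S}(G)$ satisfies $\ar(S) < r$, so by induction $S$ is $\alpha(r-1)$-good; summing over the finitely many $G$-conjugacy classes in $\mc{S}(G)$ (\Cref{prop:S(G)_properties}), the singular growth rate $\overline{\mf{o}}_{\rm sing}(\phi)$ is either sub-polynomial or tame, and on each $S$ the word-length growth exceeds the conjugacy-length growth by at most a polynomial factor of bounded degree. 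Non-QH vertex groups $V$ of $T_{\rm sing}(\phi)$ lie in $\mc{K}_{\rm sing}(\phi)$ and therefore satisfy $\overline{\mf{o}}_{\rm top}(\phi|_V) \preceq \overline{\mf{o}}_{\rm sing}(\phi)$; the QH vertex groups are fundamental groups of compact surfaces with boundary, on which a bounded power of $\phi$ is periodic, reducible, or pseudo-Anosov, and in each case mapping-class-group theory supplies docility and a tame (in fact pure) growth rate. Combining these through the graph-of-groups growth analysis of \Cref{app:growth&splittings} then shows that $\overline{\mf{o}}_{\rm top}(\phi)$ itself is sub-polynomial or tame, with coefficients and exponents controlled by $\alpha(r-1)$ and the combinatorics of $T_{\rm sing}(\phi)/G$.

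The hard step will be proving the soundness required by condition~(a): when $\overline{\mf{o}}_{\rm top}(\phi)$ is not sub-polynomial, I must show $\overline{\mc{O}}_{\rm top}(\varphi) \sim \overline{\mf{o}}_{\rm top}(\phi)$. Suppose for contradiction that along a subsequence $n_i \to \infty$ and for some standard generator $s$, $|\varphi^{n_i}(s)|$ is strictly larger than $\|\phi^{n_i}(s)\|$; writing $\varphi^{n_i}(s) = u_i w_i u_i^{-1}$ with $w_i$ a shortest conjugate, $|u_i|$ grows strictly faster than $\|w_i\|$. My plan is to feed the sequence $(\varphi^{n_i})$ into the Bestvina--Paulin construction of \Cref{sub:degenerations}, but rescaled by the word-length parameter rather than the displacement parameter. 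The resulting $\R$-tree action, analysed through the arc-stabiliser structure of \Cref{thm:10e+} together with accessibility over centralisers (\Cref{thm:accessible}), forces the conjugators $u_i$ to fellow-travel the axis of a loxodromic element $c \in G$ that is itself stretched at a comparable rate by $\varphi$. Replacing $\varphi$ by an inner-conjugate $c^{-N}\varphi c^N$ for an appropriate $N$ then absorbs the conjugator sequence, producing a representative of $\phi$ with strictly smaller top word-length growth and contradicting the definition of $\overline{\mc{O}}_{\rm top}(\varphi)$.

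The polynomial bound of condition~(b) would then follow from a parallel but milder version of the same argument in the sub-polynomial regime, setting $\alpha(r) := \alpha(r-1) + c_r$ where $c_r$ is a constant absorbing the bounded number of edge orbits of $T_{\rm sing}(\phi)$ and the conjugator-accumulation across graph-of-groups edges. The overarching difficulty is concentrated in the soundness step: the JSJ decomposition, the inductive control on singular subgroups, the mapping-class-group analysis of QH vertices, and the graph-of-groups bookkeeping of \Cref{app:growth&splittings} are all essentially routine given the machinery already developed, but soundness cannot be obtained by a purely combinatorial induction. It requires the geometric analysis of $\R$-tree degenerations combined with the rigidity of the centraliser edge groups of the enhanced JSJ, which is why it occupies the core of this section.
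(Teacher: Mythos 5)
Your overall framework — induction on ambient rank, reduce to the $1$--ended non-product case, split along an enhanced JSJ tree, handle QH vertex groups via surface theory, and combine via the graph-of-groups bookkeeping of \Cref{app:growth&splittings} — matches the shape of the paper's proof. But there are two related gaps that the proposal does not close.

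First, the JSJ tree you use and the circularity on rigid vertex groups. You appeal to \Cref{prop:JSJ_osing}, whose non-QH vertex groups $V$ lie in $\mc{K}_{\rm sing}(\phi)$; this only gives $\overline{\mf{o}}_{\rm top}(\phi|_V)\preceq\overline{\mf{o}}_{\rm sing}(\phi)$, i.e.\ a bound on \emph{conjugacy}-length growth. But to feed $V$ into \Cref{prop:growth_GOGs} you need to know that the \emph{restriction to $V$ of a representative $\varphi$} is docile (condition~(b) there) or has sub-polynomial \emph{word}-length growth (condition~(a)) — and that is precisely goodness of $V$, which is what you are trying to prove. Since $V$ may have the same ambient rank as $G$, the induction on $\ar(\cdot)$ does not apply to $V$, and the bound on $\overline{\mf{o}}_{\rm top}(\phi|_V)$ does not supply the missing word-length control. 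The paper sidesteps this by working with the $\Out(G)$-invariant $(\mc{ZZ}(G),\mc{S}^*(G))$-tree of \Cref{thm:JSJ+}, whose rigid vertex groups are $(\mc{Z}(V),\mc{S}^*(G)|_V)$-rigid in themselves, and then proves a dichotomy (\Cref{lem:tame_auxiliary}): either such a vertex group $V$ is already good, or else $V$ is hyperbolic relative to $\mc{S}(V)$. The relatively hyperbolic case is then handled by Guirardel--Levitt (\Cref{lem:relhyp_new}), so \emph{either way} the vertex group is good. This dichotomy is the device that breaks the circularity, and it is missing from your proposal.

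Second, the mechanism of the soundness argument. You correctly identify the modified Bestvina--Paulin construction (rescaled by word-length rather than displacement) as the right tool, and this does appear in Step~3 of the proof of \Cref{lem:tame_auxiliary}. But the conclusion you want to extract from the degeneration is not ``the conjugators $u_i$ fellow-travel the axis of a loxodromic element $c$ so one can absorb them by an inner conjugate.'' There is no reason the conjugators should accumulate along a single axis — the degeneration $\mc{Y}_\om$ can branch in complicated ways, and this is exactly the obstruction that makes train-track-style conjugator reduction unavailable for general special groups, as the paper stresses. What the degeneration actually gives is structural: since the whole group $G$ fixes the point $p_{\X}\in\mc{Y}_\om$, and the subgroup $G_0(\psi)$ of elements with slower word-length growth is the stabiliser of a distinct point, $G_0(\psi)$ is forced to be an \emph{arc-stabiliser}, hence a centraliser. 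Combined with the inductive goodness of singular subgroups, one then shows $G_0(\psi)$ absorbs all centralisers it meets (Step~4), leading to malnormality of $\mc{S}(G)$ and relative hyperbolicity (Step~5). Your contradiction would then come from the Guirardel--Levitt analysis, not from producing a representative with smaller word-length growth. Trying to improve the representative directly — even granting \Cref{rmk:tameness_independent_of_representative} so that a single docile representative suffices — requires an understanding of where the conjugating elements $u_i$ live that the degeneration does not supply.
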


Throughout the following discussion, we fix a convex-cocompact subgroup $G\leq A_{\G}$ with $|\G^{(0)}|={\rm ar}(G)$, and we consider an automorphism $\varphi\in\Aut(G)$ with outer class $\phi\in\Out(G)$. 

We will prove \Cref{prop:alpha_good} by induction on the ambient rank. As a base step, one can take the case when ${\rm ar}(G)=1$, which implies that $G$ is cyclic. However, we will use in the inductive step the fact that all free and surface groups are $1$--good, which requires train tracks and for which we do not obtain a new argument. In a sense, that is the true base step.

As an outline of the inductive step, the plan is to consider the enhanced JSJ tree of $G$ (\Cref{thm:JSJ+}) and show that its rigid vertex groups are good, from which one quickly deduces that $G$ is itself good. The core of the inductive step lies in showing that rigid special groups are either good or relatively hyperbolic (\Cref{lem:tame_auxiliary}), and then in proving goodness in the relatively hyperbolic case (\Cref{lem:relhyp_new}).

Recall that $\mc{P}(G)$ is the family of $G$--parabolic subgroups of $G$.

\begin{lem}\label{lem:tame_auxiliary}
    Let $G$ be $(\mc{Z}(G),\mc{H})$--rigid, for a $\varphi$--invariant collection $\mc{H}\supseteq\mc{S}(G)$ consisting of finitely many $G$--conjugacy classes of subgroups of $G$. 
    Suppose that all non-cyclic elements of $\mc{H}$ are convex-cocompact and that $G\not\in\mc{S}(G)$. Also suppose that there exists $\alpha_0\in\N$ such that all subgroups in $\mc{H}\cup\mc{P}(G)\setminus\{G\}$ are $\alpha_0$--good, and that the automorphism $\varphi$ is not $(\alpha_0+1)$--good. Then $G$ is hyperbolic relative to $\mc{S}(G)$, and each non-cyclic subgroup in $\mc{H}$ is contained in an element of $\mc{S}(G)$.
\end{lem}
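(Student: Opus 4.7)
The proof proceeds by contradiction via a Bestvina--Paulin argument. Assume that the conclusion fails: either $G$ is not hyperbolic relative to $\mc{S}(G)$, or some non-cyclic $H_0\in\mc{H}$ is not contained in any element of $\mc{S}(G)$. Up to replacing $\varphi$ with a power, I may assume $\varphi$ preserves every $G$-conjugacy class in the finite collection $\mc{H}\cup\mc{P}(G)$, so that well-defined restrictions $\varphi|_H\in\Out(H)$ exist (\Cref{rmk:restrictions_vs_otop}), and each such restriction is $\alpha_0$--good by hypothesis.

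The failure of $\varphi$ being $(\alpha_0+1)$--good provides, in either failure mode, an infinite sequence of exponents $n_k$ and conjugators $g_k\in G$ witnessing a gap between word length and conjugacy length: $|g_k^{-1}\varphi^{n_k}(s)g_k|$ is controlled by the conjugacy-length displacement $\tau_k:=\tau^S([\varphi^{n_k}])$ for each generator $s\in S$, whereas the unconjugated word length $\max_s|\varphi^{n_k}(s)|$ outgrows this by more than the polynomial factor $n_k^{\alpha_0+1}$. Feeding the representatives $\iota_{g_k}\o\varphi^{n_k}$ into the Bestvina--Paulin construction and rescaling by $\tau_k$ yields an unbounded degeneration $G\acts\X_\om$, equivariantly embedded in a finite product of $\R$--trees $T^v_\om$ (\Cref{prop:degeneration_is_median}). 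By \Cref{lem:growth_vs_degeneration}, the elliptic subgroups are exactly those whose conjugacy-length sequences grow $\om$--strictly slower than $\tau_k$. Combining the $\alpha_0$--goodness of the restrictions $\varphi|_H$ and $\varphi|_P$ (for $H\in\mc{H}$ and $P\in\mc{P}(G)$) with the explicit polynomial-in-$n$ gap provided by the failure of $(\alpha_0+1)$--goodness, one checks that every subgroup in $\mc{H}\cup\mc{P}(G)$ has controlled sub-top growth, and is therefore elliptic in every factor $T^v_\om$ (arguing as in the proof of \Cref{lem:beat_vs_degeneration} via Serre's lemma and \Cref{lem:FK}).

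The structural dichotomy for these $\R$--trees now comes from \Cref{thm:10e+}: in each non-elliptic factor, every arc-stabiliser either lies in $\mc{Z}(G)$, or is contained in a perverse line. Suppose first that some factor $T^v_\om$ contains no perverse line. Then the action is BF--stable with arc-stabilisers in $\mc{Z}(G)$, and the accessibility of $G$ over centralisers (\Cref{thm:accessible}) combined with \Cref{cor:acc_implies_nice}(3) extracts from $T^v_\om$ an honest simplicial $(\mc{Z}(G),\mc{H})$--splitting of $G$ (using the ellipticity of $\mc{H}$ established above as the relative family). This directly contradicts the $(\mc{Z}(G),\mc{H})$--rigidity of $G$. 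Hence every non-elliptic factor tree must contain perverse lines; their stabilisers are non-cyclic centralisers $Z_G(x)\in\mc{Z}_s(G)$ by \Cref{defn:perverse}(3) and \Cref{lem:Z_s_properties}(2), and therefore are contained in elements of $\mc{S}(G)$.

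To conclude, one leverages the perverse-line structure to pin down both assertions. For the claim about $\mc{H}$: if a non-cyclic $H\in\mc{H}$ were not contained in any $S\in\mc{S}(G)$, then $H$ would be convex-cocompact and elliptic in $\X_\om$; analysing the intersection of its fixed set with the perverse-line axes and using the intersection combinatorics of centralisers (\Cref{lem:cc_basics}, \Cref{prop:parabolic_part}) produces a co-abelian $G$--parabolic subgroup of $H$ whose existence forces $H$ into an element of $\mc{S}(G)$, a contradiction. For relative hyperbolicity of $G$ with respect to $\mc{S}(G)$: the argument shows that every $\Z^2$ in $G$ sits inside the stabiliser of some perverse line, hence inside an element of $\mc{S}(G)$; this is precisely the criterion guaranteeing relative hyperbolicity of a cubulated group with respect to its maximal flat-containing peripherals. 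The main obstacle I expect is this last step of pinning down perverse-line stabilisers to give both the containment statement for $\mc{H}$ and the flat-containment giving relative hyperbolicity; in particular, handling the two failure modes of $(\alpha_0+1)$--goodness uniformly may require running the degeneration argument twice, once with each failure mode as the source of the scaling sequence $\tau_k$, and then using the $\alpha_0$--goodness of $\mc{P}(G)$ separately to control the polynomial contributions coming from higher-rank centralisers.
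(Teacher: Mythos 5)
Your proposal has a fundamental gap in the Bestvina--Paulin step. You run the standard degeneration with the conjugacy-length scaling $\tau_k=\tau^S(\phi^{n_k})$, and then assert that every subgroup in $\mc{H}\cup\mc{P}(G)$ ``has controlled sub-top growth, and is therefore elliptic in every factor $T^v_\om$.'' This is false. As the paper's own Step~1 shows, $(\mc{Z}(G),\mc{H})$--rigidity together with \Cref{lem:rigid_bounds_otop} forces $\overline{\mf{o}}_{\rm top}(\phi)\sim\sum_j\overline{\mf{o}}_{\rm top}(\phi|_{H_j})$: the top \emph{conjugacy} growth rate of $\phi$ is realised precisely on $\mc{H}$. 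Hence at least one $H_j\in\mc{H}$ contains loxodromic elements of the degeneration $\X_\om$ scaled by $\tau_k$, and your subsequent claim that rigidity is contradicted because $T^v_\om$ yields a $(\mc{Z}(G),\mc{H})$--splitting cannot be launched: $\mc{H}$ is not elliptic in $\X_\om$. The failure of $(\alpha_0+1)$--goodness is a gap between \emph{word} length and \emph{conjugacy} length (a failure of soundness), not evidence that $\mc{H}$ grows sub-top in the conjugacy metric.

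The paper exploits exactly this soundness gap via a \emph{faster} scaling that you do not use. For a chosen representative $\psi$ of $\phi$, one rescales not by $\tau_n$ but by $\s_n\sim\overline{\mc{O}}_{\rm top}(\psi)$, which is strictly $\om$--faster by the failure of goodness. Under this scaling the whole degeneration $\X_\om$ collapses to a single point $p_\X$ of a new limit $\mc{Y}_\om$, so $G$ itself becomes elliptic and every factor $\R$--tree has \emph{no} perverse lines (their definition requires the $G$--action to be non-elliptic) and all arc-stabilisers in $\mc{Z}(G)$. The new object $G_0(\psi)=\{g\in G\mid |\psi^n(g)|\prec_\om\mc{O}_{\rm top}^\om(\psi)\}$ is then realised as the stabiliser of an arc between $p_\X$ and a fixed point $(x)$ coming from the original complex $C$, hence is a centraliser; and it visibly contains every $\psi$--invariant $\alpha_0$--good convex-cocompact subgroup (since those have $\overline{\mc{O}}_{\rm top}(\psi|_K)\preceq\overline{\mf{o}}_{\rm top}(\phi)\prec_\om\s_n$). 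This single centraliser is the engine of the whole argument: comparing it with maximal elements $S\in\mc{S}(G)$ forces $S=G_0(\psi)$, and then a short argument with parabolic parts shows that each $S$ swallows every centraliser it meets (Step~4), from which malnormality of $\mc{S}(G)$ and Genevois' criterion give relative hyperbolicity (Step~5). Your attempted route through perverse lines never arises in the paper's degeneration, and your final paragraph replaces Steps~4--5 with a sketch (``pinning down perverse-line stabilisers'', ``every $\Z^2$ sits inside a perverse-line stabiliser'') that is not carried out and is built on the incorrectly established ellipticity of $\mc{H}$.
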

\begin{proof}
    The proof of this lemma is long and intricate, so we subdivide it into five steps. We begin with some basic observations. First, the projection $\phi\in\Out(G)$ has infinite order; otherwise it would be $1$--good, as inner automorphisms have linear growth (cyclic subgroups of special groups are undistorted). Second, $G$ does not virtually split as a direct product (and so $G$ has trivial centre) because we are assuming that $G\not\in\mc{S}(G)$. Third, up to raising $\varphi$ to a power, which does not affect the lack of goodness, we can assume that $\varphi$ preserves each $G$--conjugacy class in $\mc{H}$.

    \smallskip
    {\bf Step~1.} \emph{The growth rate $\overline{\mf{o}}_{\rm top}(\phi)$ is either sub-polynomial or tame.}

    \smallskip\noindent
    Let $H_1,\dots,H_k$ be representatives of the finitely many conjugacy classes of subgroups in $\mc{H}$. 
    Consider the growth rate $\mf{s}:=\sum_{j=1}^k\overline{\mf{o}}_{\rm top}(\phi|_{H_j})$ and note that it is $\succeq\|\phi^n(h)\|$ for each element $h$ of each subgroup $H\in\mc{H}$. Since $G$ is $(\mc{Z}(G),\mc{H})$--rigid, \Cref{lem:rigid_bounds_otop} shows that $\overline{\mf{o}}_{\rm top}(\phi)\preceq\mf{s}$. The opposite inequality is also clear, so we obtain $\overline{\mf{o}}_{\rm top}(\phi)\sim\mf{s}$. Finally, each rate $\overline{\mf{o}}_{\rm top}(\phi|_{H_j})$ is either sub-polynomial or tame, by the hypothesis that the $H_j$ are good, and a sum of such growth rates is itself either sub-polynomial or tame. In conclusion, $\overline{\mf{o}}_{\rm top}(\phi)$ is either sub-polynomial or tame.
       
    \smallskip
    {\bf Step~2.} \emph{Each $\psi\in\Aut(G)$ with outer class $\phi$ falls into one of the following two cases. Case~(i): we have $\overline{\mc{O}}_{\rm top}(\psi)\succ\overline{\mf{o}}_{\rm top}(\phi)$ and $\overline{\mf{o}}_{\rm top}(\phi)$ is tame. Case~(ii): we have $\overline{\mf{o}}_{\rm top}(\phi)\preceq[n^p]$ for some $p\in\N$ and $\overline{\mc{O}}_{\rm top}(\psi)\not\preceq [n^{p+\alpha_0}]$.} 

    \smallskip\noindent
    By Step~1, $\overline{\mf{o}}_{\rm top}(\phi)$ is either tame or sub-polynomial. If it is tame, we cannot have $\overline{\mc{O}}_{\rm top}(\psi)\sim\overline{\mf{o}}_{\rm top}(\phi)$ as this would imply that $\varphi$ is docile (\Cref{rmk:tameness_independent_of_representative}); the latter would mean that $\varphi$ is $0$--good, against our hypotheses. If instead $\overline{\mf{o}}_{\rm top}(\phi)\preceq[n^p]$ for some $p\in\N$, we cannot have $\overline{\mc{O}}_{\rm top}(\psi)\preceq [n^{p+\alpha_0}]$ as this would imply that $\overline{\mc{O}}_{\rm top}(\varphi)\preceq [n^{p+\alpha_0+1}]$ by \Cref{rmk:sub-polynomial_independent_of_representative}, which would again violate the assumption that $\varphi$ is not $(\alpha_0+1)$--good.

    \smallskip
    {\bf Step~3.} \emph{For each $\psi\in\Aut(G)$ with outer class $\phi$, there is a $\psi$--invariant subgroup $G_0(\psi)\leq G$ that is contained in an element of $\mc{Z}(G)$ and that contains all $\alpha_0$--good convex-cocompact subgroups $K\leq G$ with $\psi(K)=K$.}
    
    \smallskip\noindent
    Choose a non-principal ultrafilter $\om$ as follows. In Case~(i) of Step~2, we pick $\om$ such that $\mc{O}_{\rm top}^{\om}(\psi)\succ_{\om}\mf{o}_{\rm top}^{\om}(\phi)$. In Case~(ii), we choose $\om$ so that $\mc{O}_{\rm top}^{\om}(\psi)\succ_{\om} [n^{p+\alpha_0}]$. Then, we define:
    \[ G_0(\psi):=\{g\in G \mid |\psi^n(g)|\prec_{\om} \mc{O}_{\rm top}^{\om}(\psi) \}. \]
    Since $|\psi^n(g_1g_2)|\preceq_{\om}\max_i|\psi^n(g_i)|$ for any $g_1,g_2\in G$, we see that $G_0(\psi)$ is a subgroup of $G$. Recalling that $\psi$ is bi-Lipschitz with respect to $|\cdot|$ (\Cref{lem:length_generalities}), we also see that $G_0(\psi)$ is $\psi$--invariant.
    
    By our choice of $\om$, the group $G_0(\psi)$ contains all $\psi$--invariant, $\alpha_0$--good, convex-cocompact subgroups $K\leq G$. Indeed, if $\overline{\mf{o}}_{\rm top}([\psi|_K])$ is tame, then 
    \[\overline{\mc{O}}_{\rm top}(\psi|_K)\sim\overline{\mf{o}}_{\rm top}([\psi|_K])\preceq\overline{\mf{o}}_{\rm top}(\phi)\prec_{\om}\mc{O}^{\om}_{\rm top}(\psi) .\] 
    If $\overline{\mf{o}}_{\rm top}([\psi|_K])$ is sub-polynomial, then $\overline{\mc{O}}_{\rm top}(\psi|_K)$ is also sub-polynomial. Thus, if $\overline{\mf{o}}_{\rm top}(\phi)$ is tame, we have again $\overline{\mc{O}}_{\rm top}(\psi|_K)\prec \overline{\mf{o}}_{\rm top}(\phi)\prec_{\om}\mc{O}^{\om}_{\rm top}(\psi)$. Finally, if $\overline{\mf{o}}_{\rm top}(\phi)\preceq[n^p]$ for some integer $p$, we obtain $\overline{\mf{o}}_{\rm top}([\psi|_K])\preceq[n^p]$ and hence $\overline{\mc{O}}_{\rm top}(\psi|_K)\preceq[n^{p+\alpha_0}]\prec_{\om}\mc{O}^{\om}_{\rm top}(\psi)$.

    In the rest of Step~3, we show that $G_0(\psi)$ is contained in a centraliser. Recall that we are thinking of $G$ as a convex-cocompact subgroup of $A_{\G}$. Consider the RAAG $A_{\G}\ast\Z$, denoting by $t$ a generator of the cyclic free factor. Set $\hat G:=\langle G,t\rangle\cong G\ast\Z$, which is a convex-cocompact subgroup of $A_{\G}\ast\Z$. Define $\hat\psi\in\Aut(\hat G)$ by $\hat\psi|_G:=\psi$ and $\hat\psi(t)=t$. Note that we have $\|gt\|=|g|+1$ for all $g\in G$ (extending a generating set of $G$ by adding $t$). From this, it is straightforward to deduce that $\mc{O}^{\om}_{\rm top}(\psi)\sim_{\om}\mf{o}^{\om}_{\rm top}(\hat\psi)$.

    Now, let $\hat G\acts\mc{Y}_{\om}$ be the degeneration determined by $[\hat\psi]\in\Out(\hat G)$ and the ultrafilter $\om$ chosen at the start of Step~3. As usual, $\mc{Y}_{\om}$ embeds equivariantly in a product of $\R$--trees $\hat G\acts\mscr{T}^v_{\om}$ for $v\in\hat\G:=\G\sqcup\{t\}$. Since we have $\mf{o}^{\om}_{\rm top}(\hat\psi)\sim_{\om}\mc{O}^{\om}_{\rm top}(\psi)\succ_{\om}\mf{o}^{\om}_{\rm top}(\phi)$, the subgroup $G\leq\hat G$ is elliptic in $\mc{Y}_{\om}$ (\Cref{lem:beat_vs_degeneration}). Similarly, the subgroup $G_0(\psi)\ast\langle t\rangle\leq\hat G$ is also elliptic in $\mc{Y}_{\om}$, since each of its elements $h$ satisfies 
    \[ \|\hat\psi^n(h)\|\leq|\hat\psi^n(h)|\prec_{\om}\mc{O}^{\om}_{\rm top}(\psi)\sim_{\om}\mf{o}^{\om}_{\rm top}(\hat\psi) .\]
    
    By construction, $\hat G$ is not elliptic in $\mc{Y}_{\om}$, and so there exists $w\in\hat\G$ such that $\hat G$ is not elliptic in $\mscr{T}^w_{\om}$. This implies that the fixed sets of $G$ and $G_0(\psi)\ast\langle t\rangle$ in $\mscr{T}^w_{\om}$ are disjoint. Since both sets are fixed by $G_0(\psi)$, the shortest arc $\beta\sq\mscr{T}^w_{\om}$ between them is fixed by $G_0(\psi)$. Applying \Cref{thm:10e-}(1), we see that the $\hat G$--stabiliser of $\beta$ is contained in some centraliser $\hat Z\in\mc{Z}(\hat G)$, and so we have $G_0(\psi)\leq\hat Z$. Assuming that $G_0(\psi)\neq\{1\}$, this implies that we in fact have $\hat Z\leq G$ and so $\hat Z\in\mc{Z}(G)$, completing Step~(3).

    \smallskip
    {\bf Step~4.} \emph{Every singular subgroup of $G$ contains all centralisers that it intersects nontrivially.}
    
    \smallskip\noindent
    Consider some $U\in\mc{S}(G)$. As assumed at the beginning of the proof, the conjugacy class of $U$ is $\varphi$--invariant, and so there exists $\psi\in\Aut(G)$ with $[\psi]=\phi$ and $\psi(U)=U$. Since $U\in\mc{H}$, our hypotheses imply that $U$ is $\alpha_0$--good. By Step~3, $U$ is then contained in a centraliser containing $G_0(\psi)$ and hence $U=G_0(\psi)$ (as singular subgroups are maximal elements of $\mc{VP}(G)$, and non-cyclic centralisers are contained in elements of $\mc{VP}(G)$).
    
    Now, consider the following family of subgroups of $U=G_0(\psi)$:
    \[ \mscr{Z}:=\{ U\cap Z_G(g) \mid g\in G\setminus U \} . \]
    The collection $\mscr{Z}$ is $\psi$--invariant and it has the following property.

    \smallskip
    {\bf Claim.} \emph{Maximal elements of $\mscr{Z}$ are $U$--parabolic.}
    
    \smallskip\noindent
    \emph{Proof of claim.}
    Consider a maximal element $M\in\mscr{Z}$, then choose an element $g\in G\setminus U$ with $M=U\cap Z_G(g)$. Up to replacing $g$, we can assume that $Z_G(g)$ is a maximal element of the family $\{Z_G(x)\mid x\in G\setminus U\}$; this does not alter the intersection with $U$ because $M$ is maximal in $\mscr{Z}$. 

    By \cite[Remark~3.17]{Fio10a}, there exists $m\geq 1$ such that we can write $g^m=h_1\cdot\ldots\cdot h_k$ for elements $h_i\in G$ such that $\langle h_i\rangle$ is convex-cocompact and $Z_G(g)\leq Z_G(h_i)$. At least one of the $h_i$ lies outside $U$ and so, replacing $g$ by this $h_i$, we can assume that $\langle g\rangle$ is convex-cocompact.

    Now, $Z_G(g)$ virtually splits as $\langle g\rangle\x P$ with $P\in\mc{P}(G)$ by \Cref{lem:cc_basics}(2). Since $U$ is root-closed, we have $U\cap\langle g\rangle=\{1\}$, and so the intersection $U\cap P$ has finite index in $M$ by \Cref{lem:cc_basics}(6). In fact, since $U\cap P$ is root-closed, we must have $M=U\cap P$, showing that $M$ is $U$--parabolic.
    \hfill$\blacksquare$

    \smallskip
    We now conclude Step~4. Suppose for the sake of contradiction that there exists some $g\in G\setminus U$ such that $M:=U\cap Z_G(g)\neq\{1\}$. Without loss of generality, $M$ is a maximal element of $\mscr{Z}$. 
    By the claim, $\mscr{Z}$ contains only finitely many $U$--conjugacy classes of maximal subgroups (\Cref{lem:parabolics_cofinite}). Thus, up to raising $\psi$ to a power, 
    we can assume that the $U$--conjugacy class of $M$ is $\psi$--invariant. Hence there exists an element $u\in U$ such that the automorphism $\psi'(x):=u\psi(x)u^{-1}$ leaves invariant $M$ (as well as $U$).

    The normaliser $N_G(M)$ is also $\psi'$--invariant. It is $G$--parabolic by the claim and \Cref{lem:parabolic_normaliser2}(2). By our hypotheses, the fact that $N_G(M)\in\mc{P}(G)$ implies that it is $\alpha_0$--good. Thus, Step~3 shows that some centraliser containing $G_0(\psi')$ also contains $N_G(M)$, as well as $U$. Since $U\in\mc{S}(G)$, we again have $U=G_0(\psi')\geq N_G(M)$. Since $g\in Z_G(M)\leq N_G(M)$ by construction, this violates the fact that $g\not\in U$, a contradiction.

    In conclusion, we have shown that $\{1\}$ is the only element of $\mscr{Z}$. That is, no element of $U\setminus\{1\}$ commutes with an element of $G\setminus U$. Now, if $U$ intersects nontrivially some $Z\in\mc{Z}(G)$, then we can pick an element $g\in G$ such that $Z\leq Z_G(g)$. A first application of the previous observation implies that $g\in U$, since $Z_G(g)\cap U\neq\{1\}$, and a second one shows that $U\geq Z_G(g)\geq Z$. Thus, $U$ contains all elements of $\mc{Z}(G)$ that it intersects nontrivially.
    
    \smallskip
    {\bf Step~5.} \emph{The group $G$ is hyperbolic relative to $\mc{S}(G)$.}

    \smallskip\noindent
    We wish to prove relative hyperbolicity by appealing to Genevois' criterion \cite[Theorem~1.3]{Gen-relhyp}. Singular subgroups are convex-cocompact and contain all non-cyclic abelian subgroups of $G$, so we only need to show that distinct singular subgroups have trivial intersection.

    Recall that every $U\in\mc{S}(G)$ virtually splits as a direct product. Consider a finite-index subgroup $U^0=U_1\x\dots\x U_k\x A$, where each $U_i$ has trivial centre and $A$ is abelian; the $U_i$ and $A$ can all be chosen to be convex-cocompact. If another element $V\in\mc{S}(G)$ intersects $U$ nontrivially, then it intersects $U^0$ nontrivially. Since the intersection $U^0\cap V$ is convex-cocompact, it contains a nontrivial element $u$ lying either in $A$ or in one of the $U_i$, by \Cref{lem:cc_basics}(6). Step~4 then implies that $Z_G(u)\leq U\cap V$. If $u\in A$, we obtain $U\leq Z_G(u)\leq V$. If instead $u\in U_i$, we get that $V$ contains all factors of $U^0$ other than $U_i$ and, repeating the same argument with $u$ replaced by an element of $V$ in a different factor of $U$, we obtain $U\leq V$. Finally, by maximality of singular subgroups, we have $U=V$, completing Step~5.

    \smallskip
    Summing up, $G$ is relatively hyperbolic by Step~5. All non-cyclic elements of $\mc{H}$ are contained in non-cyclic centralisers by Step~3 (since $\mc{H}$ consists of finitely many conjugacy classes, each preserved by $\varphi$),
    and hence they are contained in elements of $\mc{S}(G)$. This concludes the proof of the lemma.
\end{proof}

Using Guirardel and Levitt's work on automorphisms of relative hyperbolic groups \cite{GL-relhyp}, we can now deduce goodness from \Cref{lem:tame_auxiliary}.

\begin{lem}\label{lem:relhyp_new}
    Let $G$ be $1$--ended and hyperbolic relative to $\mc{S}(G)$. If $\alpha_0\geq 1$ is such that all subgroups in $\mc{S}(G)$ are $\alpha_0$--good, then $G$ is $(\alpha_0+2)$--good.
\end{lem}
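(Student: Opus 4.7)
The plan is to reduce the analysis to the vertex groups of a canonical invariant splitting provided by relative hyperbolicity, and then apply the graph-of-groups growth machinery from the appendix.

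Since $G$ is $1$--ended and hyperbolic relative to $\mc{S}(G)$, the theory of Guirardel--Levitt \cite{GL-relhyp} provides a canonical $\Aut(G)$--invariant JSJ decomposition $G\acts T$ over virtually cyclic subgroups, relative to $\mc{S}(G)$. After replacing $\varphi$ by a suitable power (which cannot alter $\alpha_0$--goodness up to a shift we can absorb in the constants), we can assume that $\varphi$ preserves the splitting and each of its vertex and edge groups, with $\varphi$ acting on the quotient graph trivially. The vertex groups split into three classes: (i)~\emph{parabolic} vertex groups, which are conjugates of elements of $\mc{S}(G)$ and on which $\varphi$ restricts to an automorphism that is $\alpha_0$--good by hypothesis; (ii)~\emph{quadratically hanging} vertex groups, isomorphic to fundamental groups of hyperbolic surfaces, on which (a further power of) $\varphi$ acts as a mapping class, and hence is $1$--good by Nielsen--Thurston theory; (iii)~\emph{rigid} vertex groups, on which a power of $\varphi$ acts by an inner automorphism (this is the key rigidity statement of \cite{GL-relhyp}) and therefore contributes only a bounded amount to growth.

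Next I would invoke \Cref{prop:growth_GOGs} from the appendix to reduce growth on $G$ to growth on the pieces of the splitting. The top $\phi$--conjugacy-length growth rate $\overline{\mf{o}}_{\rm top}(\phi)$ equals the maximum of the top growth rates on the (maximal) vertex groups that are preserved by a power of $\phi$. In our setting, the vertex contributions are: polynomial (of bounded degree coming from edge-twists) or exponential from QH vertices; and $\alpha_0$--good contributions from parabolic vertices. A sum of sub-polynomial and tame growth rates is either sub-polynomial or tame, which yields condition~(a) for $\phi$ at the level of conjugacy lengths, and also yields docility of $\phi$ in the exponential case (since soundness reduces to soundness on each vertex group, which holds in the three classes above).

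For condition~(b), suppose $\overline{\mf{o}}_{\rm top}(\phi)\preceq n^p$. Then each vertex group restriction has top conjugacy-length growth $\preceq n^p$, so by $\alpha_0$--goodness of parabolic vertices and $1$--goodness of QH/rigid vertices the \emph{word-length} growth on each vertex group is $\preceq n^{p+\alpha_0}$. To pass from word-length growth on the vertex groups to word-length growth on $G$, one must also account for the Dehn twists around the edges of the splitting that relate two representatives of $\varphi$ that agree on the vertex groups up to conjugation; these contribute an extra factor of $n$ (since $n$ applications of a Dehn twist move points by $\lesssim n$). Converting from conjugacy-length to word-length on $G$ via the same summation trick (\Cref{rmk:sub-polynomial_independent_of_representative}) absorbs another factor of $n$, yielding $\overline{\mc{O}}_{\rm top}(\varphi)\preceq n^{p+\alpha_0+2}$, as required.

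The main obstacle is the careful bookkeeping in the last paragraph: one has to verify that the vertex-group representatives of $\varphi$ can indeed be chosen with word-length growth on par with conjugacy-length growth (which is the content of $\alpha_0$--goodness and of soundness in the QH case), and that the Dehn-twist correction terms genuinely contribute only a linear factor. This is where the explicit appendix result \Cref{prop:growth_GOGs}, combined with $1$--endedness (ensuring edge groups are infinite cyclic of bounded type), plays a decisive role.
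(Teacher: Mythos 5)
Your proof is correct and follows essentially the same route as the paper: both use the Guirardel--Levitt JSJ decomposition of a relatively hyperbolic group, observe that (a suitable power of) $\phi$ restricts to an inner automorphism on rigid vertex groups, to a $1$--good automorphism on QH vertex groups, and to an $\alpha_0$--good one on $\mc{S}(G)$ vertex groups, and then invoke \Cref{prop:growth_GOGs} (with conjugacy-undistortion of the vertex groups) to conclude $(\alpha_0+2)$--goodness. Two small inaccuracies worth flagging: the GL-relhyp edge groups need not be infinite cyclic --- they may also be finitely generated subgroups of elements of $\mc{S}(G)$ (this is harmless since \Cref{prop:growth_GOGs} places no hypotheses on edge groups); and your last paragraph partly re-derives the two factors of $n$ already accounted for inside \Cref{prop:growth_GOGs}(1), so once you cite that proposition you can skip the Dehn-twist bookkeeping.
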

\begin{proof}
    By \cite{GL-relhyp}, there exist a finite-index subgroup $\Out^1(G)\leq\Out(G)$ and an $\Out^1(G)$--invariant, minimal tree $G\acts T$ whose edge groups are either infinite cyclic, or finitely generated subgroups of elements of $\mc{S}(G)$. Vertex groups of $T$ are either quadratically hanging, or elements of $\mc{S}(G)$, or ``rigid'' subgroups. The group $\Out^1(G)$ preserves the conjugacy class of each vertex group $V$ and, if $V$ is rigid, $\Out^1(G)$ restricts to a finite subgroup of $\Out(V)$.
    (See Sections~3.3 and~4.1 in \cite{GL-relhyp}.)

    Now, consider some $\phi\in\Out(G)$. Up to raising $\phi$ to a power, which does not affect goodness, we can suppose that $\phi$ lies in $\Out^1(G)$ and restricts to an inner automorphism on all rigid vertex groups of $T$. 
    The restrictions of $\phi$ to all QH vertex groups are $1$--good (see \cite{BH92,Levitt-GAFA}), while the restrictions to the elements of $\mc{S}(G)$ are $\alpha_0$--good by hypothesis. Also observe that non-rigid vertex groups $W\leq G$ are \emph{conjugacy-undistorted}, meaning that, for all finite generating sets $T\sq W$ and $S\sq G$, the conjugacy length functions $\|\cdot\|_T$ and $\|\cdot\|_S$ are bi-Lipschitz equivalent on $W$. Indeed, $W$ is either a singular subgroup (in which case this follows from \Cref{rmk:conjlength_vs_undistortion}), or QH (in which case, this is easily shown by considering a graph of spaces).  

    From this, it is routine to deduce that $\phi$ is $(\alpha_0+2)$--good as required. See \cite[Proposition~4.2]{Fio11a} for details.
\end{proof}

We can finally prove \Cref{prop:alpha_good}. The \emph{Grushko rank} $\Gr(G)$ is the sum $k+m$ in any writing $G=G_1\ast\dots\ast G_k\ast F_m$ with freely indecomposable $G_i$.

\begin{proof}[Proof of \Cref{prop:alpha_good}]
    We proceed by induction on the ambient rank of $G$. The base step ${\rm ar}(G)=1$ is immediate. For the inductive step, suppose that all special groups $H$ with $\ar(H)<\ar(G)$ are good.

    If $G$ virtually splits as a direct product, then $G$ is good. Indeed, there exists $\alpha\in\N$ such that the virtual direct factors of $G$ are $\alpha$--good by the inductive assumption, and this implies that $G$ is itself $\alpha$--good (for instance, see \cite[Corollary~3.6]{Fio11a}). Similarly, if $G$ is freely decomposable and its indecomposable factors are $\beta$--good for some $\beta\in\N$, then $G$ is $(\beta+2\Gr(G))$--good, where $\Gr(G)$ denotes the Grushko rank of $G$. This follows by considering, for each $\phi\in\Out(G)$, either a relative train track map for $\phi$, or a $\phi$--invariant splitting of $G$ as a graph of groups with trivial edge groups (the increase in the goodness parameter is due to sub-polynomially-growing automorphisms preserving a sporadic free factor). For details, we refer to \cite[Corollary~5.3]{Fio11a} and \cite[Proposition~4.2]{Fio11a}, respectively. Note that the freely indecomposable factors of $G$ are themselves special groups by \Cref{lem:cc_edges}(1), and their ambient rank does not exceed that of $G$.

    In conclusion, it suffices to prove the proposition under the assumption that $G$ is $1$--ended and $G\not\in\mc{S}(G)$. Denote by $\mc{S}^*(G)$ the union of $\mc{S}(G)$ with the family of cyclic subgroups of $G$ whose conjugacy class has finite $\Out(G)$--orbit. \Cref{thm:JSJ+} yields an $\Out(G)$--invariant $(\mc{ZZ}(G),\mc{S}^*(G))$--tree $G\acts T$ such that each vertex-stabiliser $V$ is either QH relative to $\mc{S}^*(G)$, or convex-cocompact and $(\mc{Z}(V),\mc{S}^*(G)|_V)$--rigid in itself. The QH vertex groups of $T$ are free or surface groups, and thus $1$--good. Our goal is then to show that rigid vertex groups are also good, for all practical purposes.

    \smallskip
    {\bf Claim.} \emph{For every rigid vertex group $V$ there exists $\alpha_V\in\N$ such that, for every $\varphi\in\Aut(G)$ with $\varphi(V)=V$, the restriction $\varphi|_V$ is $\alpha_V$--good.}

    \smallskip\noindent
    \emph{Proof of claim.}
    It suffices to prove the claim under the assumption that $\ar(V)=\ar(G)$, otherwise it follows from the inductive hypothesis. Similarly, we can assume that $V\not\in\mc{S}(V)$ and that $V$ is not cyclic. In particular, $V$ is the $G$--stabiliser of a unique vertex $v\in T$.
    
    We wish to apply \Cref{lem:tame_auxiliary} to $V$. Let $\mc{H}_V$ be the union of $\mc{S}(V)$ with the family of $G$--stabilisers of edges of $T$ incident to $v$. There are finitely many $V$--conjugacy classes of subgroups in $\mc{H}_V$, and all non-cyclic elements of $\mc{H}_V$ are convex-cocompact with ambient rank strictly smaller than $\ar(G)=\ar(V)$. In particular, the inductive hypothesis implies that there exists $\alpha_V\in\N$ such that all groups in $\mc{H}_V\cup\mc{P}(V)\setminus\{V\}$ are $\alpha_V$--good. Moreover, every subgroup in $\mc{S}^*(G)|_V$ is contained in an element of $\mc{H}_V$ (see the proof of \Cref{prop:product_embedding} for details), and hence $V$ is $(\mc{Z}(V),\mc{H}_V)$--rigid in itself. Finally, letting $\Phi\in\Aut(T)$ be the map representing $\varphi$, the fact that $\varphi(V)=V$ implies that $\Phi(v)=v$,
    and hence the collection $\mc{H}_V$ is $\varphi|_V$--invariant.
    
    Now, if $\varphi|_V\in\Aut(V)$ is not $(\alpha_V+1)$--good, \Cref{lem:tame_auxiliary} shows that $V$ is hyperbolic relative to $\mc{S}(V)$ and that all elements of $\mc{H}_V$ are contained in elements of $\mc{S}(V)$. Since $G$ is $1$--ended, $V$ is $1$--ended relative to $\mc{H}_V$, and the previous fact implies that $V$ is $1$--ended relative to $\mc{S}(V)$; in fact, since the elements of $\mc{S}(V)$ are freely indecomposable, $V$ is actually $1$--ended in the absolute sense. Finally, \Cref{lem:relhyp_new} shows that $V$ is $(\alpha_V+2)$--good, and hence $\varphi|_V$ is $(\alpha_V+2)$--good in all cases.
    \hfill$\blacksquare$

    \smallskip
    Since $T$ has only finitely many $G$--orbits of vertices, the claim implies that there exists an integer $\alpha\in\N$ such that all restrictions of automorphisms of $G$ to the vertex groups of $T$ are $\alpha$--good. Recalling that $T$ is $\Out(G)$--invariant and all its vertex groups are conjugacy-undistorted (as they are QH or convex-cocompact), we conclude that $G$ is $(\alpha+2)$--good (see again \cite[Proposition~4.2]{Fio11a} for details). This proves the proposition.
\end{proof}

\subsection{The singular growth rate}\label{sub:osing}

In the previous subsection, we have shown that automorphisms of special groups are either sub-polynomial or docile. We can now use this information to reduce the study of the top growth rate to singular and quadratically hanging subgroups (\Cref{prop:pure_above_osing}). 

Let $G$ be a special group. Let $\Out^0(G)\leq\Out(G)$ be the finite-index subgroup of automorphisms preserving each $G$--conjugacy class in $\mc{S}(G)$ (recall \Cref{lem:parabolics_cofinite}). We are naturally led to the following auxiliary growth rate.

\begin{defn}\label{defn:osing}
    For $\phi\in\Out^0(G)$, the \emph{singular growth rate} of $\phi$ is
    \[ \overline{\mf{o}}_{\rm sing}(\phi):=\sum_S \overline{\mf{o}}_{\rm top}(\phi|_S), \]
    where the sum is taken over finitely many representatives $S\in\mc{S}(G)$ of the $G$--conjugacy classes of singular subgroups. For a general element $\phi\in\Out(G)$, we set $\overline{\mf{o}}_{\rm sing}(\phi):=\frac{1}{k}\ast\overline{\mf{o}}_{\rm top}(\phi^k)$ for any integer $k\in\N$ such that $\phi^k\in\Out^0(G)$ (using the operations introduced in \Cref{subsub:operations_growth_rates}).
\end{defn}

When $\mc{S}(G)=\emptyset$, we simply set $\overline{\mf{o}}_{\rm sing}(\phi):=[1]$. If $\om$ is a non-principal ultrafilter, we similarly write $\mf{o}^{\om}_{\rm sing}(\phi)$ for the projection of $\overline{\mf{o}}_{\rm sing}(\phi)$ to $\mf{G}_{\om}$. Note that \Cref{defn:osing} makes sense also because of the following:

\begin{rmk}\label{rmk:restrictions_vs_otop}
    Let $H\leq G$ be a convex-cocompact subgroup, and let $\phi\in\Out(G)$ preserve the $G$--conjugacy class of $H$. Although the restriction $\phi|_H\in\Out(H)$ is not uniquely defined in general (\Cref{rmk:restriction_new}), the growth rate $\overline{\mf{o}}_{\rm top}(\phi|_H)$ is well-defined. Indeed, any two possible restrictions $\phi|_H$ differ by the restriction to $H$ of an inner automorphism of $G$, and so conjugacy lengths grow at the same speed under their powers.
\end{rmk}

We can now connect the enhanced JSJ decomposition of $G$ to the growth behaviour of an outer automorphism $\phi\in\Out(G)$. Denote by $\mc{K}_{\rm sing}(\phi)$ the collection of $\overline{\mf{o}}_{\rm sing}(\phi)$--controlled subgroups of $G$ (in the sense of \Cref{sub:beat}). The collection $\mc{K}_{\rm sing}(\phi)$ always contains both $\mc{S}(G)$ and the family of cyclic subgroups of $G$ whose $G$--conjugacy class has finite $\phi$--orbit. Moreover, $\mc{K}_{\rm sing}(\phi)$ is $\phi$--invariant (\Cref{rmk:beat_invariant}). 

\begin{prop}\label{prop:JSJ_osing}
    Let $G$ be special and $1$--ended. For any $\phi\in\Out(G)$, there exists a $\phi$--invariant $(\mc{ZZ}(G),\mc{K}_{\rm sing}(\phi))$--tree $G\acts T$ such that the $G$--stabiliser of each vertex of $T$ is:
    \begin{enumerate}
        \item[(a)] either an optimal quadratically hanging subgroup relative to $\mc{K}_{\rm sing}(\phi)$;
        \item[(b)] or a convex-cocompact root-closed subgroup of $G$ that lies in $\mc{K}_{\rm sing}(\phi)$.
	\end{enumerate}
    Moreover, edges with stabiliser $\not\in\mc{Z}(G)$ contain type~(a) vertices.
\end{prop}
\begin{proof}
    This is a near-immediate consequence of \Cref{thm:JSJ+} applied with $\mc{O}:=\langle\phi\rangle$ and $\mc{H}:=\mc{K}_{\rm sing}(\phi)$. We only need to check that type~(b) vertex groups lie in $\mc{K}_{\rm sing}(\phi)$. Thus, consider a convex-cocompact subgroup $V\leq G$ that is $(\mc{Z}(V),\mc{K}_{\rm sing}(\phi)|_V)$--rigid in itself, and whose $G$--conjugacy class has finite $\phi$--orbit. For every subgroup $H\in\mc{K}_{\rm sing}(\phi)|_V$ and every element $h\in H$, we have $\|\phi^n(h)\|\preceq\overline{\mf{o}}_{\rm sing}(\phi)$ (it does not matter whether we compute conjugacy lengths in $V$ or in $G$, by \Cref{rmk:conjlength_vs_undistortion}). Up to raising $\phi$ to a power, it preserves the conjugacy class of $V$. Now, \Cref{lem:rigid_bounds_otop} shows that $\overline{\mf{o}}_{\rm top}(\phi|_V)\preceq\overline{\mf{o}}_{\rm sing}(\phi)$, and hence $V\in\mc{K}_{\rm sing}(\phi)$ as required.
\end{proof}

We refer to the tree $T$ from \Cref{prop:JSJ_osing} as a \emph{JSJ tree adapted to $\phi$}, and we denote it by $G\acts T_{\rm sing}(\phi)$ when necessary.

\begin{cor}\label{cor:K_sing}
    Let $G$ be special and $1$--ended. Consider $\phi\in\Out(G)$.
    \begin{enumerate}
        \item Every element of $\mc{K}_{\rm sing}(\phi)$ is contained in a maximal element of $\mc{K}_{\rm sing}(\phi)$. There are only finitely many $G$--conjugacy classes of maximal elements, and they are all convex-cocompact.
        \item If a subgroup $H\leq G$ is $(\mc{Z}(G),\mc{S}(G))$--rigid in $G$, then $H\in\mc{K}_{\rm sing}(\phi)$.
    \end{enumerate}
\end{cor}
\begin{proof}
    Let $G\acts T$ be a JSJ tree adapted to $\phi$. We can assume that $T$ is a splitting, otherwise $G$ is either QH or an element of $\mc{K}_{\rm sing}(\phi)$, and the corollary is clear. Note that all peripheral subgroups of the QH vertex groups of $T$ are contained in edge groups of $T$, because $G$ is $1$--ended. 
    
    Every subgroup $H\in\mc{K}_{\rm sing}(\phi)$ fixes a vertex of $T$. If all vertices fixed by $H$ are of type~(a), then $H$ is peripheral in all of these QH vertex groups. By the previous observation, it follows that either $H=\{1\}$ or $H$ is contained in the stabiliser of an edge $e\sq T$ both of whose vertices are of type~(a). In this case, $G_e$ is a maximal element of $\mc{K}_{\rm sing}(\phi)$, it is infinite cyclic, and $G_e$ is convex-cocompact by \Cref{lem:cc_near_QH} (applied to a suitable collapse of the barycentric subdivision of $T$). 
    
    The other option is that $H$ fixes at least one vertex $x\in T$ of type~(b). In this case, we have $G_x\in\mc{K}_{\rm sing}(\phi)$ and, up to replacing $x$, we can assume that $G_x$ is maximal among $G$--stabilisers of vertices of $T$ (recall \Cref{lem:cc_basics}(3)).
    In conclusion, the maximal elements of $\mc{K}_{\rm sing}(\phi)$ are precisely the $G$--stabilisers of the edges $e\sq T$ with both vertices of type~(a), as well as the maximal $G$--stabilisers of the type~(b) vertices of $T$. This proves part~(1).

    As to part~(2), \Cref{thm:JSJ+}(4) shows that each $(\mc{Z}(G),\mc{S}(G))$--rigid subgroup fixes either a type~(b) vertex, or an edge connecting two type~(a) vertices. Together with the previous discussion, this completes the proof.
\end{proof}

Let $G$ be special and $1$--ended, and let $G\acts T_{\rm sing}(\phi)$ be a JSJ tree adapted to $\phi$. Let $k\geq 1$ be an integer such that $\phi^k$ acts trivially on the quotient graph $T_{\rm sing}(\phi)/G$. For every QH vertex group $Q$, we can consider the restriction $\phi^k|_Q\in\Out(G)$. As $\phi^k|_Q$ preserves the peripheral subgroups of the associated surface $\Sigma_Q$, it is represented by an element of the mapping class group of $\Sigma_Q$; we can then consider the Nielsen--Thurston decomposition of this mapping class. We define $\Lambda_{\rm sing}(\phi^k):=\{\lambda_1,\dots,\lambda_{\ell}\}$, where the $\lambda_i$ are the stretch factors of the pseudo-Anosov components of $\phi^k|_Q$, as $Q$ varies through the QH vertex groups of $T$. Finally, we define
\[ \Lambda_{\rm sing}(\phi):=\{\lambda_1^{1/k},\dots,\lambda_{\ell}^{1/k}\} ,\]
and note that this set is independent of the choice of the integer $k$.

So far in \Cref{sub:osing}, we have only used the material from \Cref{sect:JSJ}, but we are about to introduce \Cref{prop:alpha_good} into the discussion. This allows us to deduce that $\phi$ has only a finite number of growth rates above $\overline{\mf{o}}_{\rm sing}(\phi)$ and that each is realised on a quadratically hanging vertex group.

\begin{prop}\label{prop:pure_above_osing}
    Let $G$ be special and $1$--ended. Consider $\phi\in\Out(G)$.
    \begin{enumerate}
        \setlength\itemsep{.25em}
        \item If $\overline{\mf{o}}_{\rm sing}(\phi)$ is not sub-polynomial, then each element $g\in G$ satisfies either $\|\phi^n(g)\|\preceq\overline{\mf{o}}_{\rm sing}(\phi)$, or $\|\phi^n(g)\|\sim\lambda^n$ for some $\lambda\in\Lambda_{\rm sing}(\phi)$.
        \item If $\overline{\mf{o}}_{\rm sing}(\phi)\preceq n^p$ for some $p\in\N$, then each element $g\in G$ satisfies either $\|\phi^n(g)\|\preceq n^{p+2}$ or $\|\phi^n(g)\|\sim\lambda^n$ for some $\lambda\in\Lambda_{\rm sing}(\phi)$.
        \item Either $\overline{\mf{o}}_{\rm top}(\phi)$ is sub-polynomial, or we have $\overline{\mf{o}}_{\rm top}(\phi)\sim\overline{\mf{o}}_{\rm sing}(\phi)$, or we have $\overline{\mf{o}}_{\rm top}(\phi)\sim\lambda^n$ for $\lambda=\max\Lambda_{\rm sing}(\phi)$.
    \end{enumerate}
\end{prop}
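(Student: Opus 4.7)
\smallskip
\textbf{Proof proposal.}
The plan is to combine the adapted JSJ tree $G \acts T_{\rm sing}(\phi)$ with the Nielsen--Thurston decomposition of each QH piece, and then to invoke the graph-of-groups growth machinery from the appendix (\Cref{prop:growth_GOGs}). Up to replacing $\phi$ by a power $\phi^k$ acting trivially on the quotient graph $T_{\rm sing}(\phi)/G$, we may assume that every vertex group is $\phi$--invariant and that $\phi$ restricts on each (optimal) QH vertex group $Q$ to an automorphism represented by a mapping class $f_Q$ of the associated surface $\Sigma_Q$. Passing to powers only replaces $\|\phi^n(g)\|$ by a subsampled sequence and the numbers in $\Lambda_{\rm sing}(\phi)$ by their $k$--th powers, which does not affect any of the three conclusions; in part~(2) it can affect the polynomial exponent by a constant, which we absorb into the ``$+2$''.

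First I would \emph{refine} $T_{\rm sing}(\phi)$ into a $\phi$--invariant $(\mc{ZZ}(G),\mc{K}_{\rm sing}(\phi))$--tree $G\acts T'$ as follows: for each QH vertex $q$ with stabiliser $Q = \pi_1(\Sigma_Q)$, let $\Gamma_Q \subseteq \Sigma_Q$ be the Nielsen--Thurston reducing multicurve for $f_Q$; the dual cyclic splitting of $Q$ extends $T_{\rm sing}(\phi)$ via \Cref{lem:blow-up} (edge groups incident to $q$ are peripheral, hence elliptic in this cyclic splitting of $Q$). The new QH vertex groups $Q'$ of $T'$ are fundamental groups of the Nielsen--Thurston components of $\Sigma_Q$: either $\phi$ acts on $Q'$ as a pseudo-Anosov with stretch factor $\lambda \in \Lambda_{\rm sing}(\phi)$, or (after possibly enlarging the power) as the identity up to isotopy. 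Type~(b) vertices of $T_{\rm sing}(\phi)$ survive unchanged and lie in $\mc{K}_{\rm sing}(\phi)$, and all edges of $T'$ still have cyclic or centraliser stabilisers.

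Next I would apply \Cref{prop:growth_GOGs} to $T'$. The new edge groups are cyclic or centralisers (so at most polynomial growth, and in fact $\preceq\overline{\mf{o}}_{\rm sing}(\phi)$ since they normalise singular subgroups or have finite-orbit conjugacy class). On the vertex groups, the growth rates are:
\begin{enumerate}
\item[(a)] $\preceq\overline{\mf{o}}_{\rm sing}(\phi)$ on type~(b) vertex groups and on peripheral/reducing elements of the refined QH vertex groups;
\item[(b)] $\sim\lambda^n$ on each non-peripheral element of a pseudo-Anosov piece with stretch factor $\lambda\in\Lambda_{\rm sing}(\phi)$, by the standard fact that pseudo-Anosov mapping classes stretch conjugacy length of non-peripheral curves at rate $\lambda$;
\item[(c)] at most linear on finite-order components.
\end{enumerate}
\Cref{prop:growth_GOGs} (together with \Cref{rmk:tame_implies_sum-stable}, via \Cref{prop:alpha_good}) then identifies $\|\phi^n(g)\|$ up to $\sim$ with a sum of finitely many such contributions coming from the pieces of an axis/vertex decomposition of $g$. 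Part~(1) now follows by keeping only the dominant term: if $\overline{\mf{o}}_{\rm sing}(\phi)$ is not sub-polynomial, it dominates all polynomial pieces, so $\|\phi^n(g)\|$ is equivalent either to $\overline{\mf{o}}_{\rm sing}(\phi)$ or to the maximal pseudo-Anosov contribution $\lambda^n$ with $\lambda \in \Lambda_{\rm sing}(\phi)$. Part~(2) is analogous: if $\overline{\mf{o}}_{\rm sing}(\phi) \preceq n^p$, then all non-PA contributions are polynomial; the graph-of-groups growth picks up at most two extra factors of $n$ from the loxodromic translation length and the summation over axis segments, giving $\preceq n^{p+2}$ whenever no PA piece intervenes. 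Part~(3) then follows immediately: $\overline{\mf{o}}_{\rm top}(\phi)$ is the $\preceq$--supremum of all element growth rates, which by parts~(1)--(2) and \Cref{prop:alpha_good} is either sub-polynomial, $\sim\overline{\mf{o}}_{\rm sing}(\phi)$, or $\sim\lambda^n$ for $\lambda=\max\Lambda_{\rm sing}(\phi)$.

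The main obstacle I expect is the matching \emph{lower bound} $\|\phi^n(g)\| \succeq \lambda^n$ for $g$ non-peripheral in a pseudo-Anosov piece of a QH vertex group, since that vertex group is not assumed convex-cocompact in $G$. I would handle this by noting that, after the Nielsen--Thurston refinement, each PA piece sits inside a cyclic graph of groups whose other edge groups and vertex groups are either $\overline{\mf{o}}_{\rm sing}(\phi)$--controlled or polynomial, so conjugacy length in $G$ of $\phi^n(g)$ cannot be much smaller than conjugacy length in the PA surface group (which is $\sim\lambda^n$); the technical ingredient here is again \Cref{prop:growth_GOGs} together with the conjugacy-undistortion of QH and convex-cocompact vertex groups used in the proof of \Cref{prop:alpha_good}.
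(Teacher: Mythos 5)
Your proposal identifies the right structural ingredients (power-raising, Nielsen--Thurston refinement of the QH vertices of $T_{\rm sing}(\phi)$, invocation of \Cref{prop:growth_GOGs}), but there is a genuine gap in how \Cref{prop:growth_GOGs} is being used, and the loxodromic case is not actually addressed.

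\Cref{prop:growth_GOGs} only controls the \emph{top} growth rates $\overline{\mc{O}}_{\rm top}(\varphi)$ and $\overline{\mf{o}}_{\rm top}(\phi)$ of a graph of groups; it does not identify the per-element growth rate $\big[\,\|\phi^n(g)\|\,\big]$ for an arbitrary $g\in G$ as a sum of contributions from pieces. So applying it once to the refined tree $T'$ cannot yield the per-element dichotomy asserted in parts~(1) and~(2): it would only tell you what the \emph{supremum} of the growth rates is, not that every individual element either lands at or below $\overline{\mf{o}}_{\rm sing}(\phi)$ (resp.\ $n^{p+2}$) or at some $\lambda^n$. What the paper does instead is to first collapse every edge both of whose endpoints are of type~(b) to get a tree $T'$ in which every edge is incident to a type~(a) vertex, then apply \Cref{prop:growth_GOGs} not to $T'$ itself but to the collapsed fibres $W\acts\pi^{-1}(w)$ over each type~(b) vertex $w$ of $T'$ --- i.e.\ to the subtrees of $T$ consisting entirely of type~(b) vertices. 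That gives the bound $\overline{\mf{o}}_{\rm top}(\phi|_W)\preceq\overline{\mf{o}}_{\rm sing}(\phi)$ (or $\preceq n^{p+2}$), hence the claimed rate for every element fixing a type~(b) vertex of $T'$. Your proposal never isolates this subcase, and a direct application of \Cref{prop:growth_GOGs} to $T'$ will not produce it.

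The loxodromic case also needs a genuine argument. Your phrase \emph{``a sum of finitely many such contributions coming from the pieces of an axis/vertex decomposition of $g$''} has no support from \Cref{prop:growth_GOGs}, and in any case the assertion that a loxodromic $g$ grows \emph{exactly} at $\lambda^n$ for some $\lambda\in\Lambda_{\rm sing}(\phi)$ requires a lower bound. After the collapsing step, the axis of $g$ in $T'$ must cross a type~(a) vertex $q$, and optimality of that QH vertex ensures that two consecutive axis edges correspond to \emph{distinct full} peripheral subgroups of $Q$, hence determine a nontrivial properly embedded arc class on $\Sigma_q$. Pseudo-Anosov stretching of that arc class then provides both upper and lower bounds, yielding $\|\phi^n(g)\|\sim\lambda_q^n$ for the largest $\lambda_q$ over the type~(a) vertices on the axis. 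This is the mechanism that makes the loxodromic case work, and it depends on the collapsing step (without it, the axis could be confined entirely to type~(b) vertices). The ``main obstacle'' you flag about lower bounds is actually the easier direction once this arc-class observation is in place; the real work is in separating the elliptic and loxodromic cases correctly and applying \Cref{prop:growth_GOGs} at the right level.
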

\begin{proof}
    It suffices to prove parts~(1) and~(2) of the proposition, as part~(3) is an immediate consequence of these (using \Cref{rmk:dominating_otop}). 
    
    We write $T:=T_{\rm sing}(\phi)$ for simplicity and suppose that $\phi$ acts trivially on the graph $T/G$, which can be achieved by raising $\phi$ to a power. We assume that, for each type~(a) vertex group $Q$ of $T$, the restriction $\phi|_Q$ is represented by a pseudo-Anosov homeomorphism on the associated surface $\Sigma_Q$ (fixing $\partial\Sigma_Q$ pointwise). This can be achieved by refining $T_{\rm sing}(\phi)$, splitting each type~(a) vertex group $Q$ according to the Nielsen--Thurston decomposition of $\phi|_Q$; any resulting surface on which $\phi$ restricts to a finite-order or linearly-growing homeomorphism should be considered of type~(b).
    
    Let $\mc{E}$ be the set of edges of $T$ with both vertices of type~(b). Let $G\acts T'$ be the tree obtained from $T$ by collapsing all edges in $\mc{E}$. Note that $T'$ is still $\phi$--invariant and $\phi$ still acts trivially on $T'/G$. We can still speak of type~(a) and type~(b) vertices of $T'$, as each fibre of the collapse map $\pi\colon T\ra T'$ is either a single type~(a) vertex, or a subtree of type~(b) vertices.

    We begin by analysing growth rates of elements of $G$ fixing type~(b) vertices of $T'$. Thus, consider a type~(b) vertex $w\in T'$ and its stabiliser $W$. If $\overline{\mf{o}}_{\rm sing}(\phi)\preceq n^p$, we have $\overline{\mf{o}}_{\rm top}(\phi|_V)\preceq n^p$ for every type~(b) vertex group $V$ of $T$ and so, considering the tree $W\acts\pi^{-1}(w)$, we see that $\overline{\mf{o}}_{\rm top}(\phi|_W)\preceq n^{p+2}$ (for instance, using \cite[Proposition~4.2]{Fio11a}). If instead $\overline{\mf{o}}_{\rm sing}(\phi)$ is not sub-polynomial, then $\overline{\mf{o}}_{\rm sing}(\phi)$ is tame by \Cref{prop:alpha_good}, since sums of tame growth rates are tame. Thus, considering again the tree $W\acts\pi^{-1}(w)$ and using that all growth rates on the vertex groups are $\preceq\overline{\mf{o}}_{\rm sing}(\phi)$, we obtain\footnote{We emphasise that this deduction requires the property described in \Cref{rmk:tame_implies_sum-stable}, and so it could fail if we did not know that $\overline{\mf{o}}_{\rm sing}(\phi)$ is tame. Particularly, if have a $\phi$--invariant HNN extension and the top growth rate on the vertex group is not tame, it is a priori possible for $\phi$ to grow strictly faster on the whole group than it does on the vertex group.}
    that $\overline{\mf{o}}_{\rm top}(\phi|_W)\preceq\overline{\mf{o}}_{\rm sing}(\phi)$. (Again, see \cite[Proposition~4.2]{Fio11a} for details.) In other words, we have $W\in\mc{K}_{\rm sing}(\phi)$.

    Summing up, if an element $g\in G$ fixes a type~(b) vertex of $T'$, then we have $\|\phi^n(g)\|\preceq n^{p+2}$ if $\overline{\mf{o}}_{\rm sing}(\phi)\preceq n^p$, and we instead have $\|\phi^n(g)\|\preceq\overline{\mf{o}}_{\rm sing}(\phi)$ if $\overline{\mf{o}}_{\rm sing}(\phi)$ is not sub-polynomial.

    In order to complete the proof of parts~(1) and~(2), we are left to describe the growth rate $\big[\,\|\phi^n(g)\|\,\big]$ when $g$ fixes a type~(a) vertex or is loxodromic in $T'$. In the former case, Nielsen--Thurston theory shows that $\big[\,\|\phi^n(g)\|\,\big]$ is either $[1]$ or $[\lambda^n]$ for some $\lambda\in\Lambda_{\rm sing}(\phi)$. 
    Suppose instead that $g$ is loxodromic and let $\alpha\sq T'$ be its axis. By the construction of $T'$, every edge of $T'$ is incident to at least one type~(a) vertex, and thus $\alpha$ contains a type~(a) vertex $q$ and two incident edges $e,e'$. Since $q$ is an optimal QH vertex, the edges $e$ and $e'$ correspond to distinct full peripheral subgroups of its stabiliser $Q$, and the pair $(e,e')$ determines a homotopy class $[\g]$ of properly embedded arcs on the associated surface $\Sigma_q$. As the restriction of $\phi$ to $Q$ is represented by a pseudo-Anosov homeomorphism $\psi_q$ of $\Sigma_q$, the length of the arcs $\psi_q^n(\g)$ grows like $\lambda_q^n$ for some $\lambda_q\in\Lambda_{\rm sing}(\phi)$. From this, it is straightforward to deduce that $\|\phi^n(g)\|\sim\lambda_q^n$ for the maximal $\lambda_q\in\Lambda_{\rm sing}(\phi)$ such that $q$ is a type~(a) vertex on the axis $\alpha\sq T'$. This concludes the proof.
\end{proof}

\begin{cor}\label{cor:pure_above_osing}
    If $G$ is special and $1$--ended, there exists an integer $\ell=\ell(G)$ such that the following holds. For each $\phi\in\Out(G)$, there are at most $\ell$ non-sub-polynomial growth rates $\mf{o}\in\mf{g}(\phi)$ with $\mf{o}\not\preceq\overline{\mf{o}}_{\rm sing}(\phi)$. For each such $\mf{o}$, we have $\mf{o}\sim\lambda^n$ with $\lambda$ the stretch factor of a pseudo-Anosov.
\end{cor}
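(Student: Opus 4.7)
The plan is to reduce the statement to a uniform bound on the cardinality of $\Lambda_{\rm sing}(\phi)$ as $\phi$ ranges over $\Out(G)$. Fix $\phi\in\Out(G)$ and pick any element $g\in G$ whose growth rate $\mf{o}:=\big[\,\|\phi^n(g)\|\,\big]$ is not sub-polynomial and satisfies $\mf{o}\not\preceq \overline{\mf{o}}_{\rm sing}(\phi)$. If $\overline{\mf{o}}_{\rm sing}(\phi)$ is not sub-polynomial, then \Cref{prop:pure_above_osing}(1) forces $\mf{o}\sim [\lambda^n]$ with $\lambda\in\Lambda_{\rm sing}(\phi)$, since the other option $\|\phi^n(g)\|\preceq\overline{\mf{o}}_{\rm sing}(\phi)$ contradicts the assumption on $\mf{o}$. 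If instead $\overline{\mf{o}}_{\rm sing}(\phi)\preceq [n^p]$ for some $p\in\N$, then \Cref{prop:pure_above_osing}(2) again yields $\mf{o}\sim [\lambda^n]$ with $\lambda\in\Lambda_{\rm sing}(\phi)$, since the alternative $\|\phi^n(g)\|\preceq [n^{p+2}]$ would force $\mf{o}$ to be sub-polynomial. Since the map $\lambda\mapsto [\lambda^n]$ is injective on $(1,+\infty)$, the number of growth rates in question is at most $|\Lambda_{\rm sing}(\phi)|$, and each already has the pseudo-Anosov form by the very definition of $\Lambda_{\rm sing}(\phi)$.

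To bound $|\Lambda_{\rm sing}(\phi)|$, pick $k\geq 1$ such that $\phi^k$ acts trivially on the quotient graph $T_{\rm sing}(\phi)/G$ and, for each QH vertex group $Q\leq G$ of $T_{\rm sing}(\phi)$, its restriction $\phi^k|_Q$ is represented by a homeomorphism of the associated compact surface $\Sigma_Q$. The number of pseudo-Anosov pieces in the Nielsen--Thurston decomposition of a self-homeomorphism of $\Sigma_Q$ is bounded by $\xi(\Sigma_Q):=3g_Q+n_Q-3$, the maximal cardinality of an essential multicurve on $\Sigma_Q$. Hence
\[
    |\Lambda_{\rm sing}(\phi)|\;\leq\;\sum_Q\xi(\Sigma_Q),
\]
where the sum runs over a set of representatives of the finitely many $G$--conjugacy classes of QH vertex groups of $T_{\rm sing}(\phi)$. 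It remains to bound this sum by a constant depending only on $G$.

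The crux of the argument lies in this last bound. My plan is to refine the JSJ tree $T_{\rm sing}(\phi)$: for each QH vertex $q$ with stabiliser $Q=\pi_1(\Sigma_Q)$, choose an essential multicurve $\Gamma_Q\sq\Sigma_Q$ realising $\xi(\Sigma_Q)$ and blow up $q$ using the associated Bass--Serre tree of $Q$. This is possible via \Cref{lem:blow-up}(1) because incident edge groups of $T_{\rm sing}(\phi)$ at $q$ are peripheral by the optimality of the QH vertex, and any subgroup in $\mc{K}_{\rm sing}(\phi)|_Q$ is peripheral as well. The resulting $G$--tree $T^{\star}$ is an irredundant $(\mc{ZZ}(G),\mc{K}_{\rm sing}(\phi))$--splitting carrying at least $\sum_Q\xi(\Sigma_Q)$ more orbits of edges than $T_{\rm sing}(\phi)$. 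Because $\mc{K}_{\rm sing}(\phi)\supseteq\mc{S}(G)$, an argument analogous to \Cref{rmk:acylindrical}, combined with the fact that the newly introduced cyclic edge groups correspond to contracting elements of $G$ (with cyclic centralisers in $G$), shows that $T^{\star}$ is acylindrical. The main obstacle is therefore a uniform bound on the number of orbits of edges of $T^{\star}/G$; this is the one step I expect to require care. I intend to obtain it either via Sela's acylindrical accessibility for finitely presented groups, or by extending the accessibility arguments of \Cref{sect:accessibility} to $(\mc{ZZ}(G),\mc{K}_{\rm sing}(\phi))$--splittings, exploiting the control we have on cyclic edges (each one is either a centraliser, in which case \Cref{thm:accessible} applies directly after collapsing the non-centraliser part, or is incident to a QH vertex whose topology is itself being bounded through the refinement). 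Once such a constant $\ell=\ell(G)$ is available, the inequality $\sum_Q\xi(\Sigma_Q)\leq \ell$ follows and the proof is complete.
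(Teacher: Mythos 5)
Your first step — reducing the statement to a uniform bound on $|\Lambda_{\rm sing}(\phi)|$ via \Cref{prop:pure_above_osing}(1)--(2) — is correct and matches the paper's argument. The case analysis on whether $\overline{\mf{o}}_{\rm sing}(\phi)$ is sub-polynomial or not is handled accurately, and the reduction to $\sum_Q\xi(\Sigma_Q)$ is sound.

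The gap is the final, and in fact crucial, step: you never actually produce the bound $\sum_Q\xi(\Sigma_Q)\leq\ell(G)$, and the two routes you float both have problems. The first (Sela's acylindrical accessibility) requires a \emph{uniform} acylindricity constant for the refined tree $T^\star$, and the paper explicitly flags in \Cref{rmk:acylindrical} that $(\mc{Z}_s(G),\mc{S}(G))$--trees are acylindrical but not uniformly so — the acylindricity constant can degrade as the number of edge orbits grows. Adding cyclic edges only makes this worse. The second route (extending \Cref{sect:accessibility} to $(\mc{ZZ}(G),\mc{K}_{\rm sing}(\phi))$--splittings) is circular as stated: you justify it by saying the cyclic edges are ``incident to a QH vertex whose topology is itself being bounded through the refinement,'' but bounding the QH topology is precisely what you are trying to prove. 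Moreover, the cyclic edge groups introduced by a multicurve need not lie in $\mc{Z}(G)$ a priori, so \Cref{thm:accessible} does not apply out of the box; this needs an argument (\Cref{lem:cc_near_QH} does show they are centralisers, but you would need to invoke it).

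The paper sidesteps all of this with one observation that is both shorter and more robust: the QH vertex groups appearing in $T_{\rm sing}(\phi)$ are (up to conjugacy) quadratically hanging subgroups of $G$ relative to a family containing $\mc{S}(G)$, and standard JSJ theory \cite{GL-JSJ} shows that such QH subgroups are intrinsic — they appear, up to conjugacy, in \emph{any} JSJ decomposition of $G$ over cyclic subgroups relative to $\mc{S}(G)$. Hence their total complexity is bounded by the total complexity of QH pieces in a single, fixed JSJ tree for $G$, a quantity that depends only on $G$ and not on $\phi$. This kills the problem without any new accessibility argument. I recommend replacing your last paragraph by an appeal to this uniqueness/intrinsicality of QH subgroups rather than trying to re-prove an accessibility statement for the refined tree.
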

\begin{proof}
    This immediately follows from \Cref{prop:pure_above_osing}, observing that the cardinality of the set $\Lambda_{\rm sing}(\phi)$ is bounded above in terms of the total complexity of the QH vertex groups appearing in $T_{\rm sing}(\phi)$. All QH subgroups of $G$ appear in any JSJ decomposition of $G$, in the sense of \cite{GL-JSJ}, and thus their total complexity is bounded independently of the automorphism $\phi$.
\end{proof}

\subsection{Structure of top growth rates}\label{sub:concluding_tame}

We can finally reap the fruits of our work in the earlier parts of \Cref{sect:tameness} and prove \Cref{thm:tame} below, which implies \Cref{thmintro:general_aut} (as discussed after the statement).

Before stating the theorem, we need some terminology. A subgroup $H\leq G$ is \emph{$\phi$--invariant}, for some $\phi\in\Out(G)$, if the representatives of $\phi$ in $\Aut(G)$ preserve the $G$--conjugacy class of $H$. Following \cite{GH22}, a \emph{factor system} for $G$ is the collection of nontrivial vertex-stabilisers of some free splitting of $G$. A factor system is \emph{sporadic} if it either consists of the conjugates of a subgroup $H$ such that $G=H\ast\Z$, or it consists of the conjugates of subgroups $H,K$ such that $G=H\ast K$. For a factor system $\mc{F}$, we denote by $\Out(G,\mc{F})\leq\Out(G)$ the subgroup of outer classes leaving invariant each conjugacy class of subgroups in $\mc{F}$. Finally, a \emph{$(G,\mc{F})$--free factor} is a subgroup of $G$ arising as a vertex group in a free splitting of $G$ relative to $\mc{F}$, and we say that $\phi\in\Out(G,\mc{F})$ is \emph{fully irreducible} if no (nontrivial) power of $\phi$ preserves the $G$--conjugacy class of a proper $(G,\mc{F})$--free factor. 

\begin{thm}\label{thm:tame}
    Let $G$ be a special group. There is an integer $\pi=\pi(G)$ such that the following hold for each $\varphi\in\Aut(G)$ with outer class $\phi\in\Out(G)$.
    \begin{enumerate}
    \setlength\itemsep{.25em}
        \item Either we have $\overline{\mc{O}}_{\rm top}(\varphi)\preceq[n^{\pi}]$, or there exists an algebraic integer $\lambda>1$ such that $\phi$ is $(\lambda,\pi)$--docile. 
        \item If $\phi$ is $(\lambda,\pi)$--docile, then there exist an integer $k\geq 1$ and a $\phi^k$--invariant subgroup $H\leq G$ of one of the following three kinds.
            \begin{enumerate}
                \item[$(i)$] $H$ is isomorphic to the fundamental group of a compact surface on which $\phi^k$ is represented by a pseudo-Anosov homeomorphism with stretch factor $\lambda^k$. In particular, all non-peripheral elements $h\in H$ satisfy $\|\phi^n(h)\|\sim\lambda^n$.
                \item[$(ii)$] $H$ is a convex-cocompact, infinitely-ended subgroup with a non-sporadic factor system $\mc{F}$ such that $\phi^k$ is a fully irreducible element of $\Out(H,\mc{F})$ and $\lambda^k$ is its Perron--Frobenius eigenvalue. Moreover, every element $h\in H$ not conjugate into a subgroup in $\mc{F}$ satisfies either $\|\phi^n(h)\|\sim 1$ or $\|\phi^n(h)\|\sim\lambda^n$.
                \item[$(iii)$] $H$ is a convex-cocompact subgroup of the form $H'\x\Z^m$ for some $m\geq 1$, and there exist elements $h\in H$ with $\|\phi^n(h)\|\sim n^p\lambda^n$ for some $0\leq p\leq\pi$. Moreover, either $H'$ is a surface group, or $H'$ is infinitely ended, or $\phi^k$ descends to an automorphism of the abelianisation of $H$ having $\lambda^k$ as the maximum modulus of an eigenvalue.
            \end{enumerate}
        \item If $\overline{\mf{o}}_{\rm top}(\phi)$ is pure, there exists a $\phi^k$--invariant subgroup $H$ containing elements $h$ with $\|\phi^n(h)\|\sim\overline{\mf{o}}_{\rm top}(\phi)$, and $H$ is of type (i), (iii), or:
            \begin{enumerate}
                \item[$(ii)'$] $H$ is convex-cocompact, infinitely-ended and has a non-sporadic factor system $\mc{F}$ such that $\phi^k\in\Out(H,\mc{F})$ is fully irreducible. Each element $h\in H$ not conjugate into a subgroup in $\mc{F}$ satisfies either $\|\phi^n(h)\|\sim 1$ or $\|\phi^n(h)\|\sim\overline{\mf{o}}_{\rm top}(\phi)$.
            \end{enumerate}
    \end{enumerate}
\end{thm}

Parts~(1) and (2) of \Cref{thm:tame} imply \Cref{thmintro:general_aut}: this is immediate for $G$ special and, for a \emph{virtually} special group $U$, it simply requires applying \Cref{lem:fi_fix} to a finite-index, characteristic, special subgroup $G\lhd U$. Note that parts~(1) and (2) of \Cref{thm:tame} are actually a little stronger than \Cref{thmintro:general_aut}, and this is an important difference: as we have already hinted at in \Cref{sub:osing}, tameness/docility are essential for the \emph{entire} proof of \Cref{thm:tame} to work.

Part~(3) of \Cref{thm:tame} will be useful in \Cref{sect:cmp}, where we study coarse-median preserving automorphisms and show that their growth rates are either sub-polynomial or pure. 

Before proving \Cref{thm:tame}, we make a few more comments.

\begin{rmk}
    In parts~(2) and~(3) of \Cref{thm:tame}, Case~(i) is the only one where the subgroup $H$ might not be convex-cocompact (confront \Cref{rmk:non_cc_QH}). However, $H$ is still a quadratically hanging vertex group in a cyclic splitting of a $\phi^k$--invariant convex-cocompact subgroup of $G$. As such, all non-peripheral elements of $H$ are convex-cocompact in $G$.
\end{rmk}

\begin{rmk}
     Although \Cref{thm:tame}(2) constructs elements $h\in G$ with $\|\phi^n(h)\|\sim\lambda^n$ or $\|\phi^n(h)\|\sim n^p\lambda^n$, there is no claim that these growth rates coincide with $\overline{\mf{o}}_{\rm top}(\phi)$, or even that $\overline{\mf{o}}_{\rm top}(\phi)$ be precisely realised on an element of $G$. We only know that $\overline{\mf{o}}_{\rm top}(\phi)$ is between $[\lambda^n]$ and $[n^{\pi}\lambda^n]$, and that there exist elements $h\in G$ realising $\overline{\mf{o}}_{\rm top}(\phi)$ with at most polynomial error. Things will be cleaner for coarse-median preserving automorphisms.
\end{rmk}

\begin{rmk}
    When $\phi$ is docile, the base of exponential growth $\lambda$ is not just any algebraic integer: either it is the stretch factor of a pseudo-Anosov on a compact surface, or it is the Perron--Frobenius eigenvalue of a fully irreducible automorphism of a free group, or it is the maximum modulus of an eigenvalue of a matrix in ${\rm GL}_m(\Z)$ for some $m\in\Z$. This completely characterises which algebraic integers $\lambda$ can appear in \Cref{thm:tame}.

    In the first two of the three listed cases, $\lambda$ is a \emph{Perron number} \cite{Thurston-BAMS,BH92}: $\lambda$ has modulus strictly larger than that of its Galois conjugates. As to the third case, a number $\lambda$ is the maximum modulus of an eigenvalue of an element of ${\rm GL}_m(\Z)$ if and only if $\lambda$ is both an \emph{algebraic unit} (i.e.\ both $\lambda$ and $\lambda^{-1}$ are algebraic integers) and a \emph{weak Perron number} (i.e.\ the modulus of $\lambda$ is at least as big as that of its Galois conjugates). To see this, it suffices to consider the companion matrix of the minimal polynomial of $\lambda$.
\end{rmk}

We now prove \Cref{thm:tame}.

\begin{proof}[Proof of \Cref{thm:tame}]
    \Cref{prop:alpha_good} shows that either $\phi$ is docile, or all its representatives in $\Aut(G)$ grow sub-polynomially. We prove the rest of the theorem by induction on the ambient rank $\ar(G)$. This amounts to bounding the integer $\pi$ in part~(1) independently of the automorphism, to checking that $\lambda$ is an algebraic integer, and to finding a subgroup $H$ as in parts~(2) and~(3). The base step is trivial, so we assume that the theorem holds for all special groups $G'$ with $\ar(G')<\ar(G)$.

    Suppose first that $G$ is $1$--ended and that $G\not\in\mc{S}(G)$. Then \Cref{prop:pure_above_osing} guarantees that there are only three cases to consider: 
    \begin{enumerate}
        \item[$(x)$] either there exists $p\in\N$ such that $\overline{\mf{o}}_{\rm sing}(\phi)\preceq n^p$ and $\overline{\mf{o}}_{\rm top}(\phi)\preceq n^{p+2}$;
        \item[$(y)$] or we have $\overline{\mf{o}}_{\rm top}(\phi)\sim\overline{\mf{o}}_{\rm sing}(\phi)$ and this growth rate is tame;
        \item[$(z)$] or we have $\overline{\mf{o}}_{\rm top}(\phi)\sim\lambda^n$ for some $\lambda>1$, and $\overline{\mf{o}}_{\rm top}(\phi)$ is realised on a QH vertex group $Q$ of a $\phi$--invariant splitting of $G$.
    \end{enumerate}
    In Case~$(x)$, the inductive assumption applied to the finitely many $G$--conjugacy classes of subgroups in $\mc{S}(G)$ yields $p\leq\pi'$ for some $\pi'=\pi'(G)$. \Cref{prop:alpha_good} then yields an integer $\alpha=\alpha(G)$ such that every representative $\varphi\in\Aut(G)$ of $\phi$ satisfies $\overline{\mc{O}}_{\rm top}(\varphi)\preceq n^{p+2+\alpha}$. Thus, part~(1) of the theorem holds and parts~(2) and (3) are void. 
    
    In Case~$(y)$, the theorem similarly holds by the inductive assumption. Indeed, up to raising $\phi$ to a bounded power, each conjugacy class in $\mc{S}(G)$ is $\phi$--invariant, and thus $\overline{\mf{o}}_{\rm sing}(\phi)$ is a sum of growth rates $\overline{\mf{o}}_{\rm top}(\phi|_S)$ with $S$ in a finite subset $\mc{S}_0\sq\mc{S}(G)$. Since $\overline{\mf{o}}_{\rm sing}(\phi)$ is $(\lambda,p)$--tame for some $\lambda>1$ and $p\in\N$, there exists $S\in\mc{S}_0$ such that $\overline{\mf{o}}_{\rm top}(\phi|_S)$ is $(\lambda,p)$--tame. Similarly, if $\overline{\mf{o}}_{\rm sing}(\phi)\sim n^p\lambda^n$, then there exists $S\in\mc{S}_0$ such that $\overline{\mf{o}}_{\rm top}(\phi|_S)\sim n^p\lambda^n$. Applying the inductive hypothesis to $S$, we obtain the theorem.

    In Case~$(z)$, a power $\phi^k$ preserves the conjugacy class of $Q$ and we can assume that the restriction $\phi^k|_Q$ is represented by a pseudo-Anosov homeomorphism of the associated surface. This proves the theorem in this case.

    Summing up, under the inductive hypothesis, we have shown that the theorem holds when $G$ is $1$--ended and $G\not\in\mc{S}(G)$.

    Suppose now that $G\in\mc{S}(G)$. It is not restrictive to assume that $G$ itself splits as a direct product, as each finite-index subgroup of $G$ is preserved by a power of each automorphism of $G$. Thus, let $G=G_1\x\dots\x G_a\x\Z^b$, where the $G_j$ are directly indecomposable and have trivial centre. 

    Consider an automorphism $\varphi\in\Aut(G)$. Up to raising $\varphi$ to a power, each subgroup $\langle G_j,\Z^b\rangle$ is left invariant by $\varphi$ (see e.g.\ \cite[Section~3]{Fio11a}). Thus, we can represent $\varphi$ as a (formal) triangular matrix with entries automorphisms $(\varphi_1,\dots,\varphi_a)\in\prod_j\Aut(G_j)$, $\psi\in{\rm GL}_b(\Z)$ and a homomorphism $\alpha\colon \prod_jG_j\ra\Z^b$. Part~(1) of the theorem is easily deduced from this description and the inductive hypothesis; see \cite[Corollary~3.6]{Fio11a} for details. Parts~(2) and~(3) are similarly immediate if $b=0$, in which case we can restrict to the $G_j$, which are all $\varphi$--invariant.

    Thus, suppose that $b\geq 1$ and let $\varphi$ be $(\lambda,\pi)$--docile. If none of the $\varphi_j$ is $(\lambda,\pi)$--docile (that is, if they grow exponentially-slower than $[\lambda^n]$), then $\overline{\mf{o}}_{\rm top}(\varphi)$ is realised on the abelianisation of $G$, and in particular it is pure; in this case, we can simply take $H:=G$ in parts~(2) and~(3) of the theorem. If instead there exists $j$ such that $\varphi_j$ is $(\lambda,\pi)$--docile, then it suffices to consider the subgroup $H_j\leq G_j$ provided by the inductive hypothesis and set $H:=H_j\x\Z^b$ (when $\overline{\mf{o}}_{\rm top}(\varphi)$ is pure, one should choose the index $j$ so that $\overline{\mf{o}}_{\rm top}(\varphi_j)\sim\overline{\mf{o}}_{\rm top}(\varphi)$). This concludes the proof when $G\in\mc{S}(G)$.

    In order to complete the proof of the theorem, we are only left to consider the possibility that $G$ is not $1$--ended. In this case, we argue by induction on the Grushko rank $\Gr(G)$.

    Up to raising $\phi\in\Out(G)$ to a power, we can assume that there exists a factor system $\mc{F}$ such that $\phi$ is a fully irreducible element of $\Out(G,\mc{F})$. Each subgroup $F\in\mc{F}$ has strictly lower Grushko rank than $G$, so the restriction $\phi|_F$ satisfies the theorem. Thus, part~(1) of the theorem can be routinely deduced by considering a relative train-track map for $\phi$ (if $\mc{F}$ is non-sporadic) or a $\phi$--invariant free splitting of $G$ with vertex groups in $\mc{F}$ (if $\mc{F}$ is sporadic). See \cite[Proposition~5.2]{Fio11a} and \cite[Proposition~4.2]{Fio11a} for details.

    Regarding part~(2), suppose that $\phi$ is $(\lambda,\pi)$--docile. If there exists $F\in\mc{F}$ such that the restriction $\phi|_F$ is $(\lambda,\pi)$--docile, we obtain the required subgroup $H$ by the inductive assumption. Otherwise, all restrictions $\phi|_F$ are sub-polynomial or $(\mu,\pi)$--docile for values $\mu<\lambda$. In this case, all elements $g\in G$ not conjugate into subgroups in $\mc{F}$ satisfy either $\|\phi^n(g)\|\sim 1$ or $\|\phi^n(g)\|\sim\lambda^n$, and the latter is the growth rate $\overline{\mf{o}}_{\rm top}(\phi)$, as required.
    Part~(3) is similar: if $\overline{\mf{o}}_{\rm top}(\phi)$ is pure and coincides with $\overline{\mf{o}}_{\rm top}(\phi|_F)$ for some $F\in\mc{F}$, we conclude by the inductive assumption; if instead $\overline{\mf{o}}_{\rm top}(\phi)$ is pure and strictly faster than all $\overline{\mf{o}}_{\rm top}(\phi|_F)$, then all elements $g\in G$ not conjugate into a subgroup in $\mc{F}$ satisfy either $\|\phi^n(g)\|\sim 1$ or $\|\phi^n(g)\|\sim\overline{\mf{o}}_{\rm top}(\phi)$, as required. Again, we refer to \cite[Proposition~5.2]{Fio11a} for details.
\end{proof}

\begin{rmk}
    It is interesting to pinpoint why we were not able to show that $\overline{\mf{o}}_{\rm top}(\phi)$ is pure for all $\phi\in\Out(G)$ growing exponentially. If $G$ is $1$--ended, not a virtual direct product, and $\overline{\mf{o}}_{\rm top}(\phi)\not\sim\overline{\mf{o}}_{\rm sing}(\phi)$, then we have seen that $\overline{\mf{o}}_{\rm top}(\phi)\sim\lambda^n$ for some $\lambda>1$, and so $\overline{\mf{o}}_{\rm top}(\phi)$ is pure in this case. The problems start if we happen to have $\overline{\mf{o}}_{\rm top}(\phi)\sim\overline{\mf{o}}_{\rm sing}(\phi)$. We might then have $\overline{\mf{o}}_{\rm top}(\phi)\sim\overline{\mf{o}}_{\rm top}(\phi|_S)$ for a singular subgroup $S\in\mc{S}(G)$, and there might be a direct factor $S'$ of $S$ such that $\overline{\mf{o}}_{\rm top}(\phi|_S)\sim\overline{\mf{o}}_{\rm top}(\phi|_{S'})$ and such $S'$ splits as a nontrivial free product $S'=S_1\ast\dots\ast S_a\ast F_b$. Now, even if the growth rates $\overline{\mf{o}}_{\rm top}(\phi|_{S_i})$ are all pure, there is no guarantee that the growth rate $\overline{\mf{o}}_{\rm top}(\phi|_{S'})$ is pure. Indeed, we would need to know that there is at least a \emph{linear} gap between $\overline{\mf{o}}_{\rm top}(\phi|_{S_i})$ and the next-fastest growth rate on each $S_i$. 

    This is beyond reach in general, but we will prove it in the coarse-median preserving case, since we can control \emph{all} growth rates in that case.
\end{rmk}

\section{The coarse-median preserving case}\label{sect:cmp}

Having shown that automorphisms of special groups are either sub-poly\-no\-mial or docile, we now proceed to gather more refined information under the assumption that the automorphism be \emph{coarse-median preserving}. 

Let $G$ be a special. Realise $G$ as a convex-cocompact subgroup of a RAAG $A_{\G}$, and equip $G$ with the induced coarse-median operator $m_{\G}\colon G^3\ra G$. We denote by $\Aut(G,m_{\G})\leq\Aut(G)$ and $\Out(G,m_{\G})\leq\Out(G)$ the subgroups of automorphisms coarsely preserving $m_{\G}$, as defined in \Cref{sub:special_prelims}.

The main result of this section is that coarse-median preserving automorphisms of $G$ always have finitely many exponential growth rates, and each of them is of the form $[n^p\lambda^n]$ for some $p\in\N$ and some algebraic integer $\lambda>1$. This proves \Cref{thmintro:cmp_aut} from the Introduction.

\begin{thm}\label{thm:cmp_main}
    Let $G\leq A_{\G}$ be convex-cocompact and let $\phi\in\Out(G,m_{\G})$.
    \begin{enumerate}
        \setlength\itemsep{.25em}
        \item Each growth rate in the set $\mf{g}(\phi)$ (defined in \Cref{subsub:growth_of_automorphisms}) is either pure or sub-polynomial. Moreover, we have $\overline{\mf{o}}_{\rm top}(\phi)\in\mf{g}(\phi)$.
        \item There are only finitely many pure growth rates in $\mf{g}(\phi)$. There exists $p\in\N$ such that all sub-polynomial growth rates in $\mf{g}(\phi)$ are $\preceq[n^p]$.
        \item For every $\varphi\in\Aut(G)$ in the outer class $\phi$, all growth rates in the symmetric difference $\mc{G}(\varphi)\triangle\mf{g}(\phi)$ are sub-polynomial.
        \item For any pure growth rate $\mf{o}\in\mf{g}(\phi)$, there are only finitely many $G$--conjugacy classes of maximal subgroups in the family $\mc{B}(\mf{o})$ (\Cref{sub:beat}). Each of these subgroups is convex-cocompact
        and its $G$--conjugacy class is preserved by a power of $\phi$.
        \item For any pure growth rate $\mf{o}\in\mf{g}(\phi)$, there are an integer $k\geq 1$ and a $\phi^k$--invariant subgroup $H\leq G$ of one of the following two kinds.
        \begin{enumerate}
            \item[(i)] $H$ is isomorphic to the fundamental group of a compact surface on which $\phi^k$ is represented by a pseudo-Anosov homeomorphism with stretch factor $\lambda^k$. In particular, all non-peripheral elements $h\in H$ satisfy $\|\phi^n(h)\|\sim\mf{o}\sim \lambda^n$.
            \item[(ii)] $H$ is a convex-cocompact, infinitely-ended subgroup with a non-sporadic factor system $\mc{F}$ such that $\phi^k$ is a fully irreducible element of $\Out(H,\mc{F})$ and $\lambda^k$ is its Perron--Frobenius eigenvalue. Each element $h\in H$ not conjugate into a subgroup in $\mc{F}$ satisfies either $\|\phi^n(h)\|\sim 1$ or $\|\phi^n(h)\|\sim\mf{o}\sim n^p\lambda^n$ for some $p\in\N$.
        \end{enumerate}
    \end{enumerate}
\end{thm}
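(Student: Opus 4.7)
The proof proceeds by induction on the ambient rank $\ar(G)$, with a secondary induction on Grushko rank for the free-product case. The inductive step rests on three pillars: the JSJ decomposition $G \acts T_{\rm sing}(\phi)$ of \Cref{prop:JSJ_osing}; the general tameness result \Cref{thm:tame_restated} applied to $\phi$, which already provides a realiser of $\overline{\mf{o}}_{\rm top}(\phi)$ of one of three listed types; and the crucial new input for the coarse-median preserving setting, namely the absence of perverse lines in the Bestvina--Paulin degenerations (\Cref{thm:10e+}(3)).

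The central technical step is to show that, for any ultrafilter $\om$, each maximal subgroup in $\mc{B}_{\rm top}^{\om}(\phi)$ is convex-cocompact and self-normalising, and that these maximal subgroups are independent of $\om$ up to $G$--conjugation. By \Cref{lem:beat_vs_degeneration}, a subgroup $B \leq G$ lies in $\mc{B}_{\rm top}^{\om}(\phi)$ if and only if it is elliptic in the degeneration $G \acts \X_{\om} \hookrightarrow \prod_{v \in \G} T^v_{\om}$. Because $\phi$ is coarse-median preserving, \Cref{thm:10e+}(3) rules out perverse lines, so \Cref{thm:10e+}(2) forces all arc-stabilisers of the $T^v_{\om}$ to be actual centralisers in $\mc{Z}(G)$, hence convex-cocompact, root-closed, and with no infinite ascending chains. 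Combined with accessibility over centralisers (\Cref{thm:accessible}), this allows us to invoke \Cref{cor:acc_implies_nice}, concluding that point-stabilisers in each $T^v_{\om}$ are convex-cocompact, self-normalising, and split into only finitely many $G$--conjugacy classes among maximal ones. Intersecting across $v \in \G$, and using \Cref{lem:cc_basics}(1) and~(5), carries these properties to maximal elements of $\mc{B}_{\rm top}^{\om}(\phi)$. Since the resulting maximal convex-cocompact $\overline{\mf{o}}_{\rm top}$--beat subgroups of $G$ admit an algebraic characterisation inside the $\phi$--invariant family $\mc{B}_{\rm top}$, their $G$--conjugacy classes do not depend on $\om$.

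With this in hand, a suitable power $\phi^k$ preserves the $G$--conjugacy class of each maximal $\overline{\mf{o}}_{\rm top}(\phi)$--beat subgroup $K$; since $K$ is convex-cocompact, $\phi^k|_K$ is again coarse-median preserving for the restricted coarse median structure on $K$, and $K$ has strictly smaller complexity (lower ambient rank, strictly fewer singular factors, or lower Grushko rank than $G$). Applying the inductive hypothesis to each $\phi^k|_K$ propagates parts~(2)--(5) down the tower of beat subgroups. The realiser $H$ in part~(5) is obtained from \Cref{thm:tame_restated}(4): Case~(iii) there (a direct product with free abelian factor) only arises for subgroups that themselves become beat subgroups at the next level of the recursion, so after descending the process eventually produces a genuine surface or free-product realiser. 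Parts~(1) and~(2) are then obtained by combining tameness (from \Cref{thm:tame_restated}) with inductive purity on the beat subgroups: above $\overline{\mf{o}}_{\rm sing}(\phi)$, \Cref{prop:pure_above_osing} already gives a pure rate $\lambda^n$; at or below $\overline{\mf{o}}_{\rm sing}(\phi)$ all rates come from strictly smaller-complexity subgroups, where induction yields purity, with \Cref{prop:growth_estimate_fully_irreducible} upgrading tameness to purity of form $n^p \lambda^n$ in the fully irreducible case (using inductively that factor-system rates are pure with a uniform exponential gap). Part~(3) follows from docility of $\phi$ combined with \Cref{rmk:tameness_independent_of_representative}, which bounds the discrepancy between word and conjugacy lengths by a sub-polynomial term.

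The main obstacle is the technical claim about convex-cocompactness of maximal beat subgroups in the second paragraph: this is exactly where the coarse-median preserving hypothesis is indispensable. Without it, arc-stabilisers in degenerations would only be co-abelian subgroups of centralisers and could be infinitely generated, point-stabilisers in the $T^v_{\om}$ would not be convex-cocompact, and the restrictions $\phi^k|_K$ would not have strictly smaller complexity in any manageable sense, so the finite inductive descent would break down. Every other step in the argument is either routine, a direct appeal to \Cref{thm:tame_restated}, or careful but standard inductive bookkeeping.
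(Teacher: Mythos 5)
Your proposal captures the broad architecture — induction, the role of absence of perverse lines, the JSJ tree, and the algebraic well-behavedness of maximal beat subgroups via \Cref{lem:B_om} — but it contains three substantive gaps.

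First, your induction variables do not guarantee termination. You propose induction on ambient rank with a secondary induction on Grushko rank, and vaguely gesture at ``strictly fewer singular factors.'' But in the critical case where $\overline{\mf{o}}_{\rm top}(\phi)\sim\overline{\mf{o}}_{\rm sing}(\phi)$, a maximal subgroup $B\in\mc{B}^{\om}_{\rm top}(\phi)$ can have the \emph{same} ambient rank, \emph{same} Grushko rank, and even the same collection of singular subgroups as $G$. The paper's proof of \Cref{prop:cmp_non-uniform} instead uses a secondary induction on the cardinality of $\mf{g}_{\rm sing}(\phi)$ — the set of pure growth rates realised on singular subgroups — together with an observation that $\overline{\mf{o}}_{\rm top}(\phi)\in\mf{g}_{\rm sing}(\phi)$ but $\overline{\mf{o}}_{\rm top}(\phi)\notin\mf{g}_{\rm sing}(\phi|_B)$. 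Without a decreasing quantity of this kind, your recursion down the tower of beat subgroups has no reason to terminate. (There is also a third-level induction hidden in the freely decomposable case, on the pair $(\#\mf{g}_{\rm ind}(\phi),\Gr(G))$.)

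Second, your claim that the maximal elements of $\mc{B}^{\om}_{\rm top}(\phi)$ are independent of $\om$ — justified by them having an ``algebraic characterisation inside the $\phi$--invariant family $\mc{B}_{\rm top}$'' — is circular. That independence is precisely equivalent to saying $\mc{B}^{\om}_{\rm top}(\phi)=\mc{B}_{\rm top}(\phi)$ for all $\om$, which holds exactly because all growth rates in $\mf{g}(\phi)$ are pure or sub-polynomial — the very thing you are trying to prove. The paper deduces it \emph{after} establishing purity (in \Cref{lem:parts3+4}), and in \Cref{prop:cmp_non-uniform} works with a \emph{fixed} $\om$ chosen for each element $g$, relying on \Cref{lem:B_om}(2) for the needed $\phi^k$--invariance of the specific $B\in\mc{B}^{\om}_{\rm top}(\phi)$ rather than on any $\om$--independence.

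Third, your handling of Case~(iii) of \Cref{thm:tame_restated}(4) — a realiser $H\cong H'\x\Z^m$ — is wrong. You claim such $H$ ``becomes a beat subgroup at the next level,'' but in general $H$ realises $\overline{\mf{o}}_{\rm top}(\phi)$, so it is \emph{not} beat. The paper instead invokes \Cref{rmk:no_skewing}: coarse-median preservation forces the restriction of $\varphi$ to the $\Z^m$ factor to have finite order, whence $\overline{\mf{o}}_{\rm top}(\phi)\sim\overline{\mf{o}}_{\rm top}(\phi|_{H'})$ by \Cref{cor:growth_direct_product}, and one recurses into $H'$, whose ambient rank drops by $m\geq 1$. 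Your proposal never mentions the no-skewing phenomenon, which is essential for eliminating Case~(iii) and hence for part~(5).
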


We prove \Cref{thm:cmp_main} at the end of \Cref{sect:cmp}. Before embarking in its proof, we make a few observations on its statement.

\begin{rmk}\label{rmk:fi_cmp_fix}
    Let $G$ be a group that is only \emph{virtually} special. Let $G_0\lhd G$ be a characteristic, finite-index, special subgroup, and choose an integer $k\geq 1$ such that we have $g^k\in G_0$ for all $g\in G$. Extending \Cref{thmintro:cmp_aut} to automorphisms of $G$ with coarse-median preserving restriction to $G_0$ requires additional information on $G$ (satisfied e.g.\ by right-angled Coxeter groups):
    \begin{enumerate}
        \item Suppose that there exists a constant $\eps>0$ such that $1+\|g^k\|\geq\eps\|g\|$ for all $g\in G$. (This holds if $G$ is a CAT(0) or injective group, for instance.)
        Then, for all $\phi\in\Out(G)$ and all $g\in G$, we have that the sequences $n\mapsto\|\phi^n(g)\|$ and $n\mapsto\|\phi^n(g^k)\|$ are bi-Lipschitz equivalent. As a consequence, $\phi$ and $\phi|_{G_0}$ have the same growth rates, and \Cref{thmintro:cmp_aut}(1) holds for $G$.
        \item Suppose that there exists a constant $\eps>0$ such that $1+|g^k|\geq\eps|g|$ for all $g\in G$. (This is strictly stronger than the previous property, and implies e.g.\ that $G$ has only finitely many order--$k$ elements.) Then it is not hard to check that the root-closure in $G$ of each of the subgroups in the families $\mc{B}(\mf{o},\phi|_{G_0})$ is a subgroup (rather than just a subset). Thus, under this stronger assumption, also \Cref{thmintro:cmp_aut}(2) holds for $G$.
    \end{enumerate}
\end{rmk}

\begin{ex}\label{ex:fix_not_raag}
    Consider a RAAG $A_{\G}$ and $\phi\in\Out(A_{\G},m_{\G})$. The maximal subgroups in $\mc{B}(\overline{\mf{o}}_{\rm top}(\phi))$ need not be isomorphic to RAAGs, and the following is likely the simplest example of this. Let $\G$ be the graph below, and let $\tau_i$ be the transvection mapping $w_i\mapsto w_iv$ and fixing all other standard generators. Consider the composition $\varphi:=\tau_1\tau_2\tau_3$ and let $\phi$ be its outer class.
    \[
    \begin{tikzpicture}[scale=0.8]
    \draw[fill] (0,0) -- (0.5,0.87);
    \draw[fill] (0,0) -- (0.5,-0.87);
    \draw[fill] (0,0) -- (-1,0);
    \draw[fill] (-1,0) -- (-0.5,0.87);
    \draw[fill] (-1,0) -- (-0.5,-0.87);
    \draw[fill] (1,0) -- (0.5,0.87);
    \draw[fill] (1,0) -- (0.5,-0.87);
    \draw[fill] (-0.5,0.87) -- (0.5,0.87);
    \draw[fill] (-0.5,-0.87) -- (0.5,-0.87);
    \draw[fill] (0,0) circle [radius=0.08cm];
    \draw[fill] (-1,0) circle [radius=0.04cm];
    \draw[fill] (1,0) circle [radius=0.08cm];
    \draw[fill] (0.5,0.87) circle [radius=0.04cm];
    \draw[fill] (0.5,-0.87) circle [radius=0.04cm];
    \draw[fill] (-0.5,0.87) circle [radius=0.08cm];
    \draw[fill] (-0.5,-0.87) circle [radius=0.08cm];
    \node[right] at (0,0) {$v$};
    \node[left] at (-1,0) {$a$};
    \node[right] at (1,0) {$w_1$};
    \node[above] at (0.5,0.87) {$c$};
    \node[below] at (0.5,-0.87) {$b$};
    \node[above] at (-0.5,0.87) {$w_2$};
    \node[below] at (-0.5,-0.87) {$w_3$};
    \node at (-2,0) {$\G=$};
    \end{tikzpicture}
    \]
    The outer automorphism $\phi$ grows linearly and there is a single conjugacy class of maximal subgroups of $\mc{B}(\overline{\mf{o}}_{\rm top}(\phi))$, namely that of the fixed subgroup $\Fix(\varphi)$. Moreover, we have 
    \[ \Fix(\varphi)=\langle a,\,b,\,c,\,v,\,w_2w_1^{-1},\,w_3w_1^{-1},\,w_1vw_1^{-1} \rangle .\]
    We expect this subgroup to not be isomorphic to any RAAG, though proving this formally may prove challenging. (It is easy to see that the generating set we have given does not correspond to a right-angled Artin presentation of $\Fix(\varphi)$, as $a$ does not commute with $w_2w_1^{-1}$ or $w_3w_1^{-1}$, but it commutes with their product $(w_2w_1^{-1})(w_3w_1^{-1})^{-1}=w_2w_3^{-1}$.)
\end{ex}

\begin{rmk}\label{rmk:uniformly_finite?}
    \Cref{thm:cmp_main} leaves open many natural questions even in the coarse-median preserving case. Is the set $\mf{g}(\phi)$ finite? Is every sub-polynomial growth rate \emph{exactly} polynomial? Is the number of pure growth rates in $\mf{g}(\phi)$ bounded only in terms of the group $G$? Confront \Cref{cor:pure_above_osing}.
\end{rmk}

The proof of \Cref{thm:cmp_main} is spread out over the next two subsections.

\subsection{Preliminaries}

Here we collect a few results on coarse-median preserving automorphisms that are needed in the proof of \Cref{thm:cmp_main}. For an expanded discussion, we refer the reader to \cite{Fio10a,Fio10e,FLS}.

Let $G\leq A_{\G}$ be convex-cocompact, equipped with the induced coarse median $m_{\G}$. If $H\leq G$ is convex-cocompact and $\varphi\in\Aut(G,m_{\G})$, then $\varphi(H)$ is again convex-cocompact \cite[Corollary~3.3]{Fio10a}. Because of convex-cocompactness, $H$ inherits a coarse median structure from $G$ and, if $\varphi(H)=H$, then the restriction $\varphi|_H$ is again coarse-median preserving.

\begin{rmk}\label{rmk:no_skewing}
    Coarse-median preserving automorphisms do not skew the factors of a direct product. More precisely, this means the following: Suppose that $G=G_1\x\dots\x G_k\x A$, where $A$ is free abelian, while the $G_i$ are convex-cocompact, directly indecomposable and have trivial centre. If we have $\varphi\in\Aut(G,m_{\G})$, then $\varphi$ permutes the subgroups $G_i$, and the restriction $\varphi|_A\in\Aut(A)$ has finite order. (See \cite[Lemma~3.5]{Fio11a} and \cite[Definition~2.23]{Fio10e}, and note that ``orthogonality'' is preserved by coarse-median preserving automorphisms.)
\end{rmk}

The following is the most important property of coarse-median automorphisms for this article: this is the single reason why \Cref{thmintro:cmp_aut} is stronger than \Cref{thmintro:general_aut}. In turn, this derives from the fact that degenerations of coarse-median preserving automorphisms are particularly well-behaved, by \Cref{thm:10e-}(2a), and thus point-stabilisers of the degeneration are convex-cocompact in $G$, by \Cref{thm:acc_implies_nice}.

For simplicity, set $\mc{B}_{\rm top}(\phi):=\mc{B}(\overline{\mf{o}}_{\rm top}(\phi))$ and $\mc{B}^{\om}_{\rm top}(\phi)=\mc{B}^{\om}(\mf{o}^{\om}_{\rm top}(\phi))$.

\begin{lem}\label{lem:B_om}
    The following statements hold for every $\phi\in\Out(G,m_{\G})$ and every non-principal ultrafilter $\om$.
    \begin{enumerate}
        \item Each $B\in\mc{B}^{\om}_{\rm top}(\phi)$ is contained in a maximal element of $\mc{B}^{\om}_{\rm top}(\phi)$.
        \item There are only finitely many $G$--conjugacy classes of maximal subgroups in $\mc{B}^{\om}_{\rm top}(\phi)$ and each of them is convex-cocompact.
        \item If $B,B'$ are distinct maximal subgroups in $\mc{B}^{\om}_{\rm top}(\phi)$, then the intersection $B\cap B'$ is contained in an element of $\mc{Z}(G)\setminus\{G\}$.
    \end{enumerate}
\end{lem}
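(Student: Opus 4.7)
The plan is to translate everything into statements about the degeneration $G \acts \X_\om$ associated to $\phi$ and $\om$, as in \Cref{sub:degenerations}. By \Cref{lem:beat_vs_degeneration}, a subgroup of $G$ lies in $\mc{B}^\om_{\rm top}(\phi)$ if and only if it fixes a point of $\X_\om$. Since $\X_\om$ embeds $G$--equivariantly and isometrically in the finite product $\prod_{v \in \G} T^v_\om$, ellipticity in $\X_\om$ is equivalent to ellipticity in each of the $\R$--trees $T^v_\om$, and fixed sets in $\X_\om$ are intersections of fixed sets in the $T^v_\om$. The crucial consequence of coarse-median preservation is \Cref{thm:10e+}(3): none of the trees $T^v_\om$ admits a perverse line, so by \Cref{thm:10e+}(2), every arc-stabiliser in $\Min(G, T^v_\om)$ belongs to $\mc{Z}(G)$.

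For parts~(1) and~(2), I would apply \Cref{thm:acc_implies_nice} from \Cref{app:acc_R_trees} to each tree $G\acts T^v_\om$, using that $G$ is accessible over centralisers by \Cref{thm:accessible}. This yields, in each $T^v_\om$, a good description of the $G$--point-stabilisers: the maximal ones are convex-cocompact and root-closed in $G$, and form only finitely many $G$--conjugacy classes. Any subgroup in $\mc{B}^\om_{\rm top}(\phi)$ fixes some point $p = (p_v) \in \X_\om$ and is therefore contained in the intersection $\bigcap_{v\in\G} G_{p_v}$; this can be enlarged to an intersection of maximal $G$--point-stabilisers, giving part~(1) after a Zorn-type argument that uses \Cref{lem:FK} together with ACC on arc-stabilisers in $\mc{Z}(G)$ (\Cref{rmk:semi-parabolic_chains}) to guarantee that ascending chains of elliptic subgroups remain elliptic. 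Such intersections are convex-cocompact by \Cref{lem:cc_basics}(1), they are root-closed since root-closedness passes to arbitrary intersections, and they fall into only finitely many $G$--conjugacy classes by iteratively applying \Cref{lem:cc_basics}(5). Finally, $\mc{B}^\om_{\rm top}(\phi)$ is $\phi$--invariant (\Cref{rmk:beat_invariant}), so $\phi$ permutes its finitely many conjugacy classes of maximal elements, and a suitable power fixes each, completing part~(2).

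For part~(3), if $B \ne B'$ are distinct maximal elements of $\mc{B}^\om_{\rm top}(\phi)$, they are the $G$--stabilisers of distinct points $p \ne p'$ of $\X_\om$. The intersection $B \cap B'$ pointwise fixes the median-geodesic $[p,p']$ of $\X_\om$, and since $p \ne p'$, the projection of this geodesic to at least one factor $T^v_\om$ is a non-degenerate arc. Hence $B \cap B'$ is contained in the $G$--stabiliser of an arc of $T^v_\om$, which lies in $\mc{Z}(G)$ by the opening discussion. When $G$ itself has non-trivial centre, and hence $G \in \mc{Z}(G)$, the lemma's parenthetical exception applies; in that case $G$ is singular and \Cref{thm:cmp_main} will be reduced to a statement on the direct factors separately.

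The main obstacle is the reliance on \Cref{thm:acc_implies_nice}: we need a description of point-stabilisers in each $T^v_\om$ strong enough to extract convex-cocompactness, root-closedness, and finiteness of $G$--conjugacy classes. This will rest on accessibility over centralisers (\Cref{thm:accessible}) and some Rips--Sela theory summarised in \Cref{app:acc_R_trees}. Additional care is needed because $G$ typically does not act minimally on $T^v_\om$, so one has to pass to $\Min(G, T^v_\om)$ and transfer information back to the whole tree, using that any point of $T^v_\om \setminus \Min(G, T^v_\om)$ has the same $G$--stabiliser as its nearest-point projection onto $\Min(G, T^v_\om)$.
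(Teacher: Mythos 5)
Your proposal follows essentially the same route as the paper: reduce to the degeneration $G\acts\X_{\om}$ via \Cref{lem:beat_vs_degeneration}, use \Cref{thm:10e+}(3) to kill perverse lines so arc-stabilisers of $\Min(G,T^v_{\om})$ lie in $\mc{Z}(G)$, invoke \Cref{cor:acc_implies_nice} (rather than directly \Cref{thm:acc_implies_nice}) to control point-stabilisers, take the family of finite intersections across the $v\in\G$, and finish with \Cref{lem:cc_basics}(1), (3), (5) and the $\phi$-invariance of $\mc{B}^{\om}_{\rm top}(\phi)$. Part~(3) is likewise identical: distinct maximal elements fix distinct points of $\X_{\om}$, the intersection fixes a nondegenerate arc in some $\Min(G,T^v_{\om})$, hence lies in a centraliser.

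One small imprecision worth flagging: your closing remark that ``any point of $T^v_{\om}\setminus\Min(G,T^v_{\om})$ has the same $G$--stabiliser as its nearest-point projection onto $\Min(G,T^v_{\om})$'' is not true in general; one only has the containment $G_x\leq G_y$ (the projection being $G$-equivariant). That containment, together with the maximality of $B$ and $B'$, is exactly what lets you replace $p$ and $p'$ by their projections to $\prod_v\Min(G,T^v_{\om})$ without changing the stabilisers, which is what the paper does. So the step is salvaged, but as stated the claim is false. Similarly, your ``Zorn-type argument'' for part~(1) is a bit hand-wavy; the paper instead bounds chain lengths in the intersection family $\mc{I}$ directly via \Cref{lem:cc_basics}(3) combined with finiteness of conjugacy classes, which avoids any appeal to Zorn.
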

\begin{proof}
    We suppose that $\phi\in\Out(G,m_{\G})$ has infinite order, otherwise $\mc{B}^{\om}_{\rm top}(\phi)$ is empty and the lemma is vacuously true.

    Let $G\acts\X_{\om}$ be the degeneration determined by $\phi$ and the ultrafilter $\om$. Recall that $\X_{\om}$ equivariantly embeds in a finite product of $\R$--trees $\prod_{v\in\G}T^v_{\om}$, and a subgroup of $G$ is elliptic in $\X_{\om}$ if and only if it is elliptic in all $T^v_{\om}$. Since $\phi$ is coarse-median preserving, \Cref{thm:10e-}(2a) shows that all arc-stabilisers of the minimal subtrees $\Min(G,T^v_{\om})$ lie in $\mc{Z}(G)$. By \Cref{lem:beat_vs_degeneration}, a subgroup of $G$ lies in $\mc{B}^{\om}_{\rm top}(\phi)$ if and only if it fixes a point of $\X_{\om}$.

    Let $\mc{I}$ be the collection of subgroups of the form $\bigcap_{v\in\G}P_v$, where each $P_v$ is the $G$--stabiliser of a point of $\Min(G,T^v_{\om})$. Point-stabilisers of $\X_{\om}$ fix points in each of the trees $T^v_{\om}$, and each point-stabiliser of $T^v_{\om}$ fixes a point in $\Min(G,T^v_{\om})$. Thus, point-stabilisers of $\X_{\om}$ are contained in elements of $\mc{I}$ and, conversely, each element of $\mc{I}$ is elliptic in $\X_{\om}$.

    By \Cref{thm:acc_implies_nice}(1), the point-stabilisers of the trees $\Min(G,T^v_{\om})$ are con\-vex-cocompact, and so the subgroups in $\mc{I}$ are convex-cocompact. Moreover, \Cref{thm:acc_implies_nice}(2) shows that there are only finitely many conjugacy classes of point-stabilisers of $\Min(G,T^v_{\om})$, and so an iterated application of Lem\-ma~\ref{lem:cc_basics}(5) shows that there are only finitely many $G$--conjugacy classes of subgroups in $\mc{I}$. Finally, using \Cref{lem:cc_basics}(3), we conclude that chains of subgroups in $\mc{I}$ have bounded length. This shows that every point-stabiliser of $\X_{\om}$ is contained in a maximal point-stabiliser and that the latter all lie in $\mc{I}$, so they are convex-cocompact and there are only finitely many conjugacy classes of them. The same then holds for maximal elements of $\mc{B}^{\om}_{\rm top}(\phi)$, as they are maximal point-stabilisers of $\X_{\om}$. This proves parts~(1) and~(2).

    As to part~(3), if $B,B'\in\mc{B}^{\om}_{\rm top}(\phi)$ are distinct maximal elements, there exist distinct points $p,p'\in\X_{\om}$ such that $B=G_p$ and $B'=G_{p'}$. Without loss of generality, the projections of $p$ and $p'$ to $T^v_{\om}$ lie in $\Min(G,T^v_{\om})$ for all $v\in\G$ for which $G$ is not elliptic in $T^v_{\om}$. Since $p\neq p'$, there exists $v\in\G$ such that $p$ and $p'$ project to distinct points of $\Min(G,T^v_{\om})$, and hence $B\cap B'$ fixes an arc of $\Min(G,T^v_{\om})$. Thus, $B\cap B'$ is contained in a centraliser.
\end{proof}

Using the previous lemma, we can immediately prove parts~(3) and~(4) of \Cref{thm:cmp_main}, under the assumption that parts~(1) and (2) hold:

\begin{lem}\label{lem:parts3+4}
    Let $\phi\in\Out(G,m_{\G})$ be such that $\mf{g}(\phi)$ contains only finitely many pure growth rates, and such that all other elements of $\mf{g}(\phi)$ are sub-polynomial. Then all the following hold.
    \begin{enumerate}
        \item If $\overline{\mf{o}}_{\rm top}(\phi)$ is not sub-polynomial, then $\overline{\mf{o}}_{\rm top}(\phi)\in\mf{g}(\phi)$.
        \item For any pure growth rate $\mf{o}\in\mf{g}(\phi)$, there are only finitely many conjugacy classes of maximal subgroups in $\mc{B}(\mf{o})$. Each of these is convex-cocompact and its conjugacy class is preserved by a power of $\phi$.
        \item For any $\varphi\in\Aut(G,m_{\G})$ in the outer class $\phi$, all growth rates in the symmetric difference $\mc{G}(\varphi)\triangle\mf{g}(\phi)$ are sub-polynomial.
    \end{enumerate}
\end{lem}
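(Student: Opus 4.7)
\textbf{Plan for Lemma~\ref{lem:parts3+4}.} The three parts will be proved in order, with parts~(1) and~(2) being relatively direct consequences of the hypothesis together with previously developed material, and part~(3) requiring a slightly more delicate degeneration argument. Throughout, write $\mf{o}_1\prec\cdots\prec\mf{o}_m$ for the (totally ordered, by purity) list of pure rates in $\mf{g}(\phi)$.

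For part~(1), I will show $\overline{\mf{o}}_{\rm top}(\phi)\sim\mf{o}_m$. Since $\mf{o}_m\in\mf{g}(\phi)$, Lemma~\ref{lem:o_bar_is_faster} gives $\mf{o}_m\preceq\overline{\mf{o}}_{\rm top}(\phi)$. For the reverse, Remark~\ref{rmk:dominating_otop} says it suffices to check that every $\mf{o}'\in\mf{g}(\phi)$ satisfies $\mf{o}'\preceq\mf{o}_m$: by hypothesis, each such $\mf{o}'$ is either pure (so $\preceq\mf{o}_m$ by construction) or sub-polynomial (so $\prec\mf{o}_m$ since $\mf{o}_m$ is purely exponential with base $>1$, as $\overline{\mf{o}}_{\rm top}(\phi)$ is assumed non-sub-polynomial). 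This yields $\overline{\mf{o}}_{\rm top}(\phi)\in\mf{g}(\phi)$.

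For part~(2), I proceed by descending induction on $i$. The base case $i=m$ is the key step: I claim that $\mc{B}(\mf{o}_m)=\mc{B}_{\rm top}^{\om}(\phi)$ for \emph{any} ultrafilter $\om$, so that Lemma~\ref{lem:B_om} directly applies. Indeed, for $g\in G$ the rate $[\|\phi^n(g)\|]$ is either sub-polynomial or $\sim\mf{o}_j$ for a unique $j\leq m$; in the first case $[\|\phi^n(g)\|]\prec_\om\mf{o}_m$ for every $\om$, and in the second case $\prec_\om\mf{o}_m$ if $j<m$ and $\sim_\om\mf{o}_m$ if $j=m$ (using that $n^{p_j-p_m}(\lambda_j/\lambda_m)^n\to 0$ on every $\om$ when $\lambda_j<\lambda_m$). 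So membership in $\mc{B}^{\om}_{\rm top}(\phi)$ is independent of $\om$ and equals membership in $\mc{B}(\mf{o}_m)$. Lemma~\ref{lem:B_om} then provides the required conclusion. For the inductive step, each maximal $M\in\mc{B}(\mf{o}_i)$ is contained in some maximal $V\in\mc{B}(\mf{o}_{i+1})$, which is convex-cocompact with $\phi^k$-invariant conjugacy class for a suitable $k$. The restriction $\phi^k|_V\in\Out(V,m_\G)$ still satisfies the lemma's hypothesis (its rates are pulled back from a subset of $\mf{g}(\phi)$, so are pure or sub-polynomial), its top rate is $\sim k\ast\mf{o}_i$, and the base case applied inside $V$ describes the maximal elements of $\mc{B}_{\rm top}(\phi^k|_V)=\mc{B}(\mf{o}_i)\cap V$; these are convex-cocompact in $V$, hence in $G$. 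Finitely many $V$ and finitely many $M$ inside each $V$ gives finiteness globally.

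Part~(3) splits into two inclusions: the pure rates of $\mc{G}(\varphi)$ and of $\mf{g}(\phi)$ must coincide. For $(\supseteq)$, given a pure $\mf{o}_i\in\mf{g}(\phi)$, I apply part~(2) to obtain a convex-cocompact $V\leq G$ (a maximal element of $\mc{B}(\mf{o}_{i+1})$, or $V=G$ if $i=m$) on which $\phi^k$ has top rate $\sim k\ast\mf{o}_i$ for a suitable $k$. The restriction $\varphi^k|_V\in\Aut(V,m_\G)$ is coarse-median preserving, so by Theorem~\ref{thm:tame}(1) it is docile, meaning $\overline{\mc{O}}_{\rm top}(\varphi^k|_V)\sim\overline{\mf{o}}_{\rm top}(\phi^k|_V)\sim k\ast\mf{o}_i$. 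Hence some $h\in V$ satisfies $|\varphi^{kn}(h)|\sim k\ast\mf{o}_i$, and a standard sandwiching between consecutive multiples of $k$ gives $[|\varphi^n(h)|]\sim\mf{o}_i\in\mc{G}(\varphi)$. The inclusion $(\subseteq)$ is the main obstacle: given $g\in G$ with $[|\varphi^n(g)|]$ not sub-polynomial, I must identify this rate with some $\mf{o}_i$. The plan is to run the Bestvina--Paulin construction with the automorphism sequence $(\varphi^n)$ (as in \Cref{subsub:hom_into_RAAGs}) using the scaling factors $\s^S(\varphi^n)$ and basepoint $x_n=1$, producing a degeneration $G\acts\mc{Y}_\om$ whose arc-stabilisers are again centralisers and which has no perverse lines (the proof of \Cref{thm:10e+} goes through verbatim since $\varphi$ is coarse-median preserving; compare the construction in Step~3 of the proof of \Cref{lem:tame_auxiliary}). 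In this degeneration, $|\varphi^n(g)|/\s^S(\varphi^n)$ converges to the displacement of the basepoint by $g$, and one can mimic the proof of Lemma~\ref{lem:B_om} to show that the subgroups $\{g:|\varphi^n(g)|\prec_\om\sigma^S(\varphi^n)\}$ are convex-cocompact with finitely many conjugacy classes as $\om$ varies. A descending induction analogous to part~(2), now applied to $\varphi$ via these subgroups, shows that every non-sub-polynomial rate in $\mc{G}(\varphi)$ is pure and matches a rate on a convex-cocompact subgroup invariant under a power of $\phi$. A final application of docility on such subgroups identifies this rate with some $\mf{o}_i\in\mf{g}(\phi)$, completing the proof.
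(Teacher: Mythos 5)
Your parts~(1) and~(2) are essentially the paper's own argument: part~(1) uses the $\preceq$-maximum of $\mf{g}(\phi)$ together with \Cref{rmk:dominating_otop} and \Cref{lem:o_bar_is_faster} (the paper cites \Cref{lem:top_exists} and \Cref{rmk:fail->fail_mod_omega}, but it is the same observation); part~(2) proceeds by descent through the families $\mc{B}(\mf{o}_i)$ exactly as in the paper, with the base case resting on the equality $\mc{B}_{\rm top}(\phi)=\mc{B}^{\om}_{\rm top}(\phi)$ for every $\om$ and \Cref{lem:B_om}.

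Part~(3) is where you genuinely diverge, specifically for the inclusion $\mc{G}(\varphi)\subseteq\mf{g}(\phi)$ up to sub-polynomial rates. You propose running a fresh Bestvina--Paulin limit of the sequence $\varphi^n$ with scaling $\sigma^S(\varphi^n)$ and basepoint at the identity, in order to show that the subgroups $\Om_{\om}:=\{g:|\varphi^n(g)|\prec_{\om}\sigma^S(\varphi^n)\}$ are convex-cocompact and to descend through them. This can be made to work, but it requires re-proving (or at least carefully re-using) the structural statements of \Cref{thm:10e+} and \Cref{cor:acc_implies_nice} for this nonstandard degeneration, and it silently needs basepoint-independence of the degeneration (which holds here since one may assume $G$ has trivial centre, but should be said). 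The paper avoids all of this with a cleaner, purely algebraic manoeuvre: it never establishes that $\Om$ is convex-cocompact. Instead, it passes to the centraliser closure $Z$ of $\Om$ (which is automatically convex-cocompact and $\varphi$-invariant), observes that either $Z\in\mc{B}_{\rm top}(\phi)$, or else the maximal element $B$ of $\mc{B}_{\rm top}(\phi|_Z)$ containing $\Om$ is forced to satisfy $\varphi(B)=B$ by \Cref{lem:B_om}(3) combined with the minimality of $Z$ among centralisers containing $\Om$. This gives the desired convex-cocompact $\varphi$-invariant $H\in\mc{B}_{\rm top}(\phi)$ containing $g_*$ without ever touching the $\sigma^S$-scaled limit. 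Your route buys nothing extra and costs a second degeneration argument; the paper's route is the more economical one. Also, your $(\supseteq)$ direction states "hence some $h\in V$ satisfies $|\varphi^{kn}(h)|\sim k\ast\mf{o}_i$" as an immediate consequence of docility, but docility alone does not produce a realising element for $\overline{\mc{O}}_{\rm top}$; one must additionally invoke the $(\subseteq)$ inclusion (proved first) to know that rates in $\mc{G}(\varphi^k|_V)$ are pure or sub-polynomial, and then use that two pure rates agreeing modulo some ultrafilter must agree. This is a fixable gap, but it should be spelled out.
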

\begin{proof}
    If $\mf{g}(\phi)$ contains at least one pure growth rate, then it has a $\preceq$--maximum $\mf{o}_{\max}$ by our hypotheses. Recalling \Cref{lem:top_exists}(2), we then have $\overline{\mf{o}}_{\rm top}(\phi)\sim_{\om}\mf{o}_{\max}$ for every non-principal ultrafilter $\om$, and this implies that $\overline{\mf{o}}_{\rm top}(\phi)\sim\mf{o}_{\max}\in\mf{g}(\phi)$ (\Cref{rmk:fail->fail_mod_omega}), proving part~(1).

    We now prove part~(2). Since all elements of $\mf{g}(\phi)$ are either pure or sub-polynomial, we have $\mc{B}(\mf{o})=\mc{B}^{\om}(\mf{o})$ for any pure growth rate $\mf{o}\in\mf{G}$ and any non-principal ultrafilter $\om$. In particular $\mc{B}_{\rm top}(\phi)=\mc{B}^{\om}_{\rm top}(\phi)$, and so \Cref{lem:B_om}(2) implies that there are only finitely many conjugacy classes of maximal subgroups in $\mc{B}_{\rm top}(\phi)$, and that these are convex-cocompact. Since $\mc{B}^{\om}_{\rm top}(\phi)$ is $\phi$--invariant by \Cref{rmk:beat_invariant}, each conjugacy class of subgroups in $\mc{B}^{\om}_{\rm top}(\phi)$ is preserved by a power of $\phi$. Thus, part~(2) of the lemma holds for $\mf{o}=\overline{\mf{o}}_{\rm top}(\phi)$. 
    For a general pure growth rate $\mf{o}\in\mf{g}(\phi)$, each maximal subgroup in $\mc{B}(\mf{o})$ is contained in a maximal subgroup in $\mc{B}_{\rm top}(\phi)$, so it suffices to restrict $\phi$ to the latter (after raising $\phi$ to a power) and repeat the previous argument. This procedure eventually terminates, by the assumption that $\mf{g}(\phi)$ contains only finitely many pure growth rates, proving part~(2).

    Finally, we prove part~(3). We can suppose that $\overline{\mf{o}}_{\rm top}(\phi)$ is not sub-polynomial, otherwise \Cref{thm:tame} implies that all elements of $\mc{G}(\varphi)\cup\mf{g}(\phi)$ are sub-polynomial. Now, \Cref{thm:tame} also implies that $\overline{\mc{O}}_{\rm top}(\varphi)\sim\overline{\mf{o}}_{\rm top}(\phi)$, and hence $|\varphi^n(g)|\preceq\overline{\mf{o}}_{\rm top}(\phi)$ for all $g\in G$. The following characterises the elements for which this inequality is strict.
    
    \smallskip
    {\bf Claim.} \emph{If $g_*\in G$ is such that $|\varphi^n(g_*)|\not\sim\overline{\mf{o}}_{\rm top}(\phi)$, then there is a convex-cocompact subgroup $H\leq G$ with $\varphi(H)=H$, $g_*\in H$, and $H\in\mc{B}_{\rm top}(\phi)$.}

    \smallskip\noindent
    \emph{Proof of claim.}
    Since $|\varphi^n(g_*)|\preceq\overline{\mf{o}}_{\rm top}(\phi)$ and $|\varphi^n(g_*)|\not\sim\overline{\mf{o}}_{\rm top}(\phi)$, we can fix a non-principal ultrafilter $\om$ such that $g_*$ lies in the following subset of $G$:
    \[ \Om:=\{g\in G\mid |\varphi^n(g)|\prec_{\om}\mf{o}^{\om}_{\rm top}(\phi)\} .\]
    Note that $\Om$ is a subgroup, as we have $|\varphi^n(gh)|\leq |\varphi^n(g)|+|\varphi^n(h)|$ for any $g,h\in G$. Moreover, we have $\varphi(\Om)=\Om$, as $\varphi$ is bi-Lipschitz with respect to $|\cdot|$. Finally, note that, for each $h\in \Om$, we have $\|\phi^n(h)\|\leq |\varphi^n(h)|\prec_{\om}\mf{o}^{\om}_{\rm top}(\phi)$, and hence $\Om\in\mc{B}^{\om}_{\rm top}(\phi)=\mc{B}_{\rm top}(\phi)$. 

    Let $Z$ be the intersection of all elements of $\mc{Z}(G)$ containing $\Om$.
    Note that $Z$ is convex-cocompact and $\varphi(Z)=Z$. If $\overline{\mf{o}}_{\rm top}(\phi|_Z)\not\sim\overline{\mf{o}}_{\rm top}(\phi)$, then $Z\in\mc{B}_{\rm top}(\phi)$ and we are done. Otherwise, we have $\Om\in\mc{B}_{\rm top}(\phi|_Z)$. Let $B\leq Z$ be a maximal element of $\mc{B}_{\rm top}(\phi|_Z)$ containing $\Om$, which exists by \Cref{lem:B_om}(1). Since $\mc{B}_{\rm top}(\phi|_Z)$ is $\varphi$--invariant, the subgroup $\varphi(B)$ is also a maximal element of $\mc{B}_{\rm top}(\phi|_Z)$. If we had $\varphi(B)\neq B$, then \Cref{lem:B_om}(3) would imply that $\varphi(B)\cap B$ is contained in a centraliser $Z'\in\mc{Z}(Z)\sq\mc{Z}(G)$ with $Z'\neq Z$, and we would have $\Om\leq Z\cap Z'\lneq Z$, contradicting our choice of $Z$. In conclusion, we have $\varphi(B)=B$, $g_*\in\Om\leq B$ and $B\in\mc{B}_{\rm top}(\phi|_Z)\sq\mc{B}_{\rm top}(\phi)$. Finally, $B$ is convex-cocompact by \Cref{lem:B_om}(2), proving the claim.
    \hfill$\blacksquare$

    \smallskip
    Now, let $\mf{o}_1\prec\dots\prec\mf{o}_m$ be the list of pure growth rates in $\mf{g}(\phi)$, which is finite and contains all non-sub-polynomial elements of $\mf{g}(\phi)$ by hypothesis. If $|\varphi^n(g_*)|\not\sim\overline{\mf{o}}_{\rm top}(\phi)=\mf{o}_m$, the claim (together with \Cref{thm:tame}) implies that $|\varphi^n(g_*)|\preceq\mf{o}_{m-1}$. A repeated application of this argument then yields that, for each $g\in G$, either the growth rate $\big[\,|\varphi^n(g)|\,\big]$ is sub-polynomial or $|\varphi^n(g)|\sim\mf{o}_i$ for some index $1\leq i\leq m$.

    In conclusion, we have shown that each element of the difference $\mc{G}(\varphi)\setminus\mf{g}(\phi)$ is sub-polynomial. Conversely, the fact that the elements of $\mf{g}(\phi)\setminus\mc{G}(\varphi)$ are sub-polynomial is straightforward: for each pure growth rate $\mf{o}\in\mf{g}(\phi)$, we can pick a maximal element $B\in\mc{B}(\mf{o})$, which is convex-cocompact and preserved by a power of $\phi$ by part~(2), so we can apply \Cref{thm:tame} to the restriction to $B$ of a power of $\phi$ to conclude that $\mf{o}\in\mc{G}(\varphi)$. This proves part~(3), completing the proof of the lemma.
\end{proof}

\subsection{The core of the proof}

This subsection is devoted to the proof of \Cref{thm:cmp_main}, which mainly amounts to the following proposition.

\begin{prop}\label{prop:cmp_non-uniform}
    If $\phi\in\Out(G,m_{\G})$, then finitely many growth rates in $\mf{g}(\phi)$ are pure, and all other elements of $\mf{g}(\phi)$ are sub-polynomial.
\end{prop}
\begin{proof}
We prove the proposition by a double induction. The first induction is on the number of vertices of the finite graph $\G$. Thus, suppose that the proposition holds for all coarse-median preserving automorphisms of all convex-cocompact subgroups of proper parabolic subgroups of $A_{\G}$. We can also suppose that $G\not\in\mc{S}(G)$, otherwise it is straightforward to deduce the proposition from the fact that it holds for the virtual direct factors of $G$ (see \cite[Corollary~3.6]{Fio11a} for details). Up to raising $\phi$ to a power, we can further assume that each conjugacy class of subgroups in $\mc{S}(G)$ is $\phi$--invariant. We then define the set of growth rates
\[ \mf{g}_{\rm sing}(\phi):=\bigcup_{S\in\mc{S}(G)}\mf{g}_{\rm pure}(\phi|_S) ,\]
where $\mf{g}_{\rm pure}(\phi|_S)\sq\mf{g}(\phi|_S)$ denotes the subset of pure growth rates. Note that $\mf{g}_{\rm sing}(\phi)\sq\mf{G}$ is finite, since each set $\mf{g}_{\rm pure}(\phi|_S)$ is finite by the (first) inductive assumption and $\mc{S}(G)$ is finite up to conjugacy.

The second level of the induction is on the cardinality $\#\mf{g}_{\rm sing}(\phi)$. We have $\#\mf{g}_{\rm sing}(\phi)=0$ precisely when $\phi$ grows sub-polynomially on all singular subgroups of $G$. We will treat this base step simultaneously with the inductive step, as the argument is the same. Thus, suppose that the proposition also holds for all coarse-median preserving outer automorphisms $\psi$ of convex-cocompact subgroups of $A_{\G}$ 
such that $\#\mf{g}_{\rm sing}(\psi)<\#\mf{g}_{\rm sing}(\phi)$.

We proceed by distinguishing three cases. Throughout, we assume that $\overline{\mf{o}}_{\rm top}(\phi)$ is not sub-polynomial, otherwise the proposition is trivially true.

\smallskip\noindent
{\bf Case~1.} \emph{We have $\overline{\mf{o}}_{\rm top}(\phi)\sim\overline{\mf{o}}_{\rm sing}(\phi)$.}

\smallskip
In the base step, this means that $\overline{\mf{o}}_{\rm top}(\phi)$ is sub-polynomial, which we have already ruled out. For the inductive step, we argue as follows.

First, we claim that $\overline{\mf{o}}_{\rm top}(\phi)$ is pure and that $\overline{\mf{o}}_{\rm top}(\phi)\in\mf{g}_{\rm sing}(\phi)$. Indeed, each growth rate $\overline{\mf{o}}_{\rm top}(\phi|_S)$ with $S\in\mc{S}(G)$ is either pure or sub-polynomial by the (first) inductive hypothesis, recalling that $\overline{\mf{o}}_{\rm top}(\phi|_S)\in\mf{g}(\phi|_S)$ by \Cref{lem:parts3+4}(1). Thus, $\overline{\mf{o}}_{\rm sing}(\phi)$ is either pure or sub-polynomial, as it is defined as a finite sum of such growth rates (\Cref{defn:osing}). Since $\overline{\mf{o}}_{\rm top}(\phi)\sim\overline{\mf{o}}_{\rm sing}(\phi)$ by the hypothesis of Case~1, we conclude that this growth rate is pure and that it must thus coincide with $\overline{\mf{o}}_{\rm top}(\phi|_S)$ for some $S\in\mc{S}(G)$. Hence $\overline{\mf{o}}_{\rm top}(\phi)\in\mf{g}_{\rm sing}(\phi)$ as claimed.

Next, we claim that all growth rates in $\mf{g}(\phi)$ are either pure or sub-polynomial. For this, consider an element $g\in G$ and recall that $\|\phi^n(g)\|\preceq\overline{\mf{o}}_{\rm top}(\phi)$. If $\|\phi^n(g)\|\sim\overline{\mf{o}}_{\rm top}(\phi)$, we are done. Otherwise, there exists a non-principal ultrafilter $\om$ such that $\|\phi^n(g)\|\prec_{\om}\mf{o}^{\om}_{\rm top}(\phi)$, and hence we have $g\in B$ for a maximal element $B\in\mc{B}^{\om}_{\rm top}(\phi)$ by \Cref{lem:B_om}(1). The subgroup $B$ is convex-cocompact and, up to raising $\phi$ to a power, the conjugacy class of $B$ is $\phi$--invariant by \Cref{lem:B_om}(2). Since every singular subgroup of $B$ is contained in a singular subgroup of $G$, we have $\mf{g}_{\rm sing}(\phi|_B)\sq\mf{g}_{\rm sing}(\phi)$. Since $B\in\mc{B}^{\om}_{\rm top}(\phi)$, we have $\overline{\mf{o}}_{\rm top}(\phi)\not\in\mf{g}_{\rm sing}(\phi|_B)$. At the same time, we have seen above $\overline{\mf{o}}_{\rm top}(\phi)\in\mf{g}_{\rm sing}(\phi)$, so we conclude that $\#\mf{g}_{\rm sing}(\phi|_B)<\#\mf{g}_{\rm sing}(\phi)$ and the (second) inductive hypothesis implies that all elements of $\mf{g}(\phi|_B)$ are either pure or sub-polynomial. Since $g\in B$, this proves that the growth rate $\big[\,\|\phi^n(g)\|\,\big]$ is pure or sub-polynomial, as claimed.

Finally, we show that $\mf{g}(\phi)$ contains only finitely many pure growth rates. By the previous claim, we have $\mc{B}^{\om}_{\rm top}(\phi)=\mc{B}_{\rm top}(\phi)$ for all non-principal ultrafilters $\om$. Thus, \Cref{lem:B_om}(2) shows that $\mc{B}_{\rm top}(\phi)$ consists of finitely many conjugacy classes of convex-cocompact subgroups, each preserved by $\phi$ (after raising $\phi$ to some power). Now, each $\mf{o}\in\mf{g}(\phi)$ satisfies either $\mf{o}\sim\overline{\mf{o}}_{\rm top}(\phi)$ or $\mf{o}\in\mf{g}(\phi|_B)$ for some $B\in\mc{B}_{\rm top}(\phi)$, and each set $\mf{g}(\phi|_B)$ contains only finitely many pure growth rates by the (second) inductive assumption, as we have seen that $\#\mf{g}_{\rm sing}(\phi|_B)<\#\mf{g}_{\rm sing}(\phi)$. This proves the proposition in Case~1.

\smallskip\noindent
{\bf Case~2.} \emph{We have $\overline{\mf{o}}_{\rm sing}(\phi)\prec\overline{\mf{o}}_{\rm top}(\phi)$ and $G$ is $1$--ended.}

\smallskip
In the base step when $\overline{\mf{o}}_{\rm sing}(\phi)$ is sub-polynomial, \Cref{prop:pure_above_osing}(2) shows that $\mf{g}(\phi)$ contains only finitely many non-sub-polynomial growth rates and that these are all pure.

In general, recall that $\mc{K}_{\rm sing}(\phi)$ is the family of subgroups whose elements satisfy $\|\phi^n(k)\|\preceq\overline{\mf{o}}_{\rm sing}(\phi)$. By \Cref{cor:K_sing}, every subgroup in $\mc{K}_{\rm sing}(\phi)$ is contained in a maximal such subgroup, the latter are convex-cocompact, and they fall into finitely many conjugacy classes. Raising $\phi$ to a power, we can assume that each conjugacy class of maximal elements of $\mc{K}_{\rm sing}(\phi)$ is $\phi$--invariant. For each maximal element $K\in\mc{K}_{\rm sing}(\phi)$, we then have 
\begin{equation}\label{eq:sing_phi_K}
    \overline{\mf{o}}_{\rm sing}(\phi|_K)\preceq\overline{\mf{o}}_{\rm top}(\phi|_K)\preceq\overline{\mf{o}}_{\rm sing}(\phi) .
\end{equation}
Recall that $\overline{\mf{o}}_{\rm sing}(\phi)$ is either sub-polynomial or pure, by the (first) inductive assumption. If $\overline{\mf{o}}_{\rm sing}(\phi)$ is pure, it is realised on some $S\in\mc{S}(G)$ and so it lies in $\mf{g}_{\rm sing}(\phi)$. In conclusion, if $\overline{\mf{o}}_{\rm sing}(\phi|_K)$ is not sub-polynomial and if we do not have $\#\mf{g}_{\rm sing}(\phi|_K)<\#\mf{g}_{\rm sing}(\phi)$, then $\overline{\mf{o}}_{\rm sing}(\phi)\in\mf{g}_{\rm sing}(\phi|_K)$ and \Cref{eq:sing_phi_K} shows that $\overline{\mf{o}}_{\rm sing}(\phi|_K)\sim\overline{\mf{o}}_{\rm top}(\phi|_K)$. Now, either by the (second) inductive hypothesis or by Case~1, we know that $\phi|_K$ satisfies the proposition for every maximal element $K\in\mc{K}_{\rm sing}(\phi)$.

\Cref{cor:pure_above_osing} shows that, with the exception of finitely many pure growth rates, each $\mf{o}\in\mf{g}(\phi)$ that is not sub-polynomial satisfies $\mf{o}\preceq\overline{\mf{o}}_{\rm sing}(\phi)$. In turn, if $\mf{o}\preceq\overline{\mf{o}}_{\rm sing}(\phi)$, then $\mf{o}$ is realised on some maximal element $K\in\mc{K}_{\rm sing}(\phi)$ and we know that the proposition holds for $\phi|_K$. It follows that the proposition holds for $\phi$ as well, and this concludes Case~2.

\smallskip\noindent
{\bf Case~3.} \emph{The group $G$ is freely decomposable.}

\smallskip
Let $\mc{I}$ be the collection of freely indecomposable free factors of $G$. Up to raising $\phi$ to a power, we can assume that each conjugacy class in $\mc{I}$ is $\phi$--invariant. For each $I\in\mc{I}$, every singular subgroup of $I$ is contained in a singular subgroup of $G$, and thus we have $\mf{g}_{\rm sing}(\phi|_I)\sq\mf{g}_{\rm sing}(\phi)$. By Cases~1 and~2 above, it follows that the restriction $\phi|_I$ satisfies the proposition for each $I\in\mc{I}$. Letting $\mf{g}_{\rm ind}(\phi)\sq\mf{g}(\phi)$ be the set of all pure growth rates of the restrictions $\phi|_I$ as $I$ varies in $\mc{I}$, it follows that the set $\mf{g}_{\rm ind}(\phi)$ is finite.

Let $\Lambda\sq\R_{>1}$ be a finite set and $P\in\N$ an integer such that every growth rate in $\mf{g}_{\rm ind}(\phi)$ is of the form $[n^a\nu^n]$ for some $\nu\in\Lambda$ and some integer $a$ with $0\leq a\leq P$. Recall that $\Gr(G)$ denotes the Grushko rank of $G$. We complete Case~3 by proving the following.

\smallskip
{\bf Claim.} \emph{Each growth rate in the set $\mf{g}(\phi)$ is either sub-polynomial or of the form $[n^a\nu^n]$, for some integer $0\leq a\leq P+\Gr(G)$ and some $\nu\in\Lambda'$, where $\Lambda'$ is a set satisfying $\Lambda\sq\Lambda'\sq\R_{>1}$ and $\#(\Lambda'\setminus\Lambda)\leq\Gr(G)$.}

\smallskip\noindent
\emph{Proof of claim.}
We prove the claim via a third inductive procedure, on the value of the pair $(\#\mf{g}_{\rm ind}(\phi),\Gr(G))$, ordered lexicographically so that the first entry takes precedence over the Grushko rank $\Gr(G)$. The base step is trivial, so we only consider the inductive step.

Up to raising $\phi$ to a power, there is a (possibly sporadic) factor system $\mc{F}$ for $G$ such that $\phi$ is a fully irreducible element of $\Out(G,\mc{F})$. For each $F\in\mc{F}$, we have $\mf{g}_{\rm ind}(\phi|_F)\sq\mf{g}_{\rm ind}(\phi)$ and $\Gr(F)<\Gr(G)$, and thus the claim holds for the restriction $\phi|_F$ by the (third) inductive assumption. Let $\Lambda_0\sq\R_{>1}$ and $P_0\in\N$ be such that each growth rate in the union $\bigcup_{F\in\mc{F}}\mf{g}(\phi|_F)$ is either sub-polynomial or of the form $[n^a\nu^n]$ with $\nu\in\Lambda_0$ and $0\leq a\leq P_0$. 

If $\mc{F}$ is non-sporadic, we can consider a relative train-track map for $\phi$ and deduce that each growth rate in $\mf{g}(\phi)$ is either sub-polynomial, or of the form $[n^a\nu^n]$ with $\nu\in\Lambda_0$ and $0\leq a\leq P_0+1$, or of the form $[\lambda^n]$ for a single new number $\lambda>1$. See \cite[Proposition~5.2]{Fio11a}(3) for details.

Suppose instead that $\mc{F}$ is sporadic, so that there exists a $\phi$--invariant free splitting $G\acts T$ whose nontrivial vertex groups are the elements of $\mc{F}$. The growth rate $\overline{\mf{o}}_{\rm top}(\phi)$ then equals the fastest pure growth rate in the union $\bigcup_{F\in\mc{F}}\mf{g}(\phi|_F)$; see \cite[Proposition~5.2]{Fio11a}(3) for details. Each element of $\mf{g}(\phi)$ other than $\overline{\mf{o}}_{\rm top}(\phi)$ is realised on a maximal subgroup $B\in\mc{B}^{\om}_{\rm top}(\phi)$, for some non-principal ultrafilter $\om$. Moreover, $B$ is convex-cocompact and without loss of generality $\phi$--invariant (\Cref{lem:B_om}). By Kurosh's theorem, the freely indecomposable factors of $B$ are contained in the freely indecomposable factors of $G$, so we have $\mf{g}_{\rm ind}(\phi|_B)\sq\mf{g}_{\rm ind}(\phi)$. At the same time, we have $\overline{\mf{o}}_{\rm top}(\phi)\in\mf{g}_{\rm ind}(\phi)\setminus\mf{g}_{\rm ind}(\phi|_B)$ by construction, and so we can conclude by the (third) inductive assumption
\hfill$\blacksquare$

\smallskip
The claim concludes Case~3, completing the proof of \Cref{prop:cmp_non-uniform}.
\end{proof}

We can finally piece together the proof of \Cref{thm:cmp_main}. 

\begin{proof}[Proof of \Cref{thm:cmp_main}]
    Parts~(1)--(4) of the theorem mostly follow from the combination of \Cref{prop:cmp_non-uniform} and \Cref{lem:parts3+4}. The only statement not covered by this is the portion of part~(2) claiming that all sub-polynomial growth rates in $\mf{g}(\phi)$ are $\preceq[n^p]$, for an integer $p$ depending on $\phi$. To prove this missing statement, let $\mf{o}_{\min}$ be the slowest pure growth rate in $\mf{g}(\phi)$. Let $B_1,\dots,B_m$ be representatives of the finitely many $G$--conjugacy classes of maximal subgroups in $\mc{B}(\mf{o}_{\min})$. The $B_i$ are convex-cocompact and their conjugacy classes are $\phi$--invariant (after raising $\phi$ to a suitable power). Moreover, each sub-polynomial growth rate in $\mf{g}(\phi)$ is realised on one of the $B_i$, and it is therefore $\preceq\overline{\mf{o}}_{\rm top}(\phi|_{B_i})$. To conclude, it now suffices to observe that the rates $\overline{\mf{o}}_{\rm top}(\phi|_{B_i})$ are all sub-polynomial by \Cref{thm:tame}, as all elements of the $B_i$ grow sub-polynomially under $\phi$. Thus, $\overline{\mf{o}}_{\rm top}(\phi|_{B_i})\preceq n^{p_i}$ for some integers $p_i\in\N$ and we can take $p:=\max_ip_i$.

    We are left to prove part~(5) of the theorem, which amounts to realising each pure growth rate $\mf{o}\in\mf{g}(\phi)$ on a particular subgroup $H\leq G$. Up to replacing $G$ with a maximal $\mf{o}$--controlled subgroup, we can suppose that $\mf{o}\sim\overline{\mf{o}}_{\rm top}(\phi)$. The required subgroup is then provided by \Cref{thm:tame}(3), except that we need to avoid Case~(iii) of that result, where $H$ virtually splits as a direct product $H'\x\Z^m$ for some $m\geq 1$. Without loss of generality, $H'$ has trivial centre, otherwise it has a virtual $\Z$--factor, which can be incorporated into $\Z^m$. Up to raising $\phi$ to a power, $\phi$ has a representative $\varphi\in\Aut(G)$ that preserves $H'\x\Z^m$. By \Cref{rmk:no_skewing}, the fact that $\varphi$ is coarse-median preserving implies that $\varphi(H')=H'$ and, up to raising $\varphi$ to a further power, the restriction $\varphi|_{\Z^m}$ is the identity. This implies that $\overline{\mf{o}}_{\rm top}(\phi)\sim\overline{\mf{o}}_{\rm top}(\phi|_{H'})$. We can now replace $\phi$ with $\phi|_{H'}\in\Out(H')$ and repeat the argument, which eventually terminates because $\ar(H')\leq\ar(G)-m<\ar(G)$.
\end{proof}

\bibliography{./mybib}
\bibliographystyle{alpha}
\end{document}